%ich aendere was
% !BIB TS-program = bibtex
%\documentclass[aop]{imsart}
%\documentclass[11pt]{article}
%\documentclass[draftcls,onecolumn, journal,a4paper,12pt]{IEEEtran}
%\documentclass[conference,a4paper]{IEEEtran}
%\documentclass[sn-mathphys]{sn-jnl}
%\documentclass[a4paper]{article}
\documentclass[numbers,webpdf,imaiai]{ima-authoring-template}%\usepackage[numbers]{natbib}

\usepackage{tikz}

\usepackage{mathrsfs}
\usepackage{amsmath}
\usepackage{amssymb}
\usepackage{amsthm}
\usepackage{mathrsfs}
\usepackage{xspace}
\usepackage{bm}

\usepackage{calrsfs}
\DeclareMathAlphabet{\pazocal}{OMS}{zplm}{m}{n}
\SetMathAlphabet\pazocal{bold}{OMS}{zplm}{bx}{n}
\usepackage[Symbol]{upgreek}
\usepackage{commath}
\usepackage{mleftright}
\definecolor{tblblack}{rgb}{0.93,0.93,1.0}
\definecolor{tblred}{rgb}{1,0.93,0.93}
\definecolor{darkblack}{rgb}{0,0,0.7} 
\definecolor{darkgreen}{RGB}{20,120,43} 
\definecolor{darkred}{rgb}{0.8,0,0} 
\definecolor{lightblack}{RGB}{101,124,191}
\definecolor{skyblack}{RGB}{135,206,235}
\definecolor{gold}{RGB}{204,168,66}
\definecolor{strongblack}{RGB}{60,146,228}
\definecolor{lightgray}{gray}{0.5}
\definecolor{verylightgray}{RGB}{101,124,191}
\definecolor{mistyrose}{RGB}{238,213,210}
\definecolor{firebrick3}{RGB}{205,38,38}
\setcounter{MaxMatrixCols}{20} 

\usetikzlibrary{lindenmayersystems}
\pgfdeclarelindenmayersystem{cantor set}{
  \rule{F -> FfF}
  \rule{f -> fff}
}
\usetikzlibrary{arrows.meta,arrows}
%\usetikzlibrary{external}
%\tikzexternalize[prefix=figures/]

%\usepackage{program}

\newcommand{\safemath}[2]{\newcommand{#1}{\ensuremath{#2}\xspace}}

\newcommand{\subc}{c}
\newcommand{\supc}{b}
%%%%% Sets %%%%%%%%%%%%%%%%%%
\safemath{\setA}{\pazocal{A}}
\safemath{\setB}{\pazocal{B}}
\safemath{\setC}{\pazocal{C}}
\safemath{\setD}{\pazocal{D}}
\safemath{\setE}{\pazocal{E}}
\safemath{\setF}{\pazocal{F}}
\safemath{\setG}{\pazocal{G}}
\safemath{\setH}{\pazocal{H}}
\safemath{\setI}{\pazocal{I}}
\safemath{\setJ}{\pazocal{J}}
\safemath{\setK}{\pazocal{K}}
\safemath{\setL}{\pazocal{L}}
\safemath{\setM}{\pazocal{M}}
\safemath{\setN}{\pazocal{N}}
\safemath{\setO}{\pazocal{O}}
\safemath{\setP}{\pazocal{P}}
\safemath{\setQ}{\pazocal{Q}}
\safemath{\setR}{\pazocal{R}}
\safemath{\setS}{\pazocal{S}}
\safemath{\setT}{\pazocal{T}}
\safemath{\setU}{\pazocal{U}}
\safemath{\setV}{\pazocal{V}}
\safemath{\setW}{\pazocal{W}}
\safemath{\setX}{\pazocal{X}}
\safemath{\setY}{\pazocal{Y}}
\safemath{\setZ}{\pazocal{Z}}
\safemath{\emptySet}{\varnothing}

%%%%% Collections %%%%%%%%%%%%%%%%%%
\safemath{\colA}{\mathscr{A}}
\safemath{\colB}{\mathscr{B}}
\safemath{\colC}{\mathscr{C}}
\safemath{\colD}{\mathscr{D}}
\safemath{\colE}{\mathscr{E}}
\safemath{\colF}{\mathscr{F}}
\safemath{\colG}{\mathscr{G}}
\safemath{\colH}{\mathscr{H}}
\safemath{\colI}{\mathscr{I}}
\safemath{\colJ}{\mathscr{J}}
\safemath{\colK}{\mathscr{K}}
\safemath{\colL}{\mathscr{L}}
\safemath{\colM}{\mathscr{M}}
\safemath{\colN}{\mathscr{N}}
\safemath{\colO}{\mathscr{O}}
\safemath{\colP}{\mathscr{P}}
\safemath{\colQ}{\mathscr{Q}}
\safemath{\colR}{\mathscr{R}}
\safemath{\colS}{\mathscr{S}}
\safemath{\colT}{\mathscr{T}}
\safemath{\colU}{\mathscr{U}}
\safemath{\colV}{\mathscr{V}}
\safemath{\colW}{\mathscr{W}}
\safemath{\colX}{\mathscr{X}}
\safemath{\colY}{\mathscr{Y}}
\safemath{\colZ}{\mathscr{Z}}

% Number sets
\safemath{\reals}{\mathbb R}
\safemath{\positivereals}{\reals^{+}}
\safemath{\integers}{\mathbb Z}
\safemath{\posint}{\integers^{+}}
\safemath{\naturals}{\mathbb N}
\safemath{\posnaturals}{\naturals^{+}}
\safemath{\complexset}{\mathbb C}
\safemath{\rationals}{\mathbb Q}

%%%%% Operators %%%%%%%%%%%
\safemath{\opA}{\mathbb{A}}
\safemath{\opB}{\mathbb{B}}
\safemath{\opC}{\mathbb{C}}
\safemath{\opD}{\mathbb{D}}
\safemath{\opE}{\mathbb{E}}
\safemath{\opF}{\mathbb{F}}
\safemath{\opG}{\mathbb{G}}
\safemath{\opH}{\mathbb{H}}
\safemath{\opI}{\mathbb{I}}
\safemath{\opJ}{\mathbb{J}}
\safemath{\opK}{\mathbb{K}}
\safemath{\opL}{\mathbb{L}}
\safemath{\opM}{\mathbb{M}}
\safemath{\opN}{\mathbb{N}}
\safemath{\opO}{\mathbb{O}}
\safemath{\opP}{\mathbb{P}}
\safemath{\opQ}{\mathbb{Q}}
\safemath{\opR}{\mathbb{R}}
\safemath{\opS}{\mathbb{S}}
\safemath{\opT}{\mathbb{T}}
\safemath{\opU}{\mathbb{U}}
\safemath{\opV}{\mathbb{V}}
\safemath{\opW}{\mathbb{W}}
\safemath{\opX}{\mathbb{X}}
\safemath{\opY}{\mathbb{Y}}
\safemath{\opZ}{\mathbb{Z}}
\safemath{\opZero}{\mathbb{O}}
\safemath{\identityop}{\opI}

\DeclareMathOperator{\sinc}{sinc}
\newcommand{\ind}[1]{\chi_{#1}}	

\newcommand{\tp}[1]{\ensuremath{#1^{\mathsf{T}}}} 

\theoremstyle{thmstyletwo}
\newtheorem{thm}{Theorem}[section]
\newtheorem{lem}{Lemma}[section]
\newtheorem{prp}{Proposition}[section]
\newtheorem{cor}{Corollary}[section]
\theoremstyle{definition}

\theoremstyle{remark}

\newtheorem{exa}{Example}[section]
\newtheorem{dfn}{Definition}[section]

\numberwithin{equation}{section}
\sloppy

\allowdisplaybreaks[4]

\begin{document}
%\onehalfspacing
\renewcommand{\labelenumi}{\theenumi}

\DOI{DOI HERE}
\copyrightyear{2022}
\vol{00}
\pubyear{2022}
\access{Advance Access Publication Date: Day Month Year}
\appnotes{Paper}
\copyrightstatement{Published by Oxford University Press on behalf of the Institute of Mathematics and its Applications. All rights reserved.}
\firstpage{1}

\title[Lossy Compression of  General Random Variables]{Lossy Compression of  General Random Variables}

\author{Erwin Riegler*
\address{\orgdiv{Chair for Mathematical Information Science}, \orgname{ETH Zurich}, \orgaddress{\street{Sternwartstrasse 7}, \postcode{8092}, \state{Zurich}, \country{Switzerland}}}}
\author{G\"unther Koliander
\address{\orgdiv{Acoustics Research Institute}, \orgname{Austrian Academy of Sciences}, \orgaddress{\street{Wohllebengasse 12-14}, \postcode{1040}, \state{Vienna}, \country{Austria}}}}
\author{Helmut B\"olcskei
\address{\orgdiv{Chair for Mathematical Information Science}, \orgname{ETH Zurich}, \orgaddress{\street{Sternwartstrasse 7}, \postcode{8092}, \state{Zurich}, \country{Switzerland}}}}
\authormark{E. Riegler et al.}
\corresp[*]{Corresponding author: \href{email:eriegler@mins.ee.ethz.ch}{eriegler@mins.ee.ethz.ch}}

\received{14}{03}{2022}
\accepted{06}{09}{2022}

%\keywords{Lossy compression, rate-distortion theory, quantization, manifolds, fractal sets,  information theory, directional statistics}

%\keywords{
%\section*{Statements and Declarations}
%\begin{itemize}[itemsep=1ex,topsep=1ex]
%\item G. Koliander was supported in part by  the WWTF project MA16-053.. 
%\item A weaker version of Theorem \ref{thm:new} was presented without proof at the 2018 IEEE International Symposium on Information Theory (ISIT) \cite{rikobo18}. 
%\end{itemize}}

\abstract{
This paper is concerned with the lossy compression of general random variables, specifically with rate-distortion theory and quantization of random variables taking values in general measurable spaces such as, e.g., manifolds and fractal sets. Manifold structures are prevalent in data science, e.g., in compressed sensing, machine learning, image processing, and handwritten digit recognition. Fractal sets find application in image compression and in the modeling of Ethernet traffic. Our main contributions are bounds on the rate-distortion function and the quantization error. These bounds are very general and essentially only require the existence of reference measures satisfying certain regularity conditions in terms of small ball probabilities. To illustrate the wide applicability of our results, we particularize them to random variables taking values in i) manifolds, namely, hyperspheres and Grassmannians, and ii) self-similar sets characterized by  iterated function systems satisfying the weak separation property.}

\keywords{Lossy compression; rate-distortion theory; quantization; manifolds; fractal sets; information theory; directional statistics.\\ \textbf{2020 Math Subject Classification:} Primary 94A34, 94A29; secondary 94A15, 60B05, 62H11. 
}

%\subjclass[2000]{53C50, 57R25}

%\begin{keyword}[class=MSC]
%\kwd[Primary ]{94A34, 94A29}
%\kwd[; secondary ]{94A15, 62H11, 60B05}
%\end{keyword}

\maketitle

\section{Introduction} 
This paper is concerned with the lossy compression of general random variables, specifically with   
rate-distortion (R-D) theory  and  quantization of  random variables  taking  values in  general measurable spaces such as, e.g.,  manifolds and fractal sets. 
%This paper is concerned with the development of a rate-distortion (R-D) theory  and a theory of quantization for (sequences of) 
% random variables taking values in   general measurable spaces including manifolds and fractal sets.
Manifold structures are prevalent in data science, e.g., in compressed sensing \cite{bawa09,care09,capl11,albdekori18,ristbo15,cose06}, machine learning \cite{femina16,lz12}, image processing \cite{lufahe98,soze98}, directional statistics  \cite{maju00}, and handwritten digit recognition \cite{hidare97}. 
Fractal sets find application in image compression and in modeling of Ethernet traffic \cite{letawi94}.

In the following, we formally introduce the mathematical setup underlying  R-D theory and  quantization, review the corresponding relevant results in the literature, and summarize our main contributions.   
Quantities not defined directly are introduced in the notation paragraph at the end of this section. 

\subsection{Rate-Distortion Theory}\label{sec:RDIntro}
In R-D theory \cite{sh59,be71,gr90,grne98,deko00}, one is interested in the characterization of the ultimate limits on the compression of sequences of random variables under a distortion constraint, here expressed in terms of expected average distortion.  
Specifically, let  $(\setA,\colA)$ and $(\setB,\colB)$ be  measurable spaces %\cite[Section 1.4]{gr11} 
equipped with a measurable function  $\sigma\colon \setA\times\setB\to[0,\infty]$, henceforth referred to as   distortion function,\footnote{For $\setA=\setB=\reals^d$, a distortion function $\sigma\colon \reals^d\times\reals^d\to [0,\infty]$ is called  a difference distortion function if  there exists a measurable function $f\colon \reals^d\to [0,\infty]$ such that $\sigma(x,y)=f(x-y)$ for all $x,y\in\reals^d$.} 
 and let $(X_i)_{i\in\naturals}$ be a 
sequence of random variables taking values in $(\setA,\colA)$.     
For every $\ell\in\naturals$,  measurable mappings $g_{(\ell)}\colon \setA^\ell \to \setB^\ell$ with  $\lvert g_{(\ell)}(\setA^\ell) \rvert <\infty$ are referred to as source codes of length $\ell$.  
For a given rate $R\in[0,\infty)$ and distortion $D\in[0,\infty)$, 
the  pair $(R,D)$ is said to be $\ell$-achievable if
there exists a source code $g_{(\ell)}$ of length $\ell$ with $\abs{g_{(\ell)}(\setA^\ell)} \leq \lfloor e^{\ell R}\rfloor$ and  expected average distortion  
\begin{align}\label{eq:leqdfin}
\opE\mleft[\sigma_{(\ell)}\big((X_1,\dots,X_\ell),g_{(\ell)}(X_1,\dots,X_\ell)\big)\mright] \leq D, 
\end{align} 
where 
\begin{align}\label{eq:ghol}
\sigma_{(\ell)}((x_1,\dots,x_\ell),(y_1,\dots,y_\ell))=\frac{1}{\ell}\sum_{i=1}^\ell\sigma(x_i,y_i) 
\end{align}
is the average distortion function  of length $\ell$. 
Moreover, the  pair $(R,D)$  is said to be achievable 
if  there exists an $\ell\in\naturals$ such that  $(R,D)$  is $\ell$-achievable.  
For every $\ell\in\naturals$ and $D\in[0,\infty)$, 
we set\footnote{We use the convention that the infimum of the empty set is $\infty$.} 
\begin{equation}
R_{(\ell)}(D) = \inf\{R\in[0,\infty):  \text{$(R,D)$ is $\ell$-achievable}\}
\label{eq:rdfin}
\end{equation} 
and 
\begin{equation}
R(D) = \inf\{R\in[0,\infty):  \text{$(R,D)$ is achievable}\}.  
\label{eq:rdfin2}
\end{equation}

The set of achievable $(R,D)$-pairs can be characterized as follows. 
For general measurable spaces  $(\setX,\colX)$ and $(\setY,\colY)$  equipped with a general distortion function  $\rho\colon \setX\times\setY\to[0,\infty]$ and the  random variable  $X$ taking  values in $(\setX,\colX)$,   
the R-D function  is defined 
as 
\begin{align}\label{eq:RD}
R_{X}(D)=\inf_{\mu_{Y\mid X}:\,\opE[\rho(X,Y)]\,\leq\, D} I(X;Y),   
\end{align}
where $I(\cdot\,\,;\,\cdot)$ denotes the mutual information (defined in \eqref{eq:I})  between $X$ and $Y$ with $Y$ taking values in $(\setY,\colY)$  
and the infimum is over all  $\mu_{Y\vert  X}\colon \colY\times\setX\to[0,1]$ with  (i) $\mu_{Y\vert X}(\cdot,x)$  a probability measure on $(\setY,\colY)$ for all $x\in\setX$, (ii)  $\mu_{Y\mid X}(\setC,\cdot)$  a measurable function for all $\setC\in\colY$,  and (iii)
\begin{equation}
\opE[\rho(X,Y)]= \opE\mleft[ \int \rho(X,y) \,\mathrm d \mu _{Y\vert X}(y,X) \mright].   
\end{equation}  

Now, let $(\setA,\colA)$, $(\setB,\colB)$, and  $\sigma\colon \setA\times\setB\to[0,\infty]$ be as defined above and consider a stationary ergodic process  $(X_i)_{i\in \naturals}$, where  $X_i$ takes values in $(\setA,\colA)$ for all $i\in\naturals$.   
For every $\ell\in\naturals$, define 
$R_{(X_1, \dots, X_\ell)}(D)$ as in \eqref{eq:RD} for $X=(X_1, \dots, X_\ell)$, $Y=(Y_1, \dots, Y_\ell)$, $(\setX,\colX)=(\setA^\ell,\colA^\ell)$, $(\setY,\colY)=(\setB^\ell,\colB^\ell)$, and 
 $\rho=\sigma_{(\ell)}$. Finally,  set 
\begin{align}\label{eq:limitR}%{eq:RDl}
R_{(X_i)_{i\in \naturals}}(D)=\lim_{\ell\to\infty}\frac{1}{\ell} 
R_{(X_1, \dots, X_\ell)}(D). 
\end{align} 
The  limit in \eqref{eq:limitR} is well-defined by \cite[Problem 4.17]{be71}. 
Then, we have  the following achievability result with  matching converse: 
\renewcommand{\theenumi}{(\roman{enumi})}
\begin{enumerate}[(ii)]%[itemsep=1ex,topsep=1ex]
\item 
Suppose that  there exists a  $b^*\in\setB$ with  
$\opE\mleft[\sigma(X_1,b^*)\mright]<\infty$. Then, for every $D\in[0,\infty)$ with $R_{(X_i)_{i\in \naturals}}(D)<\infty$, $(R,D)$ is achievable for all $R>R_{(X_i)_{i\in \naturals}}(D)$ 
 \cite[Theorem 7.2.4]{be71}. 
\item \label{converse} 
For every $D\in[0,\infty)$ and  $R<R_{(X_i)_{i\in \naturals}}(D)$, 
$(R,D)$ is not achievable   \cite[Theorem 7.2.5]{be71}.
\end{enumerate}

Characterizing $R_{(X_i)_{i\in\naturals}}(D)$ analytically is, in general, very difficult, even in the case of  i.i.d. sequences  $(X_i)_{i\in\naturals}$, 
where \eqref{eq:limitR} reduces to the single-letter expression  $R_{(X_i)_{i\in \naturals}}(D)=R_{X_1}(D)$.   
For general   stationary ergodic processes $(X_i)_{i\in\naturals}$, computation of $R_{(X_i)_{i\in \naturals}}(D)$  even requires knowledge of $R_{(X_1, \dots, X_\ell)}(D)$  for all $\ell\in\naturals$ with  possibly the exception of finitely many $\ell$. 
One therefore resorts to  bounds on $R_X(D)$ 
for general $X$   (e.g., $X=X_1$ for i.i.d. sequences or $X=(X_1, \dots, X_\ell)$ for general stationary ergodic processes).  
While upper bounds can be obtained by evaluating  $I(X;Y)$ for a specific $Y$  satisfying  $\opE[\rho(X,Y)]\leq D$, 
lower bounds  are notoriously hard to obtain. 
The best-known lower bounds are the Shannon lower bounds for i) discrete random variables  
with $\rho$ such that, for every $s\in(0,\infty)$,  
$\sum_{x\in\setX}e^{-s\rho(x,y)}$ does not depend on $y$ \cite[Lemma  4.3.1]{gr90}   
and ii)   continuous random variables 
with a difference distortion function \cite[Equation (4.6.1)]{gr90}.  
For continuous $X$ of finite differential entropy  under  the difference distortion function $\rho(x,y)=\lVert x-y\rVert^k$, where $\lVert\,\cdot\,\rVert$ is a semi-norm and $k\in(0,\infty)$, the Shannon lower bound is known explicitly \cite[Section VI]{yatagr80} and, 
provided that $X$ additionally  satisfies a certain moment constraint, is tight as $D\to 0$ \cite{liza94,ko16}. 
For the class of  $m$-rectifiable random variables \cite[Definition 11]{kopirihl16}, a Shannon lower  bound   was reported recently in \cite[Theorem 55]{kopirihl16}.    
%and  
%for  general random variables in  \cite[Theorem 2]{ko17}.  
This bound  is, however, not in explicit form and depends on a parametrized integral.  
 
 Asymptotic results on the R-D function 
 for  $(\setX,\omega)$  a metric space, $\setY=\setX$, and $\rho(x,y)=\omega^k(x,y)$ with $k\in(0,\infty)$ were reported in  \cite{kade94}  in terms of the lower and upper  R-D dimensions of order $k$ defined as 
\begin{align}
\underline{\dim}_\text{R}(X)=\liminf_{D\to 0}\frac{ R_{X}(D^k)}{\log (1/D)}
\end{align}
and 
\begin{align}
\overline{\dim}_\text{R}(X)=\limsup_{D\to 0} \frac{R_{X}(D^k)}{\log (1/D)}, 
\end{align}
respectively. 
 If $\underline{\dim}_\text{R}(X)=\overline{\dim}_\text{R}(X)$, then this common value, denoted by ${\dim}_\text{R}(X)$, is referred to as the R-D dimension of order $k$.  
Specifically, it is shown in  \cite{kade94} (see also \cite[Remark 14.19]{grlu00}), that for a random variable $X$ of  self-similar distribution  $\mu_X$ satisfying  the assumptions of 
 \cite[Theorem 14.17]{grlu00}, for every $k\in(0,\infty)$,   
 the R-D dimension of order $k$  exists and equals the Hausdorff dimension of $\mu_X$ given by 
  \begin{align}\label{eq:dimHX}
\dim_\mathrm{H}(\mu_X)=\inf\{\dim_\text{H}(\setC):\setC\subseteq\reals^d\ \text{is Borel with}\ \mu_X(\setC)=1\}.   
\end{align}

\subsection{Quantization}
In  quantization  \cite{gegr92,grlu00}, one is concerned with characterizing the ultimate limits on the discretization of  random variables. 
Specifically, let  $(\setX,\colX)$ and $(\setY,\colY)$ be measurable spaces equipped with the  distortion function  $\rho\colon \setX\times\setY\to[0,\infty]$,  and let  $X$ be a random variable taking values in $(\setX,\colX)$.     
For every $n\in\naturals$, consider the set $\setF_n(\setX,\setY)$ of all 
measurable mappings $f\colon \setX \to \setY$ with  $\lvert f(\setX)\rvert\leq n$. The elements of $\setF_n$ are  referred to as  $n$-quantizers and  
the $n$-th quantization error is defined as
\begin{equation}\label{eq:quanterr}
V_{n}(X)=\inf_{f\in\setF_n(\setX,\setY)}\opE[\rho(X,f(X))]. 
\end{equation}
The typical setup  is  $\setX\subseteq\setY=\reals^d$  with  difference distortion function $\rho(x,y)=\|x-y\|^k$, where  $\lVert\,\cdot\,\rVert$ is a norm on $\reals^d$ and $k\in[1,\infty)$ \cite[Section III]{grne98}. 
The case $d>1$   is commonly referred to as vector quantization (see \cite{wuyu19} and references therein).  
While we restrict our attention to the expected average error as defined in \eqref{eq:quanterr}, results in terms of the 
worst case error 
\begin{equation}\label{eq:quanterrworst}
\widetilde V_{n}(X)=\inf_{f\in\setF_n(\setX,\setY)}\operatorname{ess} \sup \lVert X -f(X)\rVert  
\end{equation}
 can be found in  \cite{grlu00}.

The $n$-th quantization error $V_n(X)$ is difficult to characterize  in general.  
Asymptotic results are, however,  available, specifically 
in terms of 
the   
 lower and upper {$k$-th quantization dimensions},  defined as
 \begin{align}\label{eq:Dinf}
\underline{D}_k(X)=\liminf_{n\to\infty}  \frac{ k \log (n)}{\log (1/V_{n}(X))}  
\end{align}
and 
\begin{align}\label{eq:Dsup}
\overline{D}_k(X)=\limsup_{n\to\infty} \frac{ k \log (n)}{\log (1/V_{n}(X))},  
\end{align}
respectively. If $\underline{D}_k(X)=\overline{D}_k(X)$, then this common value, denoted by $D_k(X)$, is referred to as the $k$-th quantization dimension.   
For the special case  $\setX\subseteq\setY=\reals^d$,    $\rho(x,y)=\|x-y\|^k$ with $\lVert\,\cdot\,\rVert$  a norm on $\reals^d$,  and  $k\in [1,\infty)$, 
it is known that 
$\underline{D}_k(X)\geq \max\{\dim_\text{H}(\mu_X),\underline{\dim}_\text{R}(X) \}$ \cite[Theorems 11.6 and 11.10]{grlu00} with 
 $\dim_\text{H}(\mu_X)$ as  defined  in \eqref{eq:dimHX}. 
If, in addition, $\mu_X$ is $\lVert\, \cdot\,\rVert^k$-regular of dimension $m$  (see Definition \ref{def:subreg}),  
then $D_k(X)=\dim_H(\mu_X)=m$ \cite[Theorem 12.18]{grlu00}, i.e., the $k$-th  quantization dimension   exists and equals the Hausdorff dimension of $\mu_X$.  

If the  $k$-th quantization dimension $D_k(X)$ exists in  $(0,\infty)$, a more accurate characterization  of the 
asymptotic behavior of   $V_n(X)$    can be obtained in terms of 
 the   lower and upper {$k$-th quantization coefficient}   defined as  
\begin{align}\label{eq:Cinf}
\underline{C}_k(X)=\liminf_{n\to\infty} n^\frac{k}{D_k(X)}V_n(X) 
\end{align}
and 
\begin{align}\label{eq:Csup}
\overline{C}_k(X)=\limsup_{n\to\infty} n^\frac{k}{D_k(X)}V_n(X),  
\end{align}
respectively. If $\underline{C}_k(X)=\overline{C}_k(X)$, then this common value, denoted by $C_k(X)$,  is referred to as the $k$-th quantization coefficient.  
Results on the  $k$-th quantization coefficient are  scarce and available in very specific cases only. More concretely, if  $\setX=\setY=\reals^d$  with   difference distortion function $\rho(x,y)=\|x-y\|^k$, where  $\lVert\,\cdot\,\rVert$ is a norm on $\reals^d$ and $k\in[1,\infty)$, then the following results are available:   
\begin{itemize}%[itemsep=1ex,topsep=1ex]
\item   If the continuous part of the distribution of  $X$ is  nonzero then, 
for every $k\in[1,\infty)$, provided that there is a $\delta\in(0,\infty)$ such that   $\opE[\lVert X \rVert^{\delta+k}]<\infty$,    $C_k(X)$ exists  in $(0,\infty)$ and an analytic expression for $C_k(X)$  is available   \cite[Theorem 6.2, Remark 6.3]{grlu00}.  
\item
If the distribution of $X$ is $\rho^{1/k}$-regular of dimension $m$ (see Definition \ref{def:subreg}),    
then 
$0<\underline{C}_k(X)\leq \overline{C}_k(X)<\infty$    \cite[Theorem 12.18]{grlu00}. 
A condition for the existence of  $C_k(X)$  does not seem to be available 
\cite[Remark 12.19]{grlu00}. 
\item For  $X$ of self-similar distribution, $C_k(X)$ exists in $(0,\infty)$ provided that  the defining vector \cite[Equation (1.1)]{po04} of the self-similar distribution  is nonarithmetic\footnote{A vector $(x_1,\dots, x_d)\in\reals^d$ with $x_i > 0$ for $i=1,\dots,d$  is said to be arithmetic if there exists  a $\theta \in(0,\infty)$ such that $x_i/\theta \in \naturals$ for $i=1,\dots,d$.}
  \cite[Theorem 1]{po04}. Analytic characterizations  for  the lower and upper $k$-th quantization coefficient do not seem to be available in general,  with the exception  of  $X$ distributed  uniformly  on the middle-third Cantor set \cite[Theorem 5.2]{grlu97}.  
\end{itemize}

\subsection{Contributions} 
Most of our results, both for  R-D theory and quantization,   hold for general measurable spaces $(\setX,\colX)$ and $(\setY,\colY)$ equipped only with a distortion function  $\rho\colon \setX\times\setY\to[0,\infty]$ and do not need  a common ambient space or metric on $\setX$ or $\setY$. 
Our main contribution to R-D theory 
is   a lower bound $R_X^{\text{L}}(D)$ on  the {R-D} function $ R_X(D)$ in \eqref{eq:RD}   for random variables  $X$ taking values in the  measurable space $(\setX,\colX)$.  The only requirement for this bound to apply is  that the  
  distribution $\mu_X$ of $X$ is absolutely continuous with respect to a  
 ($\sigma$-)finite  measure $\mu$  on  $(\setX,\colX)$ of  finite  generalized entropy $h_\mu(X)$ (as defined in \eqref{eq:entropy}) and  satisfying a certain subregularity condition.    
Specifically, this subregularity condition guarantees that the measure $\mu$ is not too  concentrated   on $\rho$-balls of small radii  in the following sense:  
There exist constants $m\in[0,\infty)$, $c\in(0,\infty)$, and $\delta_0\in(0,\infty]$  such that  
\begin{align}\label{eq:subregularityI}
\mu\mleft(\setB_{\rho}\mleft(y,\delta\mright)\mright)\leq c\delta^m\quad\text{for all $y\in\setY$ and  $\delta\in (0,\delta_0)$} 
\end{align}
with the  $\rho$-ball of radius $\delta$ centered at $y$ defined as $\setB_{\rho}(y,\delta)=\{x\in\setX:\rho(x,y)<\delta\}$.  
We refer to $m$ as subregularity dimension and to  $c,\delta_0$ as  subregularity constants. 
The subregularity condition \eqref{eq:subregularityI} is satisfied, e.g.,   by the $m$-dimensional Hausdorff measure restricted to   
an arbitrary regular set $\setK$ of dimension $m$ in $\reals^d$ and with $\setY=\setK$ (see \cite[Section 12]{grlu00}). 
Specific examples of regular sets of dimension $m$ are compact convex sets $\setK\subseteq\reals^m$ with $\operatorname{span}(\setK)=\reals^m$ \cite[Example 12.7]{grlu00}, surfaces of compact convex sets $\setK\subseteq\reals^{m+1}$ with $\operatorname{span}(\setK)=\reals^{m+1}$ \cite[Example 12.8]{grlu00}, 
$m$-dimensional compact $C^1$-submanifolds of $\reals^d$ \cite[Example 12.9]{grlu00},  
self-similar sets of similarity dimension $m$ satisfying the weak separation property \cite[Theorem 2.1]{frheolro15}, and finite unions 
of regular sets of dimension $m$ \cite[Lemma 12.4]{grlu00}. 
%Discrete measures  satisfy \eqref{eq:subregularityI} for $m=0$ only.  
The lower bound $R_X^{\text{L}}(D)$ we obtain allows us to conclude that 
the lower R-D dimension $\underline{\dim}_\text{R}(X)$ of order $k\in(0,\infty)$ is  lower-bounded by the subregularity dimension $m$, i.e.,  $\underline{\dim}_\text{R}(X)\geq m$.
% provided that the distribution $\mu_X$ is absolutely continuous with respect to an arbitrary  
% $\sigma$-finite  measure $\mu$  satisfying  the above mentioned assumptions.  
For continuous $X$ of finite differential entropy and distortion function $\rho(x-y)=\lVert x-y\rVert^k$, 
 where  $\lVert\,\cdot\,\rVert$ is a semi-norm on $\reals^d$ and $k\in(0,\infty)$, 
 our lower bound reduces to the classical Shannon lower bound  in   \cite{yatagr80}.  

Our first main contribution to quantization  is  a lower bound  on the $n$-th quantization error  for  random variables  $X$ taking values 
 in the  measurable space $(\setX,\colX)$. 
  The only requirement for this bound to apply is  that the  
  distribution $\mu_X$ 
of $X$ is absolutely continuous with respect to a 
 $\sigma$-finite measure $\mu$    
 on $(\setX,\colX)$ satisfying the subregularity condition \eqref{eq:subregularityI} and $\lVert \mathrm d \mu_X/\mathrm d \mu\rVert_{p/(p-1)}^{(\mu)}<\infty$ with $p\in[1,\infty)$.   
%In the special case of   $X$  taking values in  a  Riemannian manifold  with $\mu=\mu_X$  subregular for $\delta_0=\infty$, our lower bound recovers 
% \cite[Proposition 4.2]{kl12}.  
Further, the lower   bound we obtain  allows us to conclude that 
%the $k$-th lower quantization dimension  $\underline{D}_k(X)$  is  lower-bounded by the subregularity dimension $m/p$, i.e., 
\begin{align}\label{eq:upperDk}
\underline{D}_k(X)\geq m/p\quad\text{for all $k\in(0,\infty)$.}
\end{align}
 Moreover, we show that, if $D_k(X)$ exists and satisfies $D_k(X)=m/p$, then 
 \begin{align}
\underline{C}_k(X)\geq   \frac{m}{m+pk}  c^{-\frac{k}{m}} \mleft(\lVert \mathrm d \mu_X/\mathrm d \mu\rVert_{p/(p-1)}^{(\mu)}\mright)^{-\frac{pk}{m}}.
 \end{align}
% where $h_\mu^{(\alpha)}$ denotes the generalized R\'enyi entropy  of order $\alpha$ (defined  in 
%\eqref{eq:defRenyia}). 

Our second main contribution to quantization  is an upper bound  on the $n$-th quantization error for  random variables  $X$ taking values in the  measurable space $(\setX,\colX)$. %This upper bound is universal in the sense of being independent of the specific random variable  $X$ as no restrictions on the distribution $\mu_X$ are imposed.  
This result  
requires  the existence of a finite measure\footnote{We assume, without loss of generality, that $\nu(\setY)=1$.\label{ftn:hier2}} $\nu$ on $(\setY,\colY)$ satisfying  
a certain superregularity condition. 
Specifically, this superregularity condition guarantees that the measure $\nu$ is not too  small   on $\rho$-balls of small radii  in the following sense: 
There exist constants $m\in[0,\infty)$, $b\in(0,\infty)$, and  $\delta_0\in(0,\infty]$  such that  
\begin{align}\label{eq:subregularityIa}
\nu\mleft(\widetilde\setB_{\rho}\mleft(x,\delta\mright)\mright)\geq b\delta^m\quad\text{for all $x\in\setX$ and  $\delta\in (0,\delta_0)$} 
\end{align}
with  the  $\rho$-ball of radius $\delta$ centered at $x$ defined as 
 $\widetilde\setB_{\rho}\mleft(x,\delta\mright)=\mleft\{y\in\setY:\rho(x,y)<\delta\mright\}$. We refer to $m$ as the superregularity dimension and to $b,\delta_0$ as  superregularity constants. 
As in the case of subregularity,   the $m$-dimensional Hausdorff measure  
restricted to an arbitrary   regular set $\setK$ of dimension $m$ in $\reals^d$ 
satisfies  the superregularity condition \eqref{eq:subregularityIa}  with $\setX=\setK$ (see \cite[Section 12]{grlu00}). 
 The upper bound  we obtain allows us to conclude that %, for every   random variable $X$ taking values in $(\setX,\colX)$,     
the $k$-th upper  quantization dimension  $\overline{D}_k(X)$  is  upper-bounded by the superregularity dimension $m$, i.e., 
\begin{align}\label{eq:lowerDk}
\overline{D}_k(X)\leq m\quad\text{for all $k\in(0,\infty)$.}
\end{align} 
Moreover, we show that, if $D_k(X)$ exists and satisfies $D_k(X)=m$, then   %the following upper bound on the upper {$k$-th quantization coefficient}: 
 \begin{align}\label{eq:RQ1}
 \overline{C}_k(X)\leq \Gamma\mleft(1+\frac{m}{k}\mright)\supc^{-\frac{k}{ m}}.
 \end{align}
 
%The result in  \eqref{eq:RQ1} hinges on $D_k(X)=m$. 
 %If  i)  $\mu_X\ll\mu$ with  $\mu$  $\sigma$-finite and  $\rho^{1/k}$-subregular for $k\in(0,\infty)$, ii)
 %$\lVert \mathrm d \mu_X/\mathrm d \mu\rVert_{\infty}^{(\mu)}<\infty$ or $|h_\mu(X)|<\infty$,  and   
 %iii) the assumptions pertaining  to the upper bound are all satisfied with superregularity dimension of $\nu$ equal to the subregularity dimension of $\mu$, then  we establish the desired dimension result  %combining  \eqref{eq:upperDk} and \eqref{eq:lowerDk} yields 
 %$D_k(X)=m$. 

If, in addition to the assumptions pertaining to the  upper bound,  $D_k(X)=m$,  $\setX=\setY\subseteq\reals^d$ is Borel, $\rho(x,y)=\omega^k(x,y)$ for $k\in(0,m)$ with $\omega$ a metric on $\reals^d$ such that 
\begin{align}
\sup_{x,y\,\in\,\setX}\omega(x,y)<\infty,
\end{align}
 and  $\nu\ll\mu_X$ with $\lVert \mathrm d\nu/\mathrm d \mu_X\rVert^{(\mu_X)}_{\infty}<\infty$, then we get the  improved upper bound  
\begin{align}\label{eq:RQ2}
 \overline{C}_k(X)\leq \opE\mleft[\mleft( \frac{\mathrm d \nu}{\mathrm d \mu_X}\mright)^{\frac{k}{m}}(X)\mright]\Gamma\mleft(1+\frac{m}{k}\mright) \supc^{-\frac{k}{ m}}.
 \end{align} 
%The upper bound in  \eqref{eq:RQ2} will be seen to be at least as good as the upper bound in 
%For  $m\geq k$,   the  bound  in  
%\eqref{eq:RQ2} is guaranteed to be at least as good as that  in 
%\eqref{eq:RQ1}. 

 %\begin{table}
 %\centering
%\begin{tabularx}{0.9\linewidth}{ |X|X|X| }
%\toprule
%  & New bounds for general $\setX,\setY,\rho$  & Known bounds\\\midrule  
% Lower bound on $ R_X(D)$: & Theorem \ref{thm:new}& continuous $X$, $\rho^{1/k}$ is a norm   \cite[Section VI]{yatagr80} \\
% \midrule   
%Lower bound on $ \underline{\dim}_\text{R}(X)$:   &Corollary \ref{lem:cordimR} 
%&   self-similar $X$, $\rho^{1/k}$ is a norm  \cite{kade94}\\
%\midrule 
%Lower bound on $ V_n(X)$:   &Theorem \ref{thm:singleshot}
%&   $X$ on  manifold, $\delta_0=\infty$  \cite[Proposition 4.2]{kl12}  \\
%\midrule 
%Lower bounds on $ \underline{D}_k(X)$ and $ \underline{C}_k(X)$:   &Corollary \ref{cor:DC2}
%&   $X$ on  manifold, $\delta_0=\infty$  \cite[Proposition 4.2]{kl12}  \\

%\bottomrule
%\end{tabularx}
% \caption{Beispieltabelle}
%\end{table}

To illustrate the wide applicability of our results, we particularize them   to i) compact manifolds, specifically hyperspheres and Grassmannians, and ii)  self-similar sets generated  by iterated function systems satisfying the weak separation property, with   the middle third Cantor set as a specific (and prominent) example.

\emph{Notation.} 
Sets are designated by calligraphic letters, e.g., $\setA$, with $\abs{\setA}$  denoting  cardinality,    
$\overline{\setA}$  closure, and $\setA^\ell$ the $\ell$-fold cartesian product.    
$\sigma$-algebras are indicated by script letters, e.g., $\colX$,  
and are  assumed to contain all singleton sets.  
Unless stated otherwise, (subsets of) topological spaces, e.g., (subsets of) $\reals^d$, hyperspheres, or Grassmannians   are understood to be equipped with the Borel $\sigma$-algebra   corresponding to the (induced) topology. Measures defined on Borel sets are  assumed to be 
Borel measures.   
 We let $\colX\otimes\colY$ be the product $\sigma$-algebra formed by $\colX$ and $\colY$ and write $\colX^\ell$ for the $\ell$-fold  product $\sigma$-algebra corresponding to  $\colX$. 
%Norms  are denoted by $\lVert\,\cdot\,\rVert$ and semi-norms  by $\lVert\,\cdot\,\rVert_\text{s}$. 
For  $k\in[1,\infty)$,   $\lVert\,\cdot\,\rVert_k$ stands for the  $k$-norm on $\reals^d$.  
Measures are  assumed to be positive. 
For a real-valued and $\mu$-measurable function $f$  and $p\in (0,\infty]$, we let  
\begin{align}
\|f\|^{(\mu)}_{p}&=\mleft(\int\abs{f(x)}^{p}\mathrm d\mu(x)\mright)^\frac{1}{p}\quad \text{if $p<\infty$,}
\end{align}
and set $\|f\|^{(\mu)}_{\infty}=\operatorname{ess\ sup} \abs{f}$.

For a measure space $(\setX,\colX,\mu)$, we write\footnote{The restricted measure $\mu\vert_{\setA}$  is defined on $\setX$ equipped with the trace $\sigma$-algebra $\colX_\setA:=\{\setZ\cap\setA:\setZ\in\colX\}$. It induces a measure on $(\setA,\colX_\setA)$ as well, for which we shall also use the symbol $\mu\vert_{\setA}$.} $\mu\vert_{\setA}$ for the restriction of $\mu$ to $\setA \in \colX$ and 
$f_\ast(\mu)$ for the pushforward measure corresponding to the  measurable mapping $f$ from $(\setX,\colX,\mu)$ into any measurable space  $(\setY,\colY)$. 
Throughout, random variables are indicated by  capital letters, e.g., $X$,   and we write $\mu_X$ for the corresponding distribution.  The support of the Borel measure $\mu$ is  denoted by   $\operatorname{supp}(\mu)$ \cite[Definition 1.64]{amfupa00}. 
For  measures $\mu$ and $\nu$,   $\mu\otimes\nu$ designates the corresponding product measure. 
For $\mu$ and $\sigma$-finite $\nu$ defined on the same measurable space with 
$\mu$ absolutely  continuous with respect to $\nu$, expressed by  $\mu\ll\nu$,  we write  $\mathrm d\mu/\mathrm d \nu$ for  the Radon--Nikod\'ym derivative of $\mu$ with respect to $\nu$.  
%With $f\colon $
We denote  the $m$-dimensional Hausdorff measure  by $\colH^m$ \cite[Definition 2.46]{amfupa00}, write  $\dim_\text{H}$ for the Hausdorff dimension \cite[Definition 2.51]{amfupa00}, and let  $\colL^m$ be the $m$-dimensional Lebesgue measure. 
The $\colH^m$-measurable set $\setA\subseteq\reals^n$ is   said to be  $\colH^m$-rectifiable  \cite[Definition 2.57]{amfupa00} if $\colH^m(\setA)<\infty$  and there exist  Lipschitz mappings $f_i\colon\reals^m\to\reals^n$, $i\in\naturals$,  such that 
\begin{align}
\colH^m\mleft(\setA\setminus\bigcup_{i\in\naturals}f_i(\reals^m)\mright)=0. 
\end{align}  
%Random variables, i.e.,   measurable mappings  
%with domain a probability space   $(\setX,\colX,\mu)$ and  taking values in a  general  measurable spaces $(\setX,\colX)$, are   
$\opE[\,\cdot\,]$ denotes  the expectation operator. 
For  $X$ taking  values in the  $\sigma$-finite measure space $(\setX,\colX,\mu)$ with $\mu_X\ll\mu$,  we define the generalized entropy  as 
\begin{align}\label{eq:entropy}
h_\mu(X)
&=-\opE\mleft[\log\mleft( \frac{\mathrm d\mu_X}{\mathrm d\mu}(X)\mright)\mright].  
\end{align}
For $X$ taking values in  $(\setX,\colX)$  and $Y$ taking values in $(\setY,\colY)$, the mutual information between $X$ and $Y$ is 
\begin{align}\label{eq:I}
I(X;Y)=\opE\mleft[\log \mleft(\frac{\mathrm d\mu_{(X,Y)}}{\mathrm d(\mu_X\otimes\mu_Y)}(X,Y)\mright)\mright]
\end{align}
if $\mu_{(X,Y)}\ll\mu_X\otimes\mu_Y$, and $I(X;Y)=\infty$ else.  
For given distortion function $\rho\colon \setX\times\setY\to[0,\infty]$, we define the open $\rho$-balls of radius $\delta\in(0,\infty)$ centered at  $y\in\setY$ and  $x\in\setX$  according to
%\footnote{If $\rho(x,y)=\lVert x-y\rVert$, then  we simply write $\setB_{\lVert\,\cdot\rVert}(\cdot,\cdot)$ instead of $\setB_{\rho}(\cdot,\cdot)$ and $\widetilde\setB_{\lVert\,\cdot\rVert}(\cdot,\cdot)$ instead of $\widetilde\setB_{\rho}(\cdot,\cdot)$.} 
\begin{align} \label{eq:Ball1}
\setB_{\rho}\mleft(y,\delta\mright)=\mleft\{x\in\setX:\rho(x,y)<\delta\mright\}
\end{align}
and
\begin{align} \label{eq:Ball2}
\widetilde\setB_{\rho}\mleft(x,\delta\mright)=\mleft\{y\in\setY:\rho(x,y)<\delta\mright\},
\end{align}
respectively. 
For $a\in(0,\infty)$, the gamma function is given by $\Gamma(a)=\int_0^\infty t^{a-1}e^{-t}\,\mathrm d t$.  
For $a\in(0,\infty)$ and $s\in[0,\infty)$, the lower and upper incomplete gamma functions  are $\gamma(a,s)=\int_0^s t^{a-1}e^{-t}\,\mathrm d t$ and  
$\Gamma(a,s)=\int_s^\infty t^{a-1}e^{-t}\,\mathrm d t$, respectively.  
For $a,b\in(0,\infty)$, the beta function is given by $B(a,b)=\int_0^1 u^{a-1}(1-u)^{b-1} \,\mathrm d u$ and satisfies the identity  \cite[Theorem 1.1.4]{anasro99} 
\begin{align}\label{eq:gammabeta}
B(a,b)=\frac{\Gamma(a)\Gamma(b)}{\Gamma(a+b)}.
\end{align}
For $a,b\in(0,\infty)$ and  $s\in(0,1]$, the incomplete beta function is defined as 
$B_{a,b}(s)=\int_0^su^{a-1}(1-u)^{b-1}\, \mathrm d u$ and  
\begin{align}\label{eq:normalizedI}
I_{a,b}(s)=\frac{B_{a,b}(s)}{B(a,b)}
\end{align}
denotes the normalized  incomplete beta function.

For  $a\in \reals$, we let  $\lfloor a \rfloor$ be  the largest integer less than or equal to $a$.   
 $\log $ denotes the  logarithm   to  base $e$  and we set  $\sinc(x)=\sin(\pi x)/(\pi x)$ for all $x\in(-\infty,\infty)\!\setminus\!\{0\}$ with $\sinc(0):=1$ by  continuous extension. 
 We write $\ind{A}(\cdot)$ for the indicator function on the set $\setA$. 
%The Lebesgue measure of the unit ball in $\reals^d$ with respect to the norm $\lVert\,\cdot\,\rVert$ is denoted by  $v_d(\lVert\,\cdot\,\rVert)$. 
For  $r\in(0,\infty)$ and  $d\in\naturals$ with $d>1$,  we set $\setS^{d-1}(r)=\{x\in\reals^d:\lVert x\rVert_2=r\}$ with corresponding surface area
\begin{align}\label{eq:area}
a^{(d-1)}(r):= \colH^{d-1}(\setS^{d-1}(r))=\frac{2\pi^{\frac{d}{2}}}{\Gamma\mleft(\frac{d}{2}\mright)}r^{d-1}. 
\end{align}
For  $d\in\naturals$ and $\lVert\,\cdot\,\rVert$ a norm on $\reals^d$, we write $\kappa_d({\lVert\,\cdot\,\rVert})$  for the volume of the corresponding unit ball in $\reals^d$ and we set
\begin{align}\label{eq:lebball}
 v^{(d)}(r)=\kappa_d({\lVert\,\cdot\,\rVert_2}) r^d= \frac{\pi^\frac{d}{2} r^d}{\Gamma\mleft(1+\frac{d}{2}\mright)}. 
\end{align}
We use the convention  $0\cdot\infty=0$.  
%\end{nota}

\section{The Regularity Conditions} \label{sec:sub}
We start by stating  the formal definitions of subregularity, superregularity, and regularity of measures. 

\begin{dfn}\label{def:subreg}
Let $(\setX,\colX)$ and   $(\setY,\colY)$ be measurable  spaces  equipped with the distortion function  $\rho\colon \setX\times\setY\to[0,\infty]$.   
\begin{enumerate}[(iii)]%[itemsep=1ex,topsep=1ex]
\item\label{item:sub1} A  measure $\mu$ on $(\setX,\colX)$ is said to be $\rho$-subregular of dimension $m\in[0,\infty)$ if there  exist subregularity 
constants $c\in(0,\infty)$  and $\delta_0\in(0,\infty]$  such that  
\begin{align}\label{eq:subregularity}
\mu\mleft(\setB_{\rho}\mleft(y,\delta\mright)\mright)\leq c\delta^m\quad\text{for all $y\in\setY$ and  $\delta\in (0,\delta_0)$}.
\end{align}
\item \label{item:sub2} A  measure $\nu$ on $(\setY,\colY)$  is said to be $\rho$-superregular of dimension $m\in[0,\infty)$ if there exist 
superregularity constants $\supc\in(0,\infty)$ and $\delta_0\in(0,\infty]$  such that  
\begin{align}\label{eq:superregularity}
\nu\mleft(\widetilde \setB_{\rho}\mleft(x,\delta\mright)\mright)\geq  \supc \delta^m\quad\text{for all $x\in\setX$ and  $\delta\in (0,\delta_0)$}.
\end{align}
\item \label{item:sub3}  A measure $\nu$ on $(\setY,\colY)$ is said to be {$\rho$-regular of dimension $m\in[0,\infty)$} if there exist regularity
constants $\supc,c\in(0,\infty)$ and $\delta_0\in(0,\infty]$  such that  
\begin{align}\label{eq:regularity}
\supc \delta^m
\leq \nu\mleft(\widetilde \setB_{\rho}\mleft(x,\delta\mright)\mright)
\leq \subc\delta^m\quad\!\text{for all $x\in\setX$ and $\delta\in (0,\delta_0)$}. 
\end{align} 
\end{enumerate}
\end{dfn} 

If $\nu$ is $\rho$-regular, then it is also $\rho$-superregular with the same  constants $b$ and $\delta_0$. For $\setX=\setY$ and $\rho$  symmetric, i.e., $\rho(x,y)=\rho(y,x)$ for all $x,y\in\setX$, $\nu$ is $\rho$-regular if and only if it is  both $\rho$-subregular and $\rho$-superregular. %with the same  constants $b,c$, and $\delta_0$. 
%For  $\setX\subseteq\setY$, this is also the case provided that $\mu$ is finite for $\delta_0=\infty$: 
%\begin{lem}
%Let $(\setY,\colY,\mu)$ be a measure  space and consider the symmetric distortion function $\rho:\colon \setY\times\setY\to[0,\infty]$. Further,  let $\setX\in\colY$ and equip $(\setX,\colY|_\setX)$, $(\setY,\colY)$  with the distortion function $\tilde \rho=\rho|_{\setX\times\setY}$. 
%Finally, suppose that  $\mu(\setX)<\infty$ if $\delta_0=\infty$. If $\mu$ is $\rho$-regular, then it is also $\rho$-subregular and $\rho$-superregular. 
%\end{lem}
%\begin{proof}
%Superregularity is obvious. If $\delta_0<\infty$, then, by assumption, we have $\mu(\setX)=1$, which implies $\mu(\setB_{\tilde \rho}(x,\delta))\leq(\setX)=1 \leq \delta^{-m}_0\delta^m$ for all $\delta\in[\delta_0,\infty)$.  
%\end{proof}
For $\nu$  a Borel measure on $\reals^d$ with $\setX=\operatorname{supp}(\nu)$ compact 
and  $\rho\colon \setX\times \reals^d\to [0,\infty)$,  $\rho(x,y)=\lVert x-y\rVert$, where $\lVert\,\cdot\, \rVert$ is a norm on $\reals^d$, $\rho$-regularity of dimension $m$ agrees with regularity of dimension $m$ as defined in \cite[Definition 12.1]{grlu00}. In this case, $\rho$-regularity also implies $\rho$-subregularity provided that $\nu$ is a finite measure \cite[Lemma 12.3]{grlu00}. 

%Note that every 
%\color{blue}

%We have the following implications. 
%\begin{lem}
%Let $(\setX,\colX)$ be a measurable space, let   $(\setY,\colY,\mu)$ be a measure  space,, and consider the    distortion function  $\rho\colon \setX\times\setY\to[0,\infty]$. 
%Suppose that  $\mu$ is  $\rho$-regular  of dimension $m\in[0,\infty)$  with regularity
%constants $\supc,c\in(0,\infty)$ and $\delta_0\in(0,\infty]$ . Then, the following properties hold. 
%\begin{enumerate}
%\item $\mu$ is $\rho$-superregular on $(\setY,\colY)$. 
%\item $\mu$ is $\rho$-subregular on $(\setY,\colY)$ provided that $\rho$ is symmetric and  
%\end{enumerate}
%\end{lem}

%\color{black}
 We next give  some examples of $\rho$-regular   measures for  $\setX\subseteq \reals^d$ and $\rho\colon \setX\times \reals^d\to [0,\infty)$, 
 $\rho(x,y)=\lVert x-y\rVert$,    where $\lVert\,\cdot\, \rVert$ is a norm on $\reals^d$: 
\begin{enumerate}[(iii)]%[itemsep=1ex,topsep=1ex]
\item \label{item:HD1}Lebesgue measure on $\setX=\reals^d$ satisfies \eqref{eq:regularity} with  $m=d$, $\supc=\subc$, and $\delta_0=\infty$.    
\item \label{item:HD2}Measures supported on discrete sets  $\setX\subseteq \reals^d$  satisfy \eqref{eq:regularity}  for $m=0$.
\item \label{item:HD3}The restricted Hausdorff measure $\mu=\colH^d|_\setX$ satisfies $0<\colH^d(\setX)<\infty$ and  \eqref{eq:regularity} with  $m=d$ 
for the following classes of  sets $\setX$: 
compact convex sets $\setX\subseteq\reals^d$ with $\operatorname{span}(\setX)=\reals^d$  \cite[Example 12.7]{grlu00}, surfaces of compact convex sets $\setX\subseteq\reals^{d+1}$ with $\operatorname{span}(\setX)=\reals^{d+1}$ \cite[Example 12.8]{grlu00}, 
$d$-dimensional compact $C^1$-submanifolds of $\reals^n$ \cite[Example 12.9]{grlu00},  
self-similar sets of similarity dimension $d$ satisfying the weak separation property \cite[Theorem 2.1]{frheolro15}, and finite unions 
of any such sets \cite[Lemma 12.4]{grlu00}. 
\end{enumerate}

 In all these  examples $\setX\subseteq \setY=\reals^d$,  $\rho(x,y)=\lVert x-y\rVert$, where $\lVert\,\cdot\,\rVert$ is a norm on $\reals^d$, and 
 the regularity dimension of the measure equals the Hausdorff dimension of  $\setX$  (for Items \ref{item:HD1} and \ref{item:HD2}, this  follows from  arguments in \cite[Section 3.2]{fa14};   for \ref{item:HD3} it is a consequence of the assumption $0<\colH^d(\setX)<\infty$). %. geometric dimension (i.e., Hausdorff dimension or similarity dimension)
%For measures that are  subregular or superregular,  the sets $\setX$ and $\setY$ may differ from each other, there is no need for a common ambient space, 
Note that  Definition \ref{def:subreg} is much more general as it does  not require $\setX\subseteq\setY$ and the distortion function need not be symmetric. 
The following simple example illustrates this aspect for  $\setX$ a  Grassmannian  and $\setY$ a hypersphere (the more complicated case where $\setX$ and $\setY$ are both Grassmannians is investigated in Section \ref{sec:grass}).
%For illustrational purpose, the following simple example constructs a subregular measure in the case where  and $\setY$ are two different manifolds, namely, the 

\begin{exa}\label{exa:subint}
Let $p,d\in\naturals$ with $p<d$ and denote by 
$\setG^\reals(p,d)$ the $p(d-p)$-dimensional   Grassmannian consisting of all $p$-dimensional subspaces of $\reals^d$  \cite[Section 1.3.2]{ch03}. For every $x\in\setG^\reals(p,d)$,  let %$\operatorname{P}_x$ denote the orthogonal projection onto $x$ and 
$\operatorname{P}^\bot_x$ denote the orthogonal projection onto the orthogonal complement of $x$ in $\reals^d$. 
Let $\gamma_{p,d}$ be the unique uniformly distributed Borel regular measure on $\setG^\reals(p,d)$ with $\gamma_{p,d}(\setG^\reals(p,d))=1$ \cite[p. 49]{ma99}. Fix  $r\in(0,\infty)$ and consider  
 the distortion function $\rho\colon \setG^\reals(p,d)\times \setS^{d-1}(r)\to [0,\infty)$ defined according to  
\begin{align}
\rho(x,y)=\|\operatorname{P}^\bot_x(y)\|_2\quad\text{for $x\in \setG^\reals(p,d)$ and $y\in \setS^{d-1}(r)$.}
\end{align}
It follows that \cite[Proof of Lemma 3.11]{ma99}
\begin{align}
\gamma_{p,d}\mleft(\setB_\rho(y,\delta)\mright)= g_{p,d,r}(\delta)\quad\text{for all $y\in\setS^{d-1}(r)$ and $\delta\in(0,\infty)$,}
\end{align}
where 
\begin{align}
g_{p,d,r}(\delta)=\frac{\colH^{d-1}\mleft\{y\in\setS^{d-1}(1):\sum_{i=p+1}^dy_i^2< (\delta/r)^2\mright\}}{\colH^{d-1}(\setS^{d-1}(1))}
\end{align}
with $y_i$ denoting the $i$-th entry of $y=(y_1, \dots ,y_d)$. 
The function  $g_{p,d,r}(\delta)$ can be upper-bounded as follows:
\begin{align}
g_{p,d,r}(\delta)
&=\colL^{d} \mleft\{y\in\reals^d:\lVert y \rVert_2\leq 1, \sum_{i=p+1}^dy_i^2 <(\delta/r)^2 \mright\}\frac{\Gamma\mleft(1+\frac{d}{2}\mright)}{\pi^\frac{d}{2}}\label{eq:Lemhaus}\\
&\leq  \colL^{d} \mleft\{y\in\reals^d: \sum_{i=1}^py_i^2\leq 1, \sum_{i=p+1}^dy_i^2< (\delta/r)^2 \mright\}\frac{\Gamma\mleft(1+\frac{d}{2}\mright)}{\pi^\frac{d}{2}}\\ 
&=\frac{\Gamma\mleft(1+\frac{d}{2}\mright)}{r^{\,d-p}\Gamma\mleft(1+\frac{p}{2}\mright)\Gamma\mleft(1+\frac{d-p}{2}\mright)}\delta^{d-p}\label{eq:finalexama99}\\
&=\frac{2}{p\, r^{\,d-p} B(p/2,1+(d-p)/2)}\delta^{d-p}\quad\text{for all $\delta\in(0,\infty)$},\label{eq:usebeta}
\end{align}
where \eqref{eq:Lemhaus} is by \cite[Equation (3.6)]{ma99}, in \eqref{eq:finalexama99} we  used \eqref{eq:lebball}, 
and \eqref{eq:usebeta} follows from \eqref{eq:gammabeta} and $\Gamma(1+p/2)=(p/2)\Gamma(p/2)$. 
We can hence conclude that $\gamma_{p,d}$ is $\rho$-subregular of dimension $d-p$. %, which for $p>1$ is strictly smaller than the geometric  dimension $p(d-p)$ of $\setG^\reals(p,d)$.  
The corresponding  subregularity constants are 
\begin{align}
c_{p,\,d,r}=\frac{2}{p\, r^{d-p} B(p/2,1+(d-p)/2)} 
\end{align}
and $\delta_0=\infty$. 
\end{exa}
%The intuition for  $\rho$ being $\rho$-subregular of dimension $d-p$ in Example \ref{exa:subint} is the following: 

We continue by listing properties of subregularity and superregularity needed throughout the paper. 
The  first result constitutes an extension of  \eqref{eq:subregularity}--\eqref{eq:regularity} to  the half-closed interval $  (0,\delta_0]$ 
applicable whenever  $\delta_0<\infty$.  
\begin{lem}\label{lem:extsub}
Let $(\setX,\colX)$ and   $(\setY,\colY)$ be measurable  spaces  equipped with the distortion function  $\rho\colon \setX\times\setY\to[0,\infty]$.  
Then, 
the following properties hold: 
\begin{enumerate}[(iii)]%[itemsep=1ex,topsep=1ex]
\item \label{item:subregularity} If $\mu$ satisfies \eqref{eq:subregularity} for $\delta_0<\infty$, then   
\begin{align}
\mu\mleft(\setB_{\rho}\mleft(y,\delta\mright)\mright)\leq c\delta^m\quad\text{ for all $y\in\setY$ and  $\delta\in (0,\delta_0]$.}
\end{align}
\item \label{item:superregularity}
If $\nu$ satisfies \eqref{eq:superregularity} for $\delta_0<\infty$, then   
\begin{align}
\nu\mleft(\widetilde\setB_{\rho}\mleft(x,\delta\mright)\mright)\geq b\delta^m\quad\text{ for all $x\in\setX$ and  $\delta\in (0,\delta_0]$.}
\end{align}
\item \label{item:regularity}
 If $\mu$ satisfies \eqref{eq:regularity} for $\delta_0<\infty$, then 
\begin{align} 
   b\delta^m\leq\mu\mleft(\widetilde \setB_{\rho}\mleft(x,\delta\mright)\mright)\leq c\delta^m \quad\text{for all $x\in\setX$ and  $\delta\in (0,\delta_0]$.}
\end{align}   
\end{enumerate}
\end{lem}
\begin{proof}
To establish Item \ref{item:subregularity}, suppose that  \eqref{eq:subregularity} holds with $\rho_0<\infty$. Then, we have  
\begin{align}
\mu\mleft(\setB_{\rho}\mleft(y,\delta_0 \mright)\mright)
&=
\mu\mleft(\bigcup_{n=\lceil1/\rho_0 \rceil+1}^\infty\setB_{\rho}\mleft(y,\delta_0-\frac{1}{n} \mright)\mright)\\
&=\lim_{n\to\infty}\mu\mleft(\setB_{\rho}\mleft(y,\delta_0-\frac{1}{n} \mright)\mright)\label{eq:useBartle}\\
&\leq c\lim_{n\to\infty} \mleft(\delta_0-\frac{1}{n}\mright)^m\label{eq:usesub}\\ 
&=c\delta_0^m,
\end{align}
where \eqref{eq:useBartle} follows from \cite[Lemma 3.4, Item (a)]{ba95}  and \eqref{eq:usesub} is by \eqref{eq:subregularity} with $0<\delta=\delta_0-1/n<\delta_0$ for all $n\geq \lceil1/\rho_0 \rceil+1$. 
The proofs of Items \ref{item:superregularity} and \ref{item:regularity} 
follow along the same lines.
%, and  Item \ref{item:regularity} is a consequence of  Items \ref{item:subregularity} and \ref{item:superregularity} for $\setY=\setX$ and the symmetry of $\rho$. 
\end{proof} 

Next, we establish  scaling properties of  subregular and superregular measures. 

\begin{lem}\label{lem:simpilysub}
Let $(\setX,\colX)$ and $(\setY,\colY)$ be   measurable spaces equipped with the distortion function   $\rho\colon \setX\times\setY\to[0,\infty]$ and fix $k,\alpha, \beta\in (0,\infty)$.   
Then, for a measure $\mu$ on  $(\setX,\colX)$, the following statements  are equivalent: 
\begin{enumerate}[(ii)]%[itemsep=1ex,topsep=1ex]
\item $\mu$ is $\rho^{1/k}$-subregular of dimension $m$ with  subregularity  constants $c\in(0,\infty)$ and $\delta_0\in(0,\infty]$. \label{property0sub}
\item $\beta \mu$ is 
$(\alpha\rho)$-subregular of dimension $m/k$  with subregularity  constants  $\tilde c=\beta c/\alpha^{m/k}$ and $\tilde \delta_0 =\alpha \delta_0^{k}$. \label{property1sub}
\end{enumerate}
Similarly, for a measure $\nu$ on  $(\setY,\colY)$, the following statements are equivalent:
\renewcommand{\theenumi}{(\alph{enumi})}
\begin{enumerate}[(a)]%[itemsep=1ex,topsep=1ex]
\item $\nu$ is $\rho^{1/k}$-superregular of dimension $m$ with  superregularity  constants $\supc\in(0,\infty)$ and $\delta_0\in(0,\infty]$. \label{property0sup}
\item $\beta \nu$ is 
$(\alpha\rho)$-superregular of dimension $m/k$  with superregularity  constants  $\tilde \supc=\beta \supc/\alpha^{m/k}$ and $\tilde \delta_0 =\alpha \delta_0^{k}$. \label{property1sup}
\end{enumerate}
\renewcommand{\theenumi}{(\roman{enumi})}

\end{lem}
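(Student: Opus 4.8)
The plan is to reduce both equivalences to a trivial change of variables in the radius, since the two regularity conditions are, after rewriting, literally the same inequality. First I would record the two elementary set identities
$\setB_{\rho^{1/k}}(y,\delta)=\{x\in\setX:\rho(x,y)<\delta^k\}=\setB_{\rho}(y,\delta^k)$
and
$\setB_{\alpha\rho}(y,\delta)=\{x\in\setX:\rho(x,y)<\delta/\alpha\}=\setB_{\rho}(y,\delta/\alpha)$,
which hold for every $y\in\setY$ and every $\delta\in(0,\infty)$ directly from the definition \eqref{eq:Ball1}. Using these, Item \ref{property0sub} becomes: $\mu(\setB_{\rho}(y,\delta^k))\leq c\delta^m$ for all $y\in\setY$ and all $\delta\in(0,\delta_0)$, while Item \ref{property1sub} becomes: $\beta\mu(\setB_{\rho}(y,\delta'/\alpha))\leq (\beta c/\alpha^{m/k})(\delta')^{m/k}$ for all $y\in\setY$ and all $\delta'\in(0,\alpha\delta_0^k)$.

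Next I would observe that $\delta'=\alpha\delta^k$ defines a strictly increasing bijection from $(0,\delta_0)$ onto $(0,\alpha\delta_0^k)$, with inverse $\delta=(\delta'/\alpha)^{1/k}$, and that under this correspondence $\delta'/\alpha=\delta^k$ and $(\delta')^{m/k}=\alpha^{m/k}\delta^m$. Substituting, the reformulation of \ref{property1sub} reads $\beta\mu(\setB_{\rho}(y,\delta^k))\leq (\beta c/\alpha^{m/k})\alpha^{m/k}\delta^m=\beta c\delta^m$, i.e., after dividing by $\beta>0$, exactly the reformulation of \ref{property0sub}. Since the substitution is a bijection, both implications follow at once, giving the equivalence of \ref{property0sub} and \ref{property1sub}. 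The equivalence of \ref{property0sup} and \ref{property1sup} is then obtained verbatim by replacing $\setB_{\rho}$ with $\widetilde\setB_{\rho}$, quantifying over $x\in\setX$ instead of $y\in\setY$, replacing $\mu,c$ by $\nu,\supc$, and reversing each inequality; nothing in the argument depends on the direction of the inequality.

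There is no genuine obstacle here: the work is purely bookkeeping, namely tracking how the radius ($\delta\mapsto\delta^k$, followed by absorbing the factor $\alpha$), the dimension exponent ($m\mapsto m/k$), the multiplicative constant ($c\mapsto\beta c/\alpha^{m/k}$), and the threshold ($\delta_0\mapsto\alpha\delta_0^k$) transform together. The only points worth stating explicitly are that $k,\alpha,\beta>0$ guarantees all the maps involved are well defined and strictly monotone, so no degenerate case arises, and that the set identities above hold irrespective of whether $\rho$ takes the value $0$ or $\infty$, so the measures of the corresponding balls agree on the nose and the equivalences are two-directional rather than mere implications.
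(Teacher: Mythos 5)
Your proof is correct and is exactly the change-of-variables verification that the paper compresses into ``Follows directly from Definition~\ref{def:subreg}''; the set identities, the bijection $\delta'=\alpha\delta^k$ between $(0,\delta_0)$ and $(0,\alpha\delta_0^k)$, and the tracking of the constant $\beta c/\alpha^{m/k}$ are all handled correctly, including the $\delta_0=\infty$ case.
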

\begin{proof}
Follows directly from Definition \ref{def:subreg}.
\end{proof}
By Lemma \ref{lem:simpilysub}, applied with $k=\alpha=1$, 
we can hence assume, without loss of generality, that finite subregular measures $\mu$ on  $(\setX,\colX)$  and finite superregular measures $\nu$ on $(\setY,\colY)$ are 
normalized according to  $\mu(\setX)=1$ and $\nu(\setY)=1$, respectively.  

Next, we show that if  $\mu$ is subregular with subregularity constants  $c,\delta_0\in(0,\infty)$   and $\mu(\setX)=1$, then $c$ can be modified to make the subregularity condition \eqref{eq:subregularity} hold  for $\delta_0=\infty$. 
The corresponding formal statement is as follows.   

\begin{lem}\label{lem:rho}
Let $(\setX,\colX,\mu)$ be a finite measure space with $\mu(\setX)=1$, let  $(\setY,\colY)$ be a measurable space, and consider the   
distortion function  $\rho\colon \setX\times\setY\to[0,\infty]$. 
If $\mu$ is $\rho$-subregular of dimension $m$ with  subregularity constants $c,\delta_0\in(0,\infty)$,  
then 
$\mu$ is also $\rho$-subregular of dimension $m$ with  subregularity constants $\tilde c=\max(c,\delta_0^{-m})$ and $\tilde \delta_0=\infty$, i.e., 
\begin{align}\label{eq:global}
\mu\mleft(\setB_{\rho}\mleft(y,\delta\mright)\mright)\leq 
\tilde c\delta^m  \quad\text{for all $y\in\setY$ and  $\delta \in  (0, \infty)$.} 
\end{align}
\end{lem}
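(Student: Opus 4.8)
The plan is to split the range of the radius $\delta$ at the threshold $\delta_0$: for $\delta<\delta_0$ the claimed bound follows directly from the hypothesis, while for $\delta\ge\delta_0$ it follows from monotonicity of $\mu$ together with the normalization $\mu(\setX)=1$. Concretely, for $\delta\in(0,\delta_0)$ the $\rho$-subregularity of $\mu$ in the form \eqref{eq:subregularity} gives $\mu(\setB_{\rho}(y,\delta))\le c\delta^m$ for all $y\in\setY$, and since $\tilde c=\max(c,\delta_0^{-m})\ge c$ this already yields $\mu(\setB_{\rho}(y,\delta))\le\tilde c\delta^m$.

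For $\delta\in[\delta_0,\infty)$, I would use that $\setB_{\rho}(y,\delta)\subseteq\setX$ together with $\mu(\setX)=1$ to get $\mu(\setB_{\rho}(y,\delta))\le 1$. Since $\delta\ge\delta_0$ implies $1\le(\delta/\delta_0)^m=\delta_0^{-m}\delta^m$, and $\tilde c\ge\delta_0^{-m}$, we conclude $\mu(\setB_{\rho}(y,\delta))\le 1\le\delta_0^{-m}\delta^m\le\tilde c\delta^m$ for all $y\in\setY$. Combining the two cases establishes \eqref{eq:global}.

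As for the main difficulty: there is essentially none — the statement is a direct consequence of monotonicity of measures and the observation that a bound of the form $c\delta^m$ can fail for a normalized measure only when $\delta$ is large, which is repaired by inflating the constant to $\max(c,\delta_0^{-m})$ so that the trivial bound $\mu(\setX)=1$ is absorbed at $\delta=\delta_0$. The only points that warrant a moment's care are the boundary radius $\delta=\delta_0$ itself, which is not covered by the open-interval hypothesis \eqref{eq:subregularity} but is handled by the ``$\delta\ge\delta_0$'' case above (or, alternatively, by Lemma \ref{lem:extsub}\ref{item:subregularity}), and the degenerate case $m=0$, where $\delta^m\equiv 1$ and the asserted inequality reduces to $\mu(\setX)=1\le\tilde c$, which holds since $\tilde c\ge\delta_0^{\,0}=1$.
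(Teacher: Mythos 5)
Your proof is correct and follows essentially the same two-case argument as the paper: use \eqref{eq:subregularity} for $\delta<\delta_0$ and the trivial bound $\mu(\setB_{\rho}(y,\delta))\leq\mu(\setX)=1\leq\delta_0^{-m}\delta^m$ for $\delta\geq\delta_0$. The extra remarks on the boundary radius and on $m=0$ are fine but not needed, since the second case already covers $\delta=\delta_0$ and the argument is uniform in $m\in[0,\infty)$.
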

\begin{proof}
Let $y\in\setY$ and $\delta\in(0,\infty)$ be arbitrary but fixed. 
If $\delta<\delta_0$, then \eqref{eq:global} follows directly from \eqref{eq:subregularity}. 
For $\delta\geq \delta_0$,  \eqref{eq:global} holds by 
$\mu\big(\setB_{\rho}(y,\delta)\big)\leq \mu(\setX) =1 \leq  \delta_0^{-m}\delta^m$. 
\end{proof}

In light of Lemma \ref{lem:rho}, if  $\mu$ is $\rho$-subregular of dimension $m$ with subregularity constants  $c,\delta_0\in(0,\infty)$ satisfying $c\geq \delta_0^{-m}$  and  if $\mu(\setX)=1$,  then the subregularity condition \eqref{eq:subregularity} holds  for $\delta_0=\infty$ with $c$ unchanged.

The following result allows to infer subregularity of $\tilde \mu\ll\mu$ from subregularity of $\mu$. 
\begin{lem}\label{lem:transformsub}
Let $(\setX,\colX,\mu)$ be a  measure space, let  $(\setY,\colY)$ be a measurable space, and consider the   
distortion function  $\rho\colon \setX\times\setY\to[0,\infty]$. 
Suppose that  $\mu$ is $\rho$-subregular of dimension $m$ with  subregularity constants $c\in(0,\infty)$ and $\delta_0\in(0,\infty]$. If $\tilde \mu\ll \mu$ 
and  $\lVert\mathrm d \tilde \mu/\mathrm d \mu\rVert^{(\mu)}_{p/(p-1)}<\infty$ with $p\in[1,\infty)$, then $\tilde \mu$ is  $\rho$-subregular of dimension $m/p$ with subregularity constants 
$\lVert\mathrm d \tilde \mu/\mathrm d \mu\rVert^{(\mu)}_{p/(p-1)} c^{1/p}$   and $\delta_0$.
\end{lem}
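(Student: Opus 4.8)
The plan is to bound $\nu(\setB_{\rho}(y,\delta))$ for an arbitrary $y\in\setY$ and $\delta\in(0,\delta_0)$ by combining the defining property of the Radon--Nikod\'ym derivative $f=\mathrm d\nu/\mathrm d\mu$ with H\"older's inequality, and then to insert the subregularity estimate for $\mu$. First I would use that $\nu\ll\mu$ to write
\begin{align}
\nu\mleft(\setB_{\rho}(y,\delta)\mright)=\int_{\setB_{\rho}(y,\delta)} f(x)\,\mathrm d\mu(x).
\end{align}

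Next I would apply H\"older's inequality to $f$ and the indicator function of $\setB_{\rho}(y,\delta)$ with the conjugate exponents $p/(p-1)$ and $p$ (read as $p/(p-1)=\infty$ when $p=1$, in which case this is just the trivial $L^\infty$--$L^1$ bound), which yields
\begin{align}
\int_{\setB_{\rho}(y,\delta)} f(x)\,\mathrm d\mu(x)\leq\lVert f\rVert^{(\mu)}_{p/(p-1)}\,\mleft(\mu\mleft(\setB_{\rho}(y,\delta)\mright)\mright)^{1/p},
\end{align}
the right-hand side being finite by the hypothesis $\lVert f\rVert^{(\mu)}_{p/(p-1)}<\infty$.

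Finally I would invoke the $\rho$-subregularity of $\mu$, i.e., $\mu(\setB_{\rho}(y,\delta))\leq c\delta^m$ for all $y\in\setY$ and $\delta\in(0,\delta_0)$, to obtain
\begin{align}
\nu\mleft(\setB_{\rho}(y,\delta)\mright)\leq\lVert f\rVert^{(\mu)}_{p/(p-1)}\,c^{1/p}\,\delta^{m/p}\quad\text{for all $y\in\setY$ and $\delta\in(0,\delta_0)$,}
\end{align}
which is precisely the assertion that $\nu$ is $\rho$-subregular of dimension $m/p$ with subregularity constants $\lVert\mathrm d\nu/\mathrm d\mu\rVert^{(\mu)}_{p/(p-1)}c^{1/p}$ and $\delta_0$. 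I do not expect any genuine obstacle: the argument is essentially a one-line application of H\"older's inequality followed by the definition of subregularity. The only minor points worth a line of care are the degenerate exponent $p=1$ and the (harmless) observation that the indicator of $\setB_{\rho}(y,\delta)$ lies in $L^p(\mu)$ because subregular measures are finite on $\rho$-balls, so that H\"older's inequality indeed applies over the full space $(\setX,\colX,\mu)$.
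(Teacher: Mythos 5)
Your argument is correct and is exactly the paper's proof: the paper also writes $\nu(\setB_\rho(y,\delta))$ as the $\mu$-integral of $\mathrm d\nu/\mathrm d\mu$ against the indicator of the ball, applies H\"older's inequality with exponents $p$ and $p/(p-1)$, and then inserts the subregularity bound $\mu(\setB_\rho(y,\delta))\leq c\delta^m$. Nothing is missing.
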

\begin{proof}
We have 
\begin{align}
\tilde \mu\mleft(\setB_{\rho}\mleft(y,\delta\mright)\mright)
&=\mleft\lVert \frac{\mathrm d \tilde \mu}{\mathrm d \mu}\ \ind{\setB_{\rho}\mleft(y,\delta\mright)} \mright\rVert_1^{(\mu)}\\
&\leq \mleft\lVert  \ind{\setB_{\rho}\mleft(y,\delta\mright)} \mright\rVert_p^{(\mu)}    \mleft\lVert   \frac{\mathrm d \tilde \mu}{\mathrm d \mu}\mright\rVert_{p/(p-1)}^{(\mu)}\label{eq:useHoellder}\\
&=   \mleft(\mu\mleft(\setB_{\rho}\mleft(y,\delta\mright)\mright)\mright)^{\frac{1}{p}}  \mleft\lVert   \frac{\mathrm d \tilde \mu}{\mathrm d \mu}\mright\rVert_{p/(p-1)}^{(\mu)} \\
&\leq c^\frac{1}{p}\mleft\lVert   \frac{\mathrm d \tilde \mu}{\mathrm d \mu}\mright\rVert_{p/(p-1)}^{(\mu)} \delta^{\frac{m}{p}}\quad\text{for all $y\in\setY$ and $\delta\in(0,\delta_0)$,}\label{eq:usesubreeg}
\end{align}
where in \eqref{eq:useHoellder} we applied H\"older's inequality \cite[Theorem 1, p. 372]{ka18} and \eqref{eq:usesubreeg} is by subregularity of $\mu$. 
\end{proof}
If  $\lVert \mathrm d \tilde \mu/\mathrm d \mu\rVert_{\infty}^{(\mu)}<\infty$, then Lemma \ref{lem:transformsub} can be applied with $p=1$ to conclude that not only does $\tilde \mu$ inherit subregularity from $\mu$, but does so while preserving the subregularity dimension as a consequence of $m/p=m$.  
%the subegularity dimension in Lemma \ref{lem:transformsub} is preserved as $m/p=m$ for $p=1$.  
If $\lVert \mathrm d \tilde \mu/\mathrm d \mu\rVert_{\infty}^{(\mu)}=\infty$, the subregularity dimension need not be preserved, even when $\lVert \mathrm d \tilde \mu/\mathrm d \mu\rVert_{q}^{(\mu)}<\infty$ for all $q\in[1,\infty)$, as is   illustrated by the following example. 
\begin{exa}\label{exa:submore}
Take $\setX=\setY=[0,1]$  and $\tilde \mu\ll  \colL^1 \vert_\setX$ with 
\begin{align}
\frac{\mathrm d \tilde \mu}{\mathrm d \colL^1 \vert_\setX}(x)=-\log(x).
\end{align}
 It follows that 
\begin{align}
\lVert \mathrm d \tilde \mu/\mathrm d \colL^1\vert_\setX\rVert_{q}^{(\colL^1\vert_\setX)}
&= \mleft( \int_0^1 (-\log (x))^q\ \mathrm d x\mright)^{\frac{1}{q}}\label{eq:pq1} \\
&= \Gamma^{\frac{1}{q}}(1+q)\\
& <\infty \quad\text{for all $q\in [1,\infty)$}\label{eq:pq2}
\end{align}
and $\lVert \mathrm d \tilde \mu/\mathrm d \colL^1\vert_\setX\rVert_{\infty}^{(\colL^1\vert_\setX)}=\infty$.  
Lemma \ref{lem:transformsub} with $p=q/(q-1)$ together with  \eqref{eq:pq1}--\eqref{eq:pq2} therefore implies that $\tilde \mu$ is $\lvert\,\cdot\,\rvert$-subregular of dimension $m/p$ for all $p\in (1,\infty)$. 
But 
\begin{align}
\tilde \mu\{x\in[0,1]: x<\delta\}
&=-\int_0^\delta \log (x)\, \mathrm d x\\
&=\Gamma(2,\log(1/\delta))\\
&\geq \log(1/\delta) \delta \quad\text{for all $\delta\in (0,1)$,\label{eq:useJameson}} 
\end{align}
where \eqref{eq:useJameson} follows from  \cite[Equation (6)]{ja16}, which implies that $\tilde \mu$ is not  $\lvert\,\cdot\,\rvert$-subregular of dimension $1$. 
\end{exa}

The following result allows to deduce subregularity/superregularity of product measures from  subregularity/superregularity of their constituent  measures.  
% This result will be employed in the derivation of 
%   lower and upper bounds on the quantization error for multi-shot quantizers.  
 
\begin{prp}\label{prp:subprod}
Fix $k\in  (0, \infty)$.  For $i=1,\dots, \ell$, let $(\setX_i,\colX_i)$ and  $(\setY_i,\colY_i)$  be measurable spaces equipped  with the  distortion function  $\rho_i\colon \setX_i\times\setY_i\to[0,\infty]$. 
 Consider the weighted distortion function 
\begin{align}\label{eq:barrho}
\rho_{(\ell)}((x_1,\dots,x_\ell),(y_1,\dots,y_\ell))=\sum_{i=1}^\ell\alpha_i\rho_i(x_i,y_i) 
\end{align}
on  $(\setX_1\times\dots\times  \setX_\ell,\setY_1\times\dots\times  \setY_\ell)$, where $\alpha_i\in(0,\infty)$ for $i=1,\dots, \ell$.  Then, the following  holds:  
\begin{enumerate}[(ii)]%[itemsep=1ex,topsep=1ex]
\item \label{item:prodsub}
Suppose that, for  $i=1,\dots,\ell$,  
$\mu_i$ is a $\sigma$-finite   $\rho_i^{1/k}$-subregular measure on $(\setX_i,\colX_i)$  of   dimension $m_i\in(0,\infty)$ with subregularity constants $c_i\in  (0, \infty)$ and $\delta_i\in(0,\infty]$. 
Then, $\mu^{( \ell)}:=\mu_1\otimes\dots\otimes \mu_\ell$ satisfies the subregularity condition 
\begin{align}\label{eq:toshowprodP}
\mu^{( \ell)} \mleft(\setB_{\rho_{(\ell)}^{1/k}}\mleft(y^{(\ell)},\delta\mright)\mright)\leq 
c_{(\ell)}\delta^{m_{(\ell)}} 
\end{align}
for all $y^{(\ell)}\in\setY_1\times\dots\times  \setY_\ell$ and $\delta\in(0,\delta_{(\ell)})$ with  subregularity dimension $m_{(\ell)}=\sum_{i=1}^\ell m_i$ 
and subregularity constants 
\begin{align}\label{eq:parsubsup}
c_{(\ell)}&=\frac{\prod_{i=1}^\ell \Gamma\mleft(1+\frac{m_i}{k}\mright)}{\Gamma\mleft(1+\sum_{i=1}^\ell\frac{m_i}{k}\mright)} \times \prod_{i=1}^\ell\frac{c_i}{\alpha_i^{\frac{m_i}{k}}}
\end{align}
and 
\begin{align}\label{eq:parsubsupb}
\delta_{(\ell)}& = \min\mleft\{\alpha_1^{1/k}\delta_1, \dots,\alpha_\ell^{1/k}\delta_\ell\mright\}.
\end{align}
\item \label{item:prodsup}
Suppose that, for  $i=1,\dots,\ell$,  
$\nu_i$ is a $\sigma$-finite  $\rho_i^{1/k}$-superregular measure on $(\setY_i,\colY_i)$  of  dimension $m_i\in(0,\infty)$ with superregularity constants $\supc_i\in(0,\infty)$ and $\delta_i\in(0,\infty]$. 
Then, $\nu^{( \ell)}:=\nu_1\otimes\dots\otimes \nu_\ell$ satisfies the superregularity condition 
\begin{align}\label{eq:toshowprodPsup}
\nu^{( \ell)}\mleft(\widetilde\setB_{\rho_{(\ell)}^{1/k}}\mleft(x^{(\ell)},\delta\mright)\mright)\geq 
\supc_{(\ell)}\delta^{m_{(\ell)}} 
\end{align}
for all $x^{(\ell)}\in\setX_1\times\dots\times  \setX_\ell$ and $\delta\in(0,\delta_{(\ell)})$ with superregularity dimension $m_{(\ell)}=\sum_{i=1}^\ell m_i$ 
and superregularity constants
\begin{align}\label{a	}
\supc_{(\ell)}&=\frac{\prod_{i=1}^\ell \Gamma\mleft(1+\frac{m_i}{k}\mright)}{\Gamma\mleft(1+\sum_{i=1}^\ell\frac{m_i}{k}\mright)} \times \prod_{i=1}^\ell\frac{\supc_i}{\alpha_i^{\frac{m_i}{k}}}
\end{align}
and 
\begin{align}\label{eq:parsubsupb2}
\delta_{(\ell)}& = \min\mleft\{\alpha_1^{1/k}\delta_1, \dots,\alpha_\ell^{1/k}\delta_\ell\mright\}.
\end{align}
\end{enumerate}
\end{prp} 
\begin{proof}See Appendix~\ref{sec:proofsubprod}.\end{proof}

The weighted distortion function $\rho_{(\ell)}$ in \eqref{eq:barrho} covers the following important special cases:
\begin{itemize}%[itemsep=1ex,topsep=1ex]
\item
If $\alpha_i=1/\ell$ and $\rho_i=\sigma$ for $i=1,\dots,\ell$, then $\rho_{(\ell)}$ equals the average distortion function of length $\ell$ in \eqref{eq:ghol}.
\item
If $\alpha_i=1$ and $\rho_i(x,y)=\lVert x-y\rVert^k_k$ for $i=1,\dots,\ell$, then 
\begin{align}
\rho_{(\ell)}((x_1,\dots,x_\ell),(y_1,\dots,y_\ell))=\lVert(x_1-y_1,\dots,x_\ell-y_\ell)\rVert^k_k. \label{eq:direct}
\end{align}   
\end{itemize}
For illustration purposes, we show that the bounds in Proposition \ref{prp:subprod}  are tight enough to recover 
\eqref{eq:lebball} using  regularity of  Lebesgue measure $\colL^\ell$ on $\reals^\ell$. To this end, note that 
 \cite[Corollary 6.7]{kn05} 
\begin{align}\label{eq:Lebprod}
\colL^\ell= \underbrace{\colL^1 \otimes\dots \otimes\colL^1}_{\text{$\ell$ times}}. 
\end{align}
Moreover,  $\colL^1(-\delta,\delta)=2\delta$ for all $\delta\in(0,\infty)$ implies that  $\colL^1$ is $\lvert\,\cdot\,\rvert$-regular of dimension $1$ with regularity constants $b=c=2$ and $\delta_0=\infty$.
Application of Items \ref{item:prodsub}  and  \ref{item:prodsup} of Proposition \ref{prp:subprod} with   $\alpha_i=1$, $\rho_i=\lvert\,\cdot\,\rvert^k$, $m_i=1$,  $b_i=c_i=2$, and $\delta_i=\infty$, all for $i=1,\dots,\ell$,  yields 
\begin{align}
\colL^\ell(\setB_{\lVert\,\cdot\,\rVert_k}(x,\delta)) 
&= \frac{\Gamma^\ell\mleft(1+\frac{1}{k}\mright)}{\Gamma\mleft(1+\frac{\ell}{k}\mright)}2^\ell\delta^\ell
%&=\frac{\pi^\frac{\ell}{2}}{\Gamma\mleft(1+\frac{\ell}{2}\mright)}  \delta^\ell
\quad\text{for all $x\in\reals^\ell$ and  $k,\delta\in(0,\infty)$,}\label{eq:indirect2}
\end{align}
which recovers the well-known volume formula for $\lVert\,\cdot\,\rVert_k$-balls in $\reals^\ell$ (see, e.g., \cite[Theorem 5]{kovy20}). Particularized to $k=2$  and using  $\Gamma(3/2)=\sqrt{\pi}/2$, this yields \eqref{eq:lebball}.

Note that for the Hausdorff measures, a factorization akin to that for Lebesgue measure in \eqref{eq:Lebprod} is not possible in general. 
There is, however, an important exception. Concretely, consider an   $\colH^s$-rectifiable set 
 $\setE\subseteq\reals^m$ and let  $\setF\subseteq\reals^n$ be such that  $\setF=f(\setA)$ with  $f\colon \reals^p\to\reals^n$ a Lipschitz mapping and $\setA\subseteq \reals^p$ compact. Then, $\setF$ is  $\colH^p$-rectifiable  by construction and  we have  \cite[Theorem 3.2.23]{fed69}
\begin{align}\label{eq:splittingH}
\colH^{s+p}\vert_{\setE\times\setF}=\colH^{s}\vert_{\setE}\otimes \colH^{p}\vert_{\setF}. 
\end{align}
The specific form of the $\colH^p$-rectifiable set $\setF$  is required as $\colH^p$-rectifiability alone is not enough for \eqref{eq:splittingH} to hold  
 \cite[Remark 3.2.24]{fed69}. Applying \eqref{eq:splittingH} inductively 
 for a set $\setF$ as above,  we find that 
\begin{align}\label{eq:splittingH2}
\colH^{\ell p}\vert_{\setF^{\ell}} 
= \underbrace{\colH^{p}\vert_{\setF}\otimes\dots\otimes  \colH^{p}\vert_{\setF}}_{\text{$\ell$ times}}\quad\text{for all $\ell\in\naturals$.}
\end{align}  
%by  \eqref{eq:splittingH} and induction over $\ell$. 
This factorization now allows the application of Proposition  \ref{prp:subprod}  (with $\alpha_i=1$ and $\rho_i=\lVert\,\cdot\,\rVert^k$ for $i=1,\dots,\ell$)  to get  
upper/lower\ bounds on $ \colH^{\ell p}\vert_{\setF^\ell}(\setB_{\lVert\,\cdot\,\rVert_k}(x,\delta))$ for all $x\in\reals^{ \ell p} $ and $\delta\in(0,\infty)$ 
in terms of   upper/lower bounds on $\colH^{p}\vert_{\setF}(\setB_{\lVert\,\cdot\,\rVert_k}(x,\delta))$ for all $x\in\reals^{p}$ and $\delta\in(0,\infty)$ upon noting that  
$\sigma$-finiteness of $\colH^{p}\vert_{\setF}$ is guaranteed by 
\cite[Lemma 15.5]{ma99}.

\section{A Lower Bound on the Rate-Distortion Function} \label{sec:slb}
The best-known lower bounds on $R_X(D)$ in \eqref{eq:RD} are the  Shannon lower bounds for i)
discrete $X$ of finite entropy and with 
distortion function  $\rho\colon\setX\times\setY\to[0,\infty]$  such that
$\sum_{x\in\setX}e^{-s\rho(x,y)}$ is independent of $y$ for all $s\in(0,\infty)$  \cite[Lemma 4.3.1]{gr90}
and ii)  continuous  $X$  of finite differential entropy and with a difference distortion function  \cite[Equation (4.6.1)]{gr90}.    
We next present  a general Shannon-type lower bound that applies to all random variables $X$ taking values in a $\sigma$-finite measure space $(\setX,\colX,\mu)$, where $\mu$ is such that   $\mu_X\ll\mu$ and $\abs{h_\mu(X)}<\infty$, but arbitrary otherwise. 
   
\begin{prp}\label{thm.ko17}
Let $X$ be a random variable taking values in  the $\sigma$-finite measure space $(\setX,\colX,\mu)$. Assume that   $\mu_X\ll\mu$ with  $\abs{h_\mu(X)}<\infty$, let  $(\setY,\colY)$ be a measurable space, and 
consider the distortion function 
  $\rho\colon \setX\times\setY\to[0,\infty]$.    
Then, $R_X(D)\geq R_X^{\text{SLB}}(D)$ for all $D\in(0,\infty)$, where 
\begin{align}
R_X^{\text{SLB}}(D)= h_\mu(X)-\inf_{s\geq 0}\mleft(sD +\log (g(s))\mright)\label{eq:SLB}
\end{align}
with 
\begin{equation}
g(s)=\sup_{y\in\setY}\int e^{-s\rho(x,y)} \mathrm d \mu(x)\quad\text{for all $s\in(0,\infty)$.} \label{eq:defns1}
\end{equation}
\end{prp}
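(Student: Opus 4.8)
The plan is to show that $I(X;Y)\ge h_\mu(X)-\inf_{s\ge 0}\bigl(sD+\log\nu(s)\bigr)$ for \emph{every} admissible Markov kernel $\mu_{Y\vert X}$ — i.e.\ every kernel with $\opE[\rho(X,Y)]\le D$ — and then take the infimum over all such kernels, which by \eqref{eq:RD} yields $R_X(D)\ge R_X^{\text{SLB}}(D)$. So fix an admissible kernel, write $\mu_Y$ and $\mu_{(X,Y)}$ for the induced output and joint distributions, and set $f=\mathrm d\mu_X/\mathrm d\mu$. If $I(X;Y)=\infty$ the desired inequality is trivial, so assume $I(X;Y)<\infty$; then $\mu_{(X,Y)}\ll\mu_X\otimes\mu_Y$, and chaining this with $\mu_X\ll\mu$ shows $\mu_{(X,Y)}\ll\mu\otimes\mu_Y$ with a nonnegative density $w$ satisfying $w(x,y)=f(x)\,\tfrac{\mathrm d\mu_{(X,Y)}}{\mathrm d(\mu_X\otimes\mu_Y)}(x,y)$ and $\int w(x,y)\,\mathrm d\mu(x)=1$ for $\mu_Y$-a.e.\ $y$ (integrating $w$ against $\mu\otimes\mu_Y$ over $\setX\times\setC$ recovers the $Y$-marginal $\mu_Y(\setC)$). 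The first key step is then the decomposition $I(X;Y)=\opE[\log w(X,Y)]+h_\mu(X)$, which is a splitting into finite quantities precisely because $\lvert h_\mu(X)\rvert<\infty$ (using $\opE[\log f(X)]=-h_\mu(X)$). Hence the problem reduces to proving $\opE[\log w(X,Y)]\ge-\inf_{s\ge 0}\bigl(sD+\log\nu(s)\bigr)$.

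The heart of the argument is a Gibbs (nonnegativity-of-relative-entropy) inequality applied fibrewise in $y$. Fix $s\ge 0$; we may assume $\nu(s)<\infty$, as otherwise the corresponding bound is vacuous, and then $\lambda_s(y):=\int e^{-s\rho(x,y)}\,\mathrm d\mu(x)\le\nu(s)<\infty$ for every $y$. For each $y$ with $\lambda_s(y)>0$, the function $x\mapsto e^{-s\rho(x,y)}/\lambda_s(y)$ is a probability density with respect to $\mu$, so comparing it with the probability density $x\mapsto w(x,y)$ gives $\int w(x,y)\log w(x,y)\,\mathrm d\mu(x)\ge -s\int\rho(x,y)\,w(x,y)\,\mathrm d\mu(x)-\log\lambda_s(y)\ge -s\int\rho(x,y)\,w(x,y)\,\mathrm d\mu(x)-\log\nu(s)$ for $\mu_Y$-a.e.\ $y$. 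Integrating over $\mu_Y$ and using that, by Fubini/Tonelli, $\opE[\log w(X,Y)]=\int\!\int w\log w\,\mathrm d\mu\,\mathrm d\mu_Y$ and $\opE[\rho(X,Y)]=\int\!\int\rho\,w\,\mathrm d\mu\,\mathrm d\mu_Y\le D$, together with $s\ge 0$, yields $\opE[\log w(X,Y)]\ge -sD-\log\nu(s)$. Taking the supremum over $s\ge 0$ and combining with the decomposition above completes the proof. As an alternative to this fibrewise computation, one can invoke the Donsker--Varadhan variational formula for $I(X;Y)=D(\mu_{(X,Y)}\,\|\,\mu_X\otimes\mu_Y)$ with the test function $g(x,y)=-s\rho(x,y)-\log f(x)$, which after the substitution $\mathrm d\mu_X=f\,\mathrm d\mu$ collapses $\opE_{\mu_X\otimes\mu_Y}[e^{g(X,Y)}]$ to $\int\lambda_s(y)\,\mathrm d\mu_Y(y)\le\nu(s)$ and produces the same bound in one stroke (after the harmless reduction to $f>0$ $\mu$-a.e., which leaves $h_\mu(X)$ unchanged).

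The algebra here is trivial; the work is in the measure-theoretic bookkeeping on a general $\sigma$-finite space, and this is where I expect the only real friction. One must: justify $\mu_{(X,Y)}\ll\mu\otimes\mu_Y$ and the form of $w$ (using $\sigma$-finiteness of $\mu$ so that $\mu\otimes\mu_Y$ is well-defined and Radon--Nikod\'ym applies); verify the two iterated-integral identities for $\opE[\log w(X,Y)]$ and $\opE[\rho(X,Y)]$, which follow from $\log w(X,Y)\in L^1(\mu_{(X,Y)})$ (a consequence of $I(X;Y)<\infty$ and $\lvert h_\mu(X)\rvert<\infty$) and from $\rho\ge 0$; and dispose of the degenerate fibres — those $y$ with $\lambda_s(y)=0$, with $\rho(\cdot,y)=\infty$ on a set of positive conditional probability, or with $\int\rho(x,y)w(x,y)\,\mathrm d\mu(x)=\infty$ — all of which the constraint $\opE[\rho(X,Y)]\le D<\infty$ (together with the convention $0\cdot\infty=0$) confines to a $\mu_Y$-null set. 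Finally, specializing $\mu$ to counting measure and to Lebesgue measure recovers, respectively, the classical discrete and continuous Shannon lower bounds of \citep[Lemma 4.3.1]{gr90} and \citep[Equation (4.6.1)]{gr90}, as claimed in the text.
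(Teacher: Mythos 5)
Your proposal is correct, and its core inequality is the same one the paper uses, but you arrive at it by a different organizational route. The paper's proof does not work with $I(X;Y)$ directly: it invokes the variational lower bound of \citep[Lemma A.1]{kade94}, namely $R_X(D)\geq \sup_{\alpha,s}\bigl(\opE[\log(\alpha(X))]-sD\bigr)$ over measurable $\alpha\colon\setX\to(0,\infty)$ satisfying $\opE[\alpha(X)e^{-s\rho(X,y)}]\leq 1$ for all $y\in\setY$, and then simply exhibits the test function $\alpha_s(x)=\bigl((\mathrm d\mu_X/\mathrm d\mu)(x)\,\nu(s)\bigr)^{-1}$ (set to $1$ where the density vanishes), for which the constraint reduces to $\nu(s)^{-1}\int e^{-s\rho(x,y)}\,\mathrm d\mu(x)\leq 1$ and $\opE[\log(\alpha_s(X))]=h_\mu(X)-\log(\nu(s))$. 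Your argument in effect inlines the proof of that cited lemma: your fibrewise Gibbs comparison of $w(\cdot,y)$ against the tilted density $e^{-s\rho(x,y)}/\lambda_s(y)$ is exactly the mechanism behind the Kawabata--Dembo bound, specialized to the same choice of reference. What the citation buys the paper is precisely the measure-theoretic bookkeeping you flag in your last paragraph --- the decomposition $I(X;Y)=\opE[\log w(X,Y)]+h_\mu(X)$, the iterated-integral identities, and the disposal of degenerate fibres with $\lambda_s(y)=0$ or $\rho(\cdot,y)=\infty$ --- which your self-contained version must carry itself; you identify all of these correctly, though a fully rigorous write-up would still need to verify the one-sided integrability that justifies splitting $I(X;Y)$ into the two finite pieces. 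What your route buys in exchange is independence from the external lemma and a cleaner conceptual picture (nonnegativity of relative entropy, or equivalently Donsker--Varadhan), at the cost of length.
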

\begin{proof}  See Appendix \ref{thm.ko17proof}.\end{proof}

Proposition \ref{thm.ko17} covers the following important special cases:
\begin{itemize}%[itemsep=1ex,topsep=1ex]
\item 
For discrete $X$   of finite entropy, $\mu$  the counting measure,  and $\sum_{x\in\setX}e^{-s\rho(x,y)}$ independent of $y$ for all $s\in(0,\infty)$, $R_X^{\text{SLB}}(D)$ in \eqref{eq:SLB} recovers the  Shannon lower bound for discrete random variables reported in \cite[Lemma 4.3.1]{gr90}; in this case    $h_\mu(X)$ equals the  Shannon entropy  of $X$. 
\item
For $X$ continuous, $\mu$ the Lebesgue measure, $\setX=\setY=\reals^d$, and $\rho$ a difference distortion function,  \color{black} $R_X^{\text{SLB}}(D)$ in \eqref{eq:SLB} \color{black} recovers  the Shannon lower bound for continuous random variables \cite[Equation (4.6.1)]{gr90}; in this case  $h_\mu(X)$ equals  the differential entropy of $X$. 
This lower bound can be evaluated explicitly for $\rho(x,y)=\lVert x-y\rVert^k$ with $\lVert\, \cdot\,\rVert$ a semi-norm and $k\in(0,\infty)$, resulting in  the classical form of the Shannon lower bound \cite[Section VI]{yatagr80}
\begin{equation} \label{eq:shlbclassic}
R_X^{\text{SLB}}(D) = 
h_\mu(X) +F_{d,k,\kappa_d({\lVert\,\cdot\,\rVert})}(D), 
\end{equation}
where, for $m,k,c,D\in(0,\infty)$, we set    
\begin{align}\label{eq:F}
F_{m,k,c}(D)=\log\mleft(\frac{\mleft(\frac{m}{kD}\mright)^\frac{m}{k}}{c\, \Gamma\mleft(1+\frac{m}{k}\mright)}\mright)-\frac{m}{k}.
\end{align}
\item
For the class of   $m$-rectifiable  random variables \cite[Definition 11]{kopirihl16},  Proposition \ref{thm.ko17} recovers \cite[Theorem 55]{kopirihl16}.  In this case, 
$h_\mu(X)$ is the   $m$-dimensional entropy \cite[Definition 18]{kopirihl16} of the $m$-rectifiable  random variable $X$. 
\end{itemize}

The explicit expression  in \eqref{eq:shlbclassic}  is made possible by a simplification of 
 $g(s)$ exploiting that i) $\rho$ is a difference distortion function and ii) Lebesgue measure is translation invariant. Specifically, we have   
\begin{align}
g(s)
&= \sup_{y\in\setY}\int e^{-s\rho(x-y,0)} \,\mathrm d \mu(x)\\
&= \int e^{-s\rho(x,0)} \,\mathrm d \mu(x),\label{eq:defns1b}
\end{align}
which can be evaluated explicitly for $\rho(x,y)=\lVert x-y\rVert^k$  with $\lVert\, \cdot\,\rVert$ a semi-norm and $k\in(0,\infty)$ \cite[Section II]{yatagr80}.  
For  general distortion functions  $\rho$ and general measures $\mu$, a simplification of $g(s)$ and hence of the lower bound in \eqref{eq:SLB} does not seem  possible. 
It turns out, however,  that subregular measures permit an explicit  upper bound on $g(s)$, which in turn leads to the following explicit lower bound on   $R_X^{\text{SLB}}(D)$.

\begin{thm}\label{thm:new}
Let   $X$ be a random variable taking values in  the $\sigma$-finite measure space $(\setX,\colX,\mu)$. Assume that    $\mu_X\ll\mu$ with   $\abs{h_\mu(X)}<\infty$, let  $(\setY,\colY)$ be  a measurable space, and consider the distortion function  $\rho\colon \setX\times\setY\to[0,\infty]$. 
Further,  let $k\in(0,\infty)$ and  suppose that  
  $\mu$ is     $\rho^{1/k}$-subregular  of dimension $m\in(0,\infty)$ 
with subregularity constants $c\in(0,\infty)$ and $\delta_0\in(0,\infty]$.
\color{black} Finally, assume that $\mu(\setX)=1$ if  $\delta_0<\infty$. \color{black}  
Then,  
\begin{align}\label{eq:toshow1}
R_X^{\text{SLB}}(D)&\geq R_X^{\text{L}}(D)\quad\text{for all $D\in(0,\infty)$}     
\end{align}
with $R_X^{\text{L}}(D)$ defined as follows:
\begin{enumerate}[(ii)]%[itemsep=1ex,topsep=1ex]
\item 
\color{black}If  $ \delta_0=\infty$, then \color{black}
\begin{align}
R_X^{\text{L}}(D)
=h_\mu(X)+ F_{m,k,c}(D)\label{eq:SLB1a}
\end{align}
with $F_{m,k,c}(D)$ as defined  in \eqref{eq:F}. %Moreover, if subregularity holds with equality, then $R_X^{\text{SLB}}(D)=R_X^{\text{L}}(D)$ for all $D\in(0,\infty)$. 
\item
\color{black}If $\delta_0<\infty$, then \color{black} 
\begin{align}
R_X^{\text{L}}(D)
=h_\mu(X)-\frac{m}{k} -\log \mleft(c\mleft(\frac{m}{kD}\mright)^{- \frac{m}{k}}\Gamma\mleft(1+ \frac{m}{k}\mright) + e^{-\frac{m\delta_0^k}{kD}}\mright)\label{eq:SLB1}
\end{align}
 with 
 \begin{align}\label{eq:RLasympA}
 R_X^{\text{L}}(D) <  h_{\mu}(X) +F_{m,k,c}(D)\quad\text{for all $D\in(0,\infty)$} 
 \end{align}
 and
 \begin{align}\label{eq:RLasymp}
 \lim_{D\to 0}(R_X^{\text{L}}(D)- h_{\mu}(X) -F_{m,k,c}(D))=0.
 \end{align} 
 \end{enumerate}
\end{thm}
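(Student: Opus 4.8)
The plan is to combine Proposition~\ref{thm.ko17} with an explicit upper bound on the quantity $\nu(s)$ in \eqref{eq:defns1} that is valid whenever $\mu$ is $\rho^{1/k}$-subregular. The starting point is the ``layer-cake'' identity
\begin{align}\label{eq:plan-layercake}
\int e^{-s\rho(x,y)}\,\mathrm d\mu(x)
= \int_0^\infty \mu\mleft(\mleft\{x\in\setX: e^{-s\rho(x,y)} > t\mright\}\mright)\,\mathrm d t
= \int_0^\infty \mu\mleft(\setB_{\rho^{1/k}}\mleft(y,\mleft(\tfrac{1}{s}\log\tfrac1t\mright)^{1/k}\mright)\mright)\,\mathrm d t,
\end{align}
valid for every $y\in\setY$ and $s\in(0,\infty)$; after the substitution $u = (\tfrac1s\log\tfrac1t)^{1/k}$ this becomes $sk\int_0^\infty u^{k-1}e^{-su^k}\mu(\setB_{\rho^{1/k}}(y,u))\,\mathrm d u$. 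First I would treat Item~(i), i.e.\ the case $\delta_0=\infty$. Here subregularity gives $\mu(\setB_{\rho^{1/k}}(y,u))\le cu^m$ for \emph{all} $u\in(0,\infty)$, so \eqref{eq:plan-layercake} is bounded by $c\,sk\int_0^\infty u^{m+k-1}e^{-su^k}\,\mathrm d u = c\,\Gamma(1+m/k)s^{-m/k}$ after the change of variables $v=su^k$ and recognising the gamma integral. Taking the supremum over $y$ yields $\nu(s)\le c\,\Gamma(1+m/k)s^{-m/k}$, hence $\log\nu(s)\le \log(c\,\Gamma(1+m/k)) - \tfrac{m}{k}\log s$. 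Plugging this into \eqref{eq:SLB}, the infimum over $s\ge0$ of $sD - \tfrac mk\log s + \log(c\,\Gamma(1+m/k))$ is a one-variable calculus exercise: differentiating gives the minimiser $s^\star = \tfrac{m}{kD}$, and substituting back produces exactly $-F_{m,k,c}(D)$ as defined in \eqref{eq:F}, which establishes \eqref{eq:SLB1a}.

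For Item~(ii), $\delta_0<\infty$, the subregularity bound only holds on $(0,\delta_0)$, so I would split the integral in \eqref{eq:plan-layercake} (after the $u$-substitution) at $u=\delta_0$. By Lemma~\ref{lem:extsub}\ref{item:subregularity} the bound $\mu(\setB_{\rho^{1/k}}(y,u))\le cu^m$ extends to $u\in(0,\delta_0]$, giving $sk\int_0^{\delta_0}cu^{m+k-1}e^{-su^k}\,\mathrm d u \le c\,\Gamma(1+m/k)s^{-m/k}$ (extending the upper limit to $\infty$ only increases the integral). On the tail $u\in[\delta_0,\infty)$ we use $\mu(\setB_{\rho^{1/k}}(y,u))\le\mu(\setX)=1$ (here is where the normalisation hypothesis $\mu(\setX)=1$ enters), so $sk\int_{\delta_0}^\infty u^{k-1}e^{-su^k}\,\mathrm d u = e^{-s\delta_0^k}$. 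Hence $\nu(s)\le c\,\Gamma(1+m/k)s^{-m/k} + e^{-s\delta_0^k}$. This expression no longer admits a closed-form minimiser, so rather than optimising I would simply evaluate the bound in \eqref{eq:SLB} at the \emph{suboptimal} choice $s = \tfrac{m}{kD}$ (the minimiser from Item~(i)); this yields precisely the right-hand side of \eqref{eq:SLB1}, namely $h_\mu(X) - \tfrac mk - \log\bigl(c(\tfrac{m}{kD})^{-m/k}\Gamma(1+m/k) + e^{-m\delta_0^k/(kD)}\bigr)$, and since the infimum over $s$ is at least as large, the desired lower bound $R_X^{\text{SLB}}(D)\ge R_X^{\text{L}}(D)$ follows from Proposition~\ref{thm.ko17}.

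Finally, the two asymptotic claims \eqref{eq:RLasympA} and \eqref{eq:RLasymp} are elementary comparisons. For \eqref{eq:RLasympA}, note that $R_X^{\text{L}}(D)$ in the $\delta_0<\infty$ case equals $h_\mu(X) - \tfrac mk - \log(A + \varepsilon)$ where $A = c(\tfrac{m}{kD})^{-m/k}\Gamma(1+m/k) = e^{-F_{m,k,c}(D) - m/k}\cdot$ (up to bookkeeping: in fact $h_\mu(X)+F_{m,k,c}(D) = h_\mu(X) - \tfrac mk - \log A$) and $\varepsilon = e^{-m\delta_0^k/(kD)} > 0$; since $\log(A+\varepsilon) > \log A$, we get $R_X^{\text{L}}(D) < h_\mu(X) + F_{m,k,c}(D)$ for all $D$. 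For \eqref{eq:RLasymp}, as $D\to 0$ the ratio $\varepsilon/A = \tfrac{e^{-m\delta_0^k/(kD)}}{c\,\Gamma(1+m/k)}(\tfrac{m}{kD})^{m/k}\to 0$ because the exponential decay dominates the polynomial growth; hence $\log(A+\varepsilon) - \log A = \log(1+\varepsilon/A)\to 0$, which is exactly the claimed limit. I expect the main obstacle to be purely bookkeeping — keeping the gamma-integral substitutions and the identification $h_\mu(X)+F_{m,k,c}(D) = h_\mu(X) - \tfrac mk - \log\bigl(c(\tfrac{m}{kD})^{-m/k}\Gamma(1+m/k)\bigr)$ straight — rather than anything conceptually deep; the one genuine subtlety worth flagging is the justification of the layer-cake/Tonelli interchange in \eqref{eq:plan-layercake}, which requires $\sigma$-finiteness of $\mu$ and measurability of $(x,y)\mapsto\rho(x,y)$, both of which are in the hypotheses.
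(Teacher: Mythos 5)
Your proposal is correct and follows essentially the same route as the paper's proof: a layer-cake/Tonelli representation of $\int e^{-s\rho(x,y)}\,\mathrm d\mu(x)$, the split at the radius $\delta_0$ where subregularity stops (with $\mu(\setX)=1$ controlling the tail), and evaluation of the Shannon-lower-bound infimum at $s=m/(kD)$, which is exact for $\delta_0=\infty$ and suboptimal-but-sufficient otherwise. The only cosmetic difference is that you substitute directly to the ball radius $u$ while the paper substitutes $t=e^{-u}$ and keeps the incomplete gamma function $\gamma(1+m/k,s\delta_0^k)$ before bounding it by $\Gamma(1+m/k)$; the arguments are otherwise identical, including your treatment of \eqref{eq:RLasympA} and \eqref{eq:RLasymp}.
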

\begin{proof}See Appendix \ref{thm:newproof}. 
\end{proof}

%Theorem \ref{thm:new} is very general in the sense that neither a difference distortion function  nor a translation invariant  measure $\mu$ are assumed.  
The proof of Theorem \ref{thm:new} reveals that 
 if $\mu$ satisfies the  subregularity condition with equality  for    $\delta_0=\infty$, then the lower  bound in \eqref{eq:toshow1} holds with equality, i.e., we have 
 \begin{align}\label{eq:yamadared}
  R_X^{\text{SLB}}(D)=R_X^{\text{L}}(D)=h_\mu(X) +F_{m,k,c}(D)\quad\text{for all $D\in(0,\infty)$} 
\end{align}  
with $F_{m,k,c}(D)$ as defined in \eqref{eq:F}. 
In particular, this applies to Lebesgue measure so that Theorem \ref{thm:new}  generalizes  the classical Shannon lower bound 
\eqref{eq:shlbclassic} to arbitrary  distortion functions. 
%for a continuous random  variable $X$ taking values in  $\reals^d$ and of finite differential entropy, this generalizes

% and for  $\rho(x,y)=\lVert x-y\rVert_\text{s}^k$ with $k\in(0,\infty)$, we recover the classical form of the Shannon lower bound in \eqref{eq:shlbclassic}. 
We finally note that if  $X$ satisfies the assumptions of Theorem \ref{thm:new},  then the  lower R-D dimension  of order $k\in(0,\infty)$ is lower-bounded by the subregularity dimension. The corresponding formal statement is as follows. 
\begin{cor}\label{lem:cordimR}
Under the assumptions of Theorem \ref{thm:new},   
 we have 
\begin{align}
\underline{\dim}_\text{R}(X)\geq m.
\end{align} 
\end{cor}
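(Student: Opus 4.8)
The plan is to combine the lower bound $R_X^{\text{L}}(D)$ from Theorem \ref{thm:new} with the definition of the lower R-D dimension $\underline{\dim}_\text{R}(X)=\liminf_{D\to 0}R_X(D^k)/\log(1/D)$. Since $R_X(D)\geq R_X^{\text{SLB}}(D)\geq R_X^{\text{L}}(D)$ by Proposition \ref{thm.ko17} and Theorem \ref{thm:new}, it suffices to show that $\liminf_{D\to 0}R_X^{\text{L}}(D^k)/\log(1/D)\geq m$. Because $R_X^{\text{L}}(D)$ differs in the two cases $\delta_0=\infty$ and $\delta_0<\infty$, but in the latter case \eqref{eq:RLasymp} guarantees $R_X^{\text{L}}(D)-h_\mu(X)-F_{m,k,c}(D)\to 0$ as $D\to 0$, in both cases we have $R_X^{\text{L}}(D)=h_\mu(X)+F_{m,k,c}(D)+o(1)$ as $D\to 0$, so the asymptotics are governed entirely by $F_{m,k,c}(D)$.

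The key computation is then the leading-order behavior of $F_{m,k,c}(D^k)$ as $D\to 0$. Writing out \eqref{eq:F},
\begin{align}
F_{m,k,c}(D^k)=\frac{m}{k}\log\mleft(\frac{m}{kD^k}\mright)-\log\mleft(c\,\Gamma\mleft(1+\frac{m}{k}\mright)\mright)-\frac{m}{k}
= m\log(1/D)+O(1)
\end{align}
as $D\to 0$, since $\log(m/(kD^k))=k\log(1/D)+\log(m/k)$ and the remaining terms are constants not depending on $D$. Dividing by $\log(1/D)$ and using that $h_\mu(X)$ is a finite constant (by the hypothesis $\abs{h_\mu(X)}<\infty$) together with the $o(1)$ remainder, we obtain
\begin{align}
\frac{R_X^{\text{L}}(D^k)}{\log(1/D)}=\frac{h_\mu(X)+F_{m,k,c}(D^k)+o(1)}{\log(1/D)}\longrightarrow m\quad\text{as $D\to 0$.}
\end{align}
Taking $\liminf$ and using $R_X(D^k)\geq R_X^{\text{L}}(D^k)$ for all sufficiently small $D$ (noting $R_X^{\text{L}}(D^k)$ is eventually positive, so the inequality is not vacuous) yields $\underline{\dim}_\text{R}(X)\geq m$.

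I do not expect any serious obstacle here: the statement is essentially a direct corollary, and the only points requiring minor care are (i) ensuring the chain $R_X(D)\geq R_X^{\text{SLB}}(D)\geq R_X^{\text{L}}(D)$ is valid on $(0,\infty)$, which it is by the cited results, (ii) handling the two cases of $R_X^{\text{L}}$ uniformly via the limit relation \eqref{eq:RLasymp}, and (iii) confirming that the finite constant $h_\mu(X)$ and the vanishing remainder do not affect the limit after division by $\log(1/D)\to\infty$. The mild subtlety worth flagging is that $\underline{\dim}_\text{R}$ is defined with a $\liminf$, so we only need the lower bound on the ratio in the limit, which is exactly what the computation provides; no matching upper bound is claimed.
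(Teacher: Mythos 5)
Your argument is correct and matches the paper's proof: both pass from $R_X(D^k)\geq R_X^{\text{SLB}}(D^k)\geq R_X^{\text{L}}(D^k)$ (Proposition \ref{thm.ko17} and Theorem \ref{thm:new}) to the observation that, after dividing by $\log(1/D)$, only the term $F_{m,k,c}(D^k)=m\log(1/D)+O(1)$ survives, with the finite constant $h_\mu(X)$ and the $o(1)$ correction from \eqref{eq:RLasymp} vanishing in the limit. Your explicit handling of the two cases $\delta_0=\infty$ and $\delta_0<\infty$ is just a slightly more spelled-out version of the same computation.
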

\begin{proof}
The lower bound on $\underline{\dim}_\text{R}(X)$ follows from 
\begin{align}
\underline{\dim}_\text{R}(X)=
&\liminf_{D\to 0} \frac{R_{X}\big(D^k\big)} {\log (1/D)}\\ 
&\geq\liminf_{D\to 0} \frac{R_X^{\text{SLB}}\big(D^k\big)} {\log (1/D)}\label{eq:asyRD0}\\ 
&\geq \lim_{D\to 0} \frac{R_{X}^{\text{L}}\big(D^k\big)} {\log (1/D)} \label{eq:asyRD}\\
&= \lim_{D\to 0} \frac{F_{m,k,c}\big(D^k\big)} {\log (1/D)}\label{eq:asyRD2}\\ 
&=m, 
\end{align}
where  \eqref{eq:asyRD0} is by  Proposition \ref{thm.ko17} and in  \eqref{eq:asyRD} we applied  Theorem \ref{thm:new}.
 %and $F_{m,k,c}$  is  as defined  in  \eqref{eq:F}.  
\end{proof}

\section{Analysis of the $n$-th Quantization Error} \label{sec:rdl1}  
We first  derive  a lower bound $L_{n}(X)$  on $V_n(X)$ for  random variables  $X$  of  distribution 
$\mu_X\ll\mu$, where $\mu$ is a   $\sigma$-finite  $\rho^{1/k}$-subregular measure    satisfying $\lVert \mathrm d \mu_X/\mathrm d \mu\rVert_{p/(p-1)}^{(\mu)}<\infty$ with  $p\in [1,\infty)$. 
%As mentioned in Section \ref{sec:sub}, this assumption  is very general. 
%and is satisfied, e.g., by the $m$-dimensional Hausdorff measure restricted to a broad class of compact sets (see
The corresponding formal statement is as follows.

\begin{thm}\label{thm:singleshot}
Let  $X$ be a random variable taking values in the $\sigma$-finite measure space $(\setX,\colX,\mu)$. Assume that    $\mu_X\ll\mu$ and 
\begin{align}
\Sigma_p(X):=\lVert \mathrm d \mu_X/\mathrm d \mu\rVert_{p/(p-1)}^{(\mu)}<\infty\quad\text{with  $p\in [1,\infty)$,}
\end{align}
 let  $(\setY,\colY)$ be  a measurable space, and  consider the  distortion function $\rho\colon \setX\times\setY\to[0,\infty]$.    
Further,  let $k\in(0,\infty)$ and suppose that  $\mu$ is $\rho^{1/k}$-subregular of dimension $m\in(0,\infty)$ with  subregularity
constants  $c\in(0,\infty)$ and $\delta_0 \in (0,\infty]$.
Then,  $V_{n}(X)\geq L_n(X)$ for all  $n\in\naturals$, 
where  
\begin{align}
L_{n}(X)= \min\mleft\{c^{-\frac{k}{m}}\Sigma_p^{-\frac{pk}{m}}(X)    n^{-\frac{pk}{m}} , \delta_0^k\mright\}  \frac{m}{m+pk}. 
\label{eq:directbound}
\end{align}
\end{thm}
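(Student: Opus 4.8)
The plan is to reduce the quantization error to a small-ball estimate for the distribution $\mu_X$ and then integrate a layer-cake profile. Fix an arbitrary $n$-quantizer $f\in\setF_n(\setX,\setY)$ and let $y_1,\dots,y_{n'}$, with $n'=\lvert f(\setX)\rvert\le n$, denote its distinct values. Since $f(x)$ is one of the $y_j$, we have $\rho(x,f(x))\ge d(x):=\min_{1\le j\le n'}\rho(x,y_j)$, and both $x\mapsto\rho(x,f(x))$ and $d$ are $\colX$-measurable because $\rho$ is $\colX\otimes\colY$-measurable and $f$ is measurable. Hence $\opE[\rho(X,f(X))]\ge\opE[d(X)]$, and since $V_n(X)$ is the infimum of the left-hand side over all $f\in\setF_n(\setX,\setY)$, it suffices to prove $\opE[d(X)]\ge L_n(X)$.

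Next I would transport the subregularity hypothesis from $\mu$ to $\mu_X$: applying Lemma~\ref{lem:transformsub} with $\nu=\mu_X$ and distortion function $\rho^{1/k}$ shows that $\mu_X$ is $\rho^{1/k}$-subregular of dimension $m/p$ with subregularity constants $c^{1/p}\Sigma_p(X)$ and $\delta_0$; equivalently, for all $y\in\setY$ and $t\in(0,\delta_0^k)$,
\[
\mu_X\bigl(\{x\in\setX:\rho(x,y)<t\}\bigr)=\mu_X\bigl(\setB_{\rho^{1/k}}(y,t^{1/k})\bigr)\le c^{1/p}\Sigma_p(X)\,t^{m/(pk)}.
\]
A union bound over $j=1,\dots,n'$ then gives, for every $t\in(0,\delta_0^k)$,
\[
\mu_X\bigl(\{x\in\setX:d(x)<t\}\bigr)\le n\,c^{1/p}\Sigma_p(X)\,t^{m/(pk)},
\qquad\text{hence}\qquad
\mu_X\bigl(\{d\ge t\}\bigr)\ge 1-n\,c^{1/p}\Sigma_p(X)\,t^{m/(pk)}.
\]

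Finally I would integrate. Set $\tau:=\min\bigl\{c^{-k/m}\Sigma_p^{-pk/m}(X)\,n^{-pk/m},\,\delta_0^k\bigr\}$, which lies in $(0,\infty)$ (note $\Sigma_p(X)>0$ because $\mu_X$ is a probability measure, and $\Sigma_p(X),c<\infty$). The first entry of this minimum is exactly the value at which $n\,c^{1/p}\Sigma_p(X)\,t^{m/(pk)}=1$, so $n\,c^{1/p}\Sigma_p(X)\,\tau^{m/(pk)}\le1$, and in particular the estimate above is informative on all of $(0,\tau)\subseteq(0,\delta_0^k)$. Using the layer-cake representation $\opE[d(X)]=\int_0^\infty\mu_X(\{d\ge t\})\,\mathrm dt$, keeping only the portion of the integral over $(0,\tau)$, and inserting the bound,
\[
\opE[d(X)]\ge\int_0^\tau\!\bigl(1-n\,c^{1/p}\Sigma_p(X)\,t^{m/(pk)}\bigr)\,\mathrm dt
=\tau\Bigl(1-\tfrac{pk}{m+pk}\,n\,c^{1/p}\Sigma_p(X)\,\tau^{m/(pk)}\Bigr)
\ge\tau\,\tfrac{m}{m+pk}=L_n(X),
\]
and taking the infimum over $f$ yields $V_n(X)\ge L_n(X)$.

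I do not expect a genuine obstacle; the two points needing care are (i) the transfer of subregularity to $\mu_X$ with the correct dimension $m/p$ and constant $c^{1/p}\Sigma_p(X)$, which is precisely Lemma~\ref{lem:transformsub}, and (ii) the choice of the truncation level $\tau=\min\{t^\ast,\delta_0^k\}$, arranged so that the small-ball estimate stays valid throughout $(0,\tau)$ while the integral still evaluates to the sharp factor $m/(m+pk)$. A single-level Markov bound at any fixed radius would lose this constant, so integrating the full layer-cake profile is essential.
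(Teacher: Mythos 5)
Your argument is correct and follows essentially the same route as the paper's proof: reduce to the nearest-codepoint distance, transfer subregularity to $\mu_X$ via Lemma~\ref{lem:transformsub} (obtaining dimension $m/p$ and constant $c^{1/p}\Sigma_p(X)$), apply the union bound over the at most $n$ codepoints, and integrate the resulting tail bound over $(0,\min\{h(n),\delta_0^k\})$ via the layer-cake formula. The truncation level and the final evaluation yielding the factor $m/(m+pk)$ match the paper exactly.
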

\begin{proof}
See Appendix \ref{thm:singleshotproof}.
\end{proof}

Theorem~\ref{thm:singleshot} particularized to  $X$ taking values in a Riemannian manifold and such that $\mu_X$ itself is $\rho^{1/k}$-subregular with $\delta_0=\infty$  recovers \cite[Proposition 4.2]{kl12}.  
\color{black}We also note that the proof of  \cite[Proposition 4.2]{kl12} uses methods from optimal transport theory \cite{vi16}, whereas our proof technique is based on elementary probability and measure theory.\color{black} 

We next use Theorem~\ref{thm:singleshot}  to study   the $n\to\infty$   asymptotics of  $V_{n}(X)$ 
in terms of the lower $k$-th  quantization dimension  and the lower
 $k$-th  quantization coefficient. 
\begin{cor}\label{cor:DC}
Under the assumptions of Theorem \ref{thm:singleshot}, the following statements  hold:
\begin{enumerate}[(ii)]%[itemsep=1ex,topsep=1ex]
\item$\underline{D}_k(X)\geq m/p$. \label{eq:boundD}
\item  \label{eq:boundC}If  ${D}_k(X)=m/p$, then 
\begin{align}
\underline{C}_k(X)\geq     c^{-\frac{k}{m}} \Sigma_p^{-\frac{pk}{m}}(X)  \frac{m}{m+pk} >0.
\end{align} 
\end{enumerate}
\end{cor}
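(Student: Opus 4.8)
The plan is to obtain both claims directly from the single-shot lower bound $V_n(X)\geq L_n(X)$ established in Theorem~\ref{thm:singleshot}, by substituting it into the definitions \eqref{eq:Dinf} of $\underline{D}_k(X)$ and \eqref{eq:Cinf} of $\underline{C}_k(X)$.

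First I would dispose of the minimum in \eqref{eq:directbound}. Since $pk/m\in(0,\infty)$, the first argument $c^{-k/m}\Sigma_p^{-pk/m}(X)\,n^{-pk/m}$ of that minimum tends to $0$ as $n\to\infty$, whereas the second argument $\delta_0^k$ is a fixed (possibly infinite) positive constant; hence there is an $n_0\in\naturals$ such that for all $n\geq n_0$ the minimum is attained by its first argument. Theorem~\ref{thm:singleshot} then yields the pure power law $V_n(X)\geq A\,n^{-pk/m}$ for all $n\geq n_0$, where $A=c^{-k/m}\Sigma_p^{-pk/m}(X)\,\tfrac{m}{m+pk}\in(0,\infty)$. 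In particular $V_n(X)\to 0$, so $\log(1/V_n(X))>0$ for all large $n$ and the liminf expressions below are well defined.

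For Item~\ref{eq:boundD} I would take logarithms in $V_n(X)\geq A\,n^{-pk/m}$ to get $\log(1/V_n(X))\leq \tfrac{pk}{m}\log n-\log A$ for $n\geq n_0$, whence $\tfrac{k\log n}{\log(1/V_n(X))}\geq \tfrac{k\log n}{(pk/m)\log n-\log A}$; letting $n\to\infty$, the right-hand side converges to $m/p$, so $\underline{D}_k(X)\geq m/p$. For Item~\ref{eq:boundC}, assuming $D_k(X)=m/p$, I would multiply $V_n(X)\geq A\,n^{-pk/m}$ by $n^{k/D_k(X)}=n^{pk/m}$ to obtain $n^{k/D_k(X)}V_n(X)\geq A$ for all $n\geq n_0$, and then pass to the liminf; strict positivity of $A$ follows since $c,\Sigma_p(X),m,k$ all lie in $(0,\infty)$.

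I do not anticipate any genuine obstacle: the corollary is a routine $n\to\infty$ reading of Theorem~\ref{thm:singleshot}. The only point deserving a line of justification is the reduction of the minimum in $L_n(X)$ to its first argument for large $n$ (and the attendant fact $V_n(X)\to 0$, which makes $\underline{D}_k(X)$ and $\underline{C}_k(X)$ meaningful); the remainder is substitution into the defining limits and elementary manipulation of the fractions.
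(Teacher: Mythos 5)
Your proof is correct and follows essentially the same route as the paper's: substitute the lower bound $L_n(X)$ from Theorem~\ref{thm:singleshot} into the defining liminf expressions \eqref{eq:Dinf} and \eqref{eq:Cinf}, note that the minimum in \eqref{eq:directbound} is eventually attained by its first argument, and evaluate the limits. The only (inconsequential) slip is your claim that $V_n(X)\to 0$ follows from the lower bound $V_n(X)\geq A\,n^{-pk/m}$ — a lower bound tending to zero does not force this — but if $V_n(X)$ stays bounded away from zero then $\underline{D}_k(X)=\infty$ and Item \ref{eq:boundD} holds trivially, so nothing is lost.
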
 
\begin{proof}
%First, note that, 
%as $h_\mu(X)>-\infty$ by assumption, 
 %Lemma \ref{lem:entlemma} with $\alpha=m/(m+k)$ implies  
 %\begin{align}\label{eq:chainent}
 % h_\mu(X) \leq h_\mu^{(m/(m+k))}(X)\leq \log (\mu(\setX))=0.
 % \end{align} 
Item \ref{eq:boundD} follows from 
\begin{align}
\underline{D}_k(X)&=\liminf_{n\to\infty}  \frac{ k \log( n)}{\log (1/V_{n}(X))}\\
&\geq \lim_{n\to\infty}  \frac{ k \log (n)}{\log (1/L_{n}(X))}\label{eq:usetheorem}\\
&=m/p,\label{eq:usehX}
\end{align} 
where \eqref{eq:usetheorem} is by Theorem \ref{thm:singleshot}. % and in \eqref{eq:usehX} we employed  finiteness of  $h_\mu^{(m/(m+k))}(X)$. 
Item  \ref{eq:boundC} is by
\begin{align}
\underline{C}_k(X)&=\liminf_{n\to\infty} n^\frac{pk}{m}V_n(X)\label{eq:coeff1}\\
&\geq \lim_{n\to\infty} n^\frac{pk}{m}L_n(X)\label{eq:coeff1a}\\
&= c^{-\frac{k}{m}}  \Sigma_p^{-\frac{pk}{m}}(X) \frac{m}{m+pk},
\end{align} 
where \eqref{eq:coeff1} is by \eqref{eq:Cinf} combined with the assumption ${D}_k(X)=m/p$  and  \eqref{eq:coeff1a}  follows from  Theorem \ref{thm:singleshot}.  
\end{proof}

Note that if  $\lVert \mathrm d \mu_X/\mathrm d \mu\rVert_{p/(p-1)}^{(\mu)}<\infty$ for all $p\in(1,\infty)$, then Item \ref{eq:boundD} of Corollary \ref{cor:DC} implies 
\begin{align}\label{eq:Dklower}
\underline{D}_k(X)\geq \sup_{p\in(1,\infty)}\mleft\{\frac{m}{p}\mright\}=m 
\end{align} 
even if  $\Sigma_1(X)=\infty$, see, e.g.,     Example  \ref{exa:submore} with  \color{black}$\mu_X=\nu$ and $\mu=\colL^1 \vert_\setX$. \color{black} %What is more, $\underline{D}_k(X)\geq d$ 
   %holds for  general distributions with  nonzero continuous part for $m=d$. More specifically, 
%let  $\setX=\setY=\reals^d$ with difference distortion function $\rho(x,y)=\lVert x-y\rVert^k$, where $\lVert\,\cdot\, \rVert$ is a norm on $\reals^d$ and  
%$k\in[1,\infty)$. Then,  it is known that $\liminf_{n\to\infty} n^{k/d} V_n(X) >0$  provided that the continuous part of the distribution $\mu_X$ is nonzero  \cite[Remark 6.3, Item (c)]{grlu00} 
%so that  $\underline{D}_k(X)\geq d$ owing to  Item \ref{eq:boundCcheck2} in Lemma \ref{lem:dimbound}.  
%For  the same assumptions  with  the condition of the  continuous part of the distribution being nonzero replaced by  $\mu_X\ll\colH^m|_\setA$ for $\setA$ $\colH^m$-rectifiable, it is conjectured that $\lim_{n\to\infty} n^{k/m} V_n(X) $ exists in  $(0,\infty)$ \cite[Remark 13.13]{grlu00}, which would imply ${D}_k(X)= m$ (see Items \ref{eq:boundCcheck2} and  \ref{eq:boundCcheck} in Lemma \ref{lem:dimbound}).  
%Item \ref{eq:boundD} of Corollary implies that $\underline{D}_k(X)\geq m$ provided that $\mu_X\ll\mu$ with $\mu$ $\sigma$-finite and  subregular of dimension $m$ and  $\Sigma_1(X)<\infty$. 
%\color{black}It remains an open question whether there are   random variables $X$ with $\Sigma_1(X)=\infty$ for which the lower bound $\underline{D}_k(X)\geq m/p$  in  Item \ref{eq:boundD} of Corollary  
%\ref{cor:DC} is sharp for  $p>1$. \color{black}% and  the lower quantization dimension is strictly smaller than the subregularity dimension. 

We finally note that for the special case   $\setX=\setY=\reals^d$, $k\in[1,\infty)$, and $X$ of nonvanishing continuous part $\tilde\mu_X$ (in the Lebesgue decomposition of $\mu_X$) satisfying $ \lVert\mathrm d \tilde \mu_X/\mathrm d \colL^d\rVert^{(\mu)}_{d/(d+k)}<\infty$, it is known that  ${D}_k(X)= d$ and an expression for the $k$-th quantization coefficient is available   \cite[Theorem 6.2, Remark 6.3]{grlu00}.

We next use our results in R-D theory to obtain lower bounds on the lower quantization dimension. To this end, we first 
establish that  the $k$-th  lower quantization dimension   is lower-bounded by the lower R-D-dimension. % provided that  $ \lim_{n\to \infty}V_n(X)=0 $. 

\begin{lem}\label{lem:RDD}
Let  $(\setX,\colX)$ and $(\setY,\colY)$ be  measurable spaces equipped with  the  distortion function $\rho\colon \setX\times\setY\to[0,\infty]$. 
Further, let $X$ be a random variable taking values in  $(\setX,\colX)$. Then, $\underline{D}_k(X) \geq \underline{\dim}_R(X)$ for all $k\in(0,\infty)$.   
\end{lem}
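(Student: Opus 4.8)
The plan is to exploit the elementary bridge between quantization and rate--distortion theory: an $n$-quantizer, regarded as a deterministic conditional distribution, is a feasible point for the rate--distortion infimum \eqref{eq:RD} and carries mutual information at most $\log n$. I would first dispose of the degenerate cases --- if $V_n(X)=0$ for some $n$, the corresponding almost-surely perfect quantizer shows $R_X(D)\le\log n$ for all $D>0$, forcing $\underline{\dim}_\text{R}(X)=0$; if $V_n(X)$ does not tend to $0$, then $\log(1/V_n(X))$ stays bounded while $k\log n\to\infty$, so $\underline{D}_k(X)=\infty$ --- and thereafter assume $V_n(X)\in(0,\infty)$ for all large $n$ with $V_n(X)\to 0$.

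\textbf{Key step.} Next I would establish, for each $n$ with $V_n(X)<\infty$, the inequality
\begin{align}\label{eq:planbridge}
R_X\big(2V_n(X)\big)\le\log n .
\end{align}
To do so, pick (by definition of $V_n(X)$) some $f\in\setF_n(\setX,\setY)$ with $\opE[\rho(X,f(X))]\le 2V_n(X)$ and set $Y=f(X)$. The induced $\mu_{Y\mid X}(\cdot,x)=\delta_{f(x)}(\cdot)$ is a valid conditional distribution with $\opE[\rho(X,Y)]\le 2V_n(X)$, hence admissible in \eqref{eq:RD} at distortion level $D=2V_n(X)$, while $I(X;Y)\le\log\lvert f(\setX)\rvert\le\log n$ because $f(X)$ takes at most $n$ values (indeed $I(X;Y)=H(f(X))$, $Y$ being a deterministic function of $X$); this yields \eqref{eq:planbridge}.

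\textbf{Passing to the limit.} Finally I would pass to the $n\to\infty$ asymptotics. Setting $\eta_n:=(2V_n(X))^{1/k}$ --- so that $R_X(2V_n(X))=R_X(\eta_n^k)$, $k\log(1/\eta_n)=\log(1/V_n(X))-\log 2$, $\eta_n\to 0$, and $\log(1/V_n(X))\to\infty$ --- inequality \eqref{eq:planbridge} gives
\begin{align}
\underline{D}_k(X)
&=\liminf_{n\to\infty}\frac{k\log n}{\log(1/V_n(X))}
\ge\liminf_{n\to\infty}\frac{k\,R_X(\eta_n^k)}{\log(1/V_n(X))}\\
&=\liminf_{n\to\infty}\mleft(\frac{R_X(\eta_n^k)}{\log(1/\eta_n)}\cdot\frac{\log(1/V_n(X))-\log 2}{\log(1/V_n(X))}\mright)\\
&=\liminf_{n\to\infty}\frac{R_X(\eta_n^k)}{\log(1/\eta_n)}
\ge\liminf_{\eta\to 0}\frac{R_X(\eta^k)}{\log(1/\eta)}
=\underline{\dim}_\text{R}(X),
\end{align}
where the third equality uses that $R_X(\eta_n^k)/\log(1/\eta_n)\ge 0$ for $n$ large together with $(\log(1/V_n(X))-\log 2)/\log(1/V_n(X))\to 1$, and the last inequality is the standard fact that $\liminf_n g(\eta_n)\ge\liminf_{\eta\to 0}g(\eta)$ for any null sequence $\eta_n$.

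\textbf{Main obstacle.} The computation itself is short, and the one point I expect to require genuine care is that the infimum defining $V_n(X)$ need not be attained, so $R_X$ cannot be evaluated exactly at $V_n(X)$; evaluating at $2V_n(X)$ instead keeps everything on a single sequence whose multiplicative correction $1-\log 2/\log(1/V_n(X))$ tends to $1$, which is precisely what makes the perturbation harmless in the limit. The remaining details --- nonnegativity of the ratios, the convergent correction factor, and the degenerate cases (which the paper's conventions absorb) --- are routine.
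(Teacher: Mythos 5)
Your proposal is correct and follows the same architecture as the paper's proof: treat the $n$-quantizer as a deterministic test channel feasible in \eqref{eq:RD}, bound its mutual information by $\log n$, dispose of the degenerate cases, and pass from the sequential $\liminf$ along $V_n(X)$ to the continuum $\liminf$ defining $\underline{\dim}_\text{R}(X)$ (the paper proves the same elementary fact about null sequences in \eqref{eq:useargliming}). The one genuine difference is how you handle non-attainment of the infimum in \eqref{eq:quanterr}: the paper obtains $R_X(V_n(X)+\delta)\le\log n$ for every $\delta>0$ and then invokes continuity of $R_X(D)$ (citing \citep[Lemma 1.1]{cs74}) to pass to $R_X(V_n(X))\le\log n$, whereas you evaluate at $2V_n(X)$ and absorb the factor $2$ into a multiplicative correction $(\log(1/V_n(X))-\log 2)/\log(1/V_n(X))\to 1$. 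Your route removes the dependence on the external continuity result at no cost, which is a small but real improvement; the paper's route yields the slightly cleaner intermediate statement $R_X(V_n(X))\le\log n$.

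One point where you lean on a standard fact that the paper does not take for granted: in the general measurable-space setting, $I(X;f(X))$ is defined in \eqref{eq:I} via the Radon--Nikod\'ym derivative $\mathrm d\mu_{(X,f(X))}/\mathrm d(\mu_X\otimes\mu_{f(X)})$, so the claim $I(X;f(X))=H(f(X))\le\log n$ requires verifying that this absolute continuity holds and computing the derivative (the paper spends the bulk of its appendix on exactly this, exhibiting the derivative $\ind{\{f(x)=y\}}(x,y)/\opP[f(X)=y]$ and then applying Jensen). Since $f(\setX)$ is finite this verification is routine, so I would not call it a gap, but a complete write-up should include it. Also, your phrase ``the corresponding almost-surely perfect quantizer'' in the case $V_n(X)=0$ is slightly loose --- the infimum need not be attained --- but the conclusion $R_X(D)\le\log n$ for all $D>0$ follows directly from the definition of the infimum, so the argument stands.
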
 
\begin{proof}
See Appendix \ref{lem:RDDproof}.
\end{proof}

In the special case $\setX\subseteq\setY=\reals^d$,    $\rho(x,y)=\|x-y\|^k$ with $\lVert\,\cdot\,\rVert$  a norm on $\reals^d$, and  $k\in [1,\infty)$, 
 Lemma  \ref{lem:RDD} particularizes to   \cite[Theorem  11.10]{grlu00}. 

As an immediate consequence of Lemma \ref{lem:RDD}, we obtain the following lower bound on the $k$-th  lower quantization dimension. 

\begin{cor}\label{cor:RDD}
Let   $X$ be a random variable taking values in  the $\sigma$-finite  measure space $(\setX,\colX,\mu)$. Assume that  $\mu_X\ll\mu$ with   $\abs{h_\mu(X)}<\infty$, let  $(\setY,\colY)$ be  a measurable space, and consider the distortion function  $\rho\colon \setX\times\setY\to[0,\infty]$. 
Further,  let $k\in(0,\infty)$ and suppose that  
  $\mu$ is     $\rho^{1/k}$-subregular  of dimension $m\in(0,\infty)$ 
with subregularity constants $c\in(0,\infty)$ and $\delta_0\in(0,\infty]$. \color{black}Finally, suppose that  $\mu(\setX)=1$ if $\delta_0<\infty$. \color{black}
%Assume further that   $ \lim_{n\to \infty}V_n(X)=0 $. 
Then, $\underline{D}_k(X)\geq m$. 
\end{cor}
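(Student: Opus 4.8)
The plan is to obtain the bound by chaining two results already established in the paper, with essentially no new computation. The first is Lemma~\ref{lem:RDD}, which gives $\underline{D}_k(X) \geq \underline{\dim}_\text{R}(X)$ for every $k \in (0,\infty)$ under no hypotheses beyond those of the present statement. The second is Corollary~\ref{lem:cordimR}, which, under the assumptions of Theorem~\ref{thm:new}, gives $\underline{\dim}_\text{R}(X) \geq m$; this is obtained there from the lower bound $R_X(D) \geq R_X^{\text{SLB}}(D) \geq R_X^{\text{L}}(D)$ of Theorem~\ref{thm:new} together with the elementary fact that $R_X^{\text{L}}(D^k)/\log(1/D) \to m$ as $D \to 0$, since $h_\mu(X)$ contributes nothing to this limit and $F_{m,k,c}(D^k)/\log(1/D) \to m$.

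The single step I would carry out is to check that the hypotheses match. The present statement assumes $X$ takes values in a $\sigma$-finite measure space $(\setX,\colX,\mu)$ with $\mu_X \ll \mu$ and $\abs{h_\mu(X)} < \infty$, $\rho \colon \setX \times \setY \to [0,\infty]$ a distortion function, $k \in (0,\infty)$, $\mu$ being $\rho^{1/k}$-subregular of dimension $m \in (0,\infty)$ with subregularity constants $c \in (0,\infty)$ and $\delta_0 \in (0,\infty]$, and $\mu(\setX) = 1$ whenever $\delta_0 < \infty$; these are verbatim the hypotheses of Theorem~\ref{thm:new}, so Corollary~\ref{lem:cordimR} applies and yields $\underline{\dim}_\text{R}(X) \geq m$. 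Combining this with Lemma~\ref{lem:RDD} gives $\underline{D}_k(X) \geq \underline{\dim}_\text{R}(X) \geq m$, which is the assertion.

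There is no genuine obstacle; the only thing demanding a moment's care is notational bookkeeping --- ensuring that the $k$ appearing in the ``$k$-th quantization dimension'' and in the ``R-D dimension of order $k$'' is the same $k$ for which $\mu$ is assumed $\rho^{1/k}$-subregular, and that both invoked results are read off for the same spaces $(\setX,\colX)$, $(\setY,\colY)$ and the same distortion function $\rho$. I would also add the remark that routing the argument through R-D theory, rather than directly through Theorem~\ref{thm:singleshot} and Corollary~\ref{cor:DC}, is precisely what makes the conclusion available under the single integrability assumption $\abs{h_\mu(X)} < \infty$: applying Theorem~\ref{thm:singleshot} with $p=1$ would additionally require $\lVert \mathrm d \mu_X / \mathrm d \mu \rVert_\infty^{(\mu)} < \infty$, which is not imposed here.
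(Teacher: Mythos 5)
Your proposal is correct and is exactly the paper's argument: the stated proof is simply the combination of Corollary~\ref{lem:cordimR} (which gives $\underline{\dim}_\text{R}(X)\geq m$ under the hypotheses of Theorem~\ref{thm:new}, which you correctly verified coincide with those assumed here) with Lemma~\ref{lem:RDD} (which gives $\underline{D}_k(X)\geq \underline{\dim}_\text{R}(X)$). Your closing remark about why this route avoids the $\lVert \mathrm d\mu_X/\mathrm d\mu\rVert_\infty^{(\mu)}<\infty$ assumption is also consistent with the paper's own discussion surrounding Corollary~\ref{cor:dimequal}.
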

\begin{proof}
Follows from  Corollary \ref{lem:cordimR} and  Lemma \ref{lem:RDD}.   
\end{proof}

While the lower bound $L_{n}(X)$ on  $V_{n}(X)$ in Theorem~\ref{thm:singleshot} is based on  a subregular  $\sigma$-finite  measure $\mu$ on $(\setX,\colX)$, we next derive an upper bound  on   $V_{n}(X)$ that requires the existence of a  finite superregular   measure $\nu$ on $(\setY,\colY)$. % and a bounded distortion function if superregularity holds for $\delta_0<\infty$. %This bound requires a bounded  distortion function $\rho$ if $\rho^{1/k}$-superregularity hold.
In contrast to $L_n(X)$, which needs $\mu_X\ll\mu$ and as such  depends on $X$, the  upper bound $U_n$ we obtain is universal 
in the sense of  depending only on the spaces $(\setX,\colX)$ and   $(\setY,\colY,\nu)$ and the distortion function $\rho$ but 
%$\mu_X$ not related to $\nu$ and, therefore,
 not on the random variable $X$ per se.  To reflect this aspect, we will write  $U_n$ without the argument $X$.

\begin{thm}\label{thm:singleshot2}
Let   $(\setX,\colX)$  be  a measurable space, let  $(\setY,\colY,\nu)$  be a finite measure space with $\nu$ normalized according to $\nu(\setY)=1$,  and  consider the  distortion function  $\rho\colon \setX\times\setY\to[0,\infty]$. 
Further, let $k\in(0,\infty)$, suppose that  $\nu$ 
is $\rho^{1/k}$-superregular of dimension $m\in(0,\infty)$ with  superregularity
constants  $ b\in(0,\infty)$ and $ \delta_0 \in (0,\infty]$,  and assume  that 
\begin{align}
\beta:=\operatorname{ess}\sup_{\!\!\!\!\!\!\!\!\!\!\!\!\!\!\!\! y\in\setY} \sup_{x\in\setX} \rho^{{1}/{k}}(x,y)<\infty\quad\text{if $\delta_0<\infty$.} \label{eq:bounddist}
\end{align}
Then, every $X$ taking values in $(\setX,\colX)$ satisfies  $V_{n}(X)\leq U_n$ for all  $n\in\naturals$, 
\color{black}
where  
\begin{align}
U_n=\Gamma\mleft(1+\frac{k}{m}\mright)\mleft(\supc n\mright)^{-\frac{k}{m}} \quad\text{if $\beta\leq \delta_0$}
\end{align}
and 
\begin{align}
U_n=\Gamma\mleft(1+\frac{k}{m}\mright)\mleft(\supc n\mright)^{-\frac{k}{m}} +(\beta^k-\delta_0^k)e^{-\supc n\delta_0^m}\quad\text{if $\beta> \delta_0$}.
\end{align}
\end{thm}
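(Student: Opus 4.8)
I would prove this by a random‑codebook (probabilistic method) argument. Fix $n\in\naturals$ and let $Y_1,\dots,Y_n$ be i.i.d.\ with common distribution $\nu$, drawn independently of $X$. For each realization of $(Y_1,\dots,Y_n)$, define the nearest‑neighbour encoder $x\mapsto Y_{j(x)}$ with $j(x)=\min\{i\le n:\rho(x,Y_i)=\min_{\ell\le n}\rho(x,Y_\ell)\}$; the explicit tie‑breaking makes $x\mapsto Y_{j(x)}$ a measurable map $\setX\to\setY$ with at most $n$ values, hence a legitimate $n$‑quantizer, and $\rho(x,Y_{j(x)})=\min_{i\le n}\rho(x,Y_i)$. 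Averaging over the codebook and invoking Tonelli (the integrand is nonnegative and, $\rho$ being measurable on $\setX\times\setY$, jointly measurable) gives $V_n(X)\le\opE\opE[\min_{i\le n}\rho(X,Y_i)\mid X]\le\sup_{x\in\setX}\opE[\min_{i\le n}\rho(x,Y_i)]$, so it suffices to bound $\opE[\min_{i\le n}\rho(x,Y_i)]$ by $U_n$ for each fixed $x$; since this estimate does not see the law of $X$, it is automatically universal in $X$ as claimed. Finally, from the averaged bound I would extract a realization $(y_1,\dots,y_n)$ with $\opE[\min_{i\le n}\rho(X,y_i)]\le U_n$, whose associated deterministic nearest‑neighbour quantizer witnesses $V_n(X)\le U_n$.

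\textbf{Core pointwise estimate.} Note first that $\supc\delta^m\le\nu(\setY)=1$ for all $\delta\in(0,\delta_0)$, which forces $\delta_0<\infty$ since $m>0$; hence \eqref{eq:bounddist} is in force and $\beta<\infty$. For fixed $x$, I would use the layer‑cake identity $\opE[\min_i\rho(x,Y_i)]=\int_0^\infty\opP[\min_{i\le n}\rho(x,Y_i)\ge t]\,\mathrm dt=\int_0^\infty\opP[\rho(x,Y_1)\ge t]^n\,\mathrm dt$, the last step by independence. Since $\{y:\rho(x,y)<t\}=\widetilde\setB_{\rho^{1/k}}(x,t^{1/k})$, superregularity gives $\opP[\rho(x,Y_1)<t]\ge\supc t^{m/k}$ for $t\in(0,\delta_0^k)$, and as this probability is at most $\nu(\setY)=1$ we have $0\le 1-\supc t^{m/k}$ there, so $\opP[\rho(x,Y_1)\ge t]^n\le(1-\supc t^{m/k})^n\le e^{-n\supc t^{m/k}}$ on $(0,\delta_0^k)$. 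If $\beta\le\delta_0$, then $\rho(x,y)\le\beta^k\le\delta_0^k$ for all $x,y$, so the integrand is supported on $(0,\delta_0^k)$ and $\opE[\min_i\rho(x,Y_i)]\le\int_0^\infty e^{-n\supc t^{m/k}}\,\mathrm dt$; the substitution $u=\supc n\,t^{m/k}$ together with $\tfrac{k}{m}\Gamma(\tfrac{k}{m})=\Gamma(1+\tfrac{k}{m})$ evaluates this to $\Gamma(1+\tfrac{k}{m})(\supc n)^{-k/m}$, which is $U_n$ in this case.

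\textbf{The case $\beta>\delta_0$.} Here I would split $\opE[\min_i\rho(x,Y_i)]=\int_0^{\delta_0^k}(\cdot)\,\mathrm dt+\int_{\delta_0^k}^{\beta^k}(\cdot)\,\mathrm dt$, the integrand vanishing past $\beta^k$ because $\rho(x,Y_1)\le\beta^k$. The first piece is $\le\int_0^\infty e^{-n\supc t^{m/k}}\,\mathrm dt=\Gamma(1+\tfrac{k}{m})(\supc n)^{-k/m}$ as before. For the second, by Item~\ref{item:superregularity} of Lemma~\ref{lem:extsub} superregularity extends to $\delta=\delta_0$, so $\opP[\rho(x,Y_1)\ge\delta_0^k]\le 1-\supc\delta_0^m$, and for every $t\in[\delta_0^k,\beta^k]$, by monotonicity, $\opP[\min_i\rho(x,Y_i)\ge t]\le\opP[\rho(x,Y_1)\ge\delta_0^k]^n\le(1-\supc\delta_0^m)^n\le e^{-n\supc\delta_0^m}$; integrating over $[\delta_0^k,\beta^k]$ contributes at most $(\beta^k-\delta_0^k)e^{-n\supc\delta_0^m}$. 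Summing the two pieces yields $\opE[\min_i\rho(x,Y_i)]\le\Gamma(1+\tfrac{k}{m})(\supc n)^{-k/m}+(\beta^k-\delta_0^k)e^{-n\supc\delta_0^m}=U_n$, and the reduction in the first paragraph completes the argument.

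\textbf{Expected obstacle.} I do not anticipate a deep difficulty; the proof rests on elementary probability and measure theory. The points I would be most careful about are (i) verifying that the de‑randomized nearest‑neighbour rule is a genuine measurable, finite‑range quantizer on the abstract space $(\setX,\colX)$ — addressed by the explicit selection $j(x)$ above — and the Tonelli/Fubini step that de‑randomizes the codebook; and (ii) the truncation at $\delta_0^k$ when $\beta>\delta_0$, where the boundedness hypothesis \eqref{eq:bounddist} is exactly what keeps the tail integral finite, and where obtaining the clean constant $(\beta^k-\delta_0^k)e^{-n\supc\delta_0^m}$ relies on monotonicity of $t\mapsto\opP[\min_i\rho(x,Y_i)\ge t]$ and on the endpoint extension of superregularity from Lemma~\ref{lem:extsub}.
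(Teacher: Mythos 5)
Your proposal is correct and follows essentially the same route as the paper's proof: a random codebook $Y_1,\dots,Y_n\sim\nu$ i.i.d., a measurable nearest-neighbour quantizer (the paper builds the Voronoi cells $\setB_i$ explicitly where you use an argmin tie-break), the layer-cake identity, the bound $(1-\supc t^{m/k})^n\le e^{-n\supc t^{m/k}}$, the split of the integral at $\delta_0^k$, and the endpoint extension of superregularity from Lemma~\ref{lem:extsub}. Your side remark that $\nu(\setY)=1$ together with $m>0$ forces $\delta_0<\infty$ (hence $\beta<\infty$) is a small observation the paper does not make explicit, but it does not change the argument.
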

\begin{proof}
See Appendix \ref{thm:singleshot2proof}.
\end{proof}
We now employ Theorem~\ref{thm:singleshot2}  to characterize  the  $n\to\infty$ asymptotics of  $V_{n}(X)$  
in terms of the upper $k$-th  quantization dimension  and the upper 
 $k$-th  quantization coefficient. 
 
\begin{cor}\label{cor:DC2}
Under the assumptions of Theorem \ref{thm:singleshot2}, the following statements  hold:
\begin{enumerate}[(ii)]%[itemsep=1ex,topsep=1ex]
\item$\overline{D}_k(X)\leq m$. \label{eq:boundD2}
\item  If  ${D}_k(X)=m$, then \label{eq:boundC2} 
\begin{align}
\overline{C}_k(X)\leq \Gamma\mleft(1+\frac{k}{m}\mright)\supc^{-\frac{k}{m}}.
\end{align} 
\end{enumerate}
\end{cor}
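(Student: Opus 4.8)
The plan is to parallel the proof of Corollary~\ref{cor:DC}, simply feeding the universal upper bound $V_n(X)\leq U_n$ from Theorem~\ref{thm:singleshot2} into the definitions \eqref{eq:Dsup} and \eqref{eq:Csup}. The structural observation that makes everything work is that in both regimes of Theorem~\ref{thm:singleshot2} we may write $U_n=\Gamma(1+k/m)(\supc n)^{-k/m}+\varepsilon_n$, where $\varepsilon_n=0$ when $\beta\leq\delta_0$ and $\varepsilon_n=(\beta^k-\delta_0^k)e^{-\supc n\delta_0^m}$ when $\beta>\delta_0$. Since $\supc\delta_0^m>0$, the correction $\varepsilon_n$ decays exponentially while the leading term decays only like $n^{-k/m}$, so $U_n=\Theta(n^{-k/m})$; in particular $U_n\to 0$, hence $U_n<1$ for all sufficiently large $n$, which legitimizes writing $\log(1/U_n)>0$ below.

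For Item~\ref{eq:boundD2}, from $V_n(X)\leq U_n$ and $U_n<1$ eventually we get $\log(1/V_n(X))\geq\log(1/U_n)>0$, hence
\[
\overline{D}_k(X)=\limsup_{n\to\infty}\frac{k\log n}{\log(1/V_n(X))}\leq\limsup_{n\to\infty}\frac{k\log n}{\log(1/U_n)}.
\]
Because $U_n=\Theta(n^{-k/m})$ we have $\log(1/U_n)=\tfrac{k}{m}\log n+O(1)$, so the right-hand side equals $m$, which yields $\overline{D}_k(X)\leq m$.

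For Item~\ref{eq:boundC2}, assume $D_k(X)=m$, so that $\overline{C}_k(X)=\limsup_{n\to\infty}n^{k/m}V_n(X)$ by \eqref{eq:Csup}. Using $V_n(X)\leq U_n$ again,
\[
\overline{C}_k(X)\leq\limsup_{n\to\infty}n^{k/m}U_n=\limsup_{n\to\infty}\bigl(\Gamma(1+k/m)\supc^{-k/m}+n^{k/m}\varepsilon_n\bigr)=\Gamma\bigl(1+\tfrac{k}{m}\bigr)\supc^{-\frac{k}{m}},
\]
since $n^{k/m}\varepsilon_n\to0$ (the exponential factor in $\varepsilon_n$ dominates the polynomial prefactor).

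The argument is essentially routine, so there is no serious obstacle; the only point that needs a little care is the bookkeeping around the exponentially small term $\varepsilon_n$ in the case $\beta>\delta_0$, namely checking that it affects neither the logarithmic asymptotics $\log(1/U_n)=\tfrac{k}{m}\log n+O(1)$ (needed for Item~\ref{eq:boundD2}) nor the limit $\lim_{n\to\infty}n^{k/m}U_n=\Gamma(1+k/m)\supc^{-k/m}$ (needed for Item~\ref{eq:boundC2}); both follow from $e^{-\supc n\delta_0^m}=o(n^{-q})$ for every $q\in(0,\infty)$. One should also note the (legitimate) reversal of the inequality when passing from $V_n(X)\leq U_n$ to $\log(1/V_n(X))\geq\log(1/U_n)$, valid precisely because $U_n<1$ for large $n$.
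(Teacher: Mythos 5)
Your proof is correct and follows essentially the same route as the paper: both plug the bound $V_n(X)\leq U_n$ from Theorem~\ref{thm:singleshot2} into the definitions \eqref{eq:Dsup} and \eqref{eq:Csup} and compute $\lim_{n\to\infty}k\log(n)/\log(1/U_n)=m$ and $\lim_{n\to\infty}n^{k/m}U_n=\Gamma(1+k/m)\,\supc^{-k/m}$, the exponential correction term being asymptotically negligible. Your extra bookkeeping around $\varepsilon_n$ and the sign of $\log(1/U_n)$ only makes explicit what the paper leaves implicit.
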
 
\begin{proof}
Item \ref{eq:boundD2} follows from 
\begin{align}
\overline{D}_k(X)&=\limsup_{n\to\infty}  \frac{ k \log (n)}{\log (1/V_{n}(X))}\\
&\leq \lim_{n\to\infty}  \frac{ k \log (n)}{\log (1/U_{n})}\label{eq:usetheorem2}\\
&=m,
\end{align} 
where \eqref{eq:usetheorem2} is by Theorem \ref{thm:singleshot2}. 
  Item \ref{eq:boundC} is by 
\begin{align}
\overline{C}_k(X)&=\limsup_{n\to\infty} n^\frac{k}{m}V_n(X)\label{eq:coeff12}\\
&\leq \lim_{n\to\infty} n^\frac{k}{m}U_n\label{eq:coeff1a2}\\
&=  \Gamma\mleft(1+\frac{k}{m}\mright)\supc^{-\frac{k}{m}},\label{eq:coeff22}
\end{align} 
where \eqref{eq:coeff12} is by  \eqref{eq:Csup}  combined with the assumption ${D}_k(X)=m$ and   \eqref{eq:coeff1a2} follows from  Theorem \ref{thm:singleshot2}.
\end{proof}

Combining Corollaries \ref{cor:DC}--\ref{cor:DC2}, we get the   following  result on the $k$-th quantization dimension.  

%applicable for  bounded distortion functions $\rho$ and $\rho^{1/k}$-regular measures, where $k\in (0,\infty)$, which states that the $k$-th  quantization dimension equals the regularity dimension.  

\begin{cor}\label{cor:dimequal}
Let  $X$ be a random variable taking values in  the $\sigma$-finite measure space $(\setX,\colX,\mu)$ with $\mu_X\ll\mu$, let  $(\setY,\colY,\nu)$ be  a finite measure space with $\nu$ normalized according to $\nu(\setY)=1$, and  consider the  distortion function $\rho\colon \setX\times\setY\to[0,\infty]$.   
Further, let  $k,m,c,b\in(0,\infty)$,  $\delta_0\in(0,\infty]$, and suppose that i) $\mu$ is  $\rho^{1/k}$-subregular of dimension $m$ with subregularity constants $c$ and $\delta_0$ and ii)
$\nu$  is $\rho^{1/k}$-superregular  of dimension $m$ with superregularity constants $b$ and $\delta_0$. Finally, assume  that  
\begin{align}
\operatorname{ess}\sup_{\!\!\!\!\!\!\!\!\!\!\!\!\!\!\!\! y\in\setY} \sup_{x\in\setX}\rho^{{1}/{k}}(x,y)<\infty\quad\text{if $\delta_0<\infty$}. \label{eq:bounddistAA} 
\end{align} 
Then, the following properties hold:
\begin{enumerate}[(ii)]%[itemsep=1ex,topsep=1ex]
\item \label{item1dimequal}
If  $\lVert \mathrm d \mu_X/\mathrm d \mu\rVert_{\infty}^{(\mu)}<\infty$, then 
\begin{align}{D}_k(X)=m.\end{align}
\item \label{item2dimequal} If i) $\abs{h_\mu(X)}<\infty$ and ii) $\mu(\setX)=1$ for $\delta_0<\infty$,  then   
\begin{align}\underline{\operatorname{dim}}_{R}(X)={D}_k(X)=m.\end{align}
\end{enumerate}
\end{cor}
\begin{proof}
If  $\lVert \mathrm d \mu_X/\mathrm d \mu\rVert_{\infty}^{(\mu)}<\infty$, then  Item \ref{eq:boundD} of 
Corollary \ref{cor:DC} yields $\underline{D}_k(X)\geq m$. Similarly, if i) $\abs{h_\mu(X)}<\infty$ and ii) $\mu(\setX)=1$ for $\delta_0<\infty$,    then   
 Corollary  
\ref{cor:RDD} establishes  $\underline{D}_k(X)\geq m$. Finally,   
 Item \ref{eq:boundD2} of 
Corollary \ref{cor:DC2} yields  $\overline{D}_k(X)\leq m$ in both cases.  Taken together these results establish ${D}_k(X)=m$ in Items \ref{item1dimequal} and \ref{item2dimequal} as desired. The identity $\underline{\operatorname{dim}}_{R}(X)=m$ follows from Corollary \ref{lem:cordimR}, Lemma \ref{lem:RDD}, and ${D}_k(X)=m$.
\end{proof}

Corollary \ref{cor:dimequal} does not hold in general when \eqref{eq:bounddistAA} is not satisfied. The following example,   which is a slight modification of \cite[Example 6.4]{grlu00}, illustrates this by constructing a random variable with ${D}_k(X)=\infty$.

%If \eqref{eq:bounddistAA} is not satisfied, then it is not difficult to construct an $X$ ,  as is shown  in the following example,. 

\begin{exa}
For every $\ell\in\naturals$ with $\ell\geq 2$, set $\setI_\ell=[2^\ell,\, 2^{\ell+1})$,  $\setJ_\ell=[(4/3)2^\ell,\, (5/3)2^{\ell})$, and 
\begin{align}
\mu_\ell=\frac{c\,\colL^{1}\vert_{\setJ_\ell}}{\colL^{1}(\setJ_\ell) 2^{k\ell}\ell\log^2(\ell)} 
\end{align}
with $c=(\sum_{\ell=2}^\infty \frac{1}{2^{k\ell}\ell\log^2(\ell)})^{-1}$.  
Set $\setX=\setY=(0,\infty)$  and $\mu=\colL^{1}|_\setX$, and consider the distortion function  $\rho\colon \setX\times\setY\to[0,\infty)$, $\rho(x,y)=|x-y|^k$ with $k\in(0,\infty)$.
Further, let $X$ be of distribution $\mu_X=\sum_{\ell=2}^\infty \mu_\ell$.  
%with $\ell\geq 2$ and 
%\begin{align}
%\operatorname{dist}(\setJ_{\ell_1}, \setI_{\ell_2}) \geq \min\{\operatorname{dist}(\setJ_{\ell_1}, \setI_{\ell_1-1}),\operatorname{dist}(\setJ_{\ell_1}, \setI_{\ell_1+1}) \} = \frac{2^\ell}{3} \label{eq:ellI}
%\end{align}
%for all  $\ell_1,\ell_2\in\naturals$ with $\ell_1,\ell_2\geq 2$ and $\ell_1\neq\ell_2$, where we set $\operatorname{dist}(\setA,\setB)=\inf_{x\in\setA,y\in\setB}|x-y|$ for $\setA,\setB\subseteq\reals$.   
Next, fix  $n\in\naturals$ and  $f\in\setF_n$ arbitrarily and set 
$\setI=\{\ell\in\naturals: \ell\geq 2\ \text{and}\ f([4,\infty))\cap \setI_\ell=\emptyset\}$. Then, we have 
\begin{align}
\opE\mleft[ \abs{f(X)-X}^k\mright]
&= \sum_{\ell=2}^\infty \int_{\setJ_\ell}\abs{f(x)-x}^k\,\mathrm d\mu_\ell(x)\\
&\geq \sum_{\ell\in\setI}\int_{\setJ_\ell} \abs{f(x)-x}^k\,\mathrm d\mu_\ell(x)\\
&\geq \sum_{\ell\in\setI} \frac{2^{k\ell}}{3^k} \mu_\ell(\setJ_\ell)\label{eq:useellI}\\
&= \frac{c}{3^k}\sum_{\ell\in\setI}\frac{1}{\ell\log^2(\ell)}\\
&\geq \frac{c}{3^k}\sum_{\ell=n+2}^\infty\frac{1}{\ell\log^2(\ell)}\\
&\geq  \frac{c}{3^k} \int_{n+2}^\infty \frac{1}{x \log^2(x)}\,\mathrm d x \\
&= \frac{c}{3^k \log(n+2)}, 
\end{align}
where \eqref{eq:useellI} follows from $\setJ_\ell\subseteq \setI_\ell$  and $f([4,\infty))\cap \setI_\ell=\emptyset$  for all $\ell\in\setI$. As  $n\in\naturals$ and $f\in\setF_n$ were arbitrary, we can conclude that 
\begin{align}
V_n(X)\geq  \frac{c}{3^k \log(n+2)} \quad\text{for all $n\in\naturals$,}
\end{align}
which in turn yields  ${D}_k(X)=\infty$. 
\end{exa}

For the particular  case  $\setX\subseteq\setY=\reals^d$   with $\setX$ compact,  distortion function $\rho\colon\setX\times\setY\to[0,\infty]$, $\rho(x,y)=\|x-y\|^k$, where  $\lVert\,\cdot\,\rVert$ is a norm on $\reals^d$ and $k\in [1,\infty)$, and  $\rho^{1/k}$-regular  $\nu$ of dimension $m$ satisfying  $\operatorname{supp}(\nu)=\setX$  and  $\nu(\setX)=1$, combining Corollary  \ref{cor:dimequal} for this $\nu$ and $\mu=\nu|_\setX$  with \cite[Theorem 12.11]{grlu00}
allows us to conclude  that the  $k$-th  quantization  dimension of $X$  with $\mu_X\ll\mu$ equals the Hausdorff dimension of $\mu$, i.e., we have 
\begin{align}\label{eq:dimchainineq}
D_k(X)=m=\dim_\mathrm{H}(\mu)\leq \dim_\mathrm{H}(\setX)
\end{align}
 with   $\dim_\text{H}(\mu)$ as  defined  in \eqref{eq:dimHX}, provided that  $\lVert \mathrm d \mu_X/\mathrm d \mu\rVert_{\infty}^{(\mu)}<\infty$ or 
 $\abs{h_\mu(X)}<\infty$.
  We note that \eqref{eq:dimchainineq}  particularized to 
   $\mu=\mu_X$   recovers  \cite[Theorem 12.18]{grlu00}.   
 The inequality $D_k(X)\leq \dim_\mathrm{H}(\setX)$ becomes  particularly relevant when  the ambient space dimension $d$ is significantly  larger than the Hausdorff dimension of the set $\setX$ containing the data to be compressed (here described by a random variable taking values in  $\setX$).  
%\color{black}
%\color{red}
%Dieser allgemeine Teil beschreibt den Zusammenhang zur Hausdorff dimension bzw. legt auch fest wann wir so einen Zusammenhang haben. In den Beispielen wo das zutrifft muss man deshalb finde ich nicht immer darauf hinweisen. 
%Wichtig finde ich auch, dass eben die Hausdorff dimension kleiner sein kann als $d$ (Dimension von $\reals^d$). Wenn es keinen $R^d$ gibt und keine Metrik auf $\setX$, dann gibt es sowieso keine  Hausdorff dimension. Ausserdem finde ich dann das Auftreten von Dimensionen (was sowieso nur fuer $\setX=\setY$ der Fall sein kann nicht so spannend, da es finde ich nicht sonderlich ueberraschend ist, dass diese Groesse dann irgendwo auftaucht und es ausserdem keine Dimensionsreduktion gibt wenn $\setX$ nicht in einem Ambient space ist).
%\color{black}

While  the upper bound $U_n$ in Theorem~\ref{thm:singleshot2} is sharp enough to establish  ${D}_k(X)=m$ in Corollary \ref{cor:dimequal}, it is, in 
general, too weak  to yield  good upper bounds on $\overline{C}_k(X)$, simply  as   $U_n$  does not depend on the specific random variable  $X$ taking values in  $(\setX,\colX)$.   
We next derive an $X$-dependent upper bound on $V_{n}(X)$  applicable to   $\setX=\setY\subseteq\reals^d$, which requires, however, stronger technical assumptions than those imposed in 
 Theorem~\ref{thm:singleshot2}. %Specifically,  we need $\nu\ll\mu_X$ with uniformly bounded  Radon-Nikod\'ym derivative and $m> k$.  

\begin{thm}\label{thm:upperstrong}
Let   $\setY=\setX\subseteq\reals^d$ be Borel and consider the distortion function $\rho=\omega^k\vert_{\setX\times \setX}$,  where  $k\in(0,\infty)$ and  $\omega$  is a metric on $\reals^d$.  
Suppose  that 
$X$ takes values  in $\setX$    and assume that there exists a  finite measure  $\nu$ on $\setX$   normalized according to $\nu(\setX)=1$  with $\nu\ll\mu_X$    
and   
\begin{align}\label{eq:deffxinfA}
\mleft\lVert \frac{\mathrm d\nu }{\mathrm d\mu_X}\mright \rVert_\infty^{(\mu_X)}<\infty.
\end{align}
%\begin{align}\label{eq:usetsup}
%\limsup_{\delta\to 0} \frac{\mu_X\mleft(\setB_{\rho}\mleft(x,\delta \mright)\mright)}{\nu\mleft(\setB_{\rho}\mleft(x,\delta \mright)\mright)} \geq t \quad\text{for all $x\in\setX$.} 
%\end{align}
Further, let  $\nu$ be $\rho^{1/k}$-superregular of dimension \color{black}$m\in(k,\infty)$ \color{black} with  superregularity
constants  $ b\in(0,\infty)$ and $ \delta_0 \in (0,\infty]$ and  
assume  that  
 \begin{align}
\beta:=\sup_{x,y\in\setX}\omega(x,y)<\infty\quad\text{if $\delta_0<\infty$.} \label{eq:bounddist2}
\end{align}
Then, we have  
\begin{align}\label{limsupVUn}
\limsup_{n\to\infty} n^\frac{k}{m}V_n(X) \leq  
\Omega_{k/m}(X)\Gamma\mleft(1+\frac{k}{m} \mright)b^{-\frac{k}{m}}   %\opE\mleft[ f(X)^{-\frac{k}{m}}\mright]
\end{align}
with 
\begin{align}\label{eq:OmegaSigma3}
\Omega_{\alpha}(X)=
\opE\mleft[\mleft( \frac{\mathrm d\nu }{\mathrm d\mu_X} (X)\mright)^{\alpha} \mright]\leq 1\quad\text{for all $\alpha\in(0,1)$}
\end{align}
\color{black}
%Moreover,  
%\begin{align}\label{eq:omegaone}
%\Omega_{\alpha}(X) \leq  1\quad\text{for all $\alpha\in(0,1)$}
%\end{align}
and strict inequality   in \eqref{eq:OmegaSigma3} unless $\mu_X=\nu$. 
%\begin{align}\label{eq:deffxinf}
%f(x)=\liminf_{\delta\to 0} \frac{\mu_X\mleft(\setB_{\rho}\mleft(x,\delta \mright)\mright)}{\nu\mleft(\setB_{\rho}\mleft(x,\delta \mright)\mright)}. 
%\end{align}
If, in addition,  $D_k(X)=m$, then 
\begin{align}\label{eq:chainCC3} 
\overline{C}_k(X) \leq  \Omega_{k/m}(X) \Gamma\mleft(1+\frac{k}{m} \mright)b^{-\frac{k}{m}}.
\end{align}
\end{thm}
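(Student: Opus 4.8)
The plan is to build a near-optimal $n$-quantizer for $X$ directly, by reusing the superregular structure of $\nu$ the same way it was used in the proof of Theorem~\ref{thm:singleshot2}, but now weighting the construction by the density $g := \mathrm d\nu/\mathrm d\mu_X$ so that the resulting bound carries the factor $\Omega_{k/m}(X)$. First I would recall that, by Lemma~\ref{lem:simpilysub} applied with $k$, $\alpha=1$, we may regard $\nu$ as $(\rho^{1/k})$-superregular without changing the relevant constants, and by Lemma~\ref{lem:extsub}(\ref{item:superregularity}) the bound $\nu(\widetilde\setB_{\rho^{1/k}}(x,\delta))\geq b\delta^m$ extends to $\delta\in(0,\delta_0]$ when $\delta_0<\infty$. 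The engine of the argument will be a one-step estimate: for a random variable $Y$ on $\setX$ with distribution $\lambda$ that is $(\rho^{1/k})$-superregular with constants $b,\delta_0$, and for any $x_0\in\setX$, integrating the tail bound $\lambda\{y:\omega(x_0,y)\geq t\}\leq \max\{0,1-bt^m\}$ (valid for $t\le\delta_0$, with the boundedness assumption \eqref{eq:bounddist2} handling $t>\delta_0$) against $\mathrm d(t^k)$ gives a clean bound of the form $\opE_\lambda[\omega^k(x_0,Y)]\le \Gamma(1+k/m)(bn)^{-k/m}+(\text{correction})$ after a quantizer with $n$ cells is used; this is exactly the computation underlying $U_n$ in Theorem~\ref{thm:singleshot2}.

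The new ingredient is the change of measure. Since $\nu\ll\mu_X$ with bounded density $g$, I would apply the universal quantizer bound not to $\mu_X$ but pointwise: decompose the expected distortion $\opE[\rho(X,f(X))]=\int \rho(x,f(x))\,\mathrm d\mu_X(x)$ and compare it against the analogous integral over $\nu$. Concretely, one constructs, for each $n$, a quantizer $f_n$ adapted to $\nu$ in the sense that its Voronoi-type cells $\{A_i\}$ each satisfy $\nu(A_i)\approx 1/n$ and have $\rho^{1/k}$-diameter controlled via superregularity; then $\opE[\rho(X,f_n(X))]=\sum_i\int_{A_i}\rho(x,f_n(x))\,\mathrm d\mu_X(x)\le \sum_i \|g^{-1}\|\cdots$ — but $g$ can vanish, so instead one keeps $\mathrm d\mu_X = g^{-1}\,\mathrm d\nu$ only where $g>0$ and absorbs the rest. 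The cleaner route, and the one I would actually pursue, is: use the superregularity of $\nu$ to produce cells of $\nu$-measure $\le 1/n$ whose $\omega$-radius is $\le (bn)^{-1/m}$ (up to the $\delta_0$ correction), assign to each cell a center, and then bound $\opE[\omega^k(X,f_n(X))]$ by $\int_{\setX}(\text{radius of the cell containing }x)^k\,\mathrm d\mu_X(x)$. Because the radius of the cell containing $x$ scales like $\nu(\text{cell})^{1/m}$ and one has freedom to make $\nu(\text{cell})$ roughly proportional to a density power, the optimal allocation (Hölder / the classical companding argument, as in \citep[Chapter~6]{grlu00}) yields precisely $\Omega_{k/m}(X)=\opE[g(X)^{k/m}]$ as the leading constant, after sending $n\to\infty$.

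Then I would finish with the two easy pieces. The bound \eqref{eq:OmegaSigma3}, $\Omega_{k/m}(X)=\opE[g(X)^{k/m}]\le 1$, follows from Jensen's inequality applied to the concave function $t\mapsto t^{k/m}$ on $[0,\infty)$ together with $\opE[g(X)]=\int g\,\mathrm d\mu_X=\nu(\setX)=1$; strictness of Jensen unless $g(X)$ is a.s.\ constant gives the stated equality condition, and since $g$ is a density w.r.t.\ $\mu_X$ that constant must be $1$ $\mu_X$-a.e., hence $\nu$-a.e. Finally, \eqref{eq:chainCC3} is immediate: under $D_k(X)=m$ the definition \eqref{eq:Csup} gives $\overline C_k(X)=\limsup_n n^{k/m}V_n(X)$, which is exactly the left-hand side of \eqref{limsupVUn}.

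**Main obstacle.** The delicate step is the cell construction with the \emph{optimal} density-power allocation: producing, for every $n$, a partition of $\setX$ into at most $n$ measurable cells together with centers such that the $\omega$-radius of the cell containing $x$ is controlled by $b^{-1/m}$ times (a prescribed non-uniform mass) $^{1/m}$, \emph{and} showing the induced distortion converges to $\Omega_{k/m}(X)\Gamma(1+k/m)b^{-k/m}$ rather than merely to the uniform constant $\Gamma(1+k/m)b^{-k/m}$. This is where the boundedness hypothesis \eqref{eq:deffxinfA} on $g$ is essential — it guarantees that the region where $g$ is small does not force cells of uncontrolled $\omega$-radius — and where the boundedness of $\omega$ in \eqref{eq:bounddist2} is needed to handle the $\delta_0<\infty$ correction term, which is $O(e^{-cn\delta_0^m})$ and hence vanishes after multiplication by $n^{k/m}$. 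I expect the argument to mirror the companding/Pierce-type construction in \citep[Section~6.2]{grlu00}, transported to the abstract $(\setX,\omega)$ setting via the superregularity inequality in place of Lebesgue volume estimates.
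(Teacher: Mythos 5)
Your plan for \eqref{eq:OmegaSigma3} (Jensen applied to the concave map $t\mapsto t^{k/m}$ with $\opE[g(X)]=\nu(\setX)=1$) and for deducing \eqref{eq:chainCC3} from \eqref{limsupVUn} matches the paper and is correct. The core of the argument, however, is different from the paper's and, as described, has a genuine gap.

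The paper does not use a deterministic companding partition. It draws the $n$ codepoints as i.i.d.\ copies $X_1,\dots,X_n$ of $X$ itself (i.e.\ from $\mu_X$, not from $\nu$), bounds $\opP[\min_i\rho(x,X_i)\ge n^{-k/m}\delta]\le \exp\bigl(-n\,\mu_X(\setB_\rho(x,n^{-k/m}\delta))\bigr)$, and writes $\mu_X(\setB_\rho(x,\cdot))=g(x,\cdot)\,\nu(\setB_\rho(x,\cdot))\ge b\,g(x,\cdot)\,\delta^{m/k}n^{-1}$ using superregularity of $\nu$. The density ratio then enters through the Lebesgue differentiation theorem: $1/g(x,\delta)=\nu(\setB_\rho(x,\delta))/\mu_X(\setB_\rho(x,\delta))\to (\mathrm d\nu/\mathrm d\mu_X)(x)$ as $\delta\to0$ for $\mu_X$-a.e.\ $x$, and \eqref{eq:deffxinfA} supplies both the boundedness needed for that theorem and the uniform domination $g\ge t>0$ that justifies the Fatou/dominated-convergence exchange. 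Integrating the limiting tail $e^{-b\delta^{m/k}/f(x)}$ in $\delta$ is precisely what produces the factor $\Gamma(1+k/m)\,f(x)^{k/m}b^{-k/m}$.

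The gap in your route is quantitative and cannot be patched by more care in the cell construction. Since $m>k$, we have $k/m\in(0,1)$ and hence $\Gamma(1+k/m)<1$: the claimed constant is \emph{strictly smaller} than $\Omega_{k/m}(X)\,b^{-k/m}$. A deterministic construction that bounds the distortion of each point by the $k$-th power of the radius of its cell can at best reach $\Omega_{k/m}(X)\,b^{-k/m}$ (and in practice worse: superregularity only controls radii of \emph{balls}, so turning ``cell of $\nu$-measure $s$'' into ``radius $\le (s/b)^{1/m}$'' requires a covering/packing argument that loses an additional factor $2^k$). The sub-unit factor $\Gamma(1+k/m)$ is an averaging gain that comes from the exponential law of the minimum distance to a random codebook; it is not available to a worst-case-radius bound. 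So the ``delicate step'' you flag is not merely delicate — the deterministic companding strategy proves a weaker inequality than the one asserted, and the missing idea is the random-codebook argument combined with the differentiation of $\mu_X$ against $\nu$.
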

\begin{proof}See Appendix 
\ref{thm:upperstrongproof}.\end{proof}

\color{black}
Note that  \eqref{eq:chainCC3} improves upon the upper bound 
  on $\overline{C}_k(X)$ in  
Item \ref{eq:boundC2} of Corollary \ref{cor:DC2} as  the inequality in \eqref{eq:OmegaSigma3} is strict for $\alpha=k/m<1$ unless  $\mu_X=\nu$.

We close this section by stating  a  technical result needed later in the paper. 

\begin{lem}\label{lem:dimbound}
Let   $(\setX,\colX)$ and  $(\setY,\colY)$  be  measurable spaces and  consider the  distortion function  $\rho\colon \setX\times\setY\to[0,\infty]$. For 
$X$ taking values in  $(\setX,\colX)$ and $m,k\in(0,\infty)$, the following properties hold: 
\begin{enumerate}[(ii)]%[itemsep=1ex,topsep=1ex]
\item \label{eq:boundCcheck2} If $\liminf_{n\to\infty} n^\frac{k}{m}V_n(X)>0$, then $\underline{D}_k(X)\geq m$. 
\item \label{eq:boundCcheck} If $\limsup_{n\to\infty} n^\frac{k}{m}V_n(X)<\infty$, then $\overline{D}_k(X)\leq m$.
%\item \label{eq:boundCcheck3} Under the assumptions of \ref{eq:boundCcheck2}  and \ref{eq:boundCcheck} we have  ${D}_k(X)= m$, $\underline{C}_k(X)=a$, and $\overline{C}_k(X)= b$.  
\end{enumerate}
\end{lem}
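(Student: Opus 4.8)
The plan is to obtain both items directly from the definitions
\[
\underline{D}_k(X)=\liminf_{n\to\infty}\frac{k\log n}{\log(1/V_n(X))}
\qquad\text{and}\qquad
\overline{D}_k(X)=\limsup_{n\to\infty}\frac{k\log n}{\log(1/V_n(X))},
\]
by converting the two hypotheses on the polynomial decay rate of $V_n(X)$ into one-sided bounds on $\log(1/V_n(X))$ and then evaluating an elementary limit of the form $\lim_{n\to\infty}\frac{k\log n}{a+(k/m)\log n}=m$.

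First I would record the preliminary fact that $n\mapsto V_n(X)$ is non-increasing: since $\setF_n(\setX,\setY)\subseteq\setF_{n+1}(\setX,\setY)$, the infimum in \eqref{eq:quanterr} over the larger class can only be smaller, so $V_{n+1}(X)\le V_n(X)$. I use this to dispose of degenerate cases. In Item \ref{eq:boundCcheck}, the hypothesis gives a finite $M$ with $V_n(X)\le M\, n^{-k/m}$ for all large $n$, hence $V_n(X)\to 0$; and if $V_n(X)=0$ for some $n$, then by monotonicity $\overline D_k(X)=0\le m$ and we are done. In Item \ref{eq:boundCcheck2}, the hypothesis forces $V_n(X)>0$ for all large $n$; moreover we may assume $V_n(X)\to 0$, since otherwise $V_n(X)$ is bounded away from $0$, the object is not finitely quantizable, and $\underline D_k(X)$ is (with the usual convention) infinite, so the claim holds trivially. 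Thus in both items we may work under the assumption $V_n(X)\in(0,1)$ for all sufficiently large $n$, so that $\log(1/V_n(X))>0$ and the ratios above are well-defined positive quantities.

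For Item \ref{eq:boundCcheck2}, the assumption $\liminf_{n\to\infty}n^{k/m}V_n(X)>0$ yields an $\varepsilon\in(0,\infty)$ and an $N\in\naturals$ such that $V_n(X)\ge\varepsilon\, n^{-k/m}$ for all $n\ge N$, and hence $\log(1/V_n(X))\le\log(1/\varepsilon)+\frac{k}{m}\log n$. Taking $n$ large enough that the right-hand side is positive gives $\frac{k\log n}{\log(1/V_n(X))}\ge\frac{k\log n}{\log(1/\varepsilon)+(k/m)\log n}$, whose limit as $n\to\infty$ equals $m$; passing to the $\liminf$ yields $\underline D_k(X)\ge m$. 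Item \ref{eq:boundCcheck} is the mirror image: the assumption $\limsup_{n\to\infty}n^{k/m}V_n(X)<\infty$ yields an $M\in(0,\infty)$ and an $N\in\naturals$ with $V_n(X)\le M\, n^{-k/m}$ for all $n\ge N$, hence $\log(1/V_n(X))\ge\log(1/M)+\frac{k}{m}\log n$; for $n$ large this gives $\frac{k\log n}{\log(1/V_n(X))}\le\frac{k\log n}{\log(1/M)+(k/m)\log n}\to m$, and passing to the $\limsup$ yields $\overline D_k(X)\le m$.

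I do not anticipate a genuine obstacle: the mathematical content reduces in each case to a single asymptotic evaluation. The only point requiring attention is the bookkeeping around the sign of $\log(1/V_n(X))$ — that is, excluding the possibility that $V_n(X)\ge 1$ for infinitely many $n$, and, in Item \ref{eq:boundCcheck2}, that $V_n(X)$ does not tend to $0$ — and both of these are neutralized by the monotonicity of $V_n(X)$ together with the observation that in the excluded situations the asserted inequality is immediate.
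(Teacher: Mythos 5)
Your proof is correct and rests on the same elementary manipulation as the paper's: converting the hypothesis $\varepsilon n^{-k/m}\le V_n(X)$ (resp.\ $V_n(X)\le M n^{-k/m}$) into a bound on $\log(1/V_n(X))$ and evaluating the resulting ratio; the paper merely phrases this contrapositively as a proof by contradiction, extracting a subsequence along which $n^{k/m}V_n(X)$ would tend to $0$ (resp.\ $\infty$). Your explicit handling of the degenerate cases ($V_n(X)=0$, $V_n(X)\not\to 0$, $V_n(X)\ge 1$) is bookkeeping the paper silently omits, and is welcome, though the case $V_n(X)\ge 1$ for all $n$ is a pathology of the definition itself rather than something either argument truly resolves.
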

\begin{proof}
See Appendix \ref{proof:dimbound}.
\end{proof}

\section{Linking R-D Theory and Quantization}\label{sec:link}
The purpose of this section is to establish, for i.i.d. sequences $(X_i)_{i\in\naturals}$, a relationship between R-D theory and quantization  that is both conceptual and quantitative. 
To this end, recall the setup described in Section \ref{sec:RDIntro},  where $(\setA,\colA)$ and $(\setB,\colB)$ are   measurable spaces %\cite[Section 1.4]{gr11} 
equipped with the  distortion function  $\sigma\colon \setA\times\setB\to[0,\infty]$,   
 and let $(X_i)_{i\in\naturals}$  be a 
sequence of i.i.d. random variables taking values in $(\setA,\colA)$.   
%In contrast to the lower bound derived in Theorem in \eqref{eq:SLB1a}, it requires a $\sigma$-finite $\sigma^{1/k}$ suregular measure $\mu$ with $\mu_{X_1}$   
For a given rate  $R\in[0,\infty)$ and distortion $D\in[0,\infty)$, 
the  pair $(R,D)$ is said to be $\ell$-achievable if
there exists a source code $g_{(\ell)}\colon \setA^\ell \to \setB^\ell$   of length $\ell$ with $\abs{g_{(\ell)}(\setA^\ell)} \leq \lfloor e^{\ell R}\rfloor$ and  expected average distortion  
\begin{align}\label{eq:leqdfin2}
\opE\mleft[\sigma_{(\ell)}\big((X_1,\dots,X_\ell),g_{(\ell)}(X_1,\dots,X_\ell)\big)\mright] \leq D, 
\end{align} 
where 
\begin{align}\label{eq:ghol2}
\sigma_{(\ell)}((x_1,\dots,x_\ell),(y_1,\dots,y_\ell))=\frac{1}{\ell}\sum_{i=1}^\ell\sigma(x_i,y_i) 
\end{align}
is the average distortion function of length $\ell$,  and 
%Moreover, the  pair $(R,D)$  is said to be achievable if it is $\ell$-achievable with $\ell\in\naturals$. 
%For every $\ell\in\naturals$ and $D\in(0,\infty)$, 
%we set 
\begin{equation}
R_{(\ell)}(D) = \inf\{R\in[0,\infty):  \text{$(R,D)$ is $\ell$-achievable}\}. 
\label{eq:rdfin3}
\end{equation} 
The  pair $(R,D)$  is said to be achievable if  there exists an $\ell\in\naturals$ such that  $(R,D)$  is $\ell$-achievable. 
Finally, we set   
\begin{equation}
R(D) = \inf\{R\in[0,\infty):  \text{$(R,D)$ is achievable}\}.  
\label{eq:rdfin22}
\end{equation}

First, note that $R<R(D)$ in R-D theory implies $V_{\lfloor e^{R}\rfloor}(X_1)\geq D$ in quantization. 
In fact,  pairs $(R,D)$  with $R<R(D)$ are not  $1$-achievable. As the set of source codes of length $1$ 
with $|f(\setX)|\leq \lfloor e^{R}\rfloor$ is 
$\setF_{\lfloor e^{R}\rfloor}(\setX,\setY)$, we can therefore conclude  that $R<R(D)$  implies  $V_{\lfloor e^{R}\rfloor}(X_1)\geq D$.

Interestingly, under  subregularity for $\delta_0=\infty$, reasoning in 
 the opposite direction  is also possible.  Concretely, one can get a lower bound  on $R(D)$ based on results from quantization. 
This will be accomplished by applying  Theorem \ref{thm:singleshot} to  vectors  $X=(X_1,\dots,X_\ell)$   and then taking the limit $\ell\to\infty$. 
%We now derive  R-D results for  of random variables based on the application of  Theorem \ref{thm:singleshot}. 
%Recall the setup  %and 
%\begin{equation}
%R(D) = \inf\{R\in(0,\infty]:  \text{$(R,D)$ is achievable}\}.  
%\label{eq:rdfin2}
%\end{equation}
We proceed to establish lower bounds on $R_{(\ell)}(D)$ for all $\ell\in\naturals$. 

\begin{thm}\label{cor:MSiid} 
Let  $(X_i)_{i\in\naturals}$ be an i.i.d. sequence of random variables  taking values in the $\sigma$-finite measure space $(\setA,\colA,\mu)$ satisfying   $\mu_{X_1}\ll\mu$ and $\lVert \mathrm d \mu_{X_1}/\mathrm d \mu\rVert_{p/(p-1)}^{(\mu)}<\infty$ with  $p\in [1,\infty)$, let  $(\setB,\colB)$ be  a measurable space, and  consider the  distortion function $\sigma\colon \setA\times\setB\to[0,\infty]$.    
Further,  let $k\in(0,\infty)$ and suppose   that  $\mu$ is $\sigma^{1/k}$-subregular of dimension $m\in(0,\infty)$ with  subregularity
constants  $c\in(0,\infty)$ and $\delta_0 \in (0,\infty]$.
Then,  we have $ R_{(\ell)} (D) \geq \widetilde R_{(\ell)}(D)$ for all $D \in(0,D_{(\ell)})$, where
\begin{align}\label{eq:MSbound}
\widetilde R_{(\ell)}(D)
=   \frac{m}{pk} \log \bigg(   \frac{\ell m}{(\ell m + pk)D} \bigg)
-  \log (d_{(\ell)})
\end{align}
with 
\begin{align}\label{eq:parsubsup2a}
d_{(\ell)}&=\frac{\Gamma \mleft(1+\frac{m}{pk}\mright)\ell^{\frac{ m}{pk}}}{\Gamma^{\frac{1}{\ell}}\mleft(1+\frac{\ell m}{pk}\mright)}  c^{\frac{1}{p}} 
\lVert\mathrm d \mu_{X_1}/\mathrm d \mu\rVert^{(\mu)}_{p/(p-1)} 
\end{align}
and 
\begin{equation}\label{eq:Dl}
D_{(\ell)} 
=  
\frac{\delta_0^{k}}{\ell}\frac{\ell m}{\ell m+pk}. 
\end{equation} 
Moreover, $\widetilde R_{(\ell)}(D)$
in  \eqref{eq:MSbound} is strictly monotonically decreasing in $\ell$ with 
\begin{align}\label{eq:asymptoticA1}
\lim_{\ell\to\infty} \widetilde R_{(\ell)}(D) =-\log \mleft(\lVert\mathrm d \mu_{X_1}/\mathrm d \mu\rVert^{(\mu)}_{p/(p-1)}\mright)+ F_{m/p,k,c^{1/p}}(D)
\end{align}
for all $D\in(0,\infty)$ 
and  with $F_{m,k,c}(D)$ as defined  in \eqref{eq:F}.  
\end{thm}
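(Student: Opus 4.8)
The plan is to relate $R_{(\ell)}(D)$ to the quantization error of the block vector $X^{(\ell)}=(X_1,\dots,X_\ell)$ under the average distortion function $\sigma_{(\ell)}$ of length $\ell$, apply Theorem~\ref{thm:singleshot} to $X^{(\ell)}$, and then pass to the limit $\ell\to\infty$. The first ingredient is the elementary observation that a source code $g_{(\ell)}\colon\setA^\ell\to\setB^\ell$ of length $\ell$ with $\lvert g_{(\ell)}(\setA^\ell)\rvert\leq\lfloor e^{\ell R}\rfloor$ is an element of $\setF_{\lfloor e^{\ell R}\rfloor}(\setA^\ell,\setB^\ell)$ for the distortion function $\rho=\sigma_{(\ell)}$; hence, if $(R,D)$ is $\ell$-achievable, then $V_{\lfloor e^{\ell R}\rfloor}(X^{(\ell)})\leq D$. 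It therefore suffices to show that $V_{\lfloor e^{\ell R}\rfloor}(X^{(\ell)})\leq D$ together with $D\in(0,D_{(\ell)})$ implies $R\geq\widetilde R_{(\ell)}(D)$, because $R_{(\ell)}(D)\geq\widetilde R_{(\ell)}(D)$ then follows upon taking the infimum over all $\ell$-achievable $R$ (the infimum of the empty set being $\infty$).

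The crucial step for a sufficiently sharp lower bound on $V_{\lfloor e^{\ell R}\rfloor}(X^{(\ell)})$ is to transform to the distribution $\mu_{X_1}$ before forming products, rather than applying Theorem~\ref{thm:singleshot} to $\mu^{\otimes\ell}$ directly. By Lemma~\ref{lem:transformsub}, applied with the distortion function $\sigma^{1/k}$, reference measure $\mu$, and $\nu=\mu_{X_1}$, the probability measure $\mu_{X_1}$ is $\sigma^{1/k}$-subregular of dimension $m/p$ with subregularity constants $\tilde c:=c^{1/p}\lVert\mathrm d\mu_{X_1}/\mathrm d\mu\rVert^{(\mu)}_{p/(p-1)}$ and $\delta_0$. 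Since the $X_i$ are i.i.d., $\mu_{X^{(\ell)}}=\mu_{X_1}^{\otimes\ell}$, and Item~\ref{item:prodsub} of Proposition~\ref{prp:subprod}, applied with $\mu_i=\mu_{X_1}$, $\rho_i=\sigma$, $\alpha_i=1/\ell$, $m_i=m/p$, $c_i=\tilde c$, and $\delta_i=\delta_0$ for $i=1,\dots,\ell$, shows that $\mu_{X^{(\ell)}}$ is $\sigma_{(\ell)}^{1/k}$-subregular of dimension $\ell m/p$ with subregularity constants
\[
\tilde c_{(\ell)}=\frac{\Gamma^\ell\mleft(1+\frac{m}{pk}\mright)}{\Gamma\mleft(1+\frac{\ell m}{pk}\mright)}\,\tilde c^{\ell}\,\ell^{\frac{\ell m}{pk}}\qquad\text{and}\qquad\tilde\delta_{(\ell)}=\delta_0\ell^{-1/k}.
\]
Now I would apply Theorem~\ref{thm:singleshot} to $X=X^{(\ell)}$ with reference measure $\mu_{X^{(\ell)}}$, which is finite and hence $\sigma$-finite; since $\mathrm d\mu_{X^{(\ell)}}/\mathrm d\mu_{X^{(\ell)}}\equiv1$, the hypothesis holds with $p=1$ and $\Sigma_1(X^{(\ell)})=1$. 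This gives $V_n(X^{(\ell)})\geq\min\bigl\{\tilde c_{(\ell)}^{-pk/(\ell m)}n^{-pk/(\ell m)},\,\tilde\delta_{(\ell)}^k\bigr\}\frac{\ell m}{\ell m+pk}$, and $\tilde\delta_{(\ell)}^k\frac{\ell m}{\ell m+pk}=D_{(\ell)}$ as in \eqref{eq:Dl}.

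Fixing $D\in(0,D_{(\ell)})$ and an $\ell$-achievable $(R,D)$, and writing $n=\lfloor e^{\ell R}\rfloor$, the chain $\min\{\tilde c_{(\ell)}^{-pk/(\ell m)}n^{-pk/(\ell m)},\tilde\delta_{(\ell)}^k\}\frac{\ell m}{\ell m+pk}\leq V_n(X^{(\ell)})\leq D<D_{(\ell)}$ forces the minimum to be attained by its first argument. Solving the resulting inequality for $n$, using $e^{\ell R}\geq n$, taking logarithms, dividing by $\ell$, and inserting $\tilde c_{(\ell)}$ (with $\tilde c=c^{1/p}\lVert\mathrm d\mu_{X_1}/\mathrm d\mu\rVert^{(\mu)}_{p/(p-1)}$) yields $R\geq\widetilde R_{(\ell)}(D)$ with $d_{(\ell)}$ exactly as in \eqref{eq:parsubsup2a}, which proves the lower bound. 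For monotonicity and the limit I would observe that, with $a:=m/(pk)$, the $\log\ell$-terms cancel and $\widetilde R_{(\ell)}(D)-\bigl(\frac1\ell\log\Gamma(1+\ell a)-a\log(\ell a+1)\bigr)$ is independent of $\ell$. Differentiating $\frac1\ell\log\Gamma(1+\ell a)-a\log(\ell a+1)$ in $\ell$ and substituting $x=\ell a$ reduces strict decrease to $(x+1)\bigl(x\psi(1+x)-\log\Gamma(1+x)\bigr)<x^2$ for all $x>0$, where $\psi$ is the digamma function; writing $x\psi(1+x)-\log\Gamma(1+x)=\int_0^x t\,\psi'(1+t)\,\mathrm dt$ and $\frac{x^2}{x+1}=\int_0^x\frac{t(t+2)}{(t+1)^2}\,\mathrm dt$, this further reduces to the strict bound $\psi'(1+t)<\frac1{1+t}+\frac1{(1+t)^2}$, an instance of the integral test applied to $\psi'(1+t)=\sum_{n\geq1}(n+t)^{-2}$. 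Finally, Stirling's formula gives $\frac1\ell\log\Gamma(1+\ell a)-a\log(\ell a+1)\to-a$ as $\ell\to\infty$, and collecting the remaining terms and comparing with the definition \eqref{eq:F} of $F$ identifies the limit \eqref{eq:asymptoticA1} with $-\log\lVert\mathrm d\mu_{X_1}/\mathrm d\mu\rVert^{(\mu)}_{p/(p-1)}+F_{m/p,k,c^{1/p}}(D)$. The main obstacle is the bookkeeping — carrying out the transformations in precisely this order so that the Gamma-function prefactors collapse to the advertised $d_{(\ell)}$ — together with the Gamma-function inequality underpinning strict monotonicity.
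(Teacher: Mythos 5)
Your proof is correct and follows essentially the same route as the paper's: subregularity of $\mu_{X_1}$ via Lemma~\ref{lem:transformsub}, subregularity of the product $\mu_{X^{(\ell)}}$ via Item~\ref{item:prodsub} of Proposition~\ref{prp:subprod} with $\alpha_i=1/\ell$, Theorem~\ref{thm:singleshot} applied to $X^{(\ell)}$ with reference measure $\mu_{X^{(\ell)}}$ itself (so that $\Sigma_1=1$), and the observation that $\ell$-achievability of $(R,D)$ with $D<D_{(\ell)}$ forces $V_{\lfloor e^{\ell R}\rfloor}(X^{(\ell)})\leq D$ and hence pins the minimum in $L_n$ to its first argument. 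The only departure is at the end: where the paper reduces strict monotonicity and the limit \eqref{eq:asymptoticA1} to properties of $x\mapsto x^{-1}\log\Gamma(1+x)-\log(1+x)$ and cites \citep[Theorem 1]{vovo02}, you supply a self-contained elementary proof of the required digamma bound $\sum_{n\geq 1}(n+t)^{-2}<(1+t)^{-1}+(1+t)^{-2}$, which is a correct substitute.
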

\begin{proof}
See Appendix \ref{cor:MSiidproof}.
\end{proof}
%worknow

 Now, if $\mu$ in Theorem \ref{cor:MSiid} is $\sigma^{1/k}$-subregular  with  $\delta_0=\infty$,  then   $D_{(\ell)} =\infty$ for all $\ell\in\naturals$ so that 
%
%
% \eqref{eq:Dlasymp}, implies  
% Theorem  \ref{cor:MSiid}    for 
%$\delta_0=\infty$ therefore yields %\footref{ftn:hi
\begin{align}\label{eq:ellratebound}
R_{(\ell)}(D) \geq   \widetilde R_{(\ell)}(D) \geq-\log \mleft(\lVert\mathrm d \mu_{X_1}/\mathrm d \mu\rVert^{(\mu)}_{p/(p-1)}\mright)+ F_{m/p,k,c^{1/p}}(D)
\end{align}
for all $D\in(0,\infty)$ and $\ell\in\naturals$.
%Based on the definition of $(R,D)$ being achievable if  it is $\ell$-achievable for some $\ell\in\naturals$, 
We conclude that $R(D)\geq \widetilde R_{X_1}^\mathrm{L}(D)$ for all $D\in(0,\infty)$ with 
%Thus, there does not exist an $\ell\in\naturals$ such that the  pair $(R,D)$  of nonnegative real numbers  is  $\ell$-achievable for $R<h_{\mu}(X_1)+ F_{m,k,c}(D)$, which yields  
\begin{align}\label{eq:lowerRDiid}
\widetilde R_{X_1}^\mathrm{L}(D)= -\log \mleft(\lVert\mathrm d \mu_{{X_1}}/\mathrm d \mu\rVert^{(\mu)}_{p/(p-1)}\mright)+ F_{m/p,k,c^{1/p}}(D). 
\end{align}
%Note that the lower bound in \eqref{eq:lowerRDiid} does not apply  for  $\delta_0<\infty$,  as    in this case    $\lim_{\ell\to\infty}D_{(\ell)}=0$ so that  \eqref{eq:ellratebound} does not hold for all $\ell\in\naturals$ and a fixed $D\in(0,\infty)$. 

%worknow 
 
 Alternatively,  suppose that the assumptions of  Theorem \ref{thm:new} (applied to $X=X_1$) are satisfied for $\delta_0=\infty$. In particular, we need $\abs{h_\mu({X_1})}<\infty$. Then,  the R-D function $R_{{X_1}}(D)$ is lower-bounded according to  
$R_{{X_1}}(D)\geq R_{{X_1}}^\mathrm{L}(D)$ with (see \eqref{eq:SLB1a})
 \begin{align}\label{eq:lowerRDiid2}
 R_{X_1}^\mathrm{L}(D)= h_\mu({X_1}) + F_{m,k,c}(D)\quad\text{for all $D\in(0,\infty)$}. 
\end{align}
Since $R_{X_1}(D)=R_{(X_i)_{i\in \naturals}}(D)$ thanks to the i.i.d. assumption, the converse \cite[Theorem 7.2.5]{be71} implies that achievability of $(R,D)$ requires $R\geq R_{X_1}(D)$, which in turn yields
$R(D)\geq R_{{X_1}}^\mathrm{L}(D)$  for all $D\in(0,\infty)$. 

   For $p=1$, the lower bound  $\widetilde R_{X_1}^\mathrm{L}(D)$ in \eqref{eq:lowerRDiid}  differs from   $R_{{X_1}}^\mathrm{L}(D)$  in  \eqref{eq:lowerRDiid2}
in the term $-\log \mleft(\lVert\mathrm d \mu_{{X_1}}/\mathrm d \mu\rVert^{(\mu)}_{\infty}\mright)$, which is  replaced by    $h_\mu({X_1})$.  
 In terms of applicability, validity of $R(D)\geq \widetilde R_{X_1}^\mathrm{L}(D)$ for $p=1$ requires $\lVert \mathrm d \mu_{X_1}/\mathrm d \mu\rVert_{\infty}^{(\mu)}<\infty$, whereas 
$R(D)\geq R_{X_1}^\mathrm{L}(D)$ is based on the assumption $\abs{h_\mu({X_1})}<\infty$.  
As illustrated in Examples \ref{exa:one} and \ref{exa:two} below, there
  are cases where  $\lVert \mathrm d \mu_{X_1}/\mathrm d \mu\rVert_{\infty}^{(\mu)}<\infty$ and  $h_\mu({X_1})=\infty$ and vice versa.

\begin{exa}\label{exa:one}
Set $\setX=\setY=[e,\infty)$  and let the random variable  $X$ take values in $\setX$. Further, set  $g(x):=(\mathrm d \mu_X/\mathrm d \colL^1 \vert_\setX)(x)=1/(x \log^2(x))$. It then follows that 
$\lVert g \rVert_{\infty}^{(\colL^1\vert_{\setX})}=1/e<\infty$, but 
\begin{align}
h_{\colL^1 \vert_\setX}(X) &=-\int_e^\infty g(x) \log (g(x))\,\mathrm d x\\
%&= -\log\log 2 +\int_2^\infty \log(x)  g(x) \mathrm d \colL^1(x)\\
%&\  \ \ + 2\int_2^\infty \log \log(x)  g(x) \mathrm d \colL^1(x) \\ 
%&\geq -\log\log 2 +\int_2^\infty \log(x)  g(x) \mathrm d \colL^1(x)\\
&\geq \int_e^\infty  \frac{1}{x \log(x)} \, \mathrm d x\\
&=\infty. 
\end{align}
\end{exa}

\begin{exa}\label{exa:two}
Set $\setX=\setY=[0,1]$  and let the random variable  $X$ take values in $\setX$. Further, set  $g(x):=(\mathrm d \mu_X/\mathrm d \colL^1 \vert_\setX)(x)=1/(2\sqrt{x})$. It then follows that 
$\lVert g \rVert_{2}^{(\colL^1\vert_{\setX})}=\infty$, which implies $\lVert g \rVert_{\infty}^{(\colL^1\vert_{\setX})}=\infty$, while 
\begin{align}
h_{\colL^1\vert_\setX}(X) 
&= -\int_0^1 g(x) \log (g(x)) \,\mathrm d x\\
&=\log(2)+\frac{1}{2}\int_0^1 \frac{\log (\sqrt{x})}{\sqrt{x}}  \,\mathrm d x\\ 
&=\log(2)+\int_0^1\log (u)  \,\mathrm d u\\
%&= -\log\log 2 +\int_2^\infty \log(x)  g(x) \mathrm d \colL^1(x)\\
%&\  \ \ + 2\int_2^\infty \log \log(x)  g(x) \mathrm d \colL^1(x) \\ 
%&\geq -\log\log 2 +\int_2^\infty \log(x)  g(x) \mathrm d \colL^1(x)\\
&= \log(2) -1.  
\end{align}
\end{exa}

%%%%%%%%%%%%%%%%%%%%%%%%%%%%%%%%%%%%%%%%%%%%%%%
\section{R-D Theory and Quantization  for Compact Manifolds}\label{sec:manifold}
%%%%%%%%%%%%%%%%%%%%%%%%%%%%%%%%%%%%%%%%%%%%%%%%%
In this section, we  particularize our results on R-D theory and quantization  to  random variables taking values in compact manifolds, specifically hyperspheres and Grassmannians.  
Hyperspheres are  prevalent in many areas of data science including  spherical quantization \cite{swth83,mahu10,egla19}, hypersphere learning \cite[Section 4]{joneruty19}, and directional statistics  \cite{maju00}.   
Grassmannians find application in, e.g.,  code design \cite{zhts02, he05,daliri08,piwetiho18}, computer vision \cite{tuvech08},  deep neural network theory \cite{huwugo18}, and the completion of low-rank matrices \cite{boab15}.   
 What we need here in order to apply the program developed above are  suitable sub/super-regularity  conditions, which in turn requires  volume estimates of balls.   
%which will be  seen to be independent of the ball centers thanks to . 
%In our examples, these volume estimates will be seen to be independent of choice of the ball centers 
For hyperspheres and Grassmannians, these  estimates are   obtained by direct computation.  
For general complete Riemannian manifolds under suitable curvature assumptions,  such volume estimates can be derived  using the Bishop-G\"unther volume bounds \cite[Theorem 3.101]{gahula93} (see also \cite[Section III.A]{he05} 
for an application of this method to  Grassmannians  and Stiefel manifolds).

\subsection{ R-D Theory and Quantization for Hyperspheres} \label{sec:hyper}
We first consider random variables  taking values in the hypersphere $\setS^{d-1}(r)$  
and  derive corresponding  lower bounds on the R-D function and lower and upper bounds on the 
%on the $n$-th quantization error for random variables $X$ of distribution  absolutely continuous with respect to $\colH^{d-1}|_{\setS^{d-1}(r)}/A^{d-1}(r)$ and upper bounds for arbitrary random variables distributed on  $\setS^{d-1}(r)$. 
$n$-th quantization error. The bounds on the $n$-th quantization error 
 will allow us  to %establish that the $k$-th quantization dimension exists and equals the dimension $d-1$ of the hypersphere and to 
  obtain lower and upper bounds on the lower and upper $k$-th quantization coefficient, respectively. 
Finally, as an example, we evaluate our results for a random variable of von Mises-Fisher distribution. 
 
We start by establishing  subregularity and  superregularity for the measure  $\colH^{d-1}\vert_{\setS^{d-1}(r)}/a^{(d-1)}(r)$. 
%We will distinguish between the cases  $\setY=\reals^{d}$ and $\setY=\setS^{d-1}(r)$. 

\begin{lem}\label{lem:sphere}
Fix  $d\in\naturals\setminus\!\{1\}$ and $r\in(0,\infty)$ and consider the restricted normalized Hausdorff measure $\mu=\colH^{d-1}\vert_{\setS^{d-1}(r)}/a^{(d-1)}(r)$. Further, let   $\setS^{d-1}(r)\subseteq \setY\subseteq\reals^d$  with distortion function  $\rho\colon\setS^{d-1}(r)\times \setY\to [0,\infty)$, $\rho(x,y)=\lVert x-y\rVert^2_{2}$. Then, the following   holds:  

\begin{enumerate}[(ii)]%[itemsep=1ex,topsep=1ex]
\item 
The measure $\mu$ satisfies the following family of subregularity conditions: 
\label{itemsubR}
\begin{align}
\mu\mleft(\setB_{\rho^{1/2}}\big(y,\delta\big)\mright)
&\leq   c_{\delta_0}\delta^{d-1}\quad\text{for all $y\in\setY$ and $\delta\in(0,\delta_0]$ } \label{eq:subsphere}
\end{align}
 parametrized by $\delta_0\in (0,r]$, where $ c_{\delta_0}=G(\delta_0^2/r^2)/a^{(d-1)}(r)$ with 
\begin{align} \label{eq:Gfunction}
G(\alpha)=\frac{a^{(d-1)}(1)}{2} \frac{I_{\frac{d-1}{2},\frac{1}{2}} \mleft(\alpha \mright)}{\alpha^\frac{d-1}{2}}
\end{align}
 continuous and strictly monotonically increasing on $(0,1]$ with  
\begin{align}\label{eq:limitGa}
\lim_{\alpha\to 0} G(\alpha)=%\frac{a^{d-1}(1)\Gamma\mleft(\frac{d}{2}\mright)}{ 2 \sqrt{\pi}\Gamma\mleft(\frac{d+1}{2}\mright)}=
\frac{\pi^{\frac{d-1}{2}}}{\Gamma\mleft(\frac{d+1}{2}\mright)}=v^{(d-1)}(1).
\end{align} 
\item  \label{itemsubS}
The measure $\mu$ satisfies the following family of superregularity conditions: 
\begin{align}\label{eq:supernew}
\mu\mleft(\widetilde \setB_{\rho^{1/2}}\big(x,\delta\big)\mright)\geq b_{\delta_0}\delta^{d-1}\quad\text{for all $x\in\setS^{d-1}(r)$ and $\delta\in(0,\delta_0]$ }
\end{align}
 parametrized by $\delta_0\in (0,\sqrt{2}r]$, where 
\begin{align}\label{eq:bsuberS}
b_{\delta_0}=\frac{v^{(d-1)}(1)}{a^{(d-1)}(r)}\mleft(1-\frac{\delta_0^2}{4r^2}\mright)^\frac{d-1}{2}.
\end{align}

%The measure $\mu$ satisfies the following family of regularity conditions   \color{blue} for the distortion function $\tilde \rho\colon\setS^{d-1}(r)\times\setS^{d-1}(r)$, $\tilde\rho(x,y)=\lVert x-y\rVert^2_{2}$:\color{black}  \label{itemsubS} %Regularity :
%\begin{align}
%b_{\delta_0}\delta^{d-1}\leq 
%\mu\mleft(\setB_{\tilde \rho^{1/2}}\big(x,\delta\big)\mright)
%&\leq   c_{\delta_0}\delta^{d-1} \label{eq:subsphere2}
%\end{align}
%for all $x\in\setS^{d-1}(r)$ and $\delta\in(0,\delta_0]$  parametrized by $\delta_0\in (0,\sqrt{2}r]$, where $b_{\delta_0}$ and 
%$c_{\delta_0}$ are defined as in Item \ref{itemsubR}. 
%\begin{align}
% c_{\delta_0}=\frac{G\mleft(h\mleft(\frac{\delta_0^2}{r^2}\mright)\mright)}{A^{d-1}(r)} 
%\end{align}
%with $G(\alpha)$ defined as in \eqref{eq:Gfunction} and 
%\begin{align}
%h\colon (0,2]&\to (0,1]\\
%t&\mapsto t(1-t/4)
%\end{align} 
%   bijective and monotonically increasing and $h^{-1}(t)=2-2\sqrt{1-t}$. 
\item
We have the following limits: 
\begin{align}\label{eq:limitcb}
c_0:= \lim_{\delta_0\to 0} c_{\delta_0}  =b_0:=  \lim_{\delta_0\to 0} b_{\delta_0}=\frac{v^{(d-1)}(1)}{a^{(d-1)}(r)}.  
\end{align}
\end{enumerate}
\end{lem} 
\begin{proof}
See Appendix \ref{lem:sphereproof}. 
\end{proof}
Note that   \eqref{eq:limitcb} implies that the bounds in \eqref{eq:subsphere} and \eqref{eq:supernew}
% regularity conditions in Item \ref{itemsubS} 
become tight for $\delta_0\to 0$. 
%the fact that $b_0=c_0=v^{d-1}(1)/A^{d-1}(r)=v_m(\lVert \,\cdot\,\rVert_2)/A^{d-1}(r)$
  The specific value  for the limiting constant, namely $v^{(d-1)}(1)/a^{(d-1)}(r)= \kappa_{d-1}(\lVert \,\cdot\,\rVert_2)/a^{(d-1)}(r)$,    can be explained using the following deep result on  $\colH^{m}$-rectifiable sets   in geometric measure theory.   
\color{black}

\begin{thm}\cite[Theorem 2.63]{amfupa00}  \label{th:rec}
An  $\colH^m$-measurable set  $\setA\subseteq\reals^d$ with 
$0<\colH^m(\setA)<\infty$ is  $\colH^m$-rectifiable if and only if 
\begin{equation}\label{eq:limc0rec}
\lim_{\delta\to 0} \frac{\colH^m\vert_{\setA}\mleft(\setB_{\lVert \,\cdot\,\rVert}\big(y,\delta\big)\mright)}{\delta^m}=\kappa_m(\lVert \,\cdot\,\rVert)
\quad\text{for $\colH^m$-almost all $y\in\setA$}. 
\end{equation}
\end{thm}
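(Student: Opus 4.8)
This is the Marstrand--Mattila rectifiability criterion, and the plan is to establish the two implications separately; the forward one is soft, while the converse carries essentially all the difficulty.

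\emph{Forward implication.} Suppose $\setA$ is $\colH^m$-rectifiable. First I would replace the defining Lipschitz images by $C^1$ ones via the Whitney extension theorem together with a covering argument: up to an $\colH^m$-null set, $\setA\subseteq\bigcup_j M_j$ with each $M_j$ an embedded $C^1$ $m$-submanifold of $\reals^d$. At every point of such a submanifold the $m$-density of $\colH^m\vert_{M_j}$ exists and, by a blow-up to the tangent plane, equals $v_m(\lVert\,\cdot\,\rVert)$ (in the Euclidean case this constant is the same for every $m$-plane). Since $\colH^m(\setA)<\infty$, the standard upper-density estimate $\Theta^{*m}(\colH^m\vert_E,\cdot)=0$ $\colH^m$-a.e.\ off any set $E$ with $\colH^m(E)<\infty$ shows that at $\colH^m$-almost every $y\in\setA$ the countably many null overlaps $M_i\cap M_j$, as well as the submanifolds $M_j$ not containing $y$, contribute nothing to the limit in \eqref{eq:limc0rec}; combining this with the $C^1$ density computation yields \eqref{eq:limc0rec}.

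\emph{Converse implication.} Assume \eqref{eq:limc0rec} and argue by contradiction. Decompose $\setA=\setA_{\mathrm r}\cup P$ into its $\colH^m$-rectifiable part and its purely $\colH^m$-unrectifiable part, and suppose $\colH^m(P)>0$. By the forward implication already proven, $\colH^m\vert_{\setA_{\mathrm r}}$ has $m$-density $v_m(\lVert\,\cdot\,\rVert)$ at $\colH^m$-almost every point of $\setA_{\mathrm r}$, so \eqref{eq:limc0rec} forces $\colH^m\vert_P$ to have $m$-density $v_m(\lVert\,\cdot\,\rVert)$ at $\colH^m$-almost every point of $P$. The crux is to rule this out. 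By the Besicovitch--Federer projection theorem, a purely $m$-unrectifiable set of finite $\colH^m$-measure projects onto an $\colL^m$-null set under $\colH^m$-almost every orthogonal projection to an $m$-plane; on the other hand, existence and positivity of the $m$-density at $\colH^m$-almost every point of $P$ produces, through a Vitali covering argument, an approximate tangent $m$-plane at $\colH^m$-almost every point of $P$ onto which $P$ has positive projected $\colL^m$-density---a contradiction. (An alternative route: existence and positivity of the density forces every tangent measure of $\colH^m\vert_P$ at $\colH^m$-almost every point to be $m$-uniform; the classification of $m$-uniform measures identifies them as flat $m$-planes in this range, and $\colH^m$-a.e.\ flatness of tangent measures implies rectifiability.) Either way $\colH^m(P)=0$, hence $\setA$ is $\colH^m$-rectifiable.

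The main obstacle is the converse implication: peeling off the rectifiable part is routine, but excluding a purely unrectifiable set that carries a well-defined, positive $m$-density rests on genuinely deep input---Besicovitch's projection theorem for $m=1$, Federer's extension to general $m$, or, in the tangent-measure approach, Marstrand's and Preiss's structure theory for uniform measures---for which there is no elementary substitute.
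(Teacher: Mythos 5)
The paper does not prove this statement at all: it is imported verbatim from \citep[Theorem 2.63]{amfupa00} and used as a black box (to explain the limiting constant in \eqref{eq:limitcb}), so there is no internal proof to compare yours against. Your outline is the standard architecture of the Besicovitch--Marstrand--Mattila theorem and is sound as a roadmap: $C^1$ approximation plus the a.e.\ vanishing of upper densities off a set of finite $\colH^m$-measure for the forward direction; decomposition into rectifiable and purely unrectifiable parts, and exclusion of a purely unrectifiable piece of positive measure, for the converse. One step, however, is circular as written: in your projection-theorem route, the claim that ``existence and positivity of the $m$-density produces, through a Vitali covering argument, an approximate tangent $m$-plane onto which $P$ has positive projected density'' is not a routine covering argument --- extracting approximate tangent behaviour from the mere existence of the density is exactly the hard content of the theorem (Mattila's treatment, or Preiss's tangent-measure machinery in your alternative route), so this step presupposes what it must prove and should instead be an explicit appeal to that deep result, as your closing paragraph in effect concedes. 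A second, smaller point: as printed the statement carries a general norm, but at a point of a rectifiable set with tangent plane $V$ the density of $\colH^m\vert_{\setA}$ with respect to $\lVert\,\cdot\,\rVert$-balls equals the $m$-volume of $V\cap\{x:\lVert x\rVert<1\}$, which depends on $V$ unless the norm is Euclidean; your parenthetical remark correctly flags this, and the paper indeed only invokes the theorem for $\lVert\,\cdot\,\rVert_2$.
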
 
Intuitively, Theorem  \ref{th:rec}  says that viewed from close up, every $\colH^m$-rectifiable set looks almost everywhere like $\reals^m$. 
%Now, applying  Theorem  \ref{th:rec} to the $\colH^{(d-1)}$-rectifiable set $\setS^{d-1}(r)$ implies that $c_0=b_0$ in 
%\eqref{eq:limitcb} forces    $b_0=c_0={v^{d-1}(1)}/{a^{d-1}(r)}=v_m(\lVert \,\cdot\,\rVert)/{a^{d-1}(r)}$. 

% yields 
% now explains the origin of the limiting constant  ${v^{d-1}(1)}/{a^{d-1}(r)}$ for the  
%In fact, in the light of \eqref{eq:limc0rec} we must have
% $c_0\geq {v^{d-1}(1)}/{a^{d-1}(r)}$ and $b_0\leq {v^{d-1}(1)}/{a^{d-1}(r)}$, so that  tightness  is possible  only for 

In the  following, we fix $d\in\naturals\!\setminus\!\{1\}$, set 
\begin{align}
\mu:=\frac{\colH^{d-1}\vert_{\setS^{d-1}(r)}}{a^{(d-1)}(r)},  
\end{align}
 let $X$ be a random variable taking values in $\setX=\setS^{d-1}(r)\subseteq \setY\subseteq \reals^d$, and  consider the distortion function  
$\rho\colon \setX\times \setY\to[0,\infty)$, $\rho(x,y)=\lVert x-y\rVert_2^2$.
%\tilde \rho\colon \setX\times \setX&\to[0,\infty), \ \  \tilde \rho(x,y)=\lVert x-y\rVert_2^2\quad\text{(for $\setY=\setX$)}

%and satisfying $h_\mu(X)>-\infty$,  
%and  

We first  derive a lower bound on the R-D function  under the assumptions $\mu_X\ll\mu$ and  
 $h_\mu(X)>-\infty$. 
 Before starting in earnest, we note that 
Jensen's inequality  \cite[Theorem 2.3]{peprto92} combined with $\mu(\setX)=1$ yields $h_\mu(X) \leq \log (\mu(\setX))=0$, which in turn implies  
  $\abs{h_\mu(X)}<\infty$. 
Fix $\alpha\in(0,1/2)$  
 and set, for every $D\in(0,\infty)$, $\delta_D=\min\{D^{\alpha}, r \}$ and 
\begin{align}
c_{\delta_D} =\frac{1}{2} \frac{I_{\frac{d-1}{2},\frac{1}{2}} \mleft(\frac{\delta_D^2}{r^2} \mright)}{\delta_D^{d-1}}. 
\end{align}
Application of Theorem \ref{thm:new} with $k=2$ and $m=d-1$ 
combined with Item \ref{itemsubR}  of  Lemma \ref{lem:sphere} for $\delta_0=\delta_{D}$
then  yields 
$R_X^{\text{SLB}}(D)\geq R_X^{\text{L}}(D)$ for all $D\in(0,\infty)$ with       
\begin{align}
R_X^{\text{L}}(D)
&=h_\mu(X)-\frac{d-1}{2}\label{eq:slb1}\\
 &\ \ \ -\log \mleft(c_{\delta_D}\mleft(\frac{d-1}{2D}\mright)^{- \frac{d-1}{2}}\Gamma\mleft(\frac{d+1}{2}\mright) + e^{-\frac{(d-1)\delta_D^2}{2D}}\mright).  \label{eq:slb2}
\end{align}
Moreover, since  $\delta_D\to0$ as $D\to0$ and  $e^{-(d-1)\delta_{D}^2/(2D)}\to 0$  as $D\to 0$, we have     
\begin{align}
\lim_{D\to0}(R_X^{\text{L}}(D)- h_{\mu}(X) -F_{d-1,2,c_0}(D))=0, 
\end{align} 
where %$F_{m,k,c}(D)$  is as defined   in \eqref{eq:F}  
 we used \eqref{eq:limitcb} to conclude that 
\begin{align}\label{eq;divdA}
c_0=\frac{v^{(d-1)}(1)}{a^{(d-1)}(r)}.
\end{align}
Thus, as $D\to 0$, the lower bound $R_X^{\text{L}}(D)$ approaches the classical Shannon lower bound  $R_W^{\text{SLB}}(D)$ in \eqref{eq:shlbclassic} for a continuous random variable $W$ taking values in  $\reals^{d-1}$ and of differential entropy 
\begin{equation} 
h_{\colL^{d-1}}(W) = \log (a^{(d-1)}(r)) +h_\mu(X) = h_{\tilde \mu}(X)
\end{equation}
with $\tilde \mu=a^{(d-1)}(r)\mu=\colH^{d-1}\vert_{\setS^{d-1}(r)}$. This reflects the fact that from close up the hypersphere $\setS^{d-1}(r)$ looks like $\reals^{d-1}$ (see Theorem  \ref{th:rec}).

Next, we derive a lower bound on the $n$-th quantization error $V_{n}(X)$    under the assumptions $\mu_X\ll\mu$  and  
\begin{align}
\Sigma_p(X):=\mleft\lVert \frac{\mathrm d \mu_X}{\mathrm d \mu}\mright\rVert_{p/(p-1)}^{(\mu)}<\infty \quad\text{with $p\in[1,\infty).$}\label{eq:gp}
\end{align}    
 To this end, for  every $n\in\naturals$ with $n\geq n_0:=2^{1/p}/\Sigma_p(X)$, we set 
\begin{align}
\delta_{n}
&= r \sqrt{I_{\frac{d-1}{2},\frac{1}{2}}^{-1}\mleft(\frac{2}{n^p \Sigma^p_p(X)}\mright)}   \label{eq:rhon} 
\end{align}
and 
\begin{align}
c_{\delta_n} =\frac{1}{2} \frac{I_{\frac{d-1}{2},\frac{1}{2}} \mleft(\frac{\delta_n^2}{r^2} \mright)}{\delta_n^{d-1}}. 
\end{align}
These  choices for  $\delta_n$  and $c_{\delta_n}$ guarantee that   $\delta_{n}\leq r$  and 
\begin{align}
\delta_n= c_{\delta_n}^{-\frac{1}{d-1}} \Sigma_p^{-\frac{p}{d-1}}(X)\, n^{-\frac{p}{d-1}} \quad\text{ for all  $n\geq n_0.$}
\end{align}  
%which directly follows from   
%The properties of $I_{\frac{d-1}{2},\frac{1}{2}}(s)$ needed to infer i) iii)  are straightforward verified from the definition of   $I_{\frac{d-1}{2},\frac{1}{2}}(s)$ in \eqref{eq:normalizedI}. 
Application of Theorem \ref{thm:singleshot} with $k=2$ and $m=d-1$ combined with Item \ref{itemsubR}  of  Lemma \ref{lem:sphere} for $\delta_0=\delta_{n}$
then  yields  $V_n(X)\geq   L_n(X)$ for all $n\ge n_0$ with 
\begin{align}
  L_n(X)\label{eq:lowerboundLn}
&= \frac{d-1}{d-1+2p}r^2\, I_{\frac{d-1}{2},\frac{1}{2}}^{-1}\mleft(\frac{2}{n^p \Sigma_p^p(X)}\mright).
\end{align} 
This allows us to establish  
\begin{align} \label{eq:limsumS} 
 \liminf_{n\to\infty} n^\frac{2p}{d-1}V_{n}(X)
\geq  \frac{d-1}{d-1+2p}\, r^2\, \Sigma_p^{-\frac{2p}{d-1}}(X)\, k_d >0
% \Sigma_{d-1,2}(X) \frac{d-1}{d+1} r^2 K_d. \label{eq:laststepC}
\end{align}
with 
\begin{align} \label{eq:Kd}
k_d=\mleft(\frac{2\sqrt{\pi}\,\Gamma\mleft(\frac{d+1}{2}\mright)}{\Gamma\mleft(\frac{d}{2}\mright)}\mright)^\frac{2}{d-1}.  
\end{align} 
%The lower bound in \eqref{eq:limsumS} will allow us later to lower-bound $\underline{C}_2(X)$. 
Indeed, we have 
\begin{align}
 \liminf_{n\to\infty} n^\frac{2p}{d-1}V_{n}(X)
 &\geq  \lim_{n\to\infty} n^\frac{2p}{d-1}L_{n}(X)\\
 &=   \frac{d-1}{d-1+2p}\,r^2\, \lim_{n\to\infty} n^\frac{2p}{d-1}I_{\frac{d-1}{2},\frac{1}{2}}^{-1}\mleft(\frac{2}{n^p \Sigma_p^p(X)}\mright)\\
 &= \frac{d-1}{d-1+2p}\, r^2 \,\Sigma_p^{-\frac{2p}{d-1}}(X) \mleft(\lim_{\alpha\to 0}\frac{2\alpha^\frac{d-1}{2}}{I_{\frac{d-1}{2},\frac{1}{2}}(\alpha)}\mright)^\frac{2}{d-1},\label{eq:alpha} 
% \Sigma_{d-1,2}(X) \frac{d-1}{d+1} r^2 K_d. \label{eq:laststepC}
\end{align}
where in \eqref{eq:alpha} we set 
\begin{align}
\alpha=I_{\frac{d-1}{2},\frac{1}{2}}^{-1}\mleft(\frac{2}{n^p \Sigma^p_p(X)}\mright).
\end{align} Using  L'H\^opital's rule, we obtain  
\begin{align}\label{eq:alphalim}
\lim_{\alpha\to 0}\frac{2\alpha^\frac{d-1}{2}}{I_{\frac{d-1}{2},\frac{1}{2}}(\alpha)}
&= \frac{2\sqrt{\pi}\,\Gamma\mleft(\frac{d+1}{2}\mright)}{\Gamma\mleft(\frac{d}{2}\mright)}, 
\end{align} 
which when inserted  in \eqref{eq:alpha} establishes \eqref{eq:limsumS}.

We next derive an upper bound $U_n$ on the $n$-th quantization error $V_{n}(X)$.  %This upper bound is universal in the sense of being independent of  the specific choice of  $Z$. 
%This upper bound trivially applies to $\setY=\reals^d$ as well, simply because  more flexibility in placing the quantization points can only reduce the $n$-th quantization error. %Note that we do not make  any further assumptions on $X$. 
Fix $\alpha\in(0,1)$  
 and set, for  every $n\in\naturals\!\setminus\!\{1\}$, $\delta_{n}=\sqrt{2}r n^{-{\alpha}/(d-1)}$ 
and 
\begin{align}
b_{\delta_{n}}=\frac{\Gamma\mleft(\frac{d}{2}\mright)}{2\sqrt{\pi}r^{d-1}\Gamma\mleft(\frac{d+1}{2}\mright)}\mleft(1-\frac{\delta_{n}^2}{4r^2}\mright)^\frac{d-1}{2}. 
\end{align}
%The specific choices for $\delta_{n}$ and $b_{\delta_{n}}$ 
%guarantee that   $\delta_{n}\leq \sqrt{2}r$  and $ 4r^2-\delta_{n}^2>0$ for all $n\in\naturals$. 
As
\begin{align}
\sup_{x,y\in\setS^{d-1}(r)} \lVert x-y\rVert_2^2 = 4r^2,
\end{align}
application of Theorem \ref{thm:singleshot2} with $\setX=\setS^{d-1}(r)$\color{black}, $\nu=\mu$, $k=2$, and $m=d-1$  combined with Item \ref{itemsubS}  of  Lemma \ref{lem:sphere} for $\delta_0=\delta_{n}$
then yields 
$V_n(X)\leq U_n$ for all  $n\in\naturals$ with  
\begin{align}\label{eq:Un}
U_n=\Gamma\mleft(\frac{d+1}{d-1} \mright)\mleft(\supc_{\delta_{n}} n\mright)^{-\frac{2}{d-1}} +\mleft(4r^2-\delta_{n}^2\mright)e^{- \supc_{\delta_{n}} n\, \delta_{n}^{d-1}}.   
\end{align} 
Next, note that $\lim_{n\to\infty} \delta_{n}=0$ and  $\lim_{n\to\infty} n^\frac{2}{d-1}e^{- \supc_{\delta_{n}} n\, \delta_{n}^{d-1}} =0$. 
Therefore, $V_n(X)\leq U_n$ implies    
 \begin{align}
 \limsup_{n\to\infty} n^\frac{2}{d-1}V_{n}(X)
 &\leq   \Gamma\mleft(\frac{d+1}{d-1}\mright)r^2k_d <\infty\label{eq:laststepC2}
\end{align}   
with $k_d$  as defined  in \eqref{eq:Kd}. 
Combining \eqref{eq:limsumS} and \eqref{eq:laststepC2}  with 
Items \ref{eq:boundCcheck2} and \ref{eq:boundCcheck} of Lemma \ref{lem:dimbound}, respectively,   
now implies  %$\footnote{This also follows from \cite[Theorem 12.18]{grlu00}. } 
\begin{align}\label{eq:Dk}
\frac{d-1}{p}\leq\underline{D}_2(X)\leq \overline{D}_2(X)=d-1. 
\end{align}
If  $p=1$, then the $2$-nd quantization dimension  ${D}_2(X)$ exists and  equals  the geometric dimension $d-1$ of the hypersphere.
Similarly, if  $h_\mu(X)>-\infty$ (Jensen's inequality  \cite[Theorem 2.3]{peprto92} combined with $\mu(\setX)=1$ yields $h_\mu(X) \leq \log (\mu(\setX))=0$), 
then Item \ref{item2dimequal} of Corollary \ref{cor:dimequal} (with  $\setX=\setS^{d-1}(r)$ and $\nu=\mu$)  allows us to conclude that 
\begin{align}\underline{\operatorname{dim}}_R(X)={D}_2(X)=d-1.\end{align} 
In particular, we have the following results on the upper and lower $2$-nd quantization coefficient.  If ${D}_2(X)=d-1$, then   
by  \eqref{eq:laststepC2}, we have    
\begin{align}\label{eq:chain1}
 \overline{C}_2(X) \leq   \Gamma\mleft(\frac{d+1}{d-1}\mright) r^2 k_d<\infty,  
\end{align}
and if  \eqref{eq:gp} holds for $p=1$, then  
 \eqref{eq:limsumS}    yields 
\begin{align}\label{eq:chain1a}
\underline{C}_2(X)\geq  \frac{d-1}{d+1} r^2\,   \Sigma_1^{-\frac{2}{d-1}}(X)\, k_d>0.   
\end{align}
%We note that $\dim_\mathrm{H}(\mu_\setX)=d-1$ by virtue of \cite[Theorem 12.18]{grlu00}. 
%\eqref{eq:Dk} also follows from 
%for all $d\geq 2$. 
For $\setY=\setS^{d-1}(r)$,  if in addition to ${D}_2(X)=d-1$, we have $d\geq 4$,  $\mu\ll\mu_X$, and $\lVert \mathrm d\mu /\mathrm d\mu_X\rVert_\infty^{(\mu_X)}<\infty$, 
then we can apply Theorem \ref{thm:upperstrong}  with $\nu=\mu$, $k=2$, and  $m=d-1$  
to obtain the improved upper bound 
\begin{align}\label{eq:Cupperultra}
 \overline{C}_2(X)\leq  \Omega_{2/(d-1)}(X) \Gamma\mleft(\frac{d+1}{d-1}\mright)  r^2 k_d,   
\end{align}
%\color{black}which improves upon the upper bound  in \eqref{eq:chain1} owing to the additional factor\color{black} 
where 
\begin{align}\label{eq:OmegavM}
\Omega_{2/(d-1)}(X)
&=\opE\mleft[\mleft(\frac{\mathrm d \mu}{\mathrm d\mu_X}(X)\mright)^{\frac{2}{d-1}}\mright]\leq 1\quad\text{for all $d\geq 4$}    
\end{align}
with strict inequality in \eqref{eq:OmegavM} unless  $\mu_X=\mu$.  

%Compared to the upper bound on  $\overline{C}_2(X)$ in \eqref{eq:chain1}, the upper bound on  $\overline{C}_2(X)$ in \eqref{eq:Cupperultra}  has the additional  $\Omega_{2/(d-1)}(X)$-factor. 
%Again, by Theorem  \ref{thm:upperstrong}, we have  
%\begin{align}\label{eq:omegasphere}
%\Omega_{2/(d-1)}(X) 
% \leq 1\quad\text{for all $d\geq  3$.}
%\end{align}
%and the chain of inequalities 
%\begin{align}
%0\,<\,   e^{\frac{2}{d-1}h_\mu(X)} \, < \, \Sigma_{d-1,2}(X)\, \leq\,  \Omega_{d-1,2}(X)\, \leq\,  1\quad\text{for all $d\geq 3$.} 
%\end{align}

In order to endow the results  just obtained with a more specific, and, in particular,  quantitative flavor, we 
 evaluate $h_\mu(X)$, $\Sigma_{1}(X)$, and $\Omega_{2/(d-1)}(X)$ for  $X$  of  von Mises-Fisher distribution. 
\begin{exa} 
Fix $d\in\naturals\!\setminus\!\{1\} $ and let   $X$ be a random variable of  von Mises-Fisher distribution $\mu_X$ 
 with mean direction $y\in\setS^{d-1}(1)$ and concentration parameter $\kappa\in(0,\infty)$, which is determined according to  \cite[Equation (9.3.4)]{maju00}
\begin{align}\label{eq;RNdervM}
\frac{\mathrm d \mu_X}{\mathrm d\mu}(x)= c_d(\kappa) e^{\kappa \tp{y}x}, 
\end{align}
where $\mu=\colH^{d-1}\vert_{\setS^{d-1}(1)}/a^{(d-1)}(1)$ and 
\begin{align}
c_d(\kappa) 
&:= \mleft(\int e^{\kappa \tp{y}x} \,\mathrm d\mu(x)\mright)^{-1}\label{eq:cd1}\\
&= \frac{\kappa^{\frac{d}{2}-1}}{\Gamma\mleft(\frac{d}{2}\mright) 2^{\frac{d}{2}-1} I_{\frac{d}{2}-1}(\kappa)} \label{eq:cd2}  
\end{align}
with 
\begin{align}
I_{\alpha}(\kappa)=\frac{1}{2\pi} \int_{0}^{2\pi} \cos(\alpha t)e^{\kappa \cos(t)}\,\mathrm d t. 
\end{align}
% denoting the modified Bessel function of the first kind  of order $\alpha$ \cite[Equation (3.5.27)]{maju00}.  
This distribution   plays an important role in directional statistics \cite[Section 9.3.2]{maju00}. 
Specifically, it is one of the simplest parametric distributions on  $\setS^{d-1}(1)$ and has an entropy-maximizing property akin to that of the  multivariate Gaussian distribution on  $\reals^d$. Concretely,  among all random variables $Z$ taking values in $\setS^{d-1}(1)$, of distribution $\mu_Z\ll\mu=\colH^{d-1}\vert_{\setS^{d-1}(1)}/a^{(d-1)}(1)$, and with $\opE[Z]$ fixed, the von Mises-Fisher distribution with $y$ and $\kappa$ determined by 
 \cite[Equation (9.3.7)]{maju00}
\begin{align}\label{eq:meanx}
\opE\mleft[X\mright] = y \frac{I_{\frac{d}{2}}(\kappa)}{I_{\frac{d}{2}-1}(\kappa)} 
\end{align} 
 maximizes $h_\mu(Z)$ \cite[Section 2.3]{ma75}. 
 The  generalized  entropy $h_\mu(X)$ of $X$ with von Mises-Fisher distribution 
 can be derived as follows.  First note that 
\begin{align}
h_\mu(X)
&=-\opE\mleft[ \log \mleft(\frac{\mathrm d\mu_X}{\mathrm d\mu}(X)\mright) \mright]\\
&= -\log (c_d(\kappa))-\kappa \opE\mleft[  \tp{y}X \mright].  \label{eq:expeval} 
\end{align}
%We now evaluate the  expectation in \eqref{eq:expeval}. We have 
Now,  \eqref{eq:meanx} together with $y\in\setS^{d-1}(1)$ implies 
\begin{align}
\opE\mleft[ \tp{y} X\mright] = \frac{I_{\frac{d}{2}}(\kappa)}{I_{\frac{d}{2}-1}(\kappa)}. \label{eq:meanxy} 
\end{align}
Using \eqref{eq:meanxy}
in \eqref{eq:expeval} results in 
\begin{align}\label{eq:entVM}
h_\mu(X)= -\log (c_d(\kappa))-\kappa \frac{I_{\frac{d}{2}}(\kappa)}{I_{\frac{d}{2}-1}(\kappa)}. 
\end{align}
Moreover, 
\begin{align}
\Sigma_{1}(X)&= \mleft\lVert \frac{\mathrm d \mu_X}{\mathrm d \mu}\mright\rVert_{\infty}^{(\mu)}\\
&= c_d(\kappa)\sup_{x\in \setS^{d-1}(1)} e^{\kappa \tp{y}x}\\
&=c_d(\kappa)e^{\kappa},  
\end{align}
which implies (see \eqref{eq:chain1a})
\begin{align}
\underline{C}_2(X)\,\geq \,\frac{d-1}{d+1} r^2\,  \mleft(c_d(\kappa)e^{\kappa}\mright)^{-\frac{2}{d-1}}  \, k_d\, > \, 0.  
\end{align}
Finally, if $d\geq4$, then 
\begin{align}\label{eq:OmegavMaaa22}
\Omega_{2/(d-1)}(X)
&=\opE\mleft[\mleft(\frac{\mathrm d \mu}{\mathrm d\mu_X}\mright)^{\frac{2}{d-1}}(X)\mright]\\
&=  \int \mleft(\frac{\mathrm d\mu_X}{\mathrm d\mu}\mright)^{\frac{d-3}{d-1}}(x)\mathrm\, d\mu(x)\\
&=\frac{c_d^\frac{d-3}{d-1}(\kappa)}{c_d\mleft(\kappa\frac{d-3}{d-1} \mright)}.
\end{align}
\end{exa}

 We conclude  this discussion  by noting that   the lower bound on  $\underline{C}_2(X)$ in  \eqref{eq:chain1a}  evaluated for $d=2$ and $\mu_X=\mu$, i.e., $X$  uniformly distributed on the circle of radius $r$, is sharp enough to establish that $C_2(X)=r^2\pi^2/3$, which is shown in the following example. 
%To the best of our knowledge, this result was not reported in the literature. 

\begin{exa}
Let $X$ be a random variable with uniform distribution $\mu_X$ on the circle of radius $r$. 
Using $d=2$ and $\mu_X=\mu$ in \eqref{eq:chain1a} yields
\begin{align}\label{eq:laststep11}
 \underline{C}_2(X) \geq  \frac{r^2\pi^2}{3}.    
\end{align}
To find a matching upper bound on $\overline{C}_2(X)$, we first derive, for every $n\in\naturals$, an upper bound $U_{n}(X)$ on the $n$-th quantization error $V_{n}(X)$ for $\setY=\setS^{1}(r)$.
Since   more flexibility in placing the quantization points can only reduce $V_{n}(X)$,  
this upper bound applies to $\setS^{1}(r)\subseteq\setY\subseteq\reals^2$ as well. 
%Again, this upper bound applies to $\setY=\reals^2$ as well as .  
Concretely, for every $n\in\naturals$ and $x\in\setS^1(r)$, we set $f_n(x)=a_i$, where $i=\operatorname{argmin}_{j\in\{1,\dots,n\}}\|x-a_j\|_2$ and $a_j=r(\cos(2\pi j/n),\,  \sin(2\pi j/n))$ for $j=1,\dots,n$.  %
This yields
\begin{align}
V_{n}(X)&\leq\opE[\|X-f_{n}(X)\|_2^2]\label{eq:upbound1b}\\
&=\frac{2r^2n}{ \pi}\int_0^\frac{ \pi }{n}\mathrm (1-\cos(\alpha))\,\mathrm d\alpha \label{upbound1a}\\
&=2r^2\mleft(1-\sinc\mleft(\frac{1}{n}\mright)\mright)\quad\text{for all $n\in\naturals$,} \label{upbound1}
\end{align}
where \eqref{upbound1a} follows from the formula for the chord length corresponding to a circle segment of central angle $\alpha$.  
We thus have 
\begin{align}
\limsup_{n\to\infty}n^2V_{n}(X)
%&\leq \limsup_{n\to\infty}n^2U_{n}(X) \\
&\leq 2r^2\lim_{n\to\infty}n^2\mleft(1-\sinc\mleft(\frac{1}{n}\mright)\mright)\label{eq:asy1a}\\ 
&=
2r^2\pi^2 \lim_{\varepsilon\to 0}\frac{\varepsilon-\sin (\varepsilon)}{\varepsilon^3}\label{eq:anpi}\\
&=\frac{r^2\pi^2}{3}, \label{eq:lop}
\end{align}
where in \eqref{eq:asy1a} we   used  \eqref{eq:upbound1b}--\eqref{upbound1}, 
in \eqref{eq:anpi} we substituted 
$\varepsilon=\pi/n$,  
and in \eqref{eq:lop} we applied L'H\^opital's rule  three times.  As $D_2(X)=1$, see \eqref{eq:Dk} for $d=2$ and $p=1$, 
by \eqref{eq:asy1a}--\eqref{eq:lop}   we get  
\begin{align}\label{eq:laststep22}
 \overline{C}_2(X) \leq  \frac{r^2\pi^2}{3}.    
\end{align}
Combining    \eqref{eq:laststep11} and  \eqref{eq:laststep22}  yields the desired result  $C_2(X)=r^2\pi^2/3$. 
\end{exa}

\subsection{R-D Theory and Quantization   for Grassmannians} \label{sec:grass}
We now consider random variables taking values in a Grassmannian.  
Specifically, we  derive corresponding  lower bounds on the R-D function and lower and upper bounds on the 
%on the $n$-th quantization error for random variables $X$ of distribution  absolutely continuous with respect to $\colH^{d-1}|_{\setS^{d-1}(r)}/a^{d-1}(r)$ and upper bounds for arbitrary random variables distributed on  $\setS^{d-1}(r)$. 
$n$-th quantization error.  
 %The bounds on the $n$-th quantization error 
 %will allow us  to obtain lower and upper bounds on the lower and upper $k$-th quantization coefficient, respectively.    
We start with preparatory material on Grassmannians  largely following the exposition in 
 \cite[Sections II-III]{daliri08}.  
For  $\mathbb{F}\in\{\mathbb{R},\mathbb{C}\}$  and 
 $r,d\in\naturals$ with $1\leq r\leq d$, let  %
$\setG^\mathbb{F}(r,d)$ denote the Grassmannian  consisting of all $r$-dimensional subspaces of $\mathbb{F}^d$ and 
designate by  $\gamma_{r,d}$  the unique uniformly distributed Borel regular measure on $\setG^\mathbb{F}(r,d)$ with $\gamma_{r,d}(\setG^\mathbb{F}(r,d))=1$.  
The dimension of $\setG^\mathbb{F}(r,d)$ is given by $\beta r(d-r)$ with $\beta=1$ if $\mathbb{F}=\mathbb{R}$ and $\beta=2$ if $\mathbb{F}=\mathbb{C}$. 
%\cite[p. 49]{ma99}   
For  Grassmannians $\setG^\mathbb{F}(r,d)$ and $\setG^\mathbb{F}(s,d)$ with   $r,s,d\in\naturals$ and  $1\leq r,s\leq d$, %the measures  $\gamma_{r,d}$ and $\gamma_{s,d}$ satisfy 
the  chordal distance $\rho_\mathrm{c}$ is defined according to 
%for all $x\in\setG^\mathbb{F}(r,d)$ and $y\in\setG^\mathbb{F}(s,d)$, where 
\begin{align}\label{eq:chordaldist}
\rho_\mathrm{c}
\colon \setG^\mathbb{F}(r,d)\times \setG^\mathbb{F}(s,d)&\to [0,\sqrt{\min\{r,s\}}]\\
(x,y)&\mapsto \sqrt{\sum_{i=1}^{\min\{r,s\}} \sin^2(\theta_i(x,y))}, 
\end{align}
where   $\theta_1(x,y),\dots,\theta_{\min\{r,s\}}(x,y)$ are the principal angles between  the subspaces $x\in\setG^\mathbb{F}(r,d)$ and $y\in\setG^\mathbb{F}(s,d)$. %Note that in \eqref{eq:chordaldist}
 The   chordal distance can  now be used to state the following sub/super-regularity conditions for the measures  $\gamma_{r,d}$ on $\setG^\mathbb{F}(r,d)$ and $\gamma_{s,d}$ on $\setG^\mathbb{F}(s,d)$. 

\begin{lem}\cite[Equation (6) and Corollaries 1 and 2]{daliri08}\label{lem:Grassmann} 
Consider the Grassmannians $\setX=\setG^\mathbb{F}(r,d)$ and $\setY=\setG^\mathbb{F}(s,d)$ with $1\leq r, s\leq d$.   
Set $a=\min\{r,s\}$, $b=\max\{r,s\}$,  $m_\mathrm{G}=\beta a(d-b)$, and 
\begin{align}
c_{a,b,d,\beta}=
\begin{cases}
\frac{1}{\Gamma\mleft(\frac{\beta}{2}a(d-b)+1\mright)}\prod_{i=1}^{a}\frac{\Gamma\mleft(\frac{\beta}{2}(d-i+ 1)\mright)}{\Gamma\mleft(\frac{\beta}{2}(b-i+1)\mright)}&\quad\text{if $a+b\leq d$}\\[3mm]
\frac{1}{\Gamma\mleft(\frac{\beta}{2}a(d-b)+1\mright)}\prod_{i=1}^{d-b}\frac{\Gamma\mleft(\frac{\beta}{2}(d-i+ 1)\mright)}{\Gamma\mleft(\frac{\beta}{2}(d-a-i+1)\mright)}&\quad\text{else,}
\end{cases}
\end{align}
where $\beta=1$ if $\mathbb{F}=\mathbb{R}$ and $\beta=2$ if $\mathbb{F}=\mathbb{C}$. 
Then, we have 
\begin{align}\label{eq:defV}
v^{(d)}_{r,s}(\delta) :=\gamma_{r,d}\mleft(\setB_{\rho_\mathrm{c}}(y,\delta)\mright) =   \gamma_{s,d}\mleft(\widetilde \setB_{\rho_\mathrm{c}}(x,\delta)\mright) 
\end{align}
for all $x\in\setX$ and $y\in\setY$ 
% and let $v^{(d)}_{r,s}(\delta) $ be as defined  in \eqref{eq:defV}.  
and the following  holds: 
\begin{enumerate}[(iii)]%[itemsep=1ex,topsep=1ex]
\item \label{item0Grass} If ($\mathbb{F}=\mathbb{R}$  and $b=a+1$) or ($\mathbb{F}=\mathbb{C}$  and $b=a$), then 
\begin{align}
v^{(d)}_{r,s}(\delta) =c_{a,b,d,\beta}\delta^{m_\mathrm{G}}\quad \text{for all $\delta\in(0,1]$.}  
\end{align}

\item \label{item1Grass} If $\mathbb{F}=\mathbb{R}$ and $a=b$, then  
\begin{align}
c_{a,a,d,1}\,\delta^{m_\mathrm{G}}   \leq v^{(d)}_{r,s}(\delta)  \leq \frac{c_{a,a,d,1}}{(1-\delta_0^2)^{\frac{a}{2}}}\,\delta^{m_\mathrm{G}}
\end{align}
 for all $\delta \in(0,\delta_0]$ parametrized by $\delta_0\in (0,1]$.   
\item \label{item2Grass}If  ($\mathbb{F}=\mathbb{R}$ and $b\notin\{a,a+1\}$) or ($\mathbb{F}=\mathbb{C}$ and $a\neq b$), then 
\begin{align}
 c_{a,b,d,\beta}(1-\delta_0^2)^{\frac{\beta}{2}a(b-a+1)-a}\,\delta^{m_\mathrm{G}}     \leq   v^{(d)}_{r,s}(\delta)  \leq c_{a,b,d,\beta}\,\delta^{m_\mathrm{G}}  
\end{align}
for all $\delta \in(0,\delta_0]$ parametrized by $\delta_0\in (0,1]$. 
\end{enumerate}
\end{lem}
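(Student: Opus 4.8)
The plan is to read off all three items from the explicit volume computations in \citep{daliri08}, so the proof is essentially a translation of notation. First I would establish \eqref{eq:defV} by a symmetry argument: the measures $\gamma_{r,d}$ and $\gamma_{s,d}$ are, by construction, invariant under the action of the orthogonal group (for $\mathbb{F}=\mathbb{R}$) or the unitary group (for $\mathbb{F}=\mathbb{C}$), this action is transitive on each Grassmannian, and the chordal distance is invariant under the diagonal action because it depends only on the principal angles. Hence $\gamma_{r,d}(\setB_{\rho_\mathrm{c}}(y,\delta))$ is independent of $y\in\setY$ and $\gamma_{s,d}(\widetilde\setB_{\rho_\mathrm{c}}(x,\delta))$ is independent of $x\in\setX$; that these two quantities coincide, and equal a common function $v^{(d)}_{r,s}(\delta)$ whose leading behavior is governed by the constant $c_{a,b,d,\beta}$, is the content of \citep[Equation~(6)]{daliri08}, which expresses the normalized volume of a chordal ball as an integral against the joint density of the principal angles and thereby makes the $r\leftrightarrow s$ symmetry manifest.

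For the pointwise estimates I would invoke \citep[Corollaries~1 and~2]{daliri08}. Item \ref{item0Grass} covers the two configurations --- ($\mathbb{F}=\mathbb{R}$, $b=a+1$) and ($\mathbb{F}=\mathbb{C}$, $a=b$) --- in which the volume integral evaluates in closed form to exactly $c_{a,b,d,\beta}\,\delta^{m_\mathrm{G}}$ on all of $(0,1]$, with no curvature correction. In the remaining cases the integrand carries an extra Jacobian factor that is monotone in the principal angles; bounding it above by its value at zero and below by its value at radius $\delta_0$ (or vice versa, according to the sign of the relevant exponent) produces the two-sided estimate with correction factor $(1-\delta_0^2)^{\kappa}$, where $\kappa=a/2$ appears in the denominator of the upper bound in Item \ref{item1Grass} and $\kappa=\frac{\beta}{2}a(b-a+1)-a$ multiplies the lower bound in Item \ref{item2Grass}; in every case these bounds are valid for $\delta\in(0,\delta_0]$ with $\delta_0\in(0,1]$.

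The only part that is not purely mechanical is the bookkeeping. One must verify that the constant $c_{a,b,d,\beta}$ written here --- including the case split $a+b\le d$ versus $a+b>d$, which reflects whether all $a$ principal angles can simultaneously attain $\pi/2$ --- matches the normalization of \citep{daliri08} after aligning their parametrization of Grassmannian volume with the one used here, and that passing from the open balls $\setB_{\rho_\mathrm{c}},\widetilde\setB_{\rho_\mathrm{c}}$ to the closed radius interval $(0,\delta_0]$ loses nothing, which follows from continuity of $\delta\mapsto v^{(d)}_{r,s}(\delta)$ exactly as in the proof of Lemma \ref{lem:extsub}. Beyond this translation I expect no conceptual obstacle.
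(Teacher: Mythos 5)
Your proposal is correct and matches the paper's treatment: the paper gives no proof of this lemma at all, simply importing it verbatim from \citep[Equation (6) and Corollaries 1 and 2]{daliri08}, and your reduction to those cited results (plus the routine invariance and continuity observations) is exactly the intended justification.
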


In the following, fix $r,s,d\in \naturals$ with  $1\leq r, s\leq d$  and let  $\setX=\setG^\mathbb{F}(r,d)$ and $\setY=\setG^\mathbb{F}(s,d)$  be equipped with 
the distortion function $\rho(x,y)=\rho^2_\mathrm{c}(x-y)$. Further, set   $a=\min\{r,s\}$, $b=\max\{r,s\}$, and 
 $m_\mathrm{G}=\beta a(d-b)$, again with $\beta=1$ if $\mathbb{F}=\mathbb{R}$ and $\beta=2$ if $\mathbb{F}=\mathbb{C}$. 
 Note that, unless $r=s$,  $\setX$ and $\setY$ constitute different manifolds. If $r=s$, then  $m_\mathrm{G}$ equals the  dimension of the Grassmannian  $\setX$.  
 Further,  
let $X$ be a random variable taking values in   $\setX$. 
 % denote by $Z$ an arbitrary random variable taking values in $\setX$ and  let     
% $X$ be a random variable taking values in  $\setX$ 
%of distribution  $\mu_X\ll\gamma_{r,d}$ and satisfying $h_{\gamma_{r,d}}(X)>-\infty$. 

We proceed to derive a lower bound on the R-D function  under the assumptions $\mu_X\ll\gamma_{r,d}$ and  $h_{\gamma_{r,d}}(X)>-\infty$.  To this end, we start by noting that Jensen's inequality  \cite[Theorem 2.3]{peprto92} combined with $\gamma_{r,d}(\setX)=1$ yields $h_{\gamma_{r,d}}(X)\leq \log(\gamma_{r,d}(\setX))=0 $, which implies $\abs{h_{\gamma_{r,d}}(X)}<\infty$.

Suppose first that $\mathbb{F}=\mathbb{R}$ and $a=b$. 
Fix $\alpha\in(0,1/2)$ and set, for every $D\in(0,1)$, $\delta_D=D^{\alpha}$ and 
\begin{align}
d_{\delta_D}=
\frac{c_{a,a,d,1}}{(1-\delta_D^2)^{\frac{a}{2}}}. 
\end{align}
Application of Theorem \ref{thm:new} for $k=2$, $m=m_\mathrm{G}$, and $\mu=\gamma_{r,d}$ combined with Item \ref{item1Grass} of 
Lemma \ref{lem:Grassmann}
 then yields 
$R_X^{\text{SLB}}(D)\geq R_X^{\text{L}}(D)$ for all $D\in(0,1)$ with %\footnote{Note that, by \eqref{eq:RLasympA}, the lower bound established in \eqref{eq:SLB1} applies to both cases 
%$d_{\delta_D}\geq  \delta_D^{-m_\mathrm{G}}$ and  $d_{\delta_D}< \delta_D^{-m_\mathrm{G}}$.\label{footnotecdelta} }       
\begin{align}
R_X^{\text{L}}(D)
=h_\mu(X)-\frac{m_\mathrm{G}}{2}
   -\log \mleft(d_{\delta_D}\mleft(\frac{m_\mathrm{G}}{2D}\mright)^{- \frac{m_\mathrm{G}}{2}}\Gamma\mleft(1+ \frac{m_\mathrm{G}}{2}\mright) + e^{-\frac{m_\mathrm{G}\delta_D^2}{2D}}\mright).
\end{align}
%If ${\delta_D}=1$, then we set 
Moreover, since   $\lim_{D\to0}\delta_D=0$ and $\lim_{D\to0}e^{- m_\mathrm{G} \delta_{D}^2/(2D)}= 0$, we have 
\begin{align}
\lim_{D\to0}(R_X^{\text{L}}(D)- h_{\mu}(X) -F_{m_\mathrm{G},2,c_{a,a,d,1}}(D))=0
\end{align}
 with $F_{m,k,d}(D)$  as defined  in \eqref{eq:F}.  

Next, suppose that $\mathbb{F}=\mathbb{C}$ or $a\neq b$.  
Application of Theorem \ref{thm:new} with $k=2$, $m=m_\mathrm{G}$, $\mu=\gamma_{r,d}$, and $\delta_0=1$ combined with Items \ref{item0Grass} and \ref{item2Grass} of 
Lemma \ref{lem:Grassmann}
 then yields  
$R_X^{\text{SLB}}(D)\geq R_X^{\text{L}}(D)$ for all $D\in(0,1)$ with%\footref{footnotecdelta}       
\begin{align}
R_X^{\text{L}}(D)
&=h_{\gamma_{r,d}}(X)-\frac{m_\mathrm{G}}{2} -\log \mleft(c_{a,b,d,\beta}\mleft(\frac{m_\mathrm{G}}{2D}\mright)^{- \frac{m_\mathrm{G}}{2}}\Gamma\mleft(1+ \frac{m_\mathrm{G}}{2}\mright) + e^{-\frac{m_\mathrm{G}}{2D}}\mright).
\end{align}
Moreover, we have  
\begin{align}
\lim_{D\to0}(R_X^{\text{L}}(D)- h_{\gamma_{r,d}}(X) -F_{m_\mathrm{G},2,c_{a,b,d,\beta}}(D))=0.
\end{align}
% where $F_{m,k,d}(D)$ is as defined in \eqref{eq:F}.  

Next, we derive a lower bound on the $n$-th quantization error $V_{n}(X)$ under the assumptions $\mu_X\ll\gamma_{r,d}$  and  
\begin{align}
\Sigma_p(X):=\mleft\lVert \frac{\mathrm d \mu_X}{\mathrm d \gamma_{r,d}}\mright\rVert_{p/(p-1)}^{(\gamma_{r,d})}<\infty \quad\text{with $p\in[1,\infty).$}\label{eq:gp2}
\end{align}

Suppose first that  $\mathbb{F}=\mathbb{R}$ and $a=b$ and  
consider the strictly monotonically increasing function 
\begin{align}
h\colon [0,1)&\to [0,\infty)\\
u& \mapsto \frac{c_{a,a,d,1} u^{m_\mathrm{G}}}{(1-u^2)^\frac{a}{2}}.\label{eq:htt} 
\end{align}
As   $h(0)=0$ and $\lim_{u\to1}h(u)=\infty$, 
$h$ is bijective. 
For every $n\in\naturals$, set  \begin{align}\label{eq:deltanGr}
\delta_{n}=h^{-1}\mleft(\frac{1}{n^p\Sigma_p^p(X)}\mright)
\end{align} 
and 
\begin{align}
c_{\delta_n}=\frac{c_{a,a,d,1}}{(1-\delta_n^2)^{\frac{a}{2}}}=h(\delta_n) \delta_n^{-m_\text{G}}. 
\end{align}
These   choices for  $\delta_n$  and $c_{\delta_n}$ ensure   $\delta_{n}< 1$  and 
\begin{align}
\delta_n=
c_{\delta_n}^{-\frac{1}{m_\text{G}}} \Sigma_p(X)^{-\frac{p}{m_\text{G}}} n^{-\frac{p}{m_\text{G}}} \quad\text{for all  $n\in\naturals$.}
\end{align} 
Application of Theorem \ref{thm:singleshot} with $k=2$, $m=m_\mathrm{G}$, and $\mu=\gamma_{r,d}$ combined with Item \ref{item1Grass} of Lemma \ref{lem:Grassmann} 
then  yields  $V_n(X)\geq   L_n(X)$ for all $n\in\naturals$ with 
\begin{align}
 L_n(X)\label{eq:lowerboundLnG}
&= \frac{m_\mathrm{G}}{m_\mathrm{G}+2p}\mleft(h^{-1}\mleft(\frac{1}{n^p\Sigma_p^p(X)}\mright)\mright)^2. 
\end{align} 
This lower bound can   be further simplified as follows. 
First, note that 
\begin{align}
\frac{1}{n^p\Sigma_p^p(X)}=h(\delta_n)\geq c_{a,a,d,1}\delta_n^{m_\mathrm{G}} 
\end{align}
implies $\delta_n\leq (n^p\,\Sigma_p^p(X)\,c_{a,a,d,1})^{-1/{m_\mathrm{G}}}$ so that 
\begin{align}
\frac{1}{n^p\Sigma_p^p(X)}=h(\delta_n)&=  \frac{c_{a,a,d,1} \delta_n^{m_\mathrm{G}}}{(1-\delta_n^2)^\frac{a}{2}}\\
&\leq \frac{c_{a,a,d,1} \delta_n^{m_\mathrm{G}}}{(1-(n^p\,\Sigma_p^p(X)\,c_{a,a,d,1})^{-\frac{2}{{m_\mathrm{G}}}})^\frac{a}{2}}  
\end{align}
for all $n>n_0:=1/\big(\Sigma_p(X)\, c_{a,a,d,1}^{1/p}\big)$.
This yields 
\begin{align}\label{eq:simlyfiedLN}
\delta_n^2\geq \mleft(c_{a,a,d,1}\, n^p\,\Sigma_p^p(X) \mright)^{-\frac{2}{m_\mathrm{G}}}\mleft(1-( n^p\,\Sigma_p^p(X)\, c_{a,a,d,1})^{-\frac{2}{m_\mathrm{G}}}\mright)^\frac{1}{d-a} 
\end{align}
for all $n>n_0$. 
Using \eqref{eq:deltanGr} and 
 \eqref{eq:simlyfiedLN}  in \eqref{eq:lowerboundLnG}, we finally obtain
\begin{align}
 L_n(X) 
&\geq  \frac{m_\mathrm{G}}{m_\mathrm{G}+2p} \mleft(c_{a,a,d,1}\, n^p\,\Sigma_p^p(X) \mright)^{-\frac{2}{m_\mathrm{G}}}\mleft(1-( n^p\,\Sigma_p^p(X)\, c_{a,a,d,1})^{-\frac{2}{m_\mathrm{G}}}\mright)^\frac{1}{d-a} \label{eq:lowerboundLnG2}
\end{align}
for all $n>n_0$.  

Next, suppose that $\mathbb{F}=\mathbb{C}$ or $a\neq b$.  
Application of Theorem \ref{thm:singleshot} with $k=2$, $m=m_\mathrm{G}$, $\mu=\gamma_{r,d}$, and $\delta_0=1$   combined with Items     
\ref{item0Grass} and 
 \ref{item2Grass} of Lemma \ref{lem:Grassmann} 
  yields   $V_n(X)\geq  L_n(X)$ for all $n\geq 1/\big(\Sigma_p(X) c_{a,b,d,\beta}^{1/p}\big)$ with 
\begin{align}
 L_n(X)\label{eq:lowerboundLnG3}
 &=\frac{m_\mathrm{G}}{m_\mathrm{G}+2p}\mleft(c_{a,b,d,\beta}\, n^p\,\Sigma_p^p(X)\mright)^{-\frac{2}{m_\mathrm{G}}}. 
\end{align}
In particular, the lower bounds in \eqref{eq:lowerboundLnG2} and \eqref{eq:lowerboundLnG3} yield  
\begin{align}
 \liminf_{n\to\infty} n^\frac{2p}{m_\mathrm{G}}V_{n}(X)
 &\geq   \frac{m_\mathrm{G}}{m_\mathrm{G}+2p} \,\Sigma_p^{-\frac{2p}{m_\mathrm{G}}}(X)\,   c_{a,b,d,\beta}^{-\frac{2}{m_\mathrm{G}}}>0.    \label{eq:laststepCG}
\end{align}

We next derive an upper bound $U_n$ on the $n$-th quantization error $V_{n}(X)$. 
%Note that we do not make any further assumptions on  $X$. 
Suppose first that   $\mathbb{F}=\mathbb{R}$  with  $b\in\{a,a+1\}$ or $\mathbb{F}=\mathbb{C}$  with  $b=a$. 
Since  (see \eqref{eq:chordaldist} and recall that $\rho(x,y)=\rho_\mathrm{c}^2(x-y)$)
\begin{align}\label{eq:supforU}
\sup_{x\in\setX, y\in\setY} \rho(x,y) \leq a,
\end{align}
application of Theorem \ref{thm:singleshot2} with  $\nu=\gamma_{s,d}$, $m=m_\mathrm{G}$,  $k=2$, and $\delta_0=1$ combined with  Items \ref{item0Grass}  and \ref{item1Grass} of Lemma \ref{lem:Grassmann} yields 
$V_n(X)\leq U_n$ for all  $n\in\naturals$ with  
\begin{align}\label{eq:upperboundLnG}
U_n=\Gamma\mleft(1+\frac{2}{m_\mathrm{G}}\mright) (c_{a,b,d,\beta}n) ^{-\frac{2}{m_\mathrm{G}}}  +\mleft(a-1\mright)e^{-n c_{a,b,d,\beta}}.  
\end{align} 
Next, suppose that  $\mathbb{F}=\mathbb{R}$ with $b\notin\{a,a+1\}$ or $\mathbb{F}=\mathbb{C}$ with $a\neq b$.  
Fix  $\alpha\in(0,1)$ and set, for  every $n\in\naturals$,  
$\delta_{n}=n^{-\alpha/ m_\mathrm{G}}$ and 
\begin{align}
b_{\delta_{n}}=c_{a,b,d,\beta}\mleft(1-\delta_{n}^2\mright)^{\frac{\beta}{2}a(b-a+1)-a}. 
\end{align}
Application of Theorem \ref{thm:singleshot2} with  $\nu=\gamma_{s,d}$, $m=m_\mathrm{G}$,   $k=2$, and $\delta_0=\delta_n$ combined with \eqref{eq:supforU} and 
Item \ref{item2Grass} of Lemma \ref{lem:Grassmann} then implies  
$V_n(X)\leq U_n$ for all  $n\in\naturals$ with  
\begin{align}\label{eq:upperboundLnG2}
U_n=\Gamma\mleft(1+\frac{2}{m_\mathrm{G}}  \mright)\mleft(n \supc_{\delta_{n}}\mright)^{-\frac{2}{m_\mathrm{G}}} +\mleft(a-\delta_{n}^2\mright)e^{-n \supc_{\delta_{n}}\delta_{n}^{m_\mathrm{G}}} . 
\end{align} 
Next, note that $\lim_{n\to\infty}\delta_n=0 $ and  $\lim_{n\to\infty} n^\frac{2}{m_\mathrm{G}}e^{-n \supc_{\delta_{n}}\delta_{n}^{m_\mathrm{G}}}=0$.
Therefore,  \eqref{eq:upperboundLnG} and  \eqref{eq:upperboundLnG2} imply  that  
 \begin{align}\label{eq:laststepC2a}
 \limsup_{n\to\infty} n^\frac{2}{m_\mathrm{G}}V_{n}(X)
 &\leq \Gamma\mleft(1+\frac{2}{m_\mathrm{G}  }\mright) c_{a,b,d,\beta} ^{-\frac{2}{m_\mathrm{G}}}<\infty. 
\end{align}

The bounds in \eqref{eq:lowerboundLnG2}, \eqref{eq:lowerboundLnG3}, \eqref{eq:upperboundLnG}, and  \eqref{eq:upperboundLnG2}  generalize \cite[Theorem 4]{daliri08}
in the sense of applying to more general, i.e., not necessarily   uniformly distributed, random variables, and, in addition,  do not require the condition of $n$ being larger than an unspecified natural number.

Combining \eqref{eq:laststepCG} and \eqref{eq:laststepC2a} with 
Items \ref{eq:boundCcheck2} and \ref{eq:boundCcheck} of Lemma \ref{lem:dimbound}, respectively,  
now implies  
\begin{align}\label{eq:DkAA}
\frac{m_\mathrm{G}}{p} \leq \underline{D}_2(X) \leq \overline{D}_2(X)=m_\mathrm{G}.
\end{align}
If  $p=1$, then the $2$-nd quantization dimension  ${D}_2(X)$ exists and  equals  $m_\text{G}$, which for $r=s$, in turn  equals the geometric dimension of the Grassmannian $\setX$. 
 Similarly, if  $h_{\gamma_{r,d}}(X)>-\infty$ (Jensen's inequality  \cite[Theorem 2.3]{peprto92} combined with $\gamma_{r,d}(\setX)=1$ yields $h_{\gamma_{r,d}}(X) \leq \log (\gamma_{r,d}(\setX))=0$), 
then Item \ref{item2dimequal} of Corollary \ref{cor:dimequal} with $\mu=\gamma_{r,d}$ and  $\nu=\gamma_{s,d}$ allows us to conclude that 
\begin{align}\underline{\operatorname{dim}}_R(X)={D}_2(X)= m_\text{G}.\end{align}
In particular, if ${D}_2(X)=m_\text{G}$, then we have the following results for the upper and lower $2$-nd quantization coefficient. By 
\eqref{eq:laststepC2a}, it follows that  
\begin{align}\label{eq:ineqC}
\overline{C}_2(X)\leq  \Gamma\mleft(1+\frac{2}{m_\mathrm{G}}\mright)c_{a,b,d,\beta}^{-\frac{2}{m_\mathrm{G}}}<\infty,  
\end{align}
and if  \eqref{eq:gp2} holds for $p=1$, then 
 \eqref{eq:laststepCG} yields  
\begin{align}\label{eq:ineqCa}
 \underline{C}_2(X)\geq   \frac{m_\mathrm{G}}{m_\mathrm{G}+2} \,\Sigma_1^{-\frac{2}{m_\mathrm{G}}}\,   c_{a,b,d,\beta}^{-\frac{2}{m_\mathrm{G}}}>0.    
 \end{align}
%We finally note that for the specific case $p=q$ we have $m_\mathrm{G}=\dim_\mathrm{H}(\mu_X)$ and \eqref{eq:DkAA} can also be established by virtue of \cite[Theorem 12.18]{grlu00}. In all other cases, however, $m_\mathrm{G}$ differs from $\dim_\mathrm{H}(\mu_X)$ and   \cite[Theorem 12.18]{grlu00} does not apply. 

\section{R-D Theory and Quantization for Self-Similar Sets}\label{sec:fractal}
\color{black}
We now   particularize our results on R-D theory and quantization  to  random variables taking values in self-similar sets. 
As in the case of compact manifolds treated in Section \ref{sec:manifold}, what we need here to apply the  program   developed are suitable sub/super-regularity  conditions, which again requires the computation of volume estimates of balls.  
\color{black}
To this end, we  start with preparatory material  on contracting similarities largely following the exposition in \cite{frheolro15}. 
 Throughout this section, we work in the ambient space   $\reals^d$  equipped with a  norm $\lVert\,\cdot\, \rVert$.   
A bijection $s\colon \reals^d\to\reals^d$ is called a  similarity if there exists a $\kappa \in(0,\infty)$ such that 
\begin{align}
\lVert s(u)-s(v)\rVert= \kappa\lVert u-v\rVert\quad \text{for all}\ u,v\in\reals^d. \label{eq:contractions}
\end{align} 
A contracting similarity is a similarity with  $\kappa \in (0,1)$, which in this case is  referred to as contraction parameter. 

Let $\setI=\{1,\dots,\abs{\setI}\}$ be a finite set of indices.  An iterated function system (IFS) is a  finite collection  $\{s_1,\dots, s_{\,\abs{\setI}}\}$ of contracting similarities $s_i\colon \reals^d\to\reals^d$  with  
contraction parameters $\kappa_i\in(0,1)$, $i\in\setI$.  Let $\{s_1,\dots, s_{\,\abs{\setI}}\}$ be  an IFS with  
contraction parameters $\kappa_i\in(0,1)$, $i\in\setI$. 
The unique positive real number $m$ satisfying  
\begin{equation}\label{eq:simdim}
\sum_{i\in\setI} \kappa_i^m=1
\end{equation}
is referred to as similarity dimension. 
By \cite[Theorem 4.1.3]{ed08}, there exists a unique nonempty and compact self-similar set $\setK$ satisfying 
\begin{equation} 
\setK=\bigcup_{i\in\setI} s_i(\setK)\,\subseteq\reals^d. 
\end{equation}
Let $\setI^\ast=\bigcup_{j\in\naturals}\setI^j$.  
For every $j\in\naturals$ and $\alpha=(i_1,\dots,i_j)\in\setI^\ast$, we set 
\begin{align}
\bar\alpha=
\begin{cases}
(i_1,\dots,i_{j-1})&\text{if $j>1$}\\
\omicron&\text{if $j=1$}
\end{cases}
\end{align}  
with $\omicron$ denoting the  empty sequence, which we declare to have  length zero.     
We designate the identity mapping on $\reals^d$ by $s_\omicron$, set $\kappa_\omicron=1$,  
and define  
\begin{align}
s_\alpha&=s_{i_1}\circ s_{i_2}\circ\dots\circ s_{i_j}\label{eq:direct1}\\
\kappa_\alpha&=\kappa_{i_1}\kappa_{i_2}\dots \kappa_{i_j}\label{eq:direct2}
\end{align}   
for all $\alpha\in\setI^\ast$. 
As $\{s_1,\dots, s_{\,\abs{\setI}}\}$ are contracting similarities $s_i\colon \reals^d\to\reals^d$  with  
corresponding contraction parameters $\kappa_i\in(0,1)$, $i\in\setI$, it
 follows  from \eqref{eq:direct1} and \eqref{eq:direct2} that  $s_\alpha$ is a contracting similarity with  contraction parameter $\kappa_\alpha$ for all $\alpha\in\setI^\ast$.  
 For every $\delta\in(0,\infty)$ and $y\in\reals^d$, we set  
\begin{align}
\setJ_\delta&=\{\alpha\in\setI^\ast:\kappa_\alpha\leq \delta<\kappa_{\bar\alpha}\}
\end{align}
and 
\begin{align}
\setJ_\delta(y)&=\mleft\{\alpha\in\setJ_\delta:\setB_{\lVert\,\cdot\,\rVert}\big(y,\delta \big)\cap s_\alpha(\setK)\neq\emptyset\mright\}. \label{eq:setJd}
\end{align}
Finally, let 
\begin{align}
\setE=\{s_\alpha\circ s_\beta^{-1}:\alpha,\beta\in\setI^\ast, \alpha\neq\beta\}
\end{align}
and equip the group of all similarities on $\reals^d$ with the topology induced by pointwise convergence.  %\cite[Definition on p. 281]{mu00}. 
The IFS is said to satisfy the weak separation property if (see \cite[Definition on p. 3533]{ze96})
\begin{align}
s_\omicron\notin\overline{\setE\setminus\{s_\omicron\}}. \label{eq:wsp}
\end{align}
The weak separation property guarantees that the IFS does not admit infinitely many overlaps in the following sense:
\begin{lem}\cite[Theorem 1, Items (3a) and (5a)]{ze96}
Let $\setI=\{1,\dots,\abs{\setI}\}$ be a finite set of indices. For every $i\in\setI$, let 
$s_i\colon\reals^d\to\reals^d$  be a contracting similarity with  contraction parameter $\kappa_i\in(0,1)$. 
Suppose that the  self-similar set corresponding to the IFS $\{s_1,\dots, s_{\,\abs{\setI}}\}$  is not contained in any $(d-1)$-dimensional hyperplane. 
 Then, the IFS $\{s_1,\dots, s_{\,\abs{\setI}}\}$ 
satisfies the weak separation property if and only if for every $y\in\reals^d$, there exists an $\ell(y)\in\naturals$ such that 
\begin{align}
\abs{\{ s_\alpha\circ s_\beta: \lVert(s_\alpha\circ s_\beta)(y) -x\rVert<\delta, \alpha\in \setJ_\delta\}}\leq \ell(y)
\end{align}
for all $x\in\reals^d$, $\delta\in(0,\infty)$, and $\beta\in \setI^\ast$. 
\end{lem}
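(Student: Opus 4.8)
The plan is to obtain the lemma directly from \citep[Theorem 1]{ze96}, which collects several mutually equivalent formulations of the weak separation property, with the hypothesis that $\setK$ is not contained in any $(d-1)$-dimensional hyperplane placing us in the setting to which the quoted Items (3a) and (5a) apply. The asserted equivalence is then precisely that between those two items. The first step is therefore to match the two conditions: Item (3a) is the weak separation property itself, which by \citep[Definition on p.~3533]{ze96} is exactly \eqref{eq:wsp}, while Item (5a) is the uniform bounded-overlap condition. What then remains is bookkeeping --- identifying our set $\setJ_\delta=\{\alpha\in\setI^\ast:\kappa_\alpha\leq\delta<\kappa_{\bar\alpha}\}$ of $\delta$-stopping words with Zerner's, reading our composition $s_\alpha\circ s_\beta=s_{\alpha\beta}$ as his concatenated similarity, and, crucially, checking that the quantity being bounded counts \emph{distinct similarities} (elements of the similarity group) rather than index sequences, since distinct words may induce the same map.

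The one place where a short additional argument may be needed is the arbitrary suffix $\beta\in\setI^\ast$, whereas Zerner's condition is most cleanly stated relative to a single reference point. Since each $s_\beta$ is a bijection, the number of distinct $s_\alpha\circ s_\beta$ with $\alpha\in\setJ_\delta$ and $\lVert(s_\alpha\circ s_\beta)(y)-x\rVert<\delta$ coincides with the number of distinct $s_\alpha$ with $\alpha\in\setJ_\delta$ and $\lVert s_\alpha(z)-x\rVert<\delta$ for the point $z=s_\beta(y)$. Applying the $\beta=\omega$ form of Item (5a) to $z$ yields a bound $\ell(z)$, so it suffices to show that $\ell(\cdot)$ can be chosen bounded over $z\in\{s_\beta(y):\beta\in\setI^\ast\}$; this latter set is bounded (for long $\beta$ the maps $s_\beta$ contract into a neighborhood of the compact set $\setK$, and there are only finitely many short $\beta$), and Zerner's $\ell$ is locally bounded, so one may take $\ell(y):=\sup_{\beta\in\setI^\ast}\ell(s_\beta(y))<\infty$.

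The main obstacle is not any genuine mathematical difficulty but fidelity of translation: Zerner's notation for stopping words, reference points, and the overlap count differs superficially from ours, and one must verify line by line that the quantifier structure of Item (5a) --- for every $y$ there is an $\ell(y)$ working simultaneously for all $x$ and all scales $\delta$ --- is preserved under this translation and under the suffix reduction above. Once that check is carried out, the lemma follows at once.
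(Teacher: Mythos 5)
The paper offers no proof of this lemma beyond the citation itself: it is stated as a direct import of \citep[Theorem 1, Items (3a) and (5a)]{ze96}, exactly the route you take. Your additional bookkeeping (matching the stopping-word sets, counting distinct similarities rather than words, and reducing the arbitrary suffix $\beta$ to the $\beta=\omega$ case via injectivity of right-composition with the bijection $s_\beta$ and boundedness of $\{s_\beta(y):\beta\in\setI^\ast\}$) is sound and simply makes explicit the translation the paper leaves implicit.
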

The next  result establishes  subregularity and superregularity for the $m$-dimensional Hausdorff measure restricted to self-similar sets of  similarity dimension $m$. 
  
\begin{lem}\label{lem:frheolro15} 
Let $\setI=\{1,\dots,\abs{\setI}\}$ be a finite set of indices   and let $\{s_1,\dots, s_{\,\abs{\setI}}\}$ be an IFS with corresponding  contraction parameters $\kappa_i$, $i\in\setI$.  Let  furthermore 
\begin{equation} 
\setK=\bigcup_{i\in\setI} s_i(\setK) 
\end{equation}
be the corresponding self-similar set, denote   its similarity dimension by  $m$,   and set $\mu=\colH^m\vert_\setK/\colH^m(\setK)$. Finally, let  $k\in(0,\infty)$,  $\setK\subseteq\setY\subseteq\reals^d$, and consider the distortion function  $\rho\colon \setK\times\setY\to [0,\infty), \rho(x,y)=\lVert x-y\rVert^k$, where $\lVert \,\cdot\,\rVert$ is a norm on $\reals^d$. 
   Then, the following statements hold: 
\begin{enumerate}[(ii)]%[itemsep=1ex,topsep=1ex]
\item \label{eq:cantorsub2a} Suppose that  there exists a $c\in(0,\infty)$ such that $\abs{\setJ_\delta(y)}\leq c$ for all $y\in\setY$ and $\delta\in (0,\infty)$. Then, the  measure $\mu$ satisfies the following subregularity condition: 
\begin{align}
\mu\mleft(\setB_{\rho^{1/k}}\big(y,\delta\big)\mright)&\leq c\delta^m\quad\text{for all $y\in\setY$ and $\delta\in (0,\infty)$.} 
\end{align}
%\item\label{eq:cantorsub2b} 
%Suppose that  there exists a $c_2\in(0,\infty)$ such that $|\setJ_\delta(x)|\leq c_2$ for all $x\in\setK$. Then, the  measure $\mu$ satisfies the subregularity condition 
%\begin{align}
%\mu\mleft(\setB_{\lVert\,\cdot\,\rVert}\big(x,\delta\big)\mright)&\leq c_2\delta^m\quad\text{for all $x\in\setK$ and $\delta\in (0,\infty)$.} 
%\end{align}
\item\label{eq:cantorsub2c} 
The  measure $\mu$ satisfies the following superregularity condition:
\begin{align}
\mu\mleft(\setB_{ \rho^{1/k}}\big(x,\delta\big)\mright)
&\geq \mleft(\frac{ \kappa_{\mathrm{min}}}{\operatorname{diam}(\setK)}\mright)^m\delta^m\quad\text{for all $x\in\setK$ and $\delta\in (0,\operatorname{diam}(\setK))$,}  \label{eq:cantorsup2a} 
\end{align}
 where 
\begin{align}
\operatorname{diam}(\setK)=\sup_{x,y\in\setK} \lVert x-y \rVert 
\end{align} 
and $\kappa_{\mathrm{min}}=\min\{\kappa_1,\dots,\kappa_{\,\abs{\setI}}\}$. 
\item \label{eq:cantorsub2b} If the IFS satisfies the  weak separation property \eqref{eq:wsp}  and $\setK$ 
is not contained in any hyperplane of dimension $d-1$, then $0<\colH^m(\setK)<\infty$ 
and there exists a $c\in(0,\infty)$ such that $\abs{\setJ_\delta(y)}\leq c$ for all $y\in\reals^d$ and $\delta\in (0,\infty)$.  
\end{enumerate}
\end{lem}
\begin{proof}
The proof follows from the corresponding parts of the proof of 
\cite[Theorem 2.1]{frheolro15}. 
\end{proof}
In the remainder of this section, we fix a norm $\lVert\, \cdot\,\rVert$   on $\reals^d$ and a $k\in(0,\infty)$. 
Further,
 we consider an IFS satisfying  the weak separation poperty \eqref{eq:wsp}  and such that the corresponding self-similar set  $\setK\subseteq\reals^d$  
is not contained in any hyperplane of dimension $d-1$.\color{black} {}  
In particular, by Item \ref{eq:cantorsub2b} of Lemma  \ref{lem:frheolro15}, we then have $0<\colH^m(\setK)<\infty$, and hence  $\dim_\mathrm{H}(\setK)=m$ with $m$ denoting the similarity dimension of $\setK$. Moreover, again by Item \ref{eq:cantorsub2b} of   Lemma \ref{lem:frheolro15}, 
there must exist  a $c\in(0,\infty)$ such that 
\begin{align}\label{eq:c_1}
 \abs{\setJ_\delta(y)}\leq c\quad\text{for all $y\in\reals^d$ and $\delta\in (0,\infty)$.}
\end{align}
% which in turn implies that there exists a $c_2\in(0,c_1]$ such that 
% \begin{align}\label{eq:c_2}
% \abs{\setJ_\delta(y)}\leq c_2\quad\text{for all $y\in\setK$ and $\delta\in (0,\infty)$.}
% \end{align}
%  For ease of exposition, specifically in order to be able to  
%  handle  the cases $\setY=\reals^d$ and $\setY=\setK$ simultaneously, we set  
%  \begin{align}\label{eq:ccombined}
%c=
%\begin{cases}
%c_1&\quad\text{if $\setY=\reals^d$}\\
%c_2&\quad\text{if $\setY=\setK$.}
%\end{cases}
%\end{align}  
Finally, we consider a random variable $X$  taking values in $\setK\subseteq \setY\subseteq \reals^d$ and  the distortion function  
$\rho\colon \setK\times \setY\to[0,\infty)$, $\rho(x,y)=\lVert x-y\rVert^k$. 
%\tilde \rho\colon \setX\times \setX&\to[0,\infty), \ \  \tilde \rho(x,y)=\lVert x-y\rVert_2^2\quad\text{(for $\setY=\setX$)
% \color{black} and consider the following distortion functions:
%\begin{align}
%\rho\colon\setK\times \reals^d&\to[0,\infty),\ \ \rho(x,y)= \lVert x-y\rVert^k\quad\text{(for $\setY=\reals^d$)}\\
%\tilde\rho\colon\setK\times \setK&\to[0,\infty),\ \ \tilde\rho(x,y)= \lVert x-y\rVert^k\quad\text{(for $\setY=\setK$).}
%\end{align}

We first  derive a lower bound on the R-D function  under the assumptions   
\begin{align}
\mu_X\ll\mu:=\colH^m\vert_\setK/\colH^m(\setK)
\end{align}  
and $h_\mu(X)>-\infty$. 
Jensen's inequality  \cite[Theorem 2.3]{peprto92} combined with $\mu(\setK)=1$ yields $h_{\mu}(X)\leq \log(\mu(\setK))=0 $, which in turn implies $\abs{h_{\mu}(X)}<\infty$. 
 %As $h_\mu(X)>-\infty$ by assumption, Lemma \ref{lem:entlemma} implies $h_\mu(X)\leq \log (\mu(\setK))=0$ so that  $|h_\mu(X)|<\infty$.  
 %Since  $\mu$ is $\lVert\,\cdot\, \rVert$-subregular  of dimension $m$ with subregularity constants  $c\geq 1$ and $\delta_0=1$ by \eqref{eq:cantorsub2}, 
Application of Theorem \ref{thm:new} for $\delta_0=\infty$ combined with \eqref{eq:c_1}  and Item \ref{eq:cantorsub2a} of Lemma \ref{lem:frheolro15} 
 yields  $R_X^{\text{SLB}}(D)\geq R_X^{\text{L}}(D)$ with 
\begin{align}\label{eq:Rlowerbound}
R_X^{\text{L}}(D) =
h_{\mu}(X) +F_{m,k,c}(D) \quad\text{for all $D\in(0,\infty)$.}
\end{align}

Next, we derive a lower bound on the $n$-th quantization error under the assumptions $\mu_X\ll\mu$  and  
\begin{align}
\Sigma_p(X):=\mleft\lVert \frac{\mathrm d \mu_X}{\mathrm d \mu}\mright\rVert_{p/(p-1)}^{(\mu)}<\infty \quad\text{with $p\in[1,\infty).$}\label{eq:gp2a}
\end{align}    
Application of  Theorem \ref{thm:singleshot}  for $\delta_0=\infty$  combined 
 with \eqref{eq:c_1}  and Item \ref{eq:cantorsub2a} of Lemma \ref{lem:frheolro15} 
 yields 
 $V_{n}(X)\geq L_n(X)$   for all  $n\in\naturals$ with   
\begin{align}
L_{n}(X)= \frac{m}{m+pk} c^{-\frac{k}{ m}}\Sigma_p^{-\frac{pk}{ m}}(X) n^{-\frac{pk}{ m}}.
\label{eq:directbounda}
\end{align}
In particular, we have  
\begin{align}
 \liminf_{n\to\infty} n^\frac{p k}{m}V_{n}(X)
 &\geq \frac{m}{m+pk} c^{-\frac{k}{ m}}\Sigma_p^{-\frac{pk}{ m}}(X)>0. \label{eq:laststepD}
\end{align}

We now derive an upper bound $U_n$ on the $n$-th quantization error $V_n(X)$. 
%This upper bound is universal in the sense of being independent of the specific choice of $Z$.
 %This upper bound trivially applies to $\setY=\reals^d$ as well, simply because more flexibility in placing the quantization points can only reduce the $n$-th quantization error. 
% Note that we do not make any further assumptions on $X$. 
Application of Theorem \ref{thm:singleshot2} with  $\beta=\operatorname{diam}(\setK)$, $\nu= \mu$, and $\delta_0= \operatorname{diam}(\setK)$ combined with  Item  \ref{eq:cantorsub2c} of  Lemma  \ref{lem:frheolro15}  yields 
 $V_n(X)\leq U_n$ for all  $n\in\naturals$ with  
\begin{align}\label{eq:directbounda2}
U_n&=\Gamma\mleft(1+\frac{k}{m}\mright)  \mleft(\frac{\operatorname{diam}(\setK)}{\kappa_{\mathrm{min}}} \mright)^{k}   n^{-\frac{k}{m}}.   
\end{align} 
In particular, we have  
 \begin{align}
 \limsup_{n\to\infty} n^\frac{k}{m}V_{n}(X)
 &\leq   \Gamma\mleft(1+\frac{k}{m}\mright)\mleft(\frac{\operatorname{diam}(\setK)}{\kappa_{\mathrm{min}}} \mright)^{k}<\infty. \label{eq:laststepC2b}
\end{align} 
Combining \eqref{eq:laststepD} and  \eqref{eq:laststepC2b}   with 
Items \ref{eq:boundCcheck2} and \ref{eq:boundCcheck} of Lemma \ref{lem:dimbound}, respectively,  
now yields %$\footnote{This also follows from \cite[Theorem 12.18]{grlu00}. } 
\begin{align}\label{eq:Dk22}
\frac{m}{p}\leq \underline{D}_k(X) \leq \overline{D}_k(X)\leq m. 
\end{align}
If  $p=1$, then the $k$-th quantization dimension  ${D}_k(X)$ exists and  equals  the similarity dimension $m$. 
Similarly, if  $h_\mu(X)>-\infty$ (Jensen's inequality  \cite[Theorem 2.3]{peprto92} combined with $\mu(\setX)=1$ yields $h_\mu(X) \leq \log (\mu(\setX))=0$), 
then Item \ref{item2dimequal} of Corollary \ref{cor:dimequal} (with  $\setX=\setK$ and $\nu=\mu$)  allows us to conclude that 
\begin{align}
\underline{\operatorname{dim}}_R(X)={D}_k(X)=m.
\end{align} 
\color{black}
In particular, if ${D}_k(X)=m$, then we obtain the following results for the upper and lower $k$-th quantization coefficients.  
By \eqref{eq:laststepC2b}, we have 
\begin{align}\label{eq:chainineq}
 \overline{C}_k(X) \leq  \Gamma\mleft(1+\frac{k}{m}\mright)\mleft(\frac{\operatorname{diam}(\setK)}{\kappa_{\mathrm{min}}} \mright)^{k}<\infty,  
\end{align}
and if \eqref{eq:gp2a} holds for $p=1$, then   
%Alternatively, \eqref{eq:Dk22} also follows from Corollary \ref{cor:dimequal}. Note that the similarity dimension $m$ also equals the Hausdorff dimension of the self-similar set $\setK$ since $0<\colH^m(\setK)<\infty$ by Lemma \ref{lem:frheolro15}. 
\eqref{eq:laststepD} yields %  yields the following chain of inequalities for the lower and upper $k$-th quantization coefficient: 
\begin{align}\label{eq:chainineqa}
\underline{C}_k(X) \geq 
\frac{m}{m+k} c^{-\frac{k}{ m}}\Sigma_1^{-\frac{k}{ m}}(X)>0. 
\end{align}
%We note that can also be established by virtue of \cite[Theorem 12.18]{grlu00} and   $\dim_\mathrm{H}(\setK)=m$, which follows from   $0<\colH^m(\setK)<\infty$ owing to Lemma \ref{lem:frheolro15}.   
% and \eqref{eq:Dk}. 
For $\setY=\setK$,  if  in addition to ${D}_k(X)=m$, we have $m> k$,  $\mu\ll\mu_X$,  and $
\lVert \mathrm d\mu /\mathrm d\mu_X \rVert_\infty^{(\mu_X)}<\infty$, 
then we can apply Theorem \ref{thm:upperstrong}  with $\nu=\mu$ 
to obtain the improved upper bound 
\begin{align}\label{eq:CupperultraB}
 \overline{C}_k(X)\leq  \Omega_{k/m}(X) \Gamma\mleft(1+\frac{k}{m}\mright)\mleft(\frac{\operatorname{diam}(\setK)}{\kappa_{\mathrm{min}}} \mright)^{k},   
\end{align}
where
\begin{align}\label{eq:OmegavMB}
\Omega_{k/m}(X)
&=\opE\mleft[\mleft(\frac{\mathrm d \mu}{\mathrm d\mu_X}(X)\mright)^{\frac{k}{m}}\mright]\quad\text{for all $m>k$}
\end{align}
with strict inequality in \eqref{eq:OmegavMB} unless $\mu_X=\mu$. 
%Compared to the upper bound on  $\overline{C}_k(X)$ in \eqref{eq:chainineq}, the upper bound on  $\overline{C}_k(X)$ in \eqref{eq:Cupperultra}  has the additional  $\Omega_{k/m}(X)$-factor. 
%Again, by Theorem  \ref{thm:upperstrong}, we have  
%\begin{align}\label{eq:omegasphereB}
%%\Omega_{k/m}(X) 
% \leq 1\quad\text{for all $m\geq  k$.}
%\end{align}

In order to further quantify our results, the following example  considers a random variable   $X$ taking values in the middle third Cantor set.

\begin{exa}

Consider the  middle third Cantor set 
 $\setC\subseteq [0,1]$,  i.e., the self-similar  set corresponding to the choice   $\setI=\{1,2\}$, $\kappa_1=\kappa_2=1/3$, $s_1(x)=x/3$, and $s_2(x)=x/3+2/3$. 
This self-similar set has similarity dimension  $m_\setC=\log (2)/\log (3)$ and satisfies   $0<\mathscr{H}^{m_\setC}(\setC)<\infty$ \cite[Example 4.5]{fa14}.   
Let $\mu=\colH^{m_\setC}\vert_\setC/\colH^{m_\setC}(\setC)$.  
We now employ  Lemma \ref{lem:frheolro15} to establish  the subregularity dimension and the subregularity constants of  $\mu$ 
for  $\setY=\setC$ and $\setY=\reals$ with 
%functions:
\begin{align}
\rho\colon\setC\times \setY&\to[0,\infty),\ \ \rho(x,y)= \lvert x-y\rvert^2.
\end{align}
%\tilde\rho\colon\setC\times \setC&\to[0,\infty),\ \ \tilde\rho(x,y)= \lvert x-y\rvert^2\quad\text{(for $\setY=\setC$).}
%\end{align}
 %To this end, we first show that $|\setJ_{\delta}(y)|\leq 3$ for all $\delta \in (0,1)$ and $y\in \reals$.
Specifically, note that  
$\kappa_\alpha=3^{-j}$ for  all $\alpha=(i_1,\dots,i_j)$ and  $j\in\naturals$.  Thus,    
\begin{align}
\setJ_{\delta}
&=\{\alpha\in\setI^\ast:\kappa_\alpha\leq \delta <\kappa_{\bar\alpha}\}\\
&=\{\alpha:\abs{\alpha}=j\}\quad\text{for all $\delta\in\big[3^{-j},3^{-j+1}\big)$ and $j\in\naturals$,}  
\end{align}
which implies (see Figure \ref{fig:cantor}) 
\begin{align}
\abs{\setJ_{\delta}(y)}\leq c_\setC\quad  \text{for all $\delta\in (0,1)$ and $y\in\setY$}
\end{align}
with 
\begin{align}\label{eq:defcC}
c_\setC
&=
\begin{cases}
2 &\quad \text{if $\setY=\setC$}\\
3 &\quad \text{if $\setY=\reals$}.
\end{cases}
\end{align}
Therefore, by Item \ref{eq:cantorsub2a} in Lemma \ref{lem:frheolro15}, the subregularity dimension  of $\mu$ is $m_\setC$ and  the corresponding subregularity constants are given by $c_\setC$ as defined in \eqref{eq:defcC} and $\delta_0=\infty$.

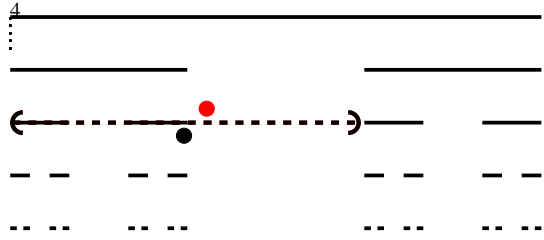
\begin{figure}[!t]
\begin{center}
\begin{tikzpicture}[scale=2]
  \foreach \order in {0,...,4}
    \draw[line width=0.5mm, yshift=-\order*10pt]  l-system[l-system={cantor set, axiom=F, order=\order, step=100pt/(3^\order)}];  
\put (10,5){\draw[{Arc Barb[]}-{Arc Barb[]}, ultra thick, red, dashed] (0,-20pt) -- (66pt,-20pt);};
\filldraw[red] (1.3,-0.61) circle (0.5mm);
\filldraw[black] (1.15,-0.79) circle (0.5mm);
%\put (10,5){\draw[{Arc Barb[]}-{Arc Barb[]}, ultra thick, red] (33pt,-10pt) -- (33pt,0pt);};
\put (0,-5){\draw[{Arc Barb[]}-{Arc Barb[]}, ultra thick, black, dashed] (0,-20pt) -- (66pt,-20pt);};
\put (0,0){
\put (4,-90){\draw [dotted, line width=0.3mm] (0pt,0pt) -- (0pt,-7pt);};
\put (18,-90){\draw [dotted, line width=0.3mm] (0pt,0pt) -- (0pt,-7pt);};
};
\put (45,0){
\put (4,-90){\draw [dotted, line width=0.3mm] (0pt,0pt) -- (0pt,-7pt);};
\put (18,-90){\draw [dotted, line width=0.3mm] (0pt,0pt) -- (0pt,-7pt);};
};
\put (134,0){
\put (4,-90){\draw [dotted, line width=0.3mm] (0pt,0pt) -- (0pt,-7pt);};
\put (18,-90){\draw [dotted, line width=0.3mm] (0pt,0pt) -- (0pt,-7pt);};
};
\put (178,0){
\put (4,-90){\draw [dotted, line width=0.3mm] (0pt,0pt) -- (0pt,-7pt);};
\put (18,-90){\draw [dotted, line width=0.3mm] (0pt,0pt) -- (0pt,-7pt);};
};
\put (220,-22){$|\alpha|=1$};
\put (220,-42){$|\alpha|=2$};  
\put (220,-62){$|\alpha|=3$};   
\put (220,-82){$|\alpha|=4$};
\end{tikzpicture}
\vspace*{10truemm}
\caption{The sets $s_\alpha([0,1])$ with $\abs{\alpha}=j$ have length $3^{-j}$. 
If $y\in\reals$, then 
at most three different  sets $s_\alpha([0,1])$ with $\abs{\alpha}=j$ intersect an open interval of  length $2(3^{-j+1})$ centered at $y$ (depicted in red  for $j=2$). 
If $y\in\setC$, then 
at most two different  sets $s_\alpha([0,1])$ with $\abs{\alpha}=j$ intersect an open interval of  length $2(3^{-j+1})$ centered at $y$ (depicted in black  for $j=2$). 
\label{fig:cantor}} 
\end{center}
\end{figure}

Now, suppose that the random variable   $X$ takes values in  $\setC$. If   $\mu_X\ll\mu$ with $h_\mu(X)>-\infty$,    
then \eqref{eq:Rlowerbound} evaluated for $\setK=\setC$, $m=m_\setC$, $c=c_\setC$, and $k=2$
  yields  the R-D lower bound 
\begin{align}\label{eq:SLB1aC}
R_X^{\text{L}}(D) = h_{\mu}(X)- 
\frac{m_\setC}{2} + \frac{m_\setC}{2}\log \mleft(\frac{m_\setC}{2 D} \mright)
-\log
\mleft(c_\setC
\Gamma\mleft(1+\frac{m_\setC}{2}\mright)
\mright). 
\end{align}
If $\mu_X\ll\mu$ with   \eqref{eq:gp2a} satisfied  for $p=1$  
%\begin{align}
%\Sigma_1(X)=\mleft\lVert \frac{\mathrm d \mu_X}{\mathrm d \mu}\mright\rVert_{\infty}^{(\mu)}<\infty\label{eq:gp2aa}
%\end{align} 
or  $\abs{h_\mu(X)}<\infty$,    
then   ${D}_2(X)=m_\setC$  owing to \eqref{eq:Dk22}. Furthermore,   \eqref{eq:chainineq} and  \eqref{eq:chainineqa}
particularized to  $\setK=\setC$, $m=m_\setC$, $c=c_\setC$,  $k=2$, $\kappa_{\mathrm{min}}=1/3$,  and $\operatorname{diam}(\setC)=1$
yields the following chain of inequalities for the lower and upper $2$-nd quantization coefficient 
\begin{align}\label{eq:boundC1C2}
0\,<\, \frac{m_\setC}{m_\setC+2}\,\Sigma_{1}^{-\frac{2}{ m_\setC}}(X) \,c_\setC^{-\frac{2}{ m_\setC}} \leq \,  \underline{C}_2(X)\, \leq \, \overline{C}_2(X)\, \leq \, 9\Gamma\mleft(1+\frac{2}{m_\setC}\mright)\,<\,\infty.
\end{align}
%owing to assumption.  

%If, in addition to the assumptions on $X$,  there exists a $t\in(0,\infty)$ such that 
%\begin{align}
%\limsup_{\delta\to 0} \frac{\mu_X\mleft(\setB_{\rho}\mleft(x,\delta \mright)\mright)}{\mu\mleft(\setB_{\rho}\mleft(x,\delta \mright)\mright)}\geq t \quad\text{for all $x\in\setC$,} 
%\end{align}
%then   
%\begin{align}
% \overline{C}_2(X)\leq  9\opE\mleft[\mleft(\frac{\mathrm d \mu_X}{\mathrm d\mu}(X)\mright)^{-\frac{2}{m_\setC}}\mright]\Gamma\mleft(1+\frac{2}{m_\setC}\mright). 
%\end{align}

We finally note that 
the $n$-th quantization error for the special case of $X$ taking values in $\setC$ with  uniform distribution $\mu_X=\mu$ and  $\setY=\reals$ is known explicitly and equals \cite[Theorem 5.2]{grlu97} 
\begin{align}\label{eq:VnC}
V_{n}(X)=\frac{1}{18^{l_n}}\frac{1}{8}\mleft(2^{l_n+1}-n+\frac{1}{9}\mleft(n-2^{l_n}\mright)\mright),
\end{align}
where $l_n=\lfloor\log (n)/\log (2)\rfloor$ for all $n\in\naturals$.
 The set of accumulation points of the sequence $V_{n}(X)n^{2/{m_\setC}}$ is given by  the interval  $[1/8, f(17/(8+4m_\setC))]$, where $f(s)=(1/72) s^{2/m_\setC}(17-8s)$  \cite[Theorem 6.3]{grlu97}, which implies  $\underline{C}_2(X) ={1}/{8}$ and $\overline{C}_2(X) =f(17/(8+4m_\setC))$. In particular,  ${C}_2(X)$ does not exist. % and our  bounds on $\underline{C}_2(X)$ and $\overline{C}_2(X)$ in \eqref{eq:boundC1C2} are not tight.  
\end{exa}

\section{Future Work and Open Problems}\label{sec:fwop} 

An interesting research direction is the identification of concrete examples of non-i.i.d. stationary ergodic processes $(X_i)_{i\in\naturals}$ 
which allow explicit expressions for  the corresponding   Shannon lower bounds according to Theorem \ref{thm:new}.  Specifically, suppose that there exists a subregular measure $\mu$ on $(\setX,\colX)$ such that $\mu_{X_1}\times\dots\times \mu_{X_\ell}\ll \mu^{(\ell)}$ with 
\begin{align}
\mu^{(\ell)}= \underbrace{\mu\times\dots\times \mu}_{\text{$\ell$ times}}\quad\text{for all $\ell\in\naturals$}. 
\end{align} 
Then,  Proposition  \ref{prp:subprod} can be used to infer subregularity of $\mu^{(\ell)}$  from subregularity of $\mu$ for all $\ell\in\naturals$, and the lower bound  $R_{X^{(\ell)}}^{\text{L}}(D)$ in \eqref{eq:toshow1}  can be evaluated for all  sections $X^{(\ell)}=(X_1,\dots,X_\ell)$ of $(X_i)_{i\in\naturals}$. The resulting 
Shannon lower bound is then explicit and depends on the subregularity dimension and the subregularity parameters of $\mu$. 
To the best of our knowledge, results on such Shannon lower bounds for non-i.i.d. stationary ergodic  processes have not been reported in the literature. 

Another open problem is related to the assumptions in  Theorem \ref{thm:singleshot}. Specifically, in \eqref{eq:useHoelder} in the proof of 
Theorem \ref{thm:singleshot}, 
subregularity of $\mu_X$ is obtained from subregularity of $\mu$ by application of  H\"older's inequality \cite[Theorem 1, p. 372]{ka18}.  
A corollary to Theorem \ref{thm:singleshot}, namely Corollary \ref{cor:DC}, then allows us to conclude that, if there exists a $p\in[1,\infty)$ such that 
\begin{align}  \label{eq:prevent2}
 \lVert\mathrm d \mu_X/\mathrm d \mu\rVert^{(\mu)}_{p/(p-1)}<\infty, 
\end{align} 
 we get  the lower bound 
 \begin{align}\label{eq:resultD}
 \underline{D}_k(X)\geq m/p.  
\end{align}
For $p>1$,  this  bound is strictly smaller than the subregularity dimension $m$. Note that if 
\begin{align} \label{eq:prevent}
 \lVert\mathrm d \mu_X/\mathrm d \mu\rVert^{(\mu)}_{q/(q-1)}=\infty\quad\text{for all  $q\in [1,p)$,}
\end{align} 
then \eqref{eq:resultD}  is also the largest possible lower bound on $\underline{D}_k(X)$ that can be obtained by application of Theorem \ref{thm:singleshot}. 
%the lower bound $m/p$ in \label{eq:resultD} can not be increased as \eqref{eq:prevent} violates   the assumptions of Theorem  \ref{thm:singleshot} for $q<p$. 
It would now be  interesting to understand if there is a  concrete example
saturating \eqref{eq:resultD}  for  $p\in(1,\infty)$  or whether the somewhat crude step of applying H\"older's inequality in the proof of Theorem \ref{thm:singleshot} renders the lower bound in Item \ref{eq:boundD} of Corollary \ref{cor:DC} structurally loose.  %i.e., the lower bound in \eqref{eq:resultD} is tight and   $\underline{D}_k(X)$ is indeed strictly smaller than the  subregularity dimension. 
%of a subregular measure $\mu$ of subregularity dimension $m$ and a random variable $X$ with $\mu_X\ll\mu$ such that, for  a specific $p\in (1,\infty)$,  \eqref{eq:prevent2} and  \eqref{eq:prevent} are  satisfied  and the lower bound in  \eqref{eq:resultD}  is tight, i.e., we have    $\underline{D}_k(X)=m/p<m$.  %$\underline{D}_k(X)$ is indeed strictly smaller than the subregularity dimension $m$. %the lower bound in  \eqref{eq:resultD}  is tight, which im
 Two broad classes of random variables can, however, already be excluded from consideration as candidates by the following arguments: 
 %Such an $X$ would have to  satisfy  the following constraints:   
\begin{itemize}
\item
 If $\setX=\setY=\reals^d$,  $\mu=\colL^d$, and $k\in[1,\infty)$, then  \cite[Theorem 6.2, Remark 6.3]{grlu00} imply ${D}_k(X)= d$ provided that 
 \begin{align}\label{eq:contrulesout} \lVert\mathrm d \mu_X/\mathrm d \mu\rVert^{(\mu)}_{d/(d+k)}<\infty. 
 \end{align} 
  This hence rules out all continuous random variables 
 satisfying \eqref{eq:contrulesout}.  
 \item   
If  $\mu$ is subregular of subregularity dimension $m$ with $\mu(\setX)=1$ for $\delta_0<\infty$, then  Corollary \ref{cor:RDD} implies 
$\underline{D}_k(X)\geq m$ provided that $\abs{h_\mu(X)}<\infty$. This hence  rules out every $X$ of finite generalized entropy. 
\end{itemize} 

Finally, it would be interesting to understand whether the link between quantization and R-D theory established in Section \ref{sec:link} extends to the case where the subregularity parameter $\delta_0$ is finite, which is, e.g.,  the case for  the  compact manifolds described in Section \ref{sec:manifold}.  
 The problem  with $\delta_0<\infty$ here is that the  lower bound $ R_{(\ell)} (D) \geq \widetilde R_{(\ell)}(D)$  in 
Theorem \ref{cor:MSiid} is valid for  $D \in(0,D_{(\ell)})$, but unfortunately  $D_{(\ell)}\to 0$ as $\ell\to\infty$ unless $\delta_0=\infty$. %But in order to get a lower bound on $R(D)$ in \eqref{eq:rdfin2}, we have to establish lower bounds on $R_{(\ell)} (D)$ for all $\ell\in\naturals$. 

%\section*{Statements and Declarations}
%\begin{itemize}%[itemsep=1ex,topsep=1ex]
%\item G. Koliander was supported in part by  the WWTF project MA16-053. 
%\item A weaker version of Theorem \ref{thm:new} was presented without proof at the 2018 IEEE International Symposium on Information Theory (ISIT) \cite{rikobo18}. 
%\end{itemize}

%\backmatter

%%%%%%%%%%%%%%%%%%%%%%%%%%%%%%%%%%%%%%%%%
%\begin{appendices}
%%%%%%%%%%%%%%%%%%%%%%%%%%%%%%%%%%%%%%%%%
\appendix

%%%%%%%%%%%%%%%%%%%%%%%%%%%%%%%%%%%%%%%%%%%%%
\section{Proof of Proposition~\ref{prp:subprod}}\label{sec:proofsubprod}
%%%%%%%%%%%%%%%%%%%%%%%%%%%%%%%%%%%%%%%%%%%%%
We prove Item \ref{item:prodsub} only as the proof of Item \ref{item:prodsup} follows along the exact same lines using superregularity of the measures $\nu_i$ instead of subregularity of the  $\mu_i$. 
First, we consider the  special case  $\ell=2$, $\alpha_1=\alpha_2=k=1$, which constitutes  the core piece  of the proof. The general case  then follows by induction over $\ell$ and subsequent application of Lemma \ref{lem:simpilysub} to extend the result to arbitrary $k, \alpha_1,\dots,\alpha_\ell\in(0,\infty)$. 
%The concrete statement is as follows. 

\begin{lem}\label{lem:subprod}
 For $i=1,2$, let $(\setX_i,\colX_i)$ and $(\setY_i,\colY_i)$ be   measurable spaces and consider the distortion function   $\rho_i\colon \setX_i\times\setY_i\to[0,\infty]$.    
Suppose that, for $i=1,2$,  
$\mu_i$ is a $\sigma$-finite  $\rho_i$-subregular measure on $(\setX_i,\colX_i)$ of dimension $m_i\in(0,\infty)$ with subregularity  constants $c_i\in(0,\infty)$ and $\delta_i\in(0,\infty]$,
%, i.e., 
%\begin{align}\label{eq:subregularityi}
%\mu_i\mleft(\setB_{\rho_i}\mleft(y_i,\delta\mright)\mright)\leq c_i\delta^{m_i}\quad\text{for all $y_i\in\setY_i$ and $\delta\in (0,\delta_i)$}, 
%\end{align}
and set 
\begin{align}
\bar\rho((x_1,x_2),(y_1,y_2))=\rho_1(x_1,y_1)+\rho_2(x_2,y_2). 
\end{align}
Then, $\bar\mu=\mu_1\otimes\mu_2$
is $\bar\rho$-subregular of dimension $m_1+m_2$ with subregularity constants $c_1c_2
\Gamma\mleft(1+m_1\mright)\Gamma\mleft(1+m_2\mright)/\Gamma\mleft(1+m_1+m_2\mright)$ and $\min(\delta_1,\delta_2)$, i.e.,
\begin{align}\label{eq:toshowprod}
\bar\mu\mleft(\setB_{\bar\rho}\mleft(y,\delta\mright)\mright)\leq c_1c_2
\frac{\Gamma\mleft(1+m_1\mright)\Gamma\mleft(1+m_2\mright)}{\Gamma\mleft(1+m_1+m_2\mright)}
\delta^{m_1+m_2} 
\end{align}
for all $y\in\setY_1\times \setY_2$ and $\delta\in(0,\min(\delta_1,\delta_2))$. 
\end{lem}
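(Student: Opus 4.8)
The plan is to exploit the additive structure of $\bar\rho$ together with Tonelli's theorem in order to reduce the estimate to a one–dimensional Beta–function computation. Since $\rho_1,\rho_2$ are (product–)measurable, for fixed $y=(y_1,y_2)$ the map $(x_1,x_2)\mapsto\rho_1(x_1,y_1)+\rho_2(x_2,y_2)$ is $\colX_1\otimes\colX_2$–measurable, so $\setB_{\bar\rho}(y,\delta)\in\colX_1\otimes\colX_2$, and its section at a fixed $x_1$ is exactly $\setB_{\rho_2}(y_2,\delta-\rho_1(x_1,y_1))$ (empty when $\rho_1(x_1,y_1)\ge\delta$). As $\mu_1$ and $\mu_2$ are $\sigma$–finite, Tonelli's theorem applies and gives
\begin{align}
\bar\mu\mleft(\setB_{\bar\rho}\mleft(y,\delta\mright)\mright)=\int_{\{x_1:\,\rho_1(x_1,y_1)<\delta\}}\mu_2\mleft(\setB_{\rho_2}\mleft(y_2,\delta-\rho_1(x_1,y_1)\mright)\mright)\,\mathrm d\mu_1(x_1).
\end{align}

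First I would bound the inner integrand. For $\delta<\min(\delta_1,\delta_2)$ and $x_1$ in the domain of integration one has $0<\delta-\rho_1(x_1,y_1)<\delta_2$, so $\rho_2$–subregularity of $\mu_2$ yields $\mu_2(\setB_{\rho_2}(y_2,\delta-\rho_1(x_1,y_1)))\le c_2(\delta-\rho_1(x_1,y_1))^{m_2}$. Next, using the layer–cake identity $s^{m_2}=\int_0^\infty m_2 t^{m_2-1}\ind{\{t<s\}}\,\mathrm dt$ (valid since $m_2\in(0,\infty)$) with $s=\delta-\rho_1(x_1,y_1)$ and interchanging the order of integration (Tonelli once more), I obtain
\begin{align}
\int_{\{x_1:\,\rho_1(x_1,y_1)<\delta\}}(\delta-\rho_1(x_1,y_1))^{m_2}\,\mathrm d\mu_1(x_1)=\int_0^\delta m_2 t^{m_2-1}\,\mu_1\mleft(\setB_{\rho_1}\mleft(y_1,\delta-t\mright)\mright)\,\mathrm dt.
\end{align}
Since $\delta-t\in(0,\delta_1)$ for $t\in(0,\delta)$, $\rho_1$–subregularity of $\mu_1$ bounds this by $c_1\int_0^\delta m_2 t^{m_2-1}(\delta-t)^{m_1}\,\mathrm dt$.

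Finally I would evaluate the remaining integral by the substitution $t=\delta u$, turning it into $c_1 m_2\delta^{m_1+m_2}\int_0^1 u^{m_2-1}(1-u)^{m_1}\,\mathrm du=c_1 m_2\delta^{m_1+m_2}B(m_2,m_1+1)$, and then use $m_2 B(m_2,m_1+1)=\Gamma(1+m_1)\Gamma(1+m_2)/\Gamma(1+m_1+m_2)$ (a consequence of $\Gamma(m_2+1)=m_2\Gamma(m_2)$). Collecting the constants $c_2$ and $c_1$ gives exactly \eqref{eq:toshowprod} for all $y\in\setY_1\times\setY_2$ and $\delta\in(0,\min(\delta_1,\delta_2))$. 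I do not expect a genuine obstacle: the only points requiring care are the membership $\setB_{\bar\rho}(y,\delta)\in\colX_1\otimes\colX_2$ and the two invocations of Tonelli, both of which are routine given $\sigma$–finiteness of $\mu_1$ and $\mu_2$; the pleasant sanity check is that the final constant is symmetric in $m_1,m_2$ even though the computation treats the two factors asymmetrically.
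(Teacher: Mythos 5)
Your proof is correct and follows essentially the same route as the paper's: Tonelli to slice the product ball, subregularity of $\mu_2$ on the inner section, a layer-cake interchange to bring in $\mu_1(\setB_{\rho_1}(y_1,\cdot))$, subregularity of $\mu_1$, and the Beta integral. The only cosmetic difference is that the paper applies the layer-cake formula to the level sets of $x_1\mapsto(\delta-\min(\rho_1(x_1,y_1),\delta))^{m_2}$ and substitutes $s=\delta^{-1}t^{1/m_2}$, whereas you write $s^{m_2}=\int_0^\infty m_2t^{m_2-1}\ind{\{t<s\}}\,\mathrm dt$ and swap the order of integration; these are the same computation.
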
 
\begin{proof}
Fix $y=(y_1,y_2)\in \setY_1\times \setY_2$ and $\delta\in(0,\min(\delta_1, \delta_2))$ arbitrarily. We have 
\begin{align}
\bar\mu\mleft(\setB_{\bar\rho}\mleft(y,\delta\mright)\mright)
&=\bar\mu\mleft((x_1,x_2)\in\setX_1\times \setX_2:\rho_1(x_1,y_1)+\rho_2(x_2,y_2)<\delta\mright)\\
&=\int  I(x_1)
\,\mathrm d\mu_1(x_1)\label{eq:applyFub},
\end{align}
where \eqref{eq:applyFub} is by Tonelli's theorem for characteristic functions \cite[Theorem 10.9]{ba95} with 
\begin{align}
I(x_1)
&=\mu_2\mleft(\setB_{\rho_2}\mleft(y_2,\mleft(\delta-\min\mleft(\rho_1(x_1,y_1),\delta\mright)\mright)\mright)\mright). 
\end{align}
Now, 
$\rho_2$-subregularity of $\mu_2$ implies 
\begin{align}\label{eq:appsub2}
I(x_1)\leq c_2 g(x_1)
\end{align}
with 
\begin{align}
g(x_1)=\mleft(\delta-\min\mleft(\rho_1(x_1,y_1),\delta\mright)\mright)^{m_2}, 
\end{align}
which, when used in \eqref{eq:applyFub}, yields 
\begin{align}\label{eq:interm1}
\bar\mu\mleft(\setB_{\bar\rho}\mleft(y,\delta\mright)\mright)
&\leq c_2\int  g(x_1) \,\mathrm d\mu_1(x_1).
\end{align}
Next, note that 
\begin{align}
 &\int  g(x_1)\,\mathrm d\mu_1(x_1)\label{eq:stepuu0}\\
 &=\int_0^{\delta^{m_2}}\mu_1(\{x_1:g(x_1)>t\})\,\mathrm d t\label{eq:stepuu1} \\
 &=\int_0^{\delta^{m_2}}\mu_1\mleft(\setB_{\rho_1}\mleft(y_1,\mleft(\delta-t^{\frac{1}{m_2}}\mright)\mright)\mright)\,\mathrm d t\\
 &\leq c_1 \int_0^{\delta^{m_2}}\mleft(\delta-t^{\frac{1}{m_2}}\mright)^{m_1}\label{eq:stepuu2} \,\mathrm d t\\
 &= c_1\delta^{m_1} \int_0^{\delta^{m_2}}\mleft(1-\delta^{-1}t^{\frac{1}{m_2}}\mright)^{{m_1}}\label{eq:stepuu3} \,\mathrm d t\\
 &= c_1\delta^{m_1+m_2} m_2\int_0^{1}\mleft(1-s\mright)^{{m_1}}s^{{m_2}-1}\label{eq:stepuu4} \,\mathrm d s\\
 &=c_1\delta^{m_1+m_2} m_2\,B_{1+m_1,\,m_2}(1),\label{eq:stepuu5}
\end{align}
where \eqref{eq:stepuu1} follows from Lemma \ref{lem:fubI} upon noting that $0\leq g(x_1)\leq \delta^{m_2}$ for all $x_1\in\setX_1$,    in \eqref{eq:stepuu2}  we employed $\rho_1$-subregularity of $\mu_1$, and in \eqref{eq:stepuu4}  we changed variables according  to 
$s=\delta^{-1}t^{\frac{1}{m_2}}$. 
Using \eqref{eq:stepuu0}--\eqref{eq:stepuu5}  in \eqref{eq:interm1}, we obtain
\begin{align}\label{eq:interm2}
\bar\mu\mleft(\setB_{\bar\rho}\mleft(y,\delta\mright)\mright)
&\leq c_1c_2\delta^{m_1+m_2} m_2\,B_{1+m_1,\,m_2}(1)\\
&=c_1c_2
\frac{\Gamma\mleft(1+m_1\mright)\Gamma\mleft(1+m_2\mright)}{\Gamma\mleft(1+m_1+m_2\mright)}
\delta^{m_1+m_2}, 
\end{align} 
where the last step follows from  \eqref{eq:gammabeta} 
%
%
%
%\cite[Theorem 1.1.4]{anasro99}
%\begin{align}
%B_{1+m_1,m_2}(1)=\frac{\Gamma\mleft(1+m_1\mright)\Gamma\mleft(m_2\mright)}{\Gamma\mleft(1+m_1+m_2\mright)}
%\end{align}
 %\cite[Definition 1.1.3]{anasro99} 
 and $m_2\Gamma(m_2)=\Gamma(1+m_2)$.  
\end{proof}

The generalization of  Lemma  \ref{lem:subprod}  to arbitrary $\ell$ is effected by induction as  follows.

\begin{lem}\label{lem:subprod1}
For $i=1,\dots, \ell$, let $(\setX_i,\colX_i)$ and  $(\setY_i,\colY_i)$ be   measurable spaces  and consider the distortion function $\rho_i\colon \setX_i\times\setY_i\to[0,\infty]$.   
Suppose that, for $i=1,\dots, \ell$, $\mu_i$ is a $\sigma$-finite $\rho_i$-subregular measure of dimension $m_i\in(0,\infty)$ with subregularity constants $c_i\in(0,\infty)$ and $\delta_i\in(0,\infty]$, i.e., 
\begin{align}\label{eq:subregularity1}
\mu_i\mleft(\setB_{\rho_i}\mleft(y_i,\delta\mright)\mright)\leq c_i\delta^{m_i}\quad\text{for all $y_i\in\setY_i$ and $\delta\in (0,\delta_i)$}. 
\end{align}
Then, $\mu^{(\ell)}:=\mu_1\otimes\dots\otimes\mu_\ell$ satisfies 
\begin{align}\label{eq:toshowprod1}
\mu^{(\ell)}\mleft(\setB_{\tilde{\rho}_{(\ell)}}\mleft(y^{(\ell)},\delta\mright)\mright)\leq 
\tilde{c}_{(\ell)}\delta^{\sum_{i=1}^\ell m_i}
\end{align}
for all $y^{(\ell)}\in\setY_1\times\dots\times  \setY_\ell$ and $\delta\in(0,\min(\delta_1, \dots,\delta_\ell))$
with 
%is $\tilde{\rho}_{(\ell)}$-subregular of dimension $\sum_{i=1}^\ell m_i$ with subregularity constants $\tilde{c}_{(\ell)}$ and $\min(\delta_1, \dots,\delta_\ell)$, i.e.,
%Here, 
\begin{align}
\tilde{\rho}_{(\ell)}((x_1,\dots,x_\ell),(y_1,\dots,y_\ell))=\sum_{i=1}^\ell\rho_i(x_i,y_i),
\end{align}
and
\begin{align}
\tilde{c}_{(\ell)}=
\frac{\prod_{i=1}^\ell \Gamma\mleft(1+m_i\mright)}{\Gamma\mleft(1+\sum_{i=1}^\ell m_i\mright)} \times \prod_{i=1}^\ell c_i .
\end{align}
\end{lem}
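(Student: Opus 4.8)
The plan is to establish Lemma~\ref{lem:subprod1} by induction on $\ell$, with Lemma~\ref{lem:subprod} supplying the inductive step. The base case $\ell=1$ is immediate: then $\tilde{\rho}_{(1)}=\rho_1$ and $\tilde{c}_{(1)}=\bigl(\Gamma(1+m_1)/\Gamma(1+m_1)\bigr)c_1=c_1$, so \eqref{eq:toshowprod1} reduces verbatim to the hypothesis \eqref{eq:subregularity1} for $i=1$.

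For the inductive step, assume the claim holds for some $\ell\geq 1$ and write $M_\ell=\sum_{i=1}^\ell m_i$. First I would record two elementary facts: $\mu^{(\ell)}=\mu_1\otimes\dots\otimes\mu_\ell$ is $\sigma$-finite (a finite product of $\sigma$-finite measures is $\sigma$-finite), and $\mu^{(\ell+1)}=\mu^{(\ell)}\otimes\mu_{\ell+1}$ together with $\tilde{\rho}_{(\ell+1)}\bigl((x^{(\ell)},x_{\ell+1}),(y^{(\ell)},y_{\ell+1})\bigr)=\tilde{\rho}_{(\ell)}(x^{(\ell)},y^{(\ell)})+\rho_{\ell+1}(x_{\ell+1},y_{\ell+1})$, i.e.\ the $(\ell+1)$-fold setup factors as a two-fold setup over the measurable spaces $\bigl(\setX_1\times\dots\times\setX_\ell,\setX_{\ell+1}\bigr)$ and $\bigl(\setY_1\times\dots\times\setY_\ell,\setY_{\ell+1}\bigr)$. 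By the inductive hypothesis $\mu^{(\ell)}$ is $\tilde{\rho}_{(\ell)}$-subregular of dimension $M_\ell$ with subregularity constants $\tilde{c}_{(\ell)}$ and $\min(\delta_1,\dots,\delta_\ell)$, so Lemma~\ref{lem:subprod} applied to the pair $\mu^{(\ell)}$ (playing the role of $\mu_1$) and $\mu_{\ell+1}$ (playing the role of $\mu_2$) yields that $\mu^{(\ell+1)}$ is $\tilde{\rho}_{(\ell+1)}$-subregular of dimension $M_\ell+m_{\ell+1}=\sum_{i=1}^{\ell+1}m_i$, for $\delta\in\bigl(0,\min(\delta_1,\dots,\delta_{\ell+1})\bigr)$, with subregularity constant
\begin{align*}
\tilde{c}_{(\ell)}\,c_{\ell+1}\,\frac{\Gamma(1+M_\ell)\,\Gamma(1+m_{\ell+1})}{\Gamma(1+M_\ell+m_{\ell+1})}.
\end{align*}

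It then remains to substitute the inductive-hypothesis value $\tilde{c}_{(\ell)}=\frac{\prod_{i=1}^\ell\Gamma(1+m_i)}{\Gamma(1+M_\ell)}\prod_{i=1}^\ell c_i$ and to observe that the factor $\Gamma(1+M_\ell)$ cancels, leaving $\frac{\prod_{i=1}^{\ell+1}\Gamma(1+m_i)}{\Gamma\bigl(1+\sum_{i=1}^{\ell+1}m_i\bigr)}\prod_{i=1}^{\ell+1}c_i=\tilde{c}_{(\ell+1)}$, which closes the induction. I do not expect any genuine obstacle here: the only points requiring a word of justification are the associativity $\mu^{(\ell+1)}=\mu^{(\ell)}\otimes\mu_{\ell+1}$ at the level of product $\sigma$-algebras and the stability of $\sigma$-finiteness under products, both of which are standard and are exactly what is needed for Lemma~\ref{lem:subprod} to apply verbatim; the remaining Gamma-function arithmetic telescopes cleanly.
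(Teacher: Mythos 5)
Your proposal is correct and follows essentially the same route as the paper: induction on $\ell$ with Lemma~\ref{lem:subprod} supplying the inductive step and the Gamma factors telescoping, the only cosmetic difference being that the paper anchors the induction at $\ell=2$ (where Lemma~\ref{lem:subprod} is the base case verbatim) rather than at the trivial case $\ell=1$. Your remarks on $\sigma$-finiteness of the product measure and the associativity $\mu^{(\ell+1)}=\mu^{(\ell)}\otimes\mu_{\ell+1}$ are exactly the (standard) facts the paper leaves implicit.
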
 
\begin{proof}
The proof is by induction on $\ell$. The base case  $\ell=2$  was established in Lemma \ref{lem:subprod}.  
For the induction step, assume that \eqref{eq:toshowprod1} holds for $\ell-1$, i.e.,   
\begin{align}\label{eq:tildemu1}
\mu^{(\ell-1)}\mleft(\setB_{\tilde{\rho}_{(\ell-1)}}\mleft(y^{(\ell-1)},\delta\mright)\mright)\leq \tilde{c}_{(\ell-1)} \delta^{\sum_{i=1}^{\ell-1} m_i}
\end{align}
for all  $y^{(\ell-1)}\in \setY_1\times\dots\times  \setY_{\ell-1}$ and $\delta\in(0,\min(\delta_1, \dots,\delta_{\ell-1}))$. 
Since 
\begin{align}
&\tilde{\rho}_{(\ell)}((x_1,\dots,x_\ell),(y_1,\dots,y_\ell))\\
&=\tilde{\rho}_{(\ell-1)}((x_1,\dots,x_{\ell-1}),(y_1,\dots,y_{\ell-1}))+\rho_{\ell}(x_\ell,y_\ell)
\end{align}
and $\mu^{(\ell)}=\mu^{(\ell-1)}\otimes\mu_\ell$ 
with $\mu^{(\ell-1)}$ satisfying \eqref{eq:tildemu1} and $\mu_\ell$ obeying \eqref{eq:subregularity1}, 
Lemma \ref{lem:subprod}  applied to $\mu^{(\ell-1)}$, $\mu_\ell$, $\tilde{\rho}_{(\ell-1)}$, and $\rho_\ell$ yields  
\begin{align}
\mu^{( \ell)}\mleft(\setB_{\tilde{\rho}_{( \ell)}}\mleft(y^{( \ell)},\delta\mright)\mright)
&\leq \tilde{c}_{( \ell-1)}c_ \ell\frac{\Gamma\mleft(1+\sum_{i=1}^{ \ell-1} m_i\mright)\Gamma\mleft(1+m_ \ell\mright)}{\Gamma\mleft(1+\big(\sum_{i=1}^{ \ell-1} m_i\big)+m_ \ell\mright)}  \delta^{(\sum_{i=1}^{ \ell-1} m_i)+m_ \ell}\\
&= \tilde{c}_{( \ell)}\delta^{\sum_{i=1}^{ \ell} m_i}  
\end{align}
for all $y^{( \ell)}\in\setY_1\times\dots\times  \setY_ \ell$ and $\delta\in(0,\min(\delta_1, \dots,\delta_ \ell))$. 
\end{proof}

To complete the proof of Proposition~\ref{prp:subprod}, we need to generalize  Lemma  \ref{lem:subprod1} to arbitrary $k\in(0,\infty)$ and arbitrary $\alpha_1,\dots,\alpha_\ell\in(0,\infty)$. 
To this end, note that 
 $\rho_i^{1/k}$-subregularity of  $\mu_i$, %with subregularity dimension $m_i$ and  subregularity constants $c_i$ and $\delta_i$, 
by Lemma \ref{lem:simpilysub}  implies   
that 
$\mu_i$ is $(\alpha_i\rho_i)$-subregular of dimension $m_i/k$ with subregularity constants $\tilde c_i=c_i\alpha_i^{- m_i/k}$ and $\tilde \delta_i=\alpha_i\delta_i^{k}$.  
Lemma \ref{lem:subprod1}    applied to the  $(\alpha_i\rho_i)$-subregular  measures $\mu_i$ and the distortion functions $\alpha_i\rho_i$ 
therefore yields 
that $\mu^{(\ell)}$ is 
$\rho_{(\ell)}$-subregular of dimension $m_{(\ell)}/k$ with subregularity constants $c_{(\ell)}$ (as defined in \eqref{eq:parsubsup}) and $\min(\alpha_1\delta_1^{k}, \dots,\alpha_\ell\delta_\ell^{k})$. 
Another application of Lemma \ref{lem:simpilysub} then establishes that 
$\mu^{( \ell)}=\mu_1\otimes\dots\otimes\mu_\ell$ is 
$\rho_{(\ell)}^{1/k}$-subregular of dimension $m_{(\ell)}$ with subregularity constants $c_{(\ell)}$ and $\delta_{(\ell)}$ as defined in \eqref{eq:parsubsupb}. \qed

\section{Proof of Proposition \ref{thm.ko17}}\label{thm.ko17proof}

By \cite[Lemma A.1]{kade94} we can lower-bound $R_X(D)$ according to\footnote{Imposing slightly stronger constraints on the distortion function $\rho$ than just measurability,  \eqref{eq:maxRd} can actually be made to hold with  equality  \cite[Theorem 2.3]{cs74}.}   
\begin{align}
R_X(D)\geq \sup_{\alpha,s}\mleft(\opE[\log (\alpha(X))]-sD\mright)\quad\text{for all $D\in(0,\infty)$}, \label{eq:maxRd}
\end{align}
where the supremization is over all measurable functions $\alpha\colon \setX\to (0,\infty)$ and  $s\in[0,\infty)$ such that 
\begin{equation}
\opE\mleft[\alpha(X)e^{-s\rho(X,y)}\mright]\leq 1\quad\text{for all $y\in\setY$.} \label{eq:constraint}
\end{equation}
The main idea of the proof is to lower-bound the supremum in \eqref{eq:maxRd} 
using a judiciously  parametrized family $(\alpha_s)_{s\geq 0}$ of measurable functions $\alpha_s\colon \setX\to (0,\infty)$ satisfying 
\begin{align}\label{eq:alphasexp}
\opE\big[\alpha_s(X)e^{-s\rho(X,y)}\big]\leq 1\quad\text{for all $s\in[0,\infty)$ and $y\in\setY$.} 
\end{align}
Specifically, for every $s\in [0,\infty)$, we set 
\begin{align}\label{eq:defas1}
\alpha_s(x)=
\begin{cases}
\displaystyle \mleft(\frac{\mathrm d\mu_X}{\mathrm d\mu}(x)g(s)\mright)^{-1}&\quad \text{if $\ \ \ \displaystyle\frac{\mathrm d\mu_X}{\mathrm d\mu}(x)>0$ }\\
1&\quad\text{else,} 
\end{cases}
\end{align}
where $(\mathrm d\mu_X/\mathrm d\mu)(x)$ exists and is nonnegative and measurable by  the Radon--Nikod\'ym theorem \cite[Theorem 8.9]{ba95}.   
Now  \eqref{eq:alphasexp} is established by noting that 
\begin{align}
\opE\mleft[\alpha_s(X)e^{-s\rho(X,y)}\mright]
&=\int \alpha_s(x)e^{-s\rho(x,y)}\,\mathrm d \mu_X(x)\\
&=\int_\setA \alpha_s(x)e^{-s\rho(x,y)} \frac{\mathrm d\mu_X}{\mathrm d\mu}(x) \,\mathrm d \mu(x)\label{eq:defasA}\\
&=g(s)^{-1}\int_\setA e^{-s\rho(x,y)}\,\mathrm d \mu(x)\label{eq:defas}\\
&\leq 1\quad\text{for all $s\in[0,\infty)$ and  $y\in \setY$},\label{eq:defgs} 
\end{align}
where in \eqref{eq:defasA} we set 
\begin{equation}
\setA=\Big\{x\in\setX:\frac{\mathrm d\mu_X}{\mathrm d\mu}(x)>0\Big\},\label{eq:setA}
\end{equation}
\eqref{eq:defas} follows from \eqref{eq:defas1}, and \eqref{eq:defgs} is by \eqref{eq:defns1}. 
It therefore follows from    \eqref{eq:maxRd} that 
\begin{align}
R_X(D)\geq \sup_{s\geq 0}\mleft(\opE[\log (\alpha_s(X))]-sD\mright)\quad\text{for all $D\in(0,\infty)$}.  \label{eq:maxRd2}
\end{align}
Finally,
\begin{align}
&\opE[\log (\alpha_s(X))]\label{eq:aa1}\\
&=\int \log (\alpha_s(x)) \,\mathrm d \mu_X(x)\\
&=\int \log (\alpha_s(x)) \frac{\mathrm d\mu_X}{\mathrm d\mu}(x)  \,\mathrm d \mu(x)\\
&=-\int\log \mleft(\frac{\mathrm d\mu_X}{\mathrm d\mu}(x)\mright)\frac{\,\mathrm d\mu_X}{\,\mathrm d\mu}(x)  \,\mathrm d \mu(x)-\log (g(s))\int \frac{\mathrm d\mu_X}{\mathrm d\mu}(x)  \,\mathrm d \mu(x)\label{eq:defas2}\\
&=h_\mu(X) -\log (g(s))\quad\text{for all $s\in[0,\infty)$}, \label{eq:aa2}
\end{align}
where \eqref{eq:defas2} follows from \eqref{eq:defas1} 
and \eqref{eq:aa2} holds owing to  $\int \frac{\mathrm d\mu_X}{\mathrm d\mu}(x)  \,\mathrm d \mu(x)= \mu_X(\setX)=1$.
Inserting \eqref{eq:aa1}--\eqref{eq:aa2} into \eqref{eq:maxRd2} concludes the proof. \qed

\section{Proof of Theorem \ref{thm:new}}\label{thm:newproof}
We start by upper-bounding  $g(s)$ in Proposition \ref{thm.ko17} as follows: 
\begin{align}
g(s)
&=\sup_{y\in\setY}\int e^{-s\rho(x,y)} \,\mathrm d \mu(x)\label{stepa1}\\
&=\sup_{y\in\setY}\int_0^1\mu \mleft(\mleft\{x\in\setX:e^{-s\rho(x,y)}>t \mright\}\mright)\,\mathrm d t  \label{stepa} \\
&=\sup_{y\in\setY}\int_0^\infty e^{-u}\mu \mleft(\mleft\{x\in\setX:u> s\rho(x,y)\mright\}\mright)\,\mathrm d u  \label{stepb} \\
&=\sup_{y\in\setY}\int_0^\infty e^{-u}\mu\mleft(\setB_{\rho}\mleft(y,\frac{u}{s}\mright) \mright) \,\mathrm d u\label{step1}\\
&=\sup_{y\in\setY}\mleft(\int_0^{s \delta_0^k} e^{-u}\mu\mleft(\setB_{\rho}\mleft(y,\frac{u}{s}\mright)\mright)\,\mathrm d u+\int_{s \delta_0^k}^\infty e^{-u}\mu\mleft(\setB_{\rho}\mleft(y,\frac{u}{s}\mright)\mright)\,\mathrm d u\mright)\label{step2}\\
&\leq cs^{- \frac{m}{k}}\int_0^{s\delta_0^k} e^{-u}u^{\frac{m}{k}} \,\mathrm d u+ e^{-s\delta_0^k}\label{step4} \\
&= cs^{- \frac{m}{k}}\gamma\mleft(1+ \frac{m}{k}, s\delta_0^k\mright) + e^{-s\delta_0^k}\quad \text{for all $s\in(0,\infty)$}, \label{step5} 
\end{align}
where \eqref{stepa}  follows from Lemma \ref{lem:fubI}  upon noting that $e^{-s\rho(x,y)}\in [0,1]$, 
in  \eqref{stepb} we applied the change of  variables  $t=e^{-u}$, 
and in \eqref{step4} we used  that i) thanks to Lemma \ref{lem:simpilysub}, $\mu$ is $\rho$-subregular of dimension $m/k$   with subregularity  constants $c$ and $\delta_0^k$ and ii) the assumption $\mu(\setX)=1$ if $\delta_0<\infty$.

Inserting \eqref{stepa1}--\eqref{step5} into   the right-hand side of \eqref{eq:SLB} yields 
\begin{align}
R_X^{\text{SLB}}(D)
& \geq h_\mu(X)-\inf_{s\geq 0}\mleft(sD +\log \mleft(cs^{- \frac{m}{k}}\gamma\mleft(1+ \frac{m}{k}, s\delta_0^k\mright) + e^{-s\delta_0^k}\mright) \mright)\label{laststeptheoa}\\
& \geq h_\mu(X)-\frac{m}{k} -\log \mleft(c\mleft(\frac{m}{kD}\mright)^{- \frac{m}{k}}\gamma\mleft(1+ \frac{m}{k}, \frac{m\delta_0^k}{kD}\mright) +e^{-\frac{m\delta_0^k}{kD}}\mright), \label{laststeptheo}
\end{align} 
where in   \eqref{laststeptheo} we set $s=m/(kD)$.   
Since $\gamma(1+ {m}/{k}, {m\delta_0^k}/(kD)) \leq \Gamma(1+ {m}/{k})$, this  
establishes the lower bounds in   \eqref{eq:SLB1a} and \eqref{eq:SLB1}. Note that the only bounding step in the derivation of \eqref{eq:SLB1a} arises from  the application of $\rho$-subregularity of $\mu$ in \eqref{step4} as for $\delta_0=\infty$ we have $\gamma(1+ {m}/{k}, {m\delta_0^k}/(kD)) = \Gamma(1+ {m}/{k})$  and the infimum in \eqref{laststeptheoa} is attained at $s=m/(kD)$.     
The bound in \eqref{eq:RLasympA} and 
the limit   in  \eqref{eq:RLasymp}  follow trivially.    
\qed

%%%%%%%%%%%%%%%%%%%%%%%%
\section{Proof of Theorem \ref{thm:singleshot}}\label{thm:singleshotproof}
%Replacing $m$ with $m/k$ and $\delta_0$ with  $\delta_0^k$ in \eqref{eq:directbound2} and applying $\inf_{f\in\setF_n}$ to \eqref{eq:toshowexprho}
% this then establishes the result. 
Let  $n\in \naturals$ and $f\in\setF_n(\setX,\setY)$ be arbitrary but fixed.
We have 
\begin{align}
 \mu_X\mleft(\mleft\{x\in \setX: \rho(x,f(x))\geq \delta\mright\}\mright)
&\geq \mu_X\mleft(\mleft\{x\in \setX: \rho(x,y)\geq \delta \ \text{for all}\ {y\in f(\setX)} \mright\}\mright)\label{eq:beginopP}\\
&=\mu_X(\setX)- \mu_X\mleft(\bigcup_{y\in f(\setX)}\setB_{\rho}\mleft(y,\delta\mright)\mright)\\
&\geq 1-\sum_{y\in f(\setX)}\mu_X\mleft(\setB_{\rho}\mleft(y,\delta\mright)\mright)\label{eq:unionb}\\
&\geq 1-\sum_{y\in f(\setX)} c^{\frac{1}{p}}\mleft\lVert\frac{\mathrm d \mu_X}{\mathrm d \mu} \mright\rVert^{(\mu)}_{p/(p-1)} \delta^{\frac{m}{pk}}
\label{eq:useHoelder} \\
&\geq 1-n c^{\frac{1}{p}}\mleft\lVert\frac{\mathrm d \mu_X}{\mathrm d \mu} \mright\rVert^{(\mu)}_{p/(p-1)} \delta^{\frac{m}{pk}}\label{eq:endopP}\\
&= 1- \mleft(\frac{\delta}{h(n)}\mright)^{\frac{m}{pk}}\quad\text{for all $\delta\in(0,\delta_0^k)$,}\label{eq:endopP2}
\end{align}
where \eqref{eq:unionb} follows from  $\mu_X(\setX)=1$ and the union bound, in \eqref{eq:useHoelder} we applied 
Lemmata  \ref{lem:simpilysub} and    \ref{lem:transformsub}, % together with  $\rho^{1/k}$-subregularity of $\mu$, 
 in 
   \eqref{eq:endopP} we used 
 $\lvert f(\setX)\rvert \leq n$, and in  \eqref{eq:endopP2} we set 
 \begin{align}
 h(n)=c^{-\frac{k}{m}} \mleft( \mleft\lVert\frac{\mathrm d \mu_X}{\mathrm d \mu} \mright\rVert^{(\mu)}_{p/(p-1)}\mright)^{-\frac{pk}{m}} n^{-\frac{pk}{m}}.
 \end{align}

 % with 
%\begin{align}
%d(n)=c^{-\frac{k}{m}} \mleft(\lVert \mathrm d \mu_X/\mathrm d \mu\rVert_{p/(p-1)}^{(\mu)}\mright)^{-\frac{pk}{m}} n^{-\frac{pk}{m}}.  
%\end{align}
We thus have 
\begin{align}
\opE[\rho(X,f(X))]
&=
\int_{0}^\infty \mu_X\mleft(\mleft\{x\in \setX: \rho(x,f(x))\geq \delta \mright\}\mright)\,\mathrm d \delta\label{eq:FubiniEP}\\
&\geq
\int_{0}^{\min\{h(n), \delta_0^k\}}
\mu_X\mleft(\mleft\{x\in \setX: \rho(x,f(x))\geq \delta\mright\}\mright)\,\mathrm d \delta \label{eq:cv2}\\
&\geq 
\int_{0}^{\min\{h(n), \delta_0^k\}}
\mleft( 1-  \mleft(\frac{\delta}{h(n)}\mright)^{\frac{m}{pk}}      \mright)\,\mathrm d \delta\label{eq:useopP}\\
&=\min\mleft\{h(n), \delta_0^k\mright\}\mleft(1 -\frac{pk}{(m+pk)}\min\mleft\{1, \frac{\delta_0^k}{h(n)}\mright\}^{\frac{m}{pk}}\mright)\\  %%%
&\geq\min\mleft\{h(n), \delta_0^k\mright\}\mleft(1 - \frac{pk}{m+pk} \mright)\\
&=\min\mleft\{h(n), \delta_0^k\mright\} \frac{m}{m+pk},\label{eq:finalstep}
%&=\mleft(\max\mleft\{n c,  \delta_0^{-m}\mright\}\mright)^{-\frac{1}{ m}}\frac{m}{m+1},
\end{align}
where \eqref{eq:FubiniEP} follows from Lemma \ref{lem:fubI} and %%
\eqref{eq:useopP} is by \eqref{eq:beginopP}--\eqref{eq:endopP2} upon  noting that $\delta < \delta^k_0$ on the integration domain. 
Since $n\in \naturals$ and $f\in\setF_n(\setX,\setY)$  were assumed to be arbitrary, we can conclude that 
\begin{align}
V_n(X)&=\inf_{f\in\setF_n(\setX,\setY)}\opE[\rho(X,f(X))]\\
&\geq \min\mleft\{h(n), \delta_0^k\mright\} \frac{m}{m+pk}.
%&=\min\mleft\{d^k n^{-\frac{pk}{m}}, \delta_0^k\mright\} \frac{m}{m+pk}\quad \text{for all $n\in\naturals$} 
\end{align}
\qed

\section{Proof of Lemma \ref{lem:RDD}}\label{lem:RDDproof}

If $\lim_{n\to\infty}V_{n}(X) >0$, then the definition of  $\underline{D}_k(X)$ in \eqref{eq:Dinf} implies   ${D}_k(X)=\infty$, which establishes  the claim. Next, consider the case    $\lim_{n\to\infty}V_{n}(X) =0$ and fix  $n\in\naturals$ and an $n$-quantizer $f\in\setF_n$ arbitrarily. Define $g\colon \setX\to\setX\times\setY$ according to $g(x)=(x,f(x))$ and consider the pushforward measures 
$\mu_f:=f_\ast(\mu_X)$ and $\mu_g:= g_\ast(\mu_X)$.  
We first show that $\mu_g\ll\mu_X\otimes\mu_f$ with   Radon--Nikod\'ym derivative  
\begin{align}
\frac{\mathrm d \mu_g}{\mathrm d (\mu_X\otimes\mu_f)}(x,y)=h(x,y):=\frac{\ind{\{(x,y)\in \setX\times\setY:f(x)=y\}}(x,y)}{\opP[f(X)=y]}.
\end{align} 
To this end, note that for every $\setA\in \colX\otimes\colY$, we have 
\begin{align}
\int_\setA h(x,y) \,\,\mathrm d (\mu_X\otimes \mu_f)(x,y)
&=\int  \mleft( \int_{\setA_y} h(x,y) \, \mathrm d \mu_X(x)  \mright)     \mathrm d \mu_f(y)  \label{eq:RDfub} \\
&=\int  \mleft( \int _{\setA_{f(z)}}h(x,f(z)) \, \mathrm d \mu_X(x)  \mright)     \mathrm d \mu_X(z)\\
&= \int  \frac{\opP[f(X)=f(z) \ \text{and}\ X\in\setA_{f(z)} ]}{\opP[f(X)=f(z)] }  \mathrm d \mu_X(z) \\
&= \sum_{a\in f(\setX)}  \frac{\opP[f(X)=a \ \text{and}\ X\in\setA_{a} ]}{\opP[f(X)=a] }  \int_{f^{-1}(\{a\})}   \mathrm d \mu_X(z) \\
&= \sum_{a\in f(\setX)}  \opP[f(X)=a \ \text{and}\ X\in\setA_{a} ]\\
&= \sum_{a\in f(\setX)}  \opP[f(X)=a \ \text{and}\ g(X)\in\setA ]\\
&=  \opP[ g(X)\in\setA ]\\
&= \mu_g(\setA), \label{eq:RDfuba}
\end{align} 
where \eqref{eq:RDfub} follows from  Tonelli's theorem  \cite[Theorem 10.9]{ba95} and we set, for every $y\in\setY$,  $\setA_y=\{x\in\setX:(x,y)\in\setA\}$. 

Next,  note that
\begin{align}
\mu_g(\setA)&= \int_\setA h(x,y) \,\,\mathrm d (\mu_X\otimes \mu_f)(x,y)\label{eq:RNpr}\\
&=\int \mu_{f(X)\vert X}(\setA_x\vert x) \mathrm d \mu_X(x)\quad \text{for all $\setA\in \colX\otimes\colY$},\label{eq:RNprU}  
\end{align}
where \eqref{eq:RNpr} is by \eqref{eq:RDfub}--\eqref{eq:RDfuba} and in  \eqref{eq:RNprU} we again applied    Tonelli's theorem 
with $\setA_x=\{y\in\setY:(x,y)\in\setA\}$ and   
\begin{align}
\mu_{f(X)\vert X}(\setB\vert x):=\int_{\setB} h(x,y) \, \mathrm d \mu_f(y)\quad \text{for all $x\in\setX$ and $\setB\in \colY$}.  
\end{align}
Tonelli's theorem  also guarantees that   $\mu_{f(X)\vert X}(\setB\vert \cdot)$  is measurable for all $\setB\in \colY$. Moreover,  $\mu_{f(X)\vert X}(\cdot\vert x)$ is a probability measure as 
\begin{align}
\mu_{f(X)\vert X}(\setY\vert x)
&=\int h(x,y) \, \mathrm d \mu_f(y)\\
&=\int h(x,f(z)) \, \mathrm d \mu_X(z)\\
&=\sum_{a\in f(\setX)} h(x,a)  \opP[f(X)=a]  \\
&=1\quad \text{for all $x\in\setX$.} 
\end{align}

Finally, we have 
\begin{align}
I(X;f(X))
&=\int \log  (h(x,y)) \mathrm d  \mu_g(x,y)\\
&=\int \log  (h(g(x)) ) \mathrm d  \mu_X(x)\\
&\leq  \log \mleft (\int   h(g(x))  \mathrm d  \mu_X(x)  \mright) \label{eq:JensenBBB} \\
&=  \log \mleft( \sum_{a\in f(\setX)} \int_{f^{-1}(a)}   h(x,a)  \mathrm d  \mu_X(x)  \mright)\\
&=\log \mleft( \sum_{a\in f(\setX)} 1\mright)\\
&\leq \log(n),    
\end{align}
where \eqref{eq:JensenBBB}  is by Jensen's inequality  \cite[Theorem 2.3]{peprto92}. 
%As 
%  $f\in\setF_n$ was assumed to be arbitrary,   
We can now  conclude that (see  \eqref{eq:RD})
\begin{align}\label{eq:RD111}
R_{X}(D) &=\inf_{\mu_{Y\mid X}:\,\opE[\rho(X,Y)]\,\leq\, D} I(X;Y)\\
&\leq \log (n)\quad\text{if $\opE[\rho(X,f(X))]\leq D$}. \label{eq:RD1116}
\end{align}
Next, note that the definition of $V_{n}(X)$ in \eqref{eq:quanterr} implies that for every  $\delta > 0$, there exists an $f_\delta\in\setF_n$ such
 that $\opE[\rho(X,f_\delta(X))]\leq V_{n}(X)+\delta $. Combined with  \eqref{eq:RD111}--\eqref{eq:RD1116} this yields 
 \begin{align}\label{eq:RD1112}
R_{X}(V_{n}(X)+\delta) \leq \log (n)\quad\text{for all  $\delta>0$}. 
\end{align}
If there exists an $n_0\in\naturals$ such that $V_{n_0}(X)=0$, then \eqref{eq:RD1112} implies $R_{X}(D) \leq \log (n_0)$ for all $D>0$ so that ${\dim}_R(X)=0$ and the statement to be established follows  trivially. 
Now, suppose that $V_{n}(X)>0$ for all $n\in\naturals$. 
Using continuity of  $R_{X}(D)$ \cite[Lemma 1.1]{cs74} and the fact that   $n$  in \eqref{eq:RD1112} was assumed to be arbitrary, it follows that   
 \begin{align}\label{eq:RD1112A}
R_{X}(V_{n}(X)) 
& \leq \log (n)\quad\text{for all $n\in\naturals$}.  
\end{align}
 We can therefore conclude that 
\begin{align}
\underline{D}_k(X)
&=\liminf_{n\to\infty}  \frac{ k \log (n)}{\log (1/V_{n}(X))}\\ 
&\geq \liminf_{n\to\infty}\frac{ k R_{X}(V_{n}(X))}{\log (1/V_{n}(X))}\label{eq:useAAA}\\
&\geq \liminf_{D \to 0}\frac{k R_{X}(D)}{\log (1/D)} \label{eq:XXX}\\
&= \liminf_{D \to 0}\frac{R_{X}(D^k)}{\log (1/D)}\\ 
&=\underline{\dim}_R(X), 
\end{align} 
\color{black}
where \eqref{eq:useAAA} follows from \eqref{eq:RD1112A} and in  \eqref{eq:XXX} we used that for every  function $f\colon (0,\infty)\to (0,\infty)$ and   sequence $(x_n)_{n\in\naturals}$ in $(0,\infty)$ with $\lim_{n\to\infty}x_n=0$,  we have 
\begin{align}
\liminf_{n\to\infty} f(x_n)&= \sup_{n\in\naturals} \inf_{k\geq n} f(x_k) \geq \sup_{n\in\naturals} \inf_{D\in(0,x_n]} f(D)\\
& \geq  \sup_{a\in (0,\infty)} \inf_{D\in(0,a]} f(D)\label{eq:useargliming}= \liminf_{D\to 0} f(D),
\end{align}
where the inequality in \eqref{eq:useargliming} follows from the observation that for every $a\in (0,\infty)$, thanks to $\lim_{n\to\infty}x_n=0$, there exists an $n_0\in\naturals$ such that $x_{n_0}<a$ so that  $\inf_{D\in(0,x_{n_0}]} f(D) \geq \inf_{D\in(0,a]} f(D)$.  
\color{black}
\qed

\section{Proof of Theorem  \ref{thm:singleshot2}}\label{thm:singleshot2proof}

Fix $n\in\naturals$ arbitrarily. We first show that  
\begin{align}
V_{n}(X)&\leq  \opE\mleft[\min_{i=1,\dots,n}\tilde \rho(X,y_i)\mright]\quad\text{for all $y_1,\dots,y_n\in\setY$.}\label{eq:upperreal} 
\end{align}
To this end, fix  $y_1,\dots,y_n\in\setY$ arbitrarily and set 
\begin{align}
\setA_i=\bigcap_{j=1}^n\{x\in\setX:\rho(x,y_i)\leq\rho(x,y_j) \} \quad\text{for $i=1,\dots,n$}.\label{eq:defAi}
\end{align}
The sets $\setA_i$ are measurable by virtue of  being finite intersections of preimages of  $[0,\beta^k]$ under measurable functions.
By construction, for every $i=1,\dots,n$, $\setA_i$ consists of all points $x\in\setX$  that have $y_i$ as nearest point among  $\{y_1,\dots,y_n\}$. 
%and referred to as  Voronoi region generated by $y_i$. The collection $\{A_i:i=1,\dots n\}$ is called Voronoi diagram.  
Next, set $\setB_1=\setA_1$ and 
\begin{align}\label{eq:322}
\setB_j=\setA_j\setminus \bigcup_{i=1}^{j-1}\setA_j\quad\text{for $j=2,\dots,n$.}
\end{align}
By construction, the sets $\setB_1,\dots,\setB_n$ are pairwise disjoint measurable sets with $\setB_i\subseteq\setA_i$ for $i=1,\dots,n$ and 
\begin{align}
\setX=\bigcup_{i=1}^n\setB_i.
\end{align}
Now, consider the  $n$-quantizer $f\colon\setX\to\setY$ defined according to $f(x)=y_i$ for all $x\in\setB_i$ and $i=1,\dots,n$.  Then, we have  
\begin{align}
V_{n}(X)&\leq \opE\, [\rho(X,f(X))]\label{eq:firstVN}\\
&=\sum_{i=1}^n\int_{\setB_i} \rho(x,f(x))\,\mathrm d \mu_X(x) \\
&=\sum_{i=1}^n\int_{\setB_i} \rho(x,y_i)\,\mathrm d \mu_X(x) \\
&=\int \min_{i=1,\dots,n}\rho(x,y_i)\,\mathrm d \mu_X(x)\label{eq:useAi}\\
&=\opE \mleft[ \min_{i=1,\dots,n}\rho(X,y_i)\mright],\label{eq:lastVN}
\end{align}
where \eqref{eq:useAi}  follows from   \eqref{eq:defAi} combined with \eqref{eq:322}. Since $y_1,\dots,y_n\in\setY$ were assumed to be  arbitrary, \eqref{eq:upperreal} follows from 
\eqref{eq:firstVN}--\eqref{eq:lastVN}. 

Now, consider i.i.d. random variables $Y_1,\dots,Y_n$ taking  values in  \setY, of distribution $\nu$, \color{black}and independent from $X$\color{black}. It follows from \eqref{eq:upperreal} that 
\begin{align}
V_{n}(X)&\leq \opE\mleft[\min_{i=1,\dots,n} \rho(X,Y_i)\mright]\label{eq:upperreal2}. 
\end{align}
Fix $x\in\setX$ arbitrarily and set $Z_n(x)= \min_{i=1,\dots,n} \rho(x,Y_i)$. Then, we have 
\begin{align}
\opP[Z_n(x)\geq \delta]
&=\opP\mleft[ \rho(x,Y_i)\geq \delta \ \ \ \text{for}\ i=1,\dots,n\mright]\label{eq:firstZxA}\\
&=\opP\mleft[ \rho(x,Y_1)\geq \delta \mright]^n\\
&=\mleft(1-\opP\mleft[ \rho(x,Y_1)< \delta \mright]\mright)^n\\
&\leq e^{-n\opP\mleft[ \rho(x,Y_1)\,<\, \delta \mright]}\label{eq:endusefurtwo}\\
&=e^{-n\nu\mleft(\widetilde\setB_{ \rho}\mleft(x,\delta \mright)\mright)}\\
&\leq
\begin{cases}
e^{-\supc n \delta^\frac{m}{k}}\quad\text{if $\delta <\delta_0^k$}\\
e^{-\supc n \delta_0^m}\quad\text{\color{black} if $\delta_0^k\leq \delta<\infty$,\color{black}}\label{eq:usesup}
\end{cases}
\end{align}
where in \eqref{eq:endusefurtwo} we used that $1-t\leq e^{-t}$ for all  $t\in[0,1]$, the  case $\delta<\delta_0^k$ in \eqref{eq:usesup} follows from the fact that $\nu$, by Lemma \ref{lem:simpilysub}, is $ \rho$-superregular of dimension $m/k$ with superregularity constants $\supc$ and $\delta_0^k$,   and for $\delta_0^k\leq \delta<\infty$,  \eqref{eq:usesup}  is by 
\begin{align}
\nu\mleft(\widetilde\setB_{ \rho}\mleft(x,\delta \mright)\mright)
\, \geq\, \nu\mleft(\widetilde\setB_{ \rho}\mleft(x,\delta_0^k \mright)\mright)
\, \geq\, \supc \delta_0^m,
\end{align}
which, in turn,  follows  from monotonicity and superregularity of $\nu$ combined with   Item  \ref{item:superregularity} in Lemma \ref{lem:extsub}. 
 We can therefore upper-bound $\opE[Z_n(x)]$ as follows: 
\begin{align}
\opE[Z_n(x)]
&=\int_0^{\beta^k} \opP\mleft[Z_n(x)\geq \delta\mright]\,\mathrm d \delta\label{eq:usefubini}\\
&\leq \int_0^{\delta_0^k} e^{-\supc n\delta^\frac{m}{k}}\,\mathrm d \delta+ e^{-\supc n\delta_0^m}\int_{\min\{\delta_0^k,\beta^k\}}^{\beta^k} \,\mathrm d \delta
 \label{eq:useProb}\\
&=\frac{k}{m}\mleft(\supc n\mright)^{-\frac{k}{m}}\int_0^{\supc n\delta_0^m} e^{-t}t^{\frac{k}{m}-1}\,\mathrm d t +e^{-\supc n\delta_0^m}\int_{\min\{\delta_0^k,\beta^k\}}^{\beta^k} \,\mathrm d \delta \label{eq:changetot}\\
&=\frac{k}{m}\gamma\mleft(\frac{k}{m},\, \supc n\delta_0^m\mright)\mleft(\supc n\mright)^{-\frac{k}{m}} +e^{-\supc n\delta_0^m}\int_{\min\{\delta_0^k,\beta^k\}}^{\beta^k} \,\mathrm d \delta\\
&\leq  U_n,\label{eq:useZx}
\end{align}
where \eqref{eq:usefubini} follows from Lemma \ref{lem:fubI} combined with \eqref{eq:bounddist}, \eqref{eq:useProb} is by \eqref{eq:firstZxA}--\eqref{eq:usesup}, and in \eqref{eq:changetot} 
we  changed variables according to $t=\supc n\delta^{m/k}$. 
The claim now follows from \eqref{eq:upperreal2}
and 
\begin{align}
\opE\mleft[\min_{i=1,\dots,n} \rho(X,Y_i)\mright]
&= \opE\mleft[Z_n(X)\mright]\\
&=\int \opE\mleft[Z_n(x)\mright] \,\mathrm d \mu_X(x)\\
&\leq \int U_n  \,\mathrm d \mu_X(x)\label{eq:LASTstep}\\
&= U_n,
\end{align}
where \eqref{eq:LASTstep} is by \eqref{eq:usefubini}--\eqref{eq:useZx}.
\qed

\section{Proof of Theorem \ref{thm:upperstrong}}\label{thm:upperstrongproof}
We first show that%\footnote{We use the convention that $[a,b]$ is the empty set if $a>b$.} 
\begin{align}
\nu\mleft( \setB_{\rho}\mleft(x,\delta \mright)\mright)
&\geq
\begin{cases}
 \supc \delta^\frac{m}{k}\quad \text{if $\delta<\delta_0^k$}\\
 \tilde \supc \delta^\frac{m}{k}\quad\text{\color{black}if $\delta_0^k\leq\delta \leq \beta^k<\infty$\color{black}}
\end{cases}\quad\text{for all $x\in\setX$,} \label{eq:supext}
\end{align}
where $\tilde \supc=\supc (\delta_0/\beta)^m$ for $\delta_0<\infty$. 
The case $\delta<\delta_0^k$ follows from the fact that  $\rho^{1/k}$-superregularity of $\nu$, by  Lemma \ref{lem:simpilysub} implies 
that 
$\nu$ is $\rho$-superregular of dimension $m/k$ with superregularity constants $\supc$ and $\delta_0^k$.
If  $\delta_0^k\leq\delta \leq \beta^k<\infty$, then we have  
%The result for   $\delta\in \big[\delta_0^k,\beta^k\big]\mysetminus\{\infty\}$ is a consequence of 
\begin{align}
\nu\mleft(\setB_{\rho}\mleft(x,\delta \mright)\mright)
&\geq \nu\mleft(\setB_{\rho}\mleft(x,\delta_0^k \mright)\mright)\label{eq:ball}\\
& \geq \supc\,\delta_0^m\label{eq:usesuprescape}\\
& \geq  \tilde \supc\, \delta^\frac{m}{k}\quad \text{for all $x\in\setX$},\label{eq:debtildeb} 
\end{align}
where in \eqref{eq:ball} we used that $\widetilde\setB_{\rho}(x,\delta)=\setB_{\rho}(x,\delta)$ as a consequence of $\setX=\setY$ and symmetry of $\rho$, and \eqref{eq:usesuprescape}  is by  Item \ref{item:superregularity} in Lemma  \ref{lem:extsub}. 
Now, let 
\begin{align}\label{eq:defgg}
g(x,\delta)=\frac{\mu_X\mleft(\setB_{\rho}\mleft(x,\delta \mright)\mright)}{\nu\mleft(\setB_{\rho}\mleft(x,\delta \mright)\mright)}   
\end{align}
and note that there exists a $t\in(0,\infty)$ such that 
\begin{align}\label{eq:deffxinf}
f(x) :=\frac{\mathrm d \nu}{\mathrm d \mu_X}(x)\leq 1/t\quad \text{for $\mu_X$-almost all $x\in\setX$} 
\end{align}
owing to \eqref{eq:deffxinfA}. We therefore have
% . Therefore, 
% as $\limsup_{\delta\to 0}g(x,\delta)\geq t$ for all $x\in\setX$ by assumption,  \cite[Lemma 2.13, Item (1)]{ma99}  implies that
\begin{align}
g(x,\delta)
&=\frac{\mu_X\mleft(\setB_{\rho}\mleft(x,\delta \mright)\mright)}{ \int_{\setB_{\rho}\mleft(x,\delta \mright)}f(x) \,\mathrm d \mu_X(x)}\label{eq:gg1} \\ 
&\geq t \quad\text{for all $x\in\setX$  and  $\delta\in(0,\infty)$.}\label{eq:gg2}
\end{align}
Next, note that
\begin{align}
\lim_{\delta\to 0} \frac{1}{g(x,\delta)}
&=\lim_{\delta\to 0} \frac{1}{\mu_X\mleft(\setB_{\rho}\mleft(x,\delta \mright)\mright)}\int_{\setB_{\rho}\mleft(x,\delta \mright)}{f(x)} \,\mathrm d \mu_X(x)\label{eq:usemattila1}\\
& = {f(x)} \quad \text{for $\mu_X$-almost all $x\in\setX$,}\label{eq:usemattila}  
\end{align}
where \eqref{eq:usemattila} follows from \cite[Corollary 2.14, Item (2)]{ma99}   upon noting that  $\mu_X$ is a  Radon measure \cite[Definition 6.6]{cimo12}  
thanks to  \cite[Theorem 6.1 and Proposition 6.7]{cimo12} and $f(x)$ is bounded  by \eqref{eq:deffxinf}. 
%\begin{align}\label{eq:deffxinf}
%f(x) = \lim_{\delta\to 0} g(x,\delta) \quad \text{for $\mu_X$-a.a. $x\in\setX$.} 
%\end{align}
%Combining \eqref{eq:deffxinf} with \eqref{eq:usemattila1}--\eqref{eq:usemattila}  yields 
The upper bound in \eqref{eq:deffxinf} now implies 
\begin{align}\label{eq:OmegaSigma3A}
\Omega_{k/m}(X)=\opE\mleft[f^{\frac{k}{m}}(X)\mright]\ \leq\  t^{-\frac{k}{m}}\ <\ \infty. 
\end{align}

Next,  let    $(X_n)_{n\in\naturals}$ be a sequence of i.i.d. copies of $X$ and define, for every  $n\in\naturals$ and $x\in\setX$, 
the random variable $U_n(x)= n^{k/m}\min_{i=1,\dots,n}\rho(x,X_i)$.   
Then, we have 
\begin{align}
\opP\mleft[U_n(x)\geq \delta\mright] 
&=\opP\mleft[\rho(x,X_i)\geq n^{-k/m}\delta \ \ \ \text{for}\ i=1,\dots,n\mright]\label{eq:firstZx}\\
&=\opP\mleft[\rho(x,X_1)\geq n^{-k/m}\delta \mright]^n\label{eq:firstZx2}\\
&=\mleft(1-\opP\mleft[\rho(x,X_1)< n^{-k/m}\delta \mright]\mright)^n\\
&\leq e^{-n\opP\mleft[X_1\, \in\, \setB_{\rho}\mleft(x,\, n^{-\frac{k}{m}}\delta \mright)\mright]}\label{eq:item2a}\\
&\leq
\begin{cases}
e^{-\supc\, \delta^\frac{m}{k} g\big(x,\, n^{-\frac{k}{m}}\delta\big)}&\quad\text{if $\delta<\delta_0^kn^{\frac{k}{m}} $}\\
e^{-\tilde \supc\, \delta^\frac{m}{k}g\big(x,\,n^{-\frac{k}{m}\delta}\big)}&\quad\text{if $\delta_0^kn^{\frac{k}{m}}\leq \delta \leq \beta^kn^{\frac{k}{m}}<\infty$}
\end{cases}\label{eq:item2b}
\end{align}
for all $n\in\naturals$, 
where  \eqref{eq:item2b} follows from  \eqref{eq:supext} and \eqref{eq:defgg}. Moreover, \eqref{eq:bounddist2} implies 
\begin{align}
\opP\mleft[U_n(x)\geq \delta\mright]=0\quad\text{if $\delta> \beta^kn^{\frac{k}{m}}$.}\label{eq:item2c}
\end{align} 
We can now upper-bound $\limsup_{n\to\infty} n^\frac{k}{m}V_n(X) $ as follows:  
\begin{align}
\limsup_{n\to\infty}n^\frac{k}{m} V_{n}(X)
%&= \limsup_{n\to\infty}n^\frac{k}{m} \int V_{n}(X)   \,\mathrm d \mu_{X_1}(x_1)\dots \,\mathrm d \mu_{X_n}(x_n)\label{eq:useupperZXa}\\
&\leq  \limsup_{n\to\infty}n^\frac{k}{m}\opE\mleft[\min_{i=1,\dots,n}\rho(X,X_i)\mright]\label{eq:useupperZX}\\
&=\limsup_{n\to\infty}  \opE\mleft[U_n(X) \mright]\\
&=\limsup_{n\to\infty} \int \mleft(\int_0^{\infty }\opP\mleft[U_n(x)\geq \delta\mright]\,\mathrm d \delta\mright)\,\mathrm d\mu_X(x) \label{eq:usefubini2}\\
&\leq  \int \mleft(\int_0^{\infty} \limsup_{n\to\infty}\opP\mleft[U_n(x)\geq \delta\mright]\,\mathrm d \delta\mright)\,\mathrm d\mu_X(x), \label{eq:usefubini2a}
\end{align}
 where \eqref{eq:useupperZX} is by \eqref{eq:upperreal}  with $\setY=\setX$,  
 \eqref{eq:usefubini2} follows from Lemma \ref{lem:fubI}, and in \eqref{eq:usefubini2a} we applied Fatou's lemma for $\limsup$ 
 \cite[Theorem 1.20]{amfupa00} twice 
 noting that $\opP[U_n(x)\geq \delta] \leq  h(\delta)$  for all $x\in\setX$ and $\delta\in(0,\infty)$ and 
 \begin{align}
 \mleft(\int_0^{\infty}h(\delta)\,\mathrm d \delta \mright) \,\mathrm d\mu_X(x)  =   \int_0^{\infty}h(\delta)\,\mathrm d \delta<\infty 
 \end{align}
 with 
 \begin{align}
 h(\delta)=
 \begin{cases}
 e^{-\supc t\, \delta^\frac{m}{k}}&\quad\text{if  $\delta\in(0,\delta_0^kn^{{k}/{m}})$}\\
 e^{-\tilde\supc t\, \delta^\frac{m}{k}}&\quad\text{if  $\delta_0^kn^{{k}/{m}}\leq \delta < \infty$}
 \end{cases}
 \end{align}
 thanks to  \eqref{eq:firstZx}--\eqref{eq:item2c} together with \eqref{eq:gg1}--\eqref{eq:gg2}. 
Next, note that 
\begin{align}
\limsup_{n\to\infty}\opP\mleft[U_n(x)\geq \delta\mright]
&\leq
\lim_{n\to\infty} e^{-\supc\, \delta^\frac{m}{k} g\big(x,\, n^{-\frac{k}{m}}\delta\big)}\label{eq:detail1}\\
&=e^{-\supc \delta^\frac{m}{k}/f(x)}\quad\text{for all $x\in\setX$ and $\delta\in(0,\infty)$,}\label{eq:detail2}
\end{align}
where \eqref{eq:detail1} is by  \eqref{eq:firstZx}--\eqref{eq:item2b} and in   \eqref{eq:detail2}  we applied \eqref{eq:usemattila1}--\eqref{eq:usemattila}. Using \eqref{eq:detail1}--\eqref{eq:detail2} in \eqref{eq:usefubini2a}
 yields  
\begin{align}
\limsup_{n\to\infty} n^\frac{k}{m}V_n(X) 
&\leq  \int \mleft(\int_0^{\infty } e^{-\supc\, \delta^\frac{m}{k}/f(x)}   \,\mathrm d \delta\mright)\,\mathrm d\mu_X(x)\\
&=\opE\mleft[ (f(X)/b)^{\frac{k}{m}} \frac{k}{m}  \int_0^{\infty } e^{-s}  s^{\frac{k}{m}-1}  \,\mathrm ds  \mright]\label{eq:usegammafinal}\\
&=\opE\mleft[ f^{\frac{k}{m}}(X)\mright] \Gamma\mleft(1+\frac{k}{m} \mright)b^{-\frac{k}{m}} ,  
\end{align}
where in \eqref{eq:usegammafinal}  we changed variables according to $s=b\, \delta^{m/k}/f(x)$. 
This establishes   \eqref{limsupVUn}.  
\color{black}
Finally, we have %$\Omega_{1}(X)=\opE\mleft[ f(X)\mright]=1$ follows trivially and we have 
\begin{align}
\Omega_{\alpha}(X)
&=\opE\mleft[ f^{\alpha}(X)\mright]\\
&\leq \mleft(\opE\mleft[ f(X)\mright]\mright)^{\alpha}\label{eq:JJJensen}\\
&=\mleft(\int  f(x) \,\mathrm d\mu_X(x)\mright)^{\alpha}\\
%&=\nu\mleft\{\setX \mright\}^{\alpha}\\
&=(\nu(\setX))^\alpha\\
&= 1\quad\text{for all $\alpha\in(0,1)$},
%&= \mleft(\mleft \lVert \frac{\mathrm d \mu_Z}{\mathrm d \mu_X}  \mright \rVert_{\frac{k}{m}}^{(\mu_X)}\mright)^\frac{k}{m}.  
\end{align}
where \eqref{eq:JJJensen} follows from Jensen's inequality  \cite[Theorem 2.3]{peprto92} and strict concavity of  $t\to t^{\alpha}$ on $(0,\infty)$ for all $\alpha\in(0,1)$ with strict inequality in \eqref{eq:JJJensen}  unless 
 $f(x)=\opE\mleft[ f(X)\mright]$ for $\mu_X$-almost all $x\in\setX$ \cite[Example 6]{ba81}.  \color{black}
The statement in \eqref{eq:chainCC3} follows trivially from $\opE[f(X)]=1$ and the definition of the lower $k$-th quantization coefficient in \eqref{eq:Cinf}. 
\qed

\section{Proof of Lemma \ref{lem:dimbound}}\label{proof:dimbound}
We first prove Item \ref{eq:boundCcheck2}.  
Toward a contradiction, assume that $\liminf_{n\to\infty} n^\frac{k}{m}V_n(X)>0$ with $\underline{D}_k(X)< m$ and set $\delta=(m-\underline{D}_k(X))/2$.  
The definition of $\underline{D}_k(X)$ in \eqref{eq:Dinf} then implies that 
\begin{align}\label{eq:limsupconta}
\frac{ k \log (n)}{\log (1/V_{n}(X))}\leq m-\delta\quad\text{for $\infty$-many $n\in\naturals$.}
\end{align}
Straightforward algebraic manipulations show that  \eqref{eq:limsupconta} is equivalent to 
\begin{align}
n^\frac{k}{m} V_{n}(X)\leq n^{\frac{k}{m}-\frac{k}{m-\delta}}\quad\text{for $\infty$-many $n\in\naturals$},  
\end{align}
which in turn implies  the existence of an increasing  sequence $(n_j)_{j\in\naturals}$ satisfying 
\begin{align}
\liminf_{j\to\infty} n_j^\frac{k}{m}V_{n_j}(X)\leq \lim_{j\to\infty} {n_j}^{\frac{k}{m}-\frac{k}{m-\delta}}=0 
\end{align}
and  thereby contradicts the assumption $\liminf_{n\to\infty} n^\frac{k}{m}V_n(X)>0$.

Next, we prove  Item \ref{eq:boundCcheck}. 
  Toward a contradiction, assume that $\limsup_{n\to\infty} n^\frac{k}{m}V_n(X)<\infty$ with $\overline{D}_k(X)> m$ and set $\delta=(\overline{D}_k(X)- m)/2$. 
The definition of $\overline{D}_k(X)$ in \eqref{eq:Dsup} then implies that 
\begin{align}\label{eq:limsupcont}
\frac{ k \log (n)}{\log (1/V_{n}(X))}\geq m+\delta\quad\text{for $\infty$-many $n\in\naturals$.}
\end{align}
Straightforward algebraic manipulations reveal that  \eqref{eq:limsupcont} is equivalent to 
\begin{align}
n^\frac{k}{m} V_{n}(X)\geq n^{\frac{k}{m}-\frac{k}{m+\delta}}\quad\text{for $\infty$-many $n\in\naturals$},  
\end{align}
which in turn implies  the existence of an increasing  sequence $(n_j)_{j\in\naturals}$ satisfying 
\begin{align}
\limsup_{j\to\infty} n_j^\frac{k}{m}V_{n_j}(X)\geq \lim_{j\to\infty} {n_j}^{\frac{k}{m}-\frac{k}{m+\delta}}=\infty
\end{align}
and  thereby contradicts the assumption $\limsup_{n\to\infty} n^\frac{k}{m}V_n(X)<\infty$.

%Finally, Item \ref{eq:boundCcheck3} follows directly form the definitions of  $\underline{C}_k(X)$ and $\overline{C}_k(X)$  in 
%\eqref{eq:Cinf} and \eqref{eq:Csup}, respectively. 

%Worknoww
\section{Proof of Theorem \ref{cor:MSiid}}\label{cor:MSiidproof}
Lemma \ref{lem:simpilysub} combined with Lemma \ref{lem:transformsub} implies that every $\mu_{X_i}$ is $\sigma$-subregular  of dimension $m/(pk)$ with subregularity constants 
$\tilde c=\lVert\mathrm d \mu_{X_1}/\mathrm d \mu\rVert^{(\mu)}_{p/(p-1)} c^{1/p}$   and $\delta_0^k$.  
Fix $\ell\in\naturals$ arbitrarily and set $X^{(\ell)}=(X_1,\dots,X_\ell)$. 
It follows from Item \ref{item:prodsub} of Proposition~\ref{prp:subprod} applied to the measures $\mu_{X_1},\dots,\mu_{X_\ell}$ with  $\alpha_i=1/\ell$ for $i=1,\dots,\ell$ that $\mu_{X^{(\ell)}}:=\mu_{X_1}\otimes\dots\otimes\mu_{X_\ell}$  is 
$\sigma_{(\ell)}$-subregular of dimension $\ell m/(pk)$
with  subregularity constants 
$d_{(\ell)}^\ell$ 
and $\delta_{(\ell)}=\delta_0^k/\ell$. 
Theorem \ref{thm:singleshot} applied to $X^{(\ell)}$ (note that for $\mu=  \mu_{X^{(\ell)}}$ we have  $\Sigma_1(X^{(\ell)})=1$) therefore yields  \begin{align}\label{eq:VnDn}
V_{n}(X^{( \ell)})\geq L_{n}\big(X^{(\ell)}\big)\quad\text{for all $n\in \naturals$}
\end{align} 
with 
\begin{align}\label{eq:Ln2}L_{n}\big(X^{(\ell)}\big)= 
\min\bigg\{d_{(\ell)}^{-\frac{pk}{ m}}n^{-\frac{pk}{\ell m}}, \frac{\delta_0^k}{\ell}\bigg\}\frac{\ell m}{\ell m+pk}. 
\end{align} 
Now, set $L_n=L_{n}\big(X^{(\ell)}\big)$ and 
\begin{align}\label{eq:Nl2}
K_{( \ell)}= \mleft\lceil\mleft(\frac{\ell}{\delta_0^k}\mright)^{\frac{\ell m}{pk}}d_{(\ell)}^{-\ell} \mright   \rceil   
\end{align}
and recall the definition of  $D_{( \ell)}$  in \eqref{eq:Dl} according to 
\begin{equation}\label{eq:Dl2}
D_{(\ell)} 
= 
\frac{\delta_0^{k}}{\ell} \frac{\ell m}{\ell m+pk}. 
\end{equation} 
Evaluation of the minimum in  \eqref{eq:Ln2} yields  
\begin{align}\label{eq:useNll}
L_n=
d_{(\ell)}^{-\frac{pk}{ m}}n^{-\frac{pk}{\ell m}}\frac{\ell m}{\ell m +pk}\quad \text{for all $n\geq \lceil K_{(\ell)}\rceil$}
\end{align}
and  $L_{n} = D_{( \ell)}$  for all $n \leq \lfloor K_{( \ell)}\rfloor$.    
Next, assume that  there exist   $n\in\naturals$ and  $R\in(0,\infty)$ such that  $V_{\lfloor e^{ \ell R}\rfloor}(X^{( \ell)})<L_n$. Then, \eqref{eq:VnDn} implies that 
$L_{\lfloor e^{ \ell R}\rfloor}< L_n$, which, as a consequence of $L_n$ being  monotonically decreasing in $n$, 
  is possible for  $e^{ \ell R}\geq n+1$ only. 
We conclude that, for  every $D\in [L_{n+1},L_n)$ and $n\geq  \lceil K_{( \ell)}\rceil-1$, it holds that 
\begin{align}
 R_{( \ell)}(D) 
&=\inf\{R>0: V_{\lfloor e^{\ell R}\rfloor}(X^{(\ell)})\leq D\}\label{eq:usefinalss1}\\
&\geq \inf\{R>0: V_{\lfloor e^{\ell R}\rfloor}(X^{(\ell)})< L_n\}\\
&\geq \frac{1}{ \ell} \log (n+1)\label{eq:usefinalss}\\
& = 
 \frac{m}{ p k} \log \bigg(   \frac{\ell m}{(\ell m +p k)L_{n+1} } \bigg)
-  \log (d_{(\ell)})\label{eq:Dnnapply}
\\
& \geq
 \frac{m}{pk} \log \bigg(   \frac{\ell m}{(\ell m + pk)D} \bigg)
-\log (d_{(\ell)}),
\label{eq:Dnnapply2}
\end{align}
where  \eqref{eq:Dnnapply} is by \eqref{eq:useNll}. 
Finally,  as  $L_{\lfloor K_{( \ell)}\rfloor}=  D_{( \ell)}$  and $\lim_{n\to\infty}L_n=0$, we can conclude that for every $D< D_{( \ell)}$, 
there must exist an  $n\geq  \lceil K_{( \ell)}\rceil-1$ such that $D\in [L_{n+1},L_n)$. 
Hence,  
\eqref{eq:usefinalss1}--\eqref{eq:Dnnapply2} establishes $ R_{( \ell)} (D ) \geq  \widetilde R_{(\ell)}(D) $ for all $D\in(0, D_{( \ell)})$.

Next, note that $\widetilde R_{(\ell)}(D) $ can be written as 
\begin{align}
\widetilde R_{(\ell)}(D)
&= -\log \mleft(\lVert\mathrm d \mu_{X_1}/\mathrm d \mu\rVert^{(\mu)}_{p/(p-1)}\mright) - \frac{m}{pk}\log (D)
-\log \mleft(c^\frac{1}{p}\, \Gamma\mleft(1+\frac{m}{pk}\mright)\mright)\\
&\ \ +\frac{m}{pk}\mleft(    \frac{pk}{\ell m} \log\mleft(\Gamma\mleft(1+\frac{\ell m}{pk}\mright)\mright) -\log \mleft(\ell+\frac{pk}{m}\mright) \mright)  \\
&= -\log \mleft(\lVert\mathrm d \mu_{X_1}/\mathrm d \mu\rVert^{(\mu)}_{p/(p-1)}\mright)  - \frac{m}{pk}\log(D)
-\log \mleft(c^\frac{1}{p}\, \Gamma\mleft(1+\frac{m}{pk}\mright)\mright)\\
&\ \ + \frac{m}{pk}\log \mleft(\frac{m}{pk}\mright) +\frac{m}{pk}\psi\mleft(\frac{\ell m}{pk}\mright), 
\end{align}
where we set    
\begin{align}
\psi(x)&=\frac{\log(\Gamma(1+x))}{x} -  \log(1+x). 
\end{align} 
The properties of $\widetilde R_{(\ell)}(D)$ we wish to establish now follow from the fact that 
 $\psi(x)$ is strictly monotonically decreasing  on $(-1,\infty)$ with $\lim_{x\to \infty}\psi(x)=-1$ \cite[Theorem 1]{vovo02}.  %The properties of $ D_{(\ell)}$ follow immediately from \eqref{eq:Dl2}.    
\qed

\section{Proof of Lemma \ref{lem:sphere}}\label{lem:sphereproof}
 We first show that $G(\alpha)$ is continuous and strictly monotonically  increasing   on $(0,1]$. 
To this end, we write  $G(\alpha)$ in the form   
\begin{align}
G(\alpha)=\frac{a^{(d-1)}(1)}{2 B_{\frac{d-1}{2},\frac{1}{2}}\mleft(1\mright) } g\mleft(\alpha\mright)  
\end{align}
with 
\begin{align}
g(\alpha)&= \frac{B_{\frac{d-1}{2},\frac{1}{2}} \mleft(\alpha \mright)}{\alpha^{\frac{d-1}{2}}}. 
\end{align}
Since 
\begin{align}\label{eq:derivative}
\frac{\mathrm d g(\alpha)}{\mathrm d \alpha} &=\frac{1}{\alpha}\mleft(\frac{1}{\sqrt{1-\alpha}}-\frac{d-1}{2}\frac{B_{\frac{d-1}{2},\frac{1}{2}}\mleft(\alpha\mright)}{\alpha^\frac{d-1}{2}}\mright)\\
&=\frac{1}{\alpha}\mleft(\frac{1}{\sqrt{1-\alpha}}-\frac{d-1}{2}g(\alpha)\mright)\quad\text{for all $\alpha\in(0,1)$,}
\end{align}
which is strictly positive on $(0,1)$ owing to 
\begin{align}
 g(\alpha) 
  &< \frac{1}{\alpha^\frac{d-1}{2}\sqrt{1-\alpha}}\int_0^\alpha u^{\frac{d-1}{2}-1}\,\mathrm d u\label{eq:bound1g}\\
  &=\frac{2}{(d-1)\sqrt{1-\alpha}}\quad\text{for all $\alpha\in(0,1)$,}\label{eq:bound2g}
\end{align}
it follows that $G(\alpha)$ is  continuous and strictly monotonically increasing  on $(0,1)$. 
Since 
\begin{align}
\abs{B_{\frac{d-1}{2},\frac{1}{2}}\mleft(1\mright)- B_{\frac{d-1}{2},\frac{1}{2}}\mleft(1-\delta\mright)}
&=\int_{1-\delta} ^1 u^{\frac{d-1}{2}-1} (1-u)^{-1/2}\,\mathrm d u\\
&\leq (1-\delta)^{-1/2} \int_{1-\delta} ^1  (1-u)^{-1/2}\,\mathrm d u \\
%&= (1-\delta)^{-1/2} \int_{0} ^\delta  v^{-1/2}\,\mathrm d v\\
&=  2 \sqrt{\frac{\delta}{1-\delta}}\quad\text{for all $\delta\in (0,1)$,}
\end{align}
the function $G(\alpha)$ is continuous at $\alpha=1$ as well so that  monotonicity   on $(0,1)$ implies monotonicity on $(0,1]$. 
The limit in 
\eqref{eq:limitGa} follows by application of L'H\^opital's rule together with  $B_{(d-1)/2,1/2}(1)=\Gamma((d-1)/2) \Gamma(1/2)/\Gamma(d/2)$ \cite[Theorem 1.1.4]{anasro99}.

We next establish the family of subregularity conditions in Item \ref{itemsubR}. %and Item \ref{itemsubS}. 
To this end, fix $\delta_0\in(0, r]$ arbitrarily. 
In  light of  Item \ref{item:subregularity} 
in Lemma \ref{lem:extsub}, 
it is sufficient to  show that 
\begin{align}
\mu\mleft(\setB_{\rho^{1/2}}(y,\delta)\mright)
&\leq   c_{\delta_0}  \delta^{d-1}\quad\text{for all $y\in\setY$ and $\delta\in(0,\delta_0)$}  \label{eq:subsphere2AA}
\end{align}
with $c_{\delta_0}$ as 
specified in the statement. 
%and 
%\begin{align} 
%\mu\mleft(\setB_{\lVert \,\cdot\,\rVer_2}\big(y,\delta\big)\mright)
%&\leq   \frac{G\mleft(h\mleft(\frac{\delta_0^2}{r^2}\mright)\mright)}{a^{d-1}(r)} \delta^{d-1}\quad\text{for all $y\in\setS^{d-1}(r)$ and $\delta\in(0,\delta_0)$}   \label{eq:subsphere22}
%\end{al
Now, fix   $y\in\setY$ and  $\delta\in(0, \delta_0)$ arbitrarily and note that 
%\begin{align}
%\setC(y,\delta)
%&= \setB_{\rho^{1/2}}\big(y,\delta\big)\cap \setS^{d-1}(r) \label{eq:ziBa} \\
%&=\{z\in\setS^{d-1}(r): \text{ $\lVert z-y\rVert_2<\delta$}\}.\label{eq:ziB}
%\end{align}
%Note that  
%First note that 
\begin{align}
\lVert z-y\rVert_2^2%&=\tp{(y-z)}(y-z)\label{eq:11q}\\
%&=\lVert y\rVert_2^2+\lVert z\rVert_2^2-2 \langle y,z\rangle\\
&= \lVert y\rVert_2^2+r^2-2 \tp{y}z\quad\text{for all $z\in\setS^{d-1}(r)$}\label{eq:11p}
\end{align} 
yields
\begin{align}\label{eq:Cap}
\setB_{\rho^{1/2}}(y,\delta)&=\mleft\{ z\in \setS^{d-1}(r):\tp{y}z > \frac{\lVert y\rVert_2^2 +r^2-\delta^2}{2} \mright\}.
\end{align}

Next, consider the one-dimensional subspace $\setA(y)=\{a y: a\in\reals\}$.   
We  have 
\begin{align}
\operatorname{dist}(z,\setA(y))
&:= \min \{\lVert z- ay \rVert_2:a\in\reals\}\label{eq:useproj0}\\
&= \sqrt{\lVert z\rVert^2_2 -\frac{(\tp{y}z)^2}{\lVert y\rVert_2^2}}\label{eq:useproj}\\
&\leq \sqrt{r^2-\frac{(r^2 -\delta^2+ \lVert y\rVert_2^2)^2}{4\lVert y\rVert_2^2}}\quad\text{for all $z\in \setB_{\rho^{1/2}}(y,\delta)$,}\label{eq:usexybound}
\end{align} 
where \eqref{eq:useproj} follows from the Hilbert projection theorem \cite[Theorem 4.11]{ru87} on $\reals^d$ and in \eqref{eq:usexybound} we used \eqref{eq:Cap} together with  $\delta<\delta_0\leq r$.
We can now write \eqref{eq:useproj0}--\eqref{eq:usexybound} in the form 
\begin{align}
\operatorname{dist}(z,\setA(y))
&\leq \sqrt{r^2-f^2(\lVert y\rVert_2)}\quad\text{for all $z\in \setB_{\rho^{1/2}}(y,\delta)$}
\end{align} 
with 
\begin{align}
f(s)=\frac{r^2 -\delta^2+ s^2}{2s}. 
\end{align}
As $\delta<\delta_0\leq r$, the function $f(s)$ is strictly convex on $(0,\infty)$ and, therefore, minimized for $s_0=\sqrt{r^2-\delta^2}$. 
We can  thus conclude that 
\begin{align}
\operatorname{dist}(z,\setA(y))
&\leq \sqrt{r^2-f^2\mleft(\sqrt{r^2-\delta^2}\mright)}\label{eq:bound1SC}\\
&=\delta \quad\text{for all $z\in \setB_{\rho^{1/2}}(y,\delta)$.}\label{eq:bound2SC}
\end{align}
Now, \eqref{eq:bound1SC}--\eqref{eq:bound2SC}  implies that the radius of the base of the hyperspherical cap $ \setB_{\rho^{1/2}}(y,\delta)$ is no larger than $\delta$. 
We conclude that 
\begin{align}
\mu\mleft(\setB_{\rho^{1/2}}(y,\delta)\mright) 
%&=\mu\mleft(\setB_{\lVert \,\cdot\,\rVert_2}\big(y,\delta\big)\cap \setS^{d-1}(r)\mright)\\
%&= \mu(\setC(y,\delta)) \label{eq:ziBaA}\\
&\leq \frac{1}{2}  I_{\frac{d-1}{2},\frac{1}{2}}\mleft(\frac{\delta^2}{r^2}\mright) \label{eq:SCbound}\\
&=  \frac{G(\delta^2/r^2)}{a^{(d-1)}(r)} \delta^{d-1}\\
&\leq   \frac{G(\delta_0^2/r^2)}{a^{(d-1)}(r)}  \delta^{d-1},\label{eq:upperb2} 
\end{align}
% where \eqref{eq:ziBaA} follows from the definition of $\setC(y,\delta)$ in  \eqref{eq:ziBa},   
where \eqref{eq:SCbound} is by     the hyperspherical cap formula \cite[Equation (1)]{li11} 
with  $\delta$ for the radius of the base of the cap
% for $\sin\Phi=\delta/r$
and in \eqref{eq:upperb2} we made use of the   monotonicity of $G(\alpha)$.  This yields  the upper bound in 
 \eqref{eq:subsphere2AA} as  $y\in\setY$ and $\delta\in(0,\delta_0)$ were assumed to be  arbitrary.

Next, we   establish the family of superregularity conditions in Item \ref{itemsubS}. 
%The upper bound in  \eqref{eq:subsphere2} follows from the upper bound in  \eqref{eq:subsphere}.  
%It remains to establish the upper bound in  \eqref{eq:subsphere2}.
To this end, fix $\delta_0\in(0, \sqrt{2}r]$ arbitrarily. 
Again, in  light of  Item \ref{item:superregularity} 
in Lemma \ref{lem:extsub}, 
it  suffices  to  show that 
\begin{align}
 \mu\mleft(\widetilde\setB_{ \rho^{1/2}}(x,\delta)\mright)  \geq b_{\delta_0}\delta^{d-1} \quad\text{for all $x\in\setS^{d-1}(r)$ and $\delta\in(0,\delta_0)$}
  \label{eq:subsphere22}
\end{align}
with $b_{\delta_0}$ as 
specified in the statement. 
To this end, fix   $x\in  \setS^{d-1}(r)$ and  $\delta\in(0, \delta_0)$ arbitrarily. 
%and let $\setC(x,\delta)$
%be as defined  in  \eqref{eq:ziBa}. It follows from \eqref{eq:Cap} and  $\lVert x \rVert_2=r$ that 
%\begin{align}\label{eq:Cap2}
%\setC(x,\delta)&=\mleft\{ z\in \setS^{d-1}(r):\tp{x}z > \frac{2r^2-\delta^2}{2} \mright\}.
%\end{align}
As before, consider the one-dimensional subspace $\setA(x)=\{a x: a\in\reals\}$.   
We have  
\begin{align}
\operatorname{dist}(z,\setA(x))
&:= \min \{\lVert z- ax \rVert_2:a\in\reals\}\label{eq:useproj02}\\
&= \sqrt{\lVert z\rVert^2_2 -\frac{(\tp{x}z)^2}{\lVert x\rVert_2^2}}\label{eq:useproj2}\\
&= \sqrt{r^2 -\frac{(\tp{x}z)^2}{r^2}}\quad\text{for all $z\in \setS^{d-1}(r)$,}\label{eq:useproj3}
\end{align} 
where again \eqref{eq:useproj2} follows from the Hilbert projection theorem % \cite[Theorem 4.11]{ru87} 
 on $\reals^d$. 
Now, \eqref{eq:useproj02}--\eqref{eq:useproj3} is equivalent to 
\begin{align}
\abs{\tp{x}z}=r\sqrt{r^2-\operatorname{dist}^2(z,\setA(x))}\quad\text{for all $z\in \setS^{d-1}(r)$.}\label{eq:useproj3A}
\end{align}
Using \eqref{eq:useproj3A} and $\delta<\delta_0\leq \sqrt{2}r$
in \eqref{eq:Cap}, we obtain  
\begin{align}\label{eq:Cap3}
 \setB_{ \rho^{1/2}}(x,\delta)
&=\mleft\{ z\in \setS^{d-1}(r): \operatorname{dist}(z,\setA(x)) < \delta \sqrt{1-\mleft(\frac{\delta}{2r}\mright)^2} \ \text{and}\ \tp{x}z >0\mright\}.
\end{align}
Now, \eqref{eq:Cap3} implies that the radius of the base of the hyperspherical cap $\setB_{ \rho^{1/2}}(x,\delta)$ equals $\delta \sqrt{1-(\delta/(2r))^2}$. 
We conclude that 
\begin{align}
\mu\mleft(\widetilde\setB_{ \rho^{1/2}}(x,\delta)\mright) &= \mu\mleft(\setB_{ \rho^{1/2}}(x,\delta)\mright)\label{eq:SCbound2a}\\
&=\frac{1}{2}  I_{\frac{d-1}{2},\frac{1}{2}}\mleft(h\mleft(\frac{\delta^2}{r^2}\mright)\mright)\label{eq:SCbound2A}\\
%
%&=\frac{G(h(\delta^2/r^2))}{a^{d-1}(r)}  \mleft(h\mleft(\frac{\delta^2}{r^2}\mright)r^2\mright) ^{\frac{d-1}{2}} \label{eq:useBG}\\
&=\frac{\Gamma\mleft(\frac{d}{2}\mright)}{2\sqrt{\pi}\,\Gamma\mleft(\frac{d-1}{2}\mright)} B_{\frac{d-1}{2},\frac{1}{2}}\mleft( h\mleft(\frac{\delta^2}{r^2}\mright) \mright)\\
&\geq \frac{\Gamma\mleft(\frac{d}{2}\mright)}{2\sqrt{\pi}\,\Gamma\mleft(\frac{d+1}{2}\mright)} \mleft( h\mleft(\frac{\delta^2}{r^2}\mright) \mright)^{\frac{d-1}{2}}  \label{eq:uselowerbeta}\\
&\geq \frac{\Gamma\mleft(\frac{d}{2}\mright)}{2\sqrt{\pi}\,\Gamma\mleft(\frac{d+1}{2}\mright)r^{d-1}}\mleft(1-\frac{\delta_0^2}{4r^2}\mright)^{\frac{d-1}{2}} \delta^{d-1}\\
&= \frac{\pi^{\frac{d-1}{2}}}{\Gamma\mleft(\frac{d+1}{2}\mright)a^{(d-1)}(r)}\mleft(1-\frac{\delta_0^2}{4r^2}\mright)^\frac{d-1}{2}\delta^{d-1}
\end{align} 
where \eqref{eq:SCbound2a} follows from 
\begin{align}
\widetilde\setB_{ \rho^{1/2}}(x,\delta)\cap \setS^{d-1}(r)
=\{z\in \setS^{d-1}(r) : \lVert x-z\rVert_2<\delta\} 
= \setB_{ \rho^{1/2}}(x,\delta),  
\end{align}  
\eqref{eq:SCbound2A} is by   the hyperspherical cap formula \cite[Equation (1)]{li11} 
with  $\delta \sqrt{1-(\delta/(2r))^2}$ for the radius of the base of the cap and we set  $h(t)=t(1-t/4)$, 
  %Using monotonicity of $G(\alpha)$ and  the fact that  $h(t)<t$ for all $t\in(0,2]$,  
%we can conclude that  
%\begin{align}
%\mu\mleft(\widetilde\setB_{ \rho^{1/2}}\big(x,\delta\big)\mright) 
%&\leq   \frac{G(\delta^2/r^2)}{a^{(d-1)}(r)}  \delta^{d-1}\leq  \frac{G(\delta_0^2/r^2)}{a^{(d-1)}(r)} \delta^{d-1}.\label{eq:usemonononicity}
%\end{align}
%This establishes the upper bound in  \eqref{eq:subsphere22} as  $x\in  \setS^{d-1}(r)$ and  $\delta\in(0, \delta_0)$ were assumed to be arbitrary. 
%The lower bound in  \eqref{eq:subsphere22} follows from 
%where \eqref{eq:useBG} is by  \eqref{eq:SCbound2a}--\eqref{eq:SCbound2A} 
and 
  in \eqref{eq:uselowerbeta} we used 
\begin{align}\label{eq:lowerboundbeta}
 B_{\frac{d-1}{2},\frac{1}{2}}(\alpha) \geq \int_0^\alpha u^{\frac{d-1}{2}-1}\,\mathrm d u= \frac{2}{d-1} \alpha^{\frac{d-1}{2}}  
\quad\text{for all $\alpha\in(0,1]$.}
\end{align}
The argument is concluded by noting that $x\in  \setS^{d-1}(r)$ and  $\delta\in(0, \delta_0)$ were assumed to be arbitrary.

 Finally, the limit for $c_{\delta_0}$  follows from \eqref{eq:limitGa} and the limit for  $b_{\delta_0}$ is trivial. 
%\begin{lem}
%The surface area of the hyperspherical cap with base radius of the hyperspherical cap
%\end{lem}
\qed

%\section{Properties of the von Mises-Fisher distribution}\label{app:vFM}

%\section{Bounds on the Lower and Upper $k$-th Quantization Dimension}

\section{A Layer Cake Argument}\label{sec:layercake}
An important tool in   lower/upper-bounding integrals using  sub/super-regularity of the underlying integration measure is based on the so-called layer cake argument, which is an immediate consequence of 
Tonelli's theorem for characteristic functions \cite[Theorem 10.9]{ba95}. The formal statement is as follows. 

\begin{lem}\cite[Exercise 1.7.2]{du10}\label{lem:fubI}
Consider a $\sigma$-finite measure space $(\setX,\colX,\mu)$. If $f\colon\setX\to\reals$ is  measurable and  $f(\setX)\subseteq [0,a]$ with $a\in (0,\infty]$, then  
\begin{align}
\int f(x)\,\mathrm d\mu(x)
&=\int_0^a \mu\mleft(\{x:f(x)\geq t\}\mright)\,\mathrm d t \label{eq:geq}\\
&=\int_0^a \mu\mleft(\{x:f(x)> t\}\mright)\,\mathrm d t.\label{eq:g}
\end{align}
\begin{proof}
To establish \eqref{eq:geq}, we note that 
\begin{align}
\int_0^a \mu\mleft(\{x:f(x)\geq t\}\mright)\,\mathrm d t\label{eq:fubapplemA}
&=\int_0^\infty \mu\mleft(\{x:f(x)\geq t\}\mright)\,\mathrm d t\\
&=\int \mleft(\int_0^{f(x)} \,\mathrm dt\mright)\,\mathrm d\mu(x)\label{eq:fubapplemB}\\
&=\int f(x)\,\mathrm d\mu(x),\label{eq:fubapplem}
\end{align}
where \eqref{eq:fubapplemA} holds because $f(\setX)\subseteq [0,a]$  
and  
\eqref{eq:fubapplemB} follows from  Tonelli's theorem for characteristic functions \cite[Theorem 10.9]{ba95}.  
Finally, \eqref{eq:g} is obtained  by repeating the steps in \eqref{eq:fubapplemA}--\eqref{eq:fubapplem} with $f(x)\geq t$ replaced by $f(x)> t$.
\end{proof}
\end{lem}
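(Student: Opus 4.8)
The plan is to prove \eqref{eq:geq} first and then obtain \eqref{eq:g} by a verbatim repetition with the weak inequality replaced by a strict one. For \eqref{eq:geq}, the starting observation is that since $f(\setX)\subseteq[0,a]$, the set $\{x:f(x)\geq t\}$ is empty for every $t>a$, so the integral over $(0,a)$ coincides with the integral over $(0,\infty)$; this lets me work on $(0,\infty)$ where the next step is cleaner. Then I would write the indicator identity $\ind{(0,f(x))}(t)=\ind{\{(x,t):f(x)> t\}}(x,t)$ (viewed as a function on $\setX\times(0,\infty)$), which is jointly measurable because $f$ is $\colX$-measurable and the map $(x,t)\mapsto f(x)-t$ is measurable. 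Applying Tonelli's theorem for characteristic functions \citep[Theorem 10.9]{ba95} to this nonnegative measurable function on the product space $(\setX\times(0,\infty),\colX\otimes\colB((0,\infty)),\mu\otimes\colL^1)$ — here $\sigma$-finiteness of $\mu$ is exactly the hypothesis that makes Tonelli applicable — allows me to exchange the order of integration: integrating in $t$ first gives $\int_0^{f(x)}\mathrm dt=f(x)$, while integrating in $x$ first gives $\mu(\{x:f(x)> t\})$ for each fixed $t$. This yields $\int f\,\mathrm d\mu=\int_0^\infty\mu(\{x:f(x)> t\})\,\mathrm dt$, which already establishes \eqref{eq:g} once we restore the domain $(0,a)$.

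To pass from the strict to the non-strict version and complete \eqref{eq:geq}, I would note that $\{x:f(x)\geq t\}$ and $\{x:f(x)> t\}$ differ only by the level set $\{x:f(x)=t\}$, and the set of $t$ for which $\mu(\{x:f(x)=t\})>0$ is at most countable (the level sets for distinct $t$ are disjoint, and in a $\sigma$-finite measure space only countably many disjoint sets can have positive measure), hence Lebesgue-null. Therefore the two integrands $t\mapsto\mu(\{x:f(x)\geq t\})$ and $t\mapsto\mu(\{x:f(x)> t\})$ agree for $\colL^1$-almost every $t$, so their integrals over $(0,a)$ coincide, giving \eqref{eq:geq}. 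Alternatively — and perhaps more in keeping with the paper's style — I would simply rerun the Tonelli computation with the closed interval $[0,f(x)]$ (equivalently the indicator $\ind{\{(x,t):f(x)\geq t\}}$), whose slice in $t$ still has Lebesgue measure $f(x)$ and whose slice in $x$ is $\{x:f(x)\geq t\}$; this avoids the countability remark entirely.

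I do not expect any genuine obstacle here: the only things to be careful about are (i) checking joint measurability of the indicator of $\{(x,t):f(x)> t\}$, which is immediate, (ii) invoking $\sigma$-finiteness of $\mu$ so that Tonelli's theorem applies to the product measure, and (iii) the truncation of the $t$-domain from $(0,\infty)$ to $(0,a)$ using $f(\setX)\subseteq[0,a]$, which is what ties the statement to the hypothesis that $f$ is bounded by $a$ (with the convention $a=\infty$ allowed, in which case no truncation is needed). The mild subtlety — the almost-everywhere equality of the strict and non-strict tail functions — is handled either by the disjointness/countability argument above or, preferably, by simply carrying out the Tonelli step twice, once with $>t$ and once with $\geq t$.
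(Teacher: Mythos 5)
Your proposal is correct and follows essentially the same route as the paper: Tonelli's theorem for characteristic functions on the product space $\setX\times(0,\infty)$, followed by truncation of the $t$-domain using $f(\setX)\subseteq[0,a]$, with the second identity obtained by rerunning the computation with the strict inequality. The extra countability argument about level sets is sound but unnecessary, as you yourself note; the paper simply carries out the Tonelli step twice.
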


%\end{appendices}

\section*{Acknowledgments}
The authors thank the anonymous reviewers for their valuable suggestions. 

\section*{Data Availability Statement}
No new data were generated or analysed in support of this review.

\section*{Statements and Declarations}
\begin{itemize}
\item G. Koliander was supported in part by  the WWTF project MA16-053. 
\item A weaker version of Theorem \ref{thm:new} was presented without proof at the 2018 IEEE International Symposium on Information Theory (ISIT) \cite{rikobo18}. 
\end{itemize}

\bibliographystyle{plain}
\bibliography{references}

%%%%%%%%%%%%%%%%%%%%%%%%%%%%%%%%%%%%%%%%%%%%%%%%%%%%%%%%%%%%%%%%%%%%%%%%%%%%%%%%%%%%%%%%%%%%%%%%%%%%%%%%%%%%%%%%%%%%%%%%%%%%%%%%%%%%%%%%%%%%%%%%%%
\end{document}